\newtheorem{theorem}{Theorem}
\newtheorem{prop}[theorem]{Proposition}
\newtheorem{lemma}[theorem]{Lemma}
\newtheorem{cor}[theorem]{Corollary}
\newtheorem{rem}[theorem]{Remark}
\newtheorem{conj}[theorem]{Conjecture}
\newtheorem{propdef}[theorem]{Proposition and Definition}
\newtheorem*{thmA}{Theorem A}
\newtheorem*{thmB}{Theorem B}
\newtheorem*{thmC}{Theorem C}
\newtheorem*{thmD}{Theorem D}
\numberwithin{theorem}{section}
\numberwithin{equation}{section}
\newcommand{\W}{\mathcal{W}}
\newcommand{\Z}{\mathbb{Z}}
\newcommand{\C}{\mathbb{C}}
\newcommand{\Zc}{\mathcal{Z}}
\newcommand{\Oc}{\mathcal{O}}
\newcommand{\bQ}{{\bf Q}}
\newcommand{\g}{\mathfrak{g}}
\newcommand{\h}{\mathfrak{h}}
\newcommand{\nil}{\mathfrak{n}}
\newcommand{\rf}{{\g_0}}
\newcommand{\bo}{\mathfrak{b}}
\newcommand{\z}{\mathfrak{z}}
\newcommand{\p}{\mathfrak{p}}
\newcommand{\lf}{\mathfrak{l}}
\newcommand{\uf}{\mathfrak{u}}
\newcommand{\slf}{\mathfrak{sl}}
\newcommand{\gl}{\mathfrak{gl}}
\newcommand{\so}{\mathfrak{so}}
\newcommand{\spf}{\mathfrak{sp}}
\newcommand{\ad}{\operatorname{ad}}
\newcommand{\Ad}{\operatorname{Ad}}
\newcommand{\rootht}{\operatorname{ht}}
\newcommand{\der}{\partial}
\newcommand{\e}{\mathrm{e}}
\newcommand{\Ker}{\operatorname{Ker}}
\newcommand{\Img}{\operatorname{Im}}
\newcommand{\End}{\operatorname{End}}
\newcommand{\Hom}{\operatorname{Hom}}
\newcommand{\inv}{(\hspace{0.5mm}\cdot\hspace{0.5mm}|\hspace{0.5mm}\cdot\hspace{0.5mm})}
\newcommand{\degG}{\deg_\Gamma}
\newcommand{\degQ}{\deg_\bQ}
\newcommand{\conf}{\Delta}
\newcommand{\charge}{\deg_\mathrm{ch}}
\newcommand{\Vtau}{V^{\tau_{k}}(\g_0)}
\newcommand{\Fch}{F_{\mathrm{ch}}(\g_{>0})}
\newcommand{\FchT}{F_{\mathrm{ch}}^T(\g_{>0})}
\newcommand{\Fchn}{F_{\mathrm{ch}}^{\hspace{0.5mm}n}}
\newcommand{\Fne}{\Phi(\g_{\frac{1}{2}})}
\newcommand{\FneT}{\Phi^T(\g_{\frac{1}{2}})}
\newcommand{\FneU}{\Phi^U(\g_{\frac{1}{2}})}
\newcommand{\FneF}{\Phi^F(\g_{\frac{1}{2}})}
\newcommand{\Hi}{\mathcal{H}}
\newcommand{\D}{\mathcal{D}}
\newcommand{\A}{\mathcal{A}}
\newcommand{\Azero}{\mathcal{A}_{\Delta_0^+}}
\newcommand{\Am}{\mathcal{A}_{\Delta_{>0}}}
\newcommand{\Anil}{\mathcal{A}_{\Delta_+}}
\newcommand{\Deltazero}{\Delta_{0}^{+}}
\newcommand{\Deltahalf}{\Delta_{\frac{1}{2}}}
\newcommand{\Deltaone}{\Delta_{1}}
\newcommand{\dst}{d_{\mathrm{st}}}
\newcommand{\dne}{d_{\mathrm{ne}}}
\newcommand{\dchi}{d_{\chi}}
\newcommand{\Py}{\pi}
\newcommand{\row}{\operatorname{row}}
\newcommand{\col}{\operatorname{col}}
\newcommand{\cop}{\operatorname{\Updelta}}
\newcommand{\para}{{\bf k}}
\newcommand{\ind}{\operatorname{Ind}}
\newcommand{\Ind}{\operatorname{\mathbb{I}nd}}
\newcommand{\VtauT}{V^{T}(\g_0)}
\newcommand{\killing}{\kappa^\circ}
\newcommand{\Id}{\operatorname{Id}}
\newcommand{\prin}{\mathrm{prin}}
\newcommand{\Zhu}{\operatorname{Zhu}}
\newcommand{\fne}{\bar{\Phi}(\g_{\frac{1}{2}})}
\newcommand{\GL}{\mathrm{GL}}
\newcommand{\Wak}{W}
\newcommand{\gr}{\operatorname{gr}}
\newcommand{\hQ}{\widetilde{Q}}
\newcommand{\nilcurrent}{\operatorname{\g_{>0}}[\![\hspace{0.2mm}t\hspace{0.2mm}]\!]}
\newcommand{\Ncurrent}{G_{>0}[\![\hspace{0.2mm}t\hspace{0.2mm}]\!]}
\newcommand{\Njet}{J_\infty G_{>0}}
\newcommand{\Weyl}{\mathbb{V}}
\title{Screening operators and Parabolic inductions for Affine $\W$-algebras}
\author{Naoki Genra}
\address[N.G.]{Department of Mathematical and Statistical Sciences, University of Alberta, 632 CAB, Edmonton, Alberta, Canada T6G 2G1}
\email{genra@ualberta.ca}
\begin{document}
\renewcommand{\thefootnote}{\fnsymbol{footnote}}
\footnote[0]{
This work was supported by Grant-in-Aid for JSPS Fellows (No.17J07495) and the Research Institute for Mathematical Sciences, a Joint Usage/Research Center located in Kyoto University.
}
\renewcommand{\thefootnote}{\arabic{footnote}}
\maketitle
\markboth{}{}

\begin{abstract}
(Affine) $\W$-algebras are a family of vertex algebras defined by the generalized Drinfeld-Sokolov reductions associated with a finite-dimensional reductive Lie algebra $\g$ over $\C$, a nilpotent element $f$ in $[\g,\g]$, a good grading $\Gamma$ and a symmetric invariant bilinear form $\kappa$ on $\g$. We introduce free field realizations of $\W$-algebras by using Wakimoto representations of affine Lie algebras, where $\W$-algebras are described as the intersections of kernels of screening operators. We call these Wakimoto free fields realizations of $\W$-algebras. As applications, under certain conditions that are valid in all cases of type $A$, we construct parabolic inductions for $\W$-algebras, which we expect to induce the parabolic inductions of finite $\W$-algebras defined by Premet and Losev. In type $A$, we show that our parabolic inductions are a chiralization of the coproducts for finite $\W$-algebras defined by Brundan-Kleshchev. In type $BCD$, we are able to obtain some generalizations of the coproducts in some special cases. This paper also contains an appendix by Shigenori Nakatsuka on the compatibility of screening operators with Miura maps.
\end{abstract}

\section{Introduction}\label{Introduction sec}
Let $\g$ be a reductive Lie algebra, $f$ a nilpotent element in $[\g,\g]$, $\kappa$ a symmetric invariant bilinear form on $\g$ and
\begin{align*}
\Gamma\colon\g=\bigoplus_{j\in\frac{1}{2}\Z}\g_j
\end{align*}
a good grading on $\g$ for $f$. We associate with the (affine) $\W$-algebra $\W^\kappa(\g,f;\Gamma)$ that is a $\frac{1}{2}\Z_{\geq0}$-graded conformal vertex algebra defined by means of the (generalized) Drinfeld-Sokolov reduction \cite{FF4, KRW}. The vertex algebra structure of $\W$-algebras doesn't depend on the choice of the good grading $\Gamma$ for fixed $\g,f,\kappa$, although the conformal grading does \cite{BG, AKM}.

In this paper, we construct inclusions
\begin{align*}
\Ind^\g_\lf\colon\W^\kappa(\g,f;\Gamma)\rightarrow\W^{\kappa_\lf}(\lf,f_\lf;\Gamma_\lf)
\end{align*}
for Levi subalgebras $\lf$ of $\g$, nilpotent elements $f_\lf$ in $[\lf,\lf]$ and good gradings $\Gamma_\lf$ on $\lf$ for $f_\lf$ that satisfy some conditions. We call the maps $\Ind^\g_\lf$ parabolic inductions of $\W$-algebras. We expect that our construction gives a chiralization of the parabolic induction for finite $\W$-algebras defined by Premet \cite{P} and Losev \cite{L}. In the case of $\g=\gl_N$, we show that these inclusions induce exactly the coproducts of the finite $\W$-algebras of Brundan-Kleshchev \cite{BK2}. In the case of $\g=\so_N,\spf_N$ with rectangular nilpotent elements, we obtain a generalization of the coproducts of the corresponding finite $\W$-algebras.

To state our results more precisely, let $\Pi$ be a set of simple roots of $\g$ compatible with $\Gamma$, $\Pi_j$ the subset of $\Pi$ consisting of simple roots whose root vectors belong to $\g_j$ for $j\in\frac{1}{2}\Z$. Then $\Pi=\Pi_0\sqcup\Pi_{\frac{1}{2}}\sqcup\Pi_1$ by \cite{EK}. According to Lusztig and Spaltenstein \cite{LS}, for any Levi subalgebra $\lf$ of $\g$, each nilpotent orbit $\Oc_\lf$ in $\lf$ defines a nilpotent orbit
\begin{align*}
\Oc_\g=\ind^\g_\lf\Oc_\lf\quad\mathrm{in}\ \g,
\end{align*}
which is called the induced nilpotent orbit of $\Oc_\lf$. Let $\Pi_\lf\subset\Pi$ be the set of simple roots of $\lf$, $G$ a connected Lie group corresponding to $\g$ and $L$ the Lie subgroup of $G$ such that $\mathrm{Lie}(L)=\lf$.

\begin{lemma}[Lemma \ref{ind good lem}]\label{intro lem}
Suppose that a good grading $\Gamma$ satisfies $\Pi\backslash\Pi_\lf\subset\Pi_1$. Then $\Oc_\g=G\cdot f$ is induced from $\Oc_\lf=L\cdot f_\lf$ for a nilpotent element $f_\lf$ in $[\lf,\lf]$. Moreover, the restriction $\Gamma_\lf$ of $\Gamma$ to $\lf$ is a good grading on $\lf$ for $f_\lf$.
\end{lemma}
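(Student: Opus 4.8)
The plan is to identify the nilpotent element $f_\lf$ explicitly and then verify the three assertions — that $\Gamma_\lf$ is a grading on $\lf$, that it is \emph{good} for $f_\lf$, and that $\Oc_\g = \ind^\g_\lf \Oc_\lf$ — using the characterizations of good gradings in terms of the $\ad$-action together with the induction criterion of Lusztig--Spaltenstein. First I would write $f = \sum_{\alpha} f_\alpha$ in terms of root vectors; since $\Gamma$ is good for $f$, we have $f \in \g_{-1}$, and the hypothesis $\Pi \backslash \Pi_\lf \subset \Pi_1$ forces every root vector appearing in $f$ to lie in $\lf$ (a negative root whose $\g$-degree is $-1$ and which is \emph{not} a root of $\lf$ must involve a simple root from $\Pi\backslash\Pi_\lf$ with negative coefficient, contradicting $\Pi\backslash\Pi_\lf\subset\Pi_1$ since then the $\g$-degree would be $\leq -1$ with strict inequality once any other positive-degree simple root also appears — one has to rule this out carefully). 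So one sets $f_\lf := f \in [\lf,\lf]$, and the first point — that $\Gamma_\lf := \Gamma|_\lf$ is a $\tfrac12\Z$-grading of $\lf$ — is immediate because $\lf$ is spanned by $\h$ and root vectors $e_\alpha$ with $\pm\alpha$ a sum of elements of $\Pi_\lf$, all of which are homogeneous for $\Gamma$.

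Next I would check that $\Gamma_\lf$ is a good grading on $\lf$ for $f_\lf$. Recall the standard criterion: a grading $\lf = \bigoplus_j \lf_j$ is good for $f_\lf \in \lf_{-1}$ iff $\ad f_\lf \colon \lf_j \to \lf_{j-1}$ is injective for $j \geq \tfrac12$ and surjective for $j \leq \tfrac12$; equivalently the grading element $x$ satisfies $\g^{f_\lf} \subset \bigoplus_{j \leq 0}$ and $\ad x$ has half-integer eigenvalues. Since $\Gamma$ is good for $f$ on $\g$, we already know $\ad f \colon \g_j \to \g_{j-1}$ is injective for $j \geq \tfrac12$. The key observation is that $\lf_j = \g_j \cap \lf$ and that $\lf$ is $\ad h$-stable for the grading element $h$ of $\Gamma$, so the restriction of an injective map to a subspace is injective, giving injectivity of $\ad f_\lf$ on $\lf_j$ for $j \geq \tfrac12$ for free. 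For surjectivity for $j \leq \tfrac12$ I would argue on dimensions using the $\slf_2$-triple $(e,h,f)$ for the \emph{good} grading (or rather its $\frac12\Z$-analogue), or alternatively invoke the known fact that goodness is equivalent to $\dim \g^f = \dim \g_0 + \dim \g_{1/2}$ adapted to $\lf$ — this is where I expect to spend the most care, since I must relate $\lf^{f_\lf}$ to $\lf \cap \g^f$ and conclude $\dim \lf^{f_\lf} = \dim \lf_0 + \dim \lf_{1/2}$.

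Finally, for the induction statement I would use the Lusztig--Spaltenstein criterion: $\Oc_\g = \ind^\g_\lf \Oc_\lf$ iff $\dim \Oc_\g = \dim \Oc_\lf + 2\dim(\g/\p)$ where $\p = \lf \oplus \uf$ is the parabolic with Levi $\lf$ determined by the sign conventions above, equivalently iff $f_\lf + \uf^-$ meets $\Oc_\g$ in a dense open subset (for a suitable nilradical $\uf^\pm$). Here the hypothesis $\Pi\backslash\Pi_\lf\subset\Pi_1$ is exactly what is needed: it says the parabolic $\p$ is built from simple roots that all sit in $\g$-degree $\geq 0$ with the ``extra'' ones in degree $1$, so that $\uf \subset \bigoplus_{j \geq 1} \g_j$ and $\uf^- \subset \bigoplus_{j \leq -1}\g_j$. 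Then $f_\lf + \uf^- \subset \g_{-1} \oplus \g_{-3/2} \oplus \cdots$, and I would show that the image of $G\cdot(f_\lf + \uf^-)$ has dimension $\dim \Oc_\g$ by a Slodowy-slice / transversality count: the tangent space to $\Oc_\g$ at $f$ is $[\g, f]$, and one checks $[\g,f] + \uf^- = \g$ modulo the centralizer directions, using goodness of $\Gamma$ to control $[\g_{\geq 0}, f]$. The main obstacle throughout is bookkeeping the interaction between the two gradings — the $\g$-degree $\Gamma$ and the $\slf_2$-weight — together with the parabolic decomposition; once the degree constraints from $\Pi\backslash\Pi_\lf\subset\Pi_1$ are unwound, each individual claim reduces to a dimension count or an injectivity statement inherited from $\Gamma$ being good on $\g$.
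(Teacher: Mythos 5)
There is a genuine gap at the very first step: you set $f_\lf := f$, claiming that the hypothesis $\Pi\backslash\Pi_\lf\subset\Pi_1$ forces every root vector appearing in $f$ to lie in $\lf$. This is false, and the parenthetical worry you flag ("one has to rule this out carefully") cannot in fact be ruled out. If $\alpha\in\Pi\backslash\Pi_\lf\subset\Pi_1$, then $-\alpha$ is a negative root of $\Gamma$-degree exactly $-1$ that is \emph{not} a root of $\lf$, so $\g_{-1}\not\subset\lf$ and $f$ typically has a nonzero component there. The simplest counterexample is the principal nilpotent $f=\sum_{\alpha\in\Pi}e_{-\alpha}$ in $\gl_N$ with $\lf=\gl_{N_1}\oplus\gl_{N_2}$: the summand $e_{-\alpha_{N_1}}$ does not lie in $\lf$. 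The paper instead takes the parabolic $\p=\lf\oplus\uf$ containing the \emph{opposite} Borel, observes $\uf\subset\g_{\leq-1}$ and $\g_j=\lf_j\oplus\uf_j$ for $j\leq\tfrac12$, and defines $f_\lf$ as the projection of $f$ onto $\lf_{-1}$ along $\uf_{-1}$ (so $f=f_\lf+f_\uf$ with $f_\uf\in\uf_{-1}$ possibly nonzero). This is exactly the Lusztig--Spaltenstein setup: $f$ lies in $\Oc_\lf+\uf$, not in $\lf$.

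The error propagates. Your claim that injectivity of $\ad f_\lf$ on $\lf_j$ ($j\geq\tfrac12$) is "for free" as the restriction of the injective map $\ad f$ breaks down, since $\ad f_\lf=\ad f-\ad f_\uf$ on $\lf$ and $\ad f$ need not even preserve $\lf$. (It can be repaired: for $u\in\lf_j$ with $j\geq\tfrac12$ one has $[f,u]=[f_\lf,u]+[f_\uf,u]$ with $[f_\uf,u]\in\uf_{j-1}=0$; but this uses the decomposition $f=f_\lf+f_\uf$ that you have suppressed.) The paper's route is to check \emph{surjectivity} of $\ad f_\lf\colon\lf_j\to\lf_{j-1}$ for $j\leq\tfrac12$ by projecting the surjectivity of $\ad f$ onto $\lf$ using $[\lf,\uf]\subset\uf$, and then to invoke the equivalence (a)$\iff$(b)$\iff$(c) of Lemma \ref{good pro lem}. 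The final dimension count $\dim G\cdot f=\dim L\cdot f_\lf+2\dim\uf$, via $\dim\lf^{f_\lf}=\dim\lf_0+\dim\lf_{\frac12}=\dim\g_0+\dim\g_{\frac12}=\dim\g^f$, together with $f\in(L\cdot f_\lf+\uf)\cap G\cdot f$, is the right idea and close to what you sketch at the end, but it only becomes available once $f_\lf$ is defined correctly.
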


We note that the existence of $\Gamma$ in Lemma \ref{intro lem} is valid in all cases of type $A$ (\cite{Kr, OW}), in the cases of rectangular nilpotent elements in type $BCD$ (\cite{Ke,Sp}), and in all cases of type $G$ (\cite{EK,GE}). However, there exist some induced nilpotent orbits $\Oc_\g=\ind^\g_\lf\Oc_\lf$ in type $E$ and $F$ such that no good grading on $\g$ satisfies that $\Pi\backslash\Pi_\lf\subset\Pi_1$, see \cite{EK, GE}.

\begin{thmA}[Theorem \ref{induced thm}, Proposition \ref{induced prop}]
Suppose that a good grading $\Gamma$ satisfies the condition that $\Pi\backslash\Pi_\lf\subset\Pi_1$.
\begin{enumerate}
\item For any symmetric invariant bilinear form $\kappa$ on $\g$, there exists an injective vertex algebra homomorphism
\begin{align*}
\Ind^\g_\lf\colon\W^\kappa(\g,f;\Gamma)\rightarrow\W^{\kappa_\lf}(\lf,f_\lf;\Gamma_\lf),
\end{align*}
where $f_\lf$, $\Gamma_\lf$ are given in Lemma \ref{intro lem}, $\kappa_\lf=\kappa+\frac{1}{2}\killing_\g-\frac{1}{2}\killing_\lf$, and $\killing_\g$, $\killing_\lf$ are the Killing forms on $\g$, $\lf$ respectively.
\item $\Ind^\g_\lf$ is a unique vertex algebra homomorphism that satisfies $\mu_\kappa=\mu_{\lf, \kappa_\lf} \circ \Ind^\g_\lf$, where $\mu_\kappa$, $\mu_{\lf, \kappa_\lf}$ are the Miura maps \cite{KW1} for $\W^\kappa(\g,f;\Gamma)$, $\W^{\kappa_\lf}(\lf,f_\lf;\Gamma_\lf)$ respectively.
\item Let $\lf'$ be any Levi subalgebra of $\g$ such that $\Pi\backslash\Pi_{\lf'}\subset\Pi_1$ and $\lf\subset\lf'\subset\g$. Then the maps $\Ind^\g_{\lf'}$, $\Ind^{\lf'}_{\lf}$ exist and $\Ind^\g_\lf=\Ind^{\lf'}_\lf\circ\Ind^\g_{\lf'}$.
\end{enumerate}
\end{thmA}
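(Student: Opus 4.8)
The plan is to build everything from the Wakimoto free field realizations that the paper has constructed. The key structural fact to exploit is that a $\W$-algebra $\W^\kappa(\g,f;\Gamma)$ is realized as the intersection of kernels of screening operators acting on a certain free field vertex algebra (a tensor product of a Wakimoto-type module for the $\g_0$-part, a $\beta\gamma$-system for $\g_{>0}$, and a fermionic system for $\g_{1/2}$), and that the screening operators are naturally indexed by the simple roots in $\Pi$. When $\Pi\backslash\Pi_\lf\subset\Pi_1$, the screening operators split into two groups: those indexed by $\Pi_\lf$ and those indexed by $\Pi\backslash\Pi_\lf$. The map $\Ind^\g_\lf$ of Theorem A is, by construction, the inclusion of $\W^\kappa(\g,f;\Gamma)=\bigcap_{\alpha\in\Pi}\Ker S_\alpha$ into $\W^{\kappa_\lf}(\lf,f_\lf;\Gamma_\lf)=\bigcap_{\alpha\in\Pi_\lf}\Ker S_\alpha$, both sitting inside the same (or compatibly identified) ambient free field algebra.

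**First** I would set up the three Levi subalgebras $\lf\subset\lf'\subset\g$ and observe that all three conditions $\Pi\backslash\Pi_\lf\subset\Pi_1$, $\Pi\backslash\Pi_{\lf'}\subset\Pi_1$, and $\Pi_{\lf'}\backslash\Pi_\lf\subset\Pi_1$ hold; the first two are hypotheses, and the third follows since $\Pi_{\lf'}\backslash\Pi_\lf\subset\Pi\backslash\Pi_\lf\subset\Pi_1$. Hence by Lemma~\ref{intro lem} the data $(f_{\lf'},\Gamma_{\lf'})$, $(f_\lf,\Gamma_\lf)$ are defined and Theorem~A(1) produces $\Ind^\g_{\lf'}$, $\Ind^{\lf'}_\lf$, and $\Ind^\g_\lf$. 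I also need the bilinear forms to match along the tower: $\kappa_{\lf'}=\kappa+\tfrac12\killing_\g-\tfrac12\killing_{\lf'}$ and then $(\kappa_{\lf'})_\lf=\kappa_{\lf'}+\tfrac12\killing_{\lf'}-\tfrac12\killing_\lf=\kappa+\tfrac12\killing_\g-\tfrac12\killing_\lf=\kappa_\lf$, which is exactly the form appearing in $\Ind^\g_\lf$. This cocycle-type identity for the Killing-form corrections is what makes the transitivity statement even well-posed.

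**Next**, with both $\Ind^\g_\lf$ and $\Ind^{\lf'}_\lf\circ\Ind^\g_{\lf'}$ being vertex algebra homomorphisms $\W^\kappa(\g,f;\Gamma)\to\W^{\kappa_\lf}(\lf,f_\lf;\Gamma_\lf)$, I would invoke the uniqueness statement of Theorem~A(2): a vertex algebra homomorphism out of $\W^\kappa(\g,f;\Gamma)$ compatible with the Miura maps is unique. So it suffices to check $\mu=\mu_\lf\circ(\Ind^{\lf'}_\lf\circ\Ind^\g_{\lf'})$. Using Theorem~A(2) for the pair $(\g,\lf')$ gives $\mu=\mu_{\lf'}\circ\Ind^\g_{\lf'}$, and for $(\lf',\lf)$ gives $\mu_{\lf'}=\mu_\lf\circ\Ind^{\lf'}_\lf$; substituting, $\mu=\mu_\lf\circ\Ind^{\lf'}_\lf\circ\Ind^\g_{\lf'}$, as required. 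This requires knowing that the Miura maps themselves compose correctly through the Levi tower, i.e. that the Miura map $\mu$ for $\W^\kappa(\g,f;\Gamma)$ factors through the Miura map $\mu_{\lf'}$ for the intermediate $\W$-algebra; this is a compatibility of the Drinfeld–Sokolov reduction with the choice of parabolic and should already be implicit in, or a direct consequence of, the construction underlying Theorem~A(1)–(2).

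**The main obstacle** I anticipate is ensuring that the ambient free field realizations are identified compatibly across all three algebras — i.e. that the Wakimoto free fields for $\lf'$ embed into those for $\g$ in a way strictly compatible with the embedding of those for $\lf$ into those for $\lf'$, with matching screening operators on the nose. Concretely, the $\g$-side free field algebra decomposes (using $\Pi\backslash\Pi_\lf\subset\Pi_1$, which forces the corresponding root spaces into $\g_1$) as a tensor product of the $\lf'$-side free field algebra with an extra bosonic/fermionic factor coming from $\g\ominus\lf'$, and similarly for $\lf'$ versus $\lf$; one must check these two tensor-factorizations are associative and that the screening operators indexed by $\Pi_{\lf'}\backslash\Pi_\lf$ and by $\Pi\backslash\Pi_{\lf'}$ act as expected after the identifications. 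Once this bookkeeping is in place, the identity $\Ind^\g_\lf=\Ind^{\lf'}_\lf\circ\Ind^\g_{\lf'}$ is essentially "inclusion of nested intersections of kernels," but the uniqueness argument of the previous paragraph is the clean way to finish without grinding through the free field identifications in full detail.
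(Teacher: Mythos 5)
Your proposal follows essentially the same route as the paper: parts (1)--(2) are obtained by realizing both $\W$-algebras as nested intersections of kernels of screening operators in a common free field algebra and reading off the Miura compatibility, and part (3) is deduced exactly as in the paper from the uniqueness in (2) via $\mu_\lf\circ\Ind^\g_\lf=\mu=\mu_{\lf'}\circ\Ind^\g_{\lf'}=(\mu_\lf\circ\Ind^{\lf'}_\lf)\circ\Ind^\g_{\lf'}$ and injectivity of $\mu_\lf$. The ``main obstacle'' you flag --- matching the intrinsic screening operators of $\W^{\kappa_\lf}(\lf,f_\lf;\Gamma_\lf)$ (including the shifted levels $\para_i$) with the operators $Q_\alpha$, $\alpha\in\Pi_\lf$, for $\g$ --- is precisely what the paper's Lemmas \ref{Ps=P lemma}, \ref{ind 1/2 lemma} and \ref{[a]s=[a] lemma} carry out, so your plan is correct as stated.
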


See Section \ref{sec:Miura maps} for the definition of the Miura map. In the case that $f$ is a principal nilpotent element, the map $\Ind^\g_\lf$ has been constructed in Theorem B 7.1 of \cite{BFN}.

For any $\frac{1}{2}\Z_{\geq0}$-graded conformal vertex algebra $V$, we can associate with an associative algebra $\Zhu(V)$, called the (twisted) Zhu algebra \cite{Z, FZ, DK}. It is proved in \cite{A1, DK} that $\Zhu(\W^\kappa(\g,f;\Gamma))$ is the finite $\W$-algebra associated with $\g,f,\Gamma$ \cite{P1, GG}, which we denote by $U(\g,f;\Gamma)$. It is easy to see that any vertex algebra homomorphism $\alpha\colon V\rightarrow W$ induces an algebra homomorphism between the Zhu algebras, which we denote by $\Zhu(\alpha)\colon\Zhu(V)\rightarrow\Zhu(W)$. For an algebra homomorphism $A$, we call a map $\alpha$ a chiralization of the map $A$ if $A=\Zhu(\alpha)$. In the case of $\alpha=\Ind^\g_\lf$, we obtain an algebra homomorphism
\begin{align*}
\Zhu(\Ind^\g_\lf)\colon U(\g,f;\Gamma)\rightarrow U(\lf,f_\lf;\Gamma_\lf),
\end{align*}
which is a unique injective algebra homomorphism that satisfies $\bar{\mu}=\bar{\mu}_\lf\circ\Zhu(\Ind^\g_\lf)$, where $\bar{\mu}$, $\bar{\mu}_\lf$ are the Miura maps for $U(\g,f;\Gamma)$, $U(\lf,f_\lf;\Gamma_\lf)$ respectively (Lemma \ref{Zhu ind lem}). See \cite{Ly} or Section \ref{chiral sec} for the definition of the Miura map for $U(\g,f;\Gamma)$.

Given an induced nilpotent orbit $G\cdot f=\ind^\g_\lf (L\cdot f_\lf)$ in $\g$ with a good grading $\Gamma$ on $\g$ for $f$ and a good grading $\Gamma_\lf$ on $\lf$ for $f_\lf$, Losev proved the existence of an injective algebra homomorphism
\begin{align}\label{intro Losev eq}
U(\g,f;\Gamma)\rightarrow\widetilde{U}(\lf,f_\lf;\Gamma_\lf)
\end{align}
in \cite{L}, where $\widetilde{U}(\lf,f_\lf;\Gamma_\lf)$ is a certain completion of $U(\lf,f_\lf;\Gamma_\lf)$. The map \eqref{intro Losev eq} induces a functor from the category of $U(\lf,f_\lf;\Gamma_\lf)$-modules to the category of $U(\g,f;\Gamma)$-modules, called the parabolic induction that was first introduced by Premet \cite{P}. We conjecture that $\Zhu(\Ind^\g_\lf)$ coincides with \eqref{intro Losev eq}, and this is the reason why we call the map $\Ind^\g_\lf$ the parabolic induction of $\W$-algebras.

In the case of $\gl_N$, any nilpotent element in $\slf_N=[\gl_N,\gl_N]$ admits a good $\Z$-grading. These good $\Z$-gradings on $\gl_N$ are classified by combinatoric objects called (even) pyramids $\Py$ introduced in \cite{EK}, which are sequences of the columns of $1\times1$ boxes such that each of rows in $\Py$ is a single connected strip (see Section \ref{coproduct sec} for details). For a pyramid $\Py$ consisting of $N$ boxes, we associate with a nilpotent element $f_\Py$ in $\gl_N$, a good $\Z$ grading $\Gamma_\Py$ on $\gl_N$ for $f_\Py$, and the finite $\W$-algebra $U(\gl_N,\Py)=U(\gl_N,f_\Py;\Gamma_\Py)$. It was shown by Brundan and Kleshchev in \cite{BK2} that $U(\gl_N,\Py)$ is isomorphic to a truncation of the Yangian $Y(\gl_n)$ for some $n\geq1$ and the coproduct of $Y(\gl_n)$ induces an injective algebra homomorphism between finite $\W$-algebras
\begin{align*}
\bar{\cop}=\bar{\cop}^\Py_{\Py_1, \Py_2}\colon U(\gl_N,\Py)\rightarrow U(\gl_{N_1},\Py_1)\otimes U(\gl_{N_2},\Py_2)
\end{align*}
for a pyramid $\Py$ that splits into sum of $\Py_1$ and $\Py_2$ along a column of $\Py$ (see e.g. Section \ref{cop thm sec}), which we denote by $\Py=\Py_1\oplus\Py_2$. This map $\bar{\cop}$ is called a coproduct of finite $\W$-algebras and satisfies the coassociativity, i.e.
\begin{align*}
(\Id\otimes\bar{\cop}^{\Py_2\oplus\Py_3}_{\Py_2,\Py_3})\circ\bar{\cop}^\Py_{\Py_1,\Py_2\oplus\Py_3}=(\bar{\cop}^{\Py_1\oplus\Py_2}_{\Py_1,\Py_2}\otimes\Id)\circ\bar{\cop}^\Py_{\Py_1\oplus\Py_2, \Py_3}
\end{align*}
for a pyramid $\Py=\Py_1\oplus\Py_2\oplus\Py_3$. The coproduct $\bar{\cop}$ plays a fundamental role to produce representations of finite $\W$-algebras of type $A$, see \cite{BK3}.

Consider a maximal Levi subalgebra $\lf$ in $\gl_N$, that is, $\lf=\gl_{N_1}\oplus\gl_{N_2}$ for some $N_1,N_2\in\Z_{\geq1}$ such that $N=N_1+N_2$. According to \cite{Kr, OW}, it follows that any induced nilpotent orbit in $\gl_N$ takes the form
\begin{align*}
\GL_N\cdot f_\Py=\ind^\g_\lf(\GL_{N_1}\cdot f_{\Py_1}+\GL_{N_2}\cdot f_{\Py_2})
\end{align*}
for some pyramid $\Py=\Py_1\oplus\Py_2$, where $f_{\Py_1}\in\gl_{N_1}$ and $f_{\Py_2}\in\gl_{N_2}$. Therefore, it is expected that $\bar{\cop}$ coincides with the special case of \eqref{intro Losev eq} for $\g=\gl_N$ and $\lf=\gl_{N_1}\oplus\gl_{N_2}$.
\smallskip

For $k\in\C$, let us denote by $\W^k(\gl_N,\Py)=\W^\kappa(\gl_N,f_\Py;\Gamma_\Py)$, where $\kappa$ is a symmetric invariant bilinear form on $\gl_N$ such that $\kappa(u|v)=k\ \mathrm{tr}(uv)$ for all $u,v\in\slf_N$. The following assertion is obtained from Theorem A.

\begin{thmB}[Theorem \ref{coproduct thm}, Proposition \ref{chiral prop}]
Let $\Py$ be a pyramid consisting of $N$ boxes such that $\Py=\Py_1\oplus\Py_2$.
\begin{enumerate}
\item For any $k\in\C$, there exists an injective vertex algebra homomorphism
\begin{align*}
\cop=\cop^\Py_{\Py_1,\Py_2}\colon\W^k(\gl_N,\Py)\rightarrow\W^{k_1}(\gl_{N_1},\Py_1)\otimes\W^{k_2}(\gl_{N_2},\Py_2),
\end{align*}
where $k+N=k_1+N_1=k_2+N_2$ and $N_i$ is a number of boxes in $\Py_i$ for $i=1,2$.
\item $\cop$ is a unique vertex algebra homomorphism that satisfies $\mu_k=(\mu_{1, k_1}\otimes\mu_{2, k_2})\circ\cop$, where $\mu_k$, $\mu_{1, k_1}$, $\mu_{2, k_2}$ are the Miura maps for $\W^k(\gl_N,\Py)$, $\W^{k_1}(\gl_{N_1},\Py_1)$, $\W^{k_2}(\gl_{N_2},\Py_2)$ respectively.
\item $\cop$ is coassociative, i.e. $(\Id\otimes\cop^{\Py_2\oplus\Py_3}_{\Py_2,\Py_3})\circ\cop^\Py_{\Py_1,\Py_2\oplus\Py_3}=(\cop^{\Py_1\oplus\Py_2}_{\Py_1,\Py_2}\otimes\Id)\circ\cop^\Py_{\Py_1\oplus\Py_2, \Py_3}$ for $\Py=\Py_1\oplus\Py_2\oplus\Py_3$.
\item $\cop$ is a chiralization of $\bar{\cop}$, that is, $\Zhu(\cop)=\bar{\cop}$.
\end{enumerate}
\end{thmB}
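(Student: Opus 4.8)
The plan is to obtain Theorem~\ref{coproduct thm} and Proposition~\ref{chiral prop} as the specialisation of Theorem~\ref{induced thm} and Proposition~\ref{induced prop} to $\g=\gl_N$ and the maximal Levi subalgebra $\lf=\gl_{N_1}\oplus\gl_{N_2}$ embedded block-diagonally, for which $\Pi\setminus\Pi_\lf$ is the single simple root $\alpha$ joining the two blocks. The first step is combinatorial: given the column decomposition $\Py=\Py_1\oplus\Py_2$, number the $N$ boxes of $\Py$ column by column from left to right (and top to bottom within a column), so that $\Py_1$ receives the labels $1,\dots,N_1$, $\Py_2$ receives $N_1+1,\dots,N$, the box labelled $N_1$ lies in the rightmost column of $\Py_1$, and the box labelled $N_1+1$ lies in the leftmost column of $\Py_2$; this is a valid even pyramid numbering, and for the associated good $\Z$-grading $\Gamma_\Py$ the root vector of $\alpha=\epsilon_{N_1}-\epsilon_{N_1+1}$ has degree $1$, so $\Pi\setminus\Pi_\lf=\{\alpha\}\subset\Pi_1$ (this is the type-$A$ case of the existence quoted after Lemma~\ref{intro lem}). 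By Lemma~\ref{intro lem}, $\Gamma_\lf=\Gamma_\Py|_\lf$ is a good grading on $\lf$ for a nilpotent $f_\lf$ with $\GL_N\cdot f_\Py=\ind^\g_\lf(\GL_{N_1}\cdot f_{\Py_1}+\GL_{N_2}\cdot f_{\Py_2})$, so $f_\lf$ is $L$-conjugate to $f_{\Py_1}+f_{\Py_2}$; applying Theorem~\ref{induced thm}(1) then produces an injective vertex algebra homomorphism $\Ind^\g_\lf\colon\W^k(\gl_N,\Py)\to\W^{\kappa_\lf}(\lf,f_\lf;\Gamma_\lf)$, which will be $\cop$ after the identification discussed next.

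Since the vertex algebra structure of a $\W$-algebra is independent of its good grading and the Drinfeld--Sokolov reduction of a direct sum is the tensor product of the reductions, $\W^{\kappa_\lf}(\lf,f_\lf;\Gamma_\lf)$ factors as $\W(\slf_{N_1},\ldots)\otimes\W(\slf_{N_2},\ldots)$ tensored with the rank-two Heisenberg attached to the centre $\C I_{N_1}\oplus\C I_{N_2}$. Computing $\kappa_\lf=\kappa+\tfrac12\killing_\g-\tfrac12\killing_\lf$ on $\slf_{N_i}$ (where $\killing_\g=2N\,\mathrm{tr}$ and $\killing_\lf=2N_i\,\mathrm{tr}$) gives the level $k+N-N_i=k_i$, and, once the bilinear forms on the centres are normalised compatibly (e.g.\ so that $I_N$ acquires level $N(k+N)$), the central form on $\C I_{N_1}\oplus\C I_{N_2}$ becomes the direct sum $\kappa_1\oplus\kappa_2$, so the rank-two Heisenberg is the product of the rank-one Heisenberg factors of $\W^{k_1}(\gl_{N_1},\Py_1)$ and $\W^{k_2}(\gl_{N_2},\Py_2)$; this identifies the target with $\W^{k_1}(\gl_{N_1},\Py_1)\otimes\W^{k_2}(\gl_{N_2},\Py_2)$ and proves (1), together with the relation $k+N=k_1+N_1=k_2+N_2$. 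For (2), the Miura map $\mu_\lf$ equals $\mu_1\otimes\mu_2$ under this identification, straight from the definition of the Miura map (Section~\ref{Miura map sec}) and $\lf=\gl_{N_1}\oplus\gl_{N_2}$, and it is injective because the Miura map is injective at every level; Theorem~\ref{induced thm}(2) then states precisely that $\cop$ is the unique vertex algebra homomorphism with $\mu=(\mu_1\otimes\mu_2)\circ\cop$.

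For (3), given $\Py=\Py_1\oplus\Py_2\oplus\Py_3$ I would fix a single even pyramid numbering of $\Py$ making $\lf=\gl_{N_1}\oplus\gl_{N_2}\oplus\gl_{N_3}$ block-diagonal with both connecting simple roots in $\Pi_1$ (the same column-by-column numbering, iterated); then $\Pi\setminus\Pi_{\lf'}\subset\Pi\setminus\Pi_\lf\subset\Pi_1$ for every Levi $\lf'$ with $\lf\subset\lf'\subset\g$, so Theorem~\ref{induced thm} applies to all of them, and — using that the $\cop$ of (1) does not depend on the choice of good grading by the uniqueness in (2) — under the identifications of (1) the maps $\Id\otimes\cop^{\Py_2\oplus\Py_3}_{\Py_2,\Py_3}$, $\cop^\Py_{\Py_1,\Py_2\oplus\Py_3}$, $\cop^{\Py_1\oplus\Py_2}_{\Py_1,\Py_2}\otimes\Id$, $\cop^\Py_{\Py_1\oplus\Py_2,\Py_3}$ become $\Ind^{\lf'}_\lf$, $\Ind^\g_{\lf'}$, $\Ind^{\lf''}_\lf$, $\Ind^\g_{\lf''}$ for $\lf'=\gl_{N_1}\oplus\gl_{N_2+N_3}$ and $\lf''=\gl_{N_1+N_2}\oplus\gl_{N_3}$. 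Coassociativity of $\cop$ is then exactly the transitivity $\Ind^\g_\lf=\Ind^{\lf'}_\lf\circ\Ind^\g_{\lf'}=\Ind^{\lf''}_\lf\circ\Ind^\g_{\lf''}$ from Theorem~\ref{induced thm}(3).

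For (4), apply the Zhu functor: since $\Zhu(\W^\kappa(\g,f;\Gamma))=U(\g,f;\Gamma)$ and $\Zhu(V\otimes V')=\Zhu(V)\otimes\Zhu(V')$, the map $\Zhu(\cop)$ is an injective algebra homomorphism $U(\gl_N,\Py)\to U(\gl_{N_1},\Py_1)\otimes U(\gl_{N_2},\Py_2)$ (injectivity being part of Lemma~\ref{Zhu ind lem}), with the same source and target as $\bar{\cop}$. Applying $\Zhu$ to $\mu=(\mu_1\otimes\mu_2)\circ\cop$ and using that $\Zhu$ carries the affine Miura maps to the finite ones (Section~\ref{chiral sec}) yields $\bar\mu=(\bar\mu_1\otimes\bar\mu_2)\circ\Zhu(\cop)$; since $\bar\mu_1\otimes\bar\mu_2$ is injective, it suffices to verify that $\bar{\cop}$ also satisfies $\bar\mu=(\bar\mu_1\otimes\bar\mu_2)\circ\bar{\cop}$, whence $\Zhu(\cop)=\bar{\cop}$. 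This last compatibility — under $\Py=\Py_1\oplus\Py_2$ the columns, hence $\g_0$, split as a direct sum, and the Brundan--Kleshchev coproduct intertwines the finite Miura transforms accordingly — is the one point where one must work with the explicit Yangian presentation of $U(\gl_N,\Py)$ from \cite{BK2} rather than with the abstract $\W$-algebra formalism, and it is the step I expect to require the most care; parts (1)--(3) are essentially formal consequences of Theorem~\ref{induced thm} once the combinatorial choice of $\Gamma_\Py$ and the bookkeeping of the central Heisenberg factors have been carried out.
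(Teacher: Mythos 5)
Your proposal is correct and follows essentially the same route as the paper: parts (1)--(3) are obtained by specialising Theorem \ref{induced thm} and Proposition \ref{induced prop} to $\lf=\gl_{N_1}\oplus\gl_{N_2}$ after checking $\Pi\setminus\Pi_\lf=\{\alpha_{N_1}\}\subset\Pi_1$, and part (4) reduces to establishing $\bar{\mu}=(\bar{\mu}_1\otimes\bar{\mu}_2)\circ\bar{\cop}$ and invoking the injectivity and uniqueness statement of Lemma \ref{Zhu ind lem}. For the one step you flag as delicate, the paper closes it without working inside the Yangian presentation: by \cite{BK2} the iterated coproduct splitting a pyramid into its individual columns \emph{is} the finite Miura map, so applying coassociativity of $\bar{\cop}$ to the full column splittings of $\Py$, $\Py_1$, $\Py_2$ yields $\bar{\mu}=(\bar{\mu}_1\otimes\bar{\mu}_2)\circ\bar{\cop}$ at once.
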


See Section \ref{examples sec} for some examples of $\cop$. In the case that $f_\Py$ is a principal nilpotent element, the coproduct $\cop$ is an injective map
\begin{align}\label{intro prin eq}
\W_N^k\rightarrow\W_{N_1}^{k_1}\otimes\W_{N_2}^{k_2}
\end{align}
for $N=N_1+N_2$ and $k+N=k_1+N_1=k_2+N_2$, where $\W_N^k$ is the $\W$-algebra of $\gl_N$ with a principal nilpotent element and level $k$ \cite{Za,FL}. It seems  that the existence of the map \eqref{intro prin eq} has been suggested in \cite{FiSt}.

In the case of $\g_N=\so_N$ or $\spf_{N}$, any maximal Levi subalgebra of $\g_N$ takes the form $\lf=\gl_{N_1}\oplus\g_{N_2}$ for some $N_1,N_2\in\Z_{\geq1}$ such that $N=2N_1+N_2$. Applying Theorem A to this setting with rectangular nilpotent elements, we obtain some generalizations of the coproducts for $\W$-algebras of $\g_N$. See Theorem \ref{cop BCD thm} for precise statements. We note that our results suggest the existence of certain coproducts for truncated twisted Yangians, which is obtained as Corollary \ref{cop BCD cor} in some special cases. We refer to \cite{R, Br} for connections between twisted Yangians and finite $\W$-algebras of type $BCD$.

The basic tool for the proof of Theorem A is Wakimoto representations of $\W$-algebras, which we introduce in Section \ref{W-alg Wak sec}. For simplicity, we assume that $\g$ is a simple Lie algebra. Denote by $\W^k(\g,f;\Gamma)=\W^\kappa(\g,f;\Gamma)$ if $k=\kappa(\theta|\theta)/2$ for the highest root $\theta$ of $\g$. Wakimoto representations of the affine Lie algebra $\widehat{\g}$ are introduced by Wakimoto \cite{Wak} in the case of $\widehat{\slf_2}$ and Feigin-Frenkel \cite{FF1, FF2, FF3, FF5, F} in general case, see also Section \ref{aff Wak sec}. The actions of $\widehat{\g}$ on Wakimoto representations are induced from an embedding of the affine vertex algebra $V^k(\g)$ into the tensor product of the Heisenberg vertex algebra $\Hi$ associated with a Cartan subalgebra $\h$ in $\g$ and $\dim\nil_+$ copies of the $\beta\gamma$-system, where $\nil_+=\mathrm{Lie}(N_+)$ and $N_+$ is the big cell of the flag manifold $G/B_-$. The image of this embedding is the intersection of kernels of screening operators $S_\alpha$ for all $\alpha\in\Pi$ if $k$ is a formal parameter (\cite{F}). As explained in detail in Section \ref{W-alg Wak sec}, applying Drinfeld-Sokolov reductions to Wakimoto representations of $\widehat{\g}$, we obtain free fields realizations of $\W$-algebras $\W^k(\g,f;\Gamma)$, which we call Wakimoto free fields realizations of $\W$-algebras $\W^k(\g,f;\Gamma)$.

Let $U = \C[\para]$ be the polynomial ring with a formal parameter $\para$ and $F = \C(\para)$ be the quotient field. Set $T = U$ or $F$. When the base ring (or field) is $T$, we replace everywhere the complex number $k$ by a formal parameter $\para$, and denote the corresponding $\W$-algebra and Heisenberg vertex algebra by $\W^T(\g,f;\Gamma)$, $\Hi^T$ instead of $\W^k(\g,f;\Gamma)$, $\Hi$ respectively. Let $\mathcal{N}$ be the nilpotent cone of $\g$, $\mathcal{S}_f$ the Slodowy slice of $\g$ through $f$.

\begin{thmC}[Corollary \ref{main cor}]
The $\W$-algebras $\W^T(\g,f;\Gamma)$ over $T$ may be embedded into the tensor products of $\Hi^T$ and $\frac{1}{2}\dim(\mathcal{N}\cap\mathcal{S}_f)$ copies of the $\beta\gamma$-system. These image can be identified with the intersections of kernels of screening operators $Q_\alpha$ induced by $S_\alpha$.
\end{thmC}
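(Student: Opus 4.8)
The plan is to apply the Drinfeld-Sokolov reduction functor to the affine Wakimoto realization of $V^T(\g)$ from Section~\ref{aff Wak sec} and then to compute the resulting cohomology. Write $C(M)=M\otimes\Fch\otimes\Fne$, with differential $d=\dst+\dchi$, for the Drinfeld-Sokolov complex of a $\widehat{\g}$-module $M$, so that $\W^T(\g,f;\Gamma)=H^0(C(V^T(\g)),d)$ and, by \cite{KRW} (see also \cite{A1}), $H^i(C(V^T(\g)),d)=0$ for $i\neq0$; the vanishing argument is uniform in the base ring, hence holds over $T=\C[\para]$. On the other hand, over $T$ the affine Wakimoto realization (\cite{F}) provides an injective vertex algebra homomorphism $V^T(\g)\hookrightarrow\Hi^T\otimes\Anil$ with image $\bigcap_{\alpha\in\Pi}\Ker S_\alpha$, where $\Anil$ is the $\beta\gamma$-system attached to $\nil_+$ (hence has $\dim\nil_+=|\Delta_+|$ pairs) and the $S_\alpha$ are the screening operators. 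Since each $S_\alpha$ is $\widehat{\g}$-equivariant it commutes with $d$ once extended by the identity on $\Fch\otimes\Fne$; tensoring the Wakimoto inclusion by $\Fch\otimes\Fne$ therefore yields a morphism of complexes $C(V^T(\g))\hookrightarrow\widetilde{C}^T:=C(\Hi^T\otimes\Anil)$ on which every $S_\alpha$ acts as a chain map.

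The first main step is to compute $H^\bullet(\widetilde{C}^T,d)$. Because $\Gamma$ is compatible with $\Pi$, every $\g_j$ is spanned by root vectors (\cite{EK}), so $\Delta_+=\Deltazero\sqcup\Delta_{>0}$ and accordingly $\Anil\cong\Azero\otimes\Am$, with $\Am$ the $\beta\gamma$-system on $\g_{>0}$. I would filter $\widetilde{C}^T$ by $\g_{>0}$-charge so that the leading term $d_0$ of $d$, restricted to the $\g_{>0}$-sector $\Am\otimes\Fch$, becomes the Koszul-type differential carrying each generator $\varphi_\gamma$ of $\Fch$ to $\beta_\gamma+\chi(e_\gamma)$ ($\gamma\in\Delta_{>0}$), whose cohomology is $\C$ concentrated in degree $0$, exactly as in the proof of the vanishing of higher Drinfeld-Sokolov cohomology, while $\Hi^T$, $\Azero$, and $\Fne$ survive untouched; thus $H^\bullet(\widetilde{C}^T,d_0)=\Hi^T\otimes\Azero\otimes\Fne$, concentrated in cohomological degree $0$, and the spectral sequence of the filtration degenerates for degree reasons. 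Hence $H^\bullet(\widetilde{C}^T,d)\cong\Hi^T\otimes\Azero\otimes\Fne$, concentrated in degree $0$. Counting $\beta\gamma$-pairs, $\Azero$ contributes $|\Deltazero|=\tfrac12(\dim\g_0-\operatorname{rank}\g)$ and $\Fne$ contributes $\tfrac12\dim\g_{\frac12}$, so the total is $\tfrac12(\dim\g_0+\dim\g_{\frac12}-\operatorname{rank}\g)=\tfrac12(\dim\g^f-\operatorname{rank}\g)=\tfrac12\dim(\mathcal{N}\cap\mathcal{S}_f)$, using $\dim\g^f=\dim\g_0+\dim\g_{\frac12}$ for good gradings and $\dim(\mathcal{N}\cap\mathcal{S}_f)=\dim\mathcal{N}-\dim(G\cdot f)$.

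Passing to $H^0$ in the morphism of complexes gives a map $\W^T(\g,f;\Gamma)\to\Hi^T\otimes\Azero\otimes\Fne$; its injectivity I would obtain from a commuting triangle with the Miura map $\mu$ (injective, Section~\ref{Miura map sec}) and the affine Wakimoto realization of $\Vtau$ (injective, as $\g_0$ is a Levi subalgebra of $\g$, hence reductive). Each $S_\alpha$, being a chain map, descends to an operator $Q_\alpha:=H^0(S_\alpha)$ on $\Hi^T\otimes\Azero\otimes\Fne$, and the image of $\W^T(\g,f;\Gamma)$ lies in $\bigcap_{\alpha\in\Pi}\Ker Q_\alpha$ since it lifts into $C(V^T(\g))=C(\bigcap_\alpha\Ker S_\alpha)$. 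For the reverse inclusion I would use that, over $T$, the screening operators fit $V^T(\g)$ into an exact sequence of $\widehat{\g}$-modules $0\to V^T(\g)\to\Hi^T\otimes\Anil\to\bigoplus_{\alpha\in\Pi}\widetilde{M}_\alpha$ whose terms, together with the relevant kernels and cokernels, all lie in the subcategory of $\widehat{\g}$-modules on which $H^0(C(-))$ is exact; applying $H^0(C(-))$ along this sequence then carries $\bigcap_\alpha\Ker S_\alpha$ to $\bigcap_\alpha\Ker Q_\alpha$, yielding the identification. (Alternatively, one may combine the screening realization of $\W^k(\g,f;\Gamma)$ inside $\Vtau\otimes\Fne$ established in earlier work with the affine Wakimoto realization of $\Vtau$.)

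The main obstacle is twofold. First, in the cohomology computation the Wakimoto currents $J^{e_\gamma}$ for non-simple $\gamma\in\Delta_{>0}$ are not literally the $\beta$-fields $\beta_\gamma$, so the filtration and the coordinates on $N_+$ must be chosen with care for $d_0$ to acquire the stated Koszul form and for the spectral sequence to degenerate; this free-field analogue of the computation underlying the Miura map (de Boer--Tjin) is where most of the work lies. Second, one must check that the Wakimoto-type modules occurring in the screening resolution of $V^T(\g)$ belong to a category on which Drinfeld-Sokolov reduction has no higher cohomology, which is precisely where replacing a fixed level $k$ by the formal parameter $\para$ is essential. Granting these two points, the embedding, the dimension count, and the screening description all follow, and the compatibility with the Miura map asserted in the statement is built into the triangle used for injectivity.
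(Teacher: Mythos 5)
Your construction of the embedding itself tracks the paper closely: you apply the Drinfeld--Sokolov functor to the Wakimoto realization of $V^T(\g)$, compute the cohomology by a filtration whose associated graded differential is Koszul-type on the $\g_{>0}$-sector, and this is exactly Proposition \ref{V prop} and Lemma \ref{Wak vanish lem} (via the semi-infinite cohomology vanishing of \cite{FF3,V}); the caveat you flag about the Wakimoto currents for non-simple $\gamma\in\Delta_{>0}$ not being pure $a_\gamma$'s is resolved in the paper by choosing the coordinate $c(\nil_+)$ compatible with $N_+=G_{>0}\times G_0^+$ (Lemma \ref{pol2 lemma}), and your dimension count and the Miura-map triangle for injectivity match Corollary \ref{w-alg wak cor}. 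So the "embedding" half of Theorem C is in good shape.

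The genuine gap is in the identification of the image with $\bigcap_{\alpha\in\Pi}\Ker Q_\alpha$. Your primary route asks that $H^0_\chi(-)$ be exact on a subcategory containing not just the Wakimoto modules $\Wak^T_\g$ and $\Wak^\locT(\halpha)$ but also the image and cokernel of $\bigoplus S_\alpha$; those are merely sub- and quotient modules of Wakimoto modules, the vanishing theorem does not apply to them, and no such exact subcategory is produced --- this is precisely the step the paper does not attempt. Instead the paper routes through your parenthetical alternative, and that is where the real work sits: one needs the explicit formulae for $Q_\alpha$ (Theorem \ref{main thm}, proved in Appendix A), the identification $\bigcap_{\alpha\in\Pi_0}\Ker Q_\alpha\simeq\VtauT\otimes\FneT$ (Lemma \ref{sc0 lemma} and Lemma \ref{Miura const lem}, reducing the problem to the operators restricted to $\VtauT\otimes\FneT$), and above all Lemma \ref{mod prop}: the commutator computation showing that the fields $V^\beta(z)=\,{:}P_\alpha^{\beta,R}(a^*)(z)\e^{-\frac{1}{\para+h^\vee}\int b_\alpha(z)}{:}$ transform under $\g_0$ exactly like the intertwining operators $\widetilde{V}^\beta(z)$ of \cite{G}, so that irreducibility of $\hM_\alpha$ forces $Q_\alpha=\hQ_\alpha$ and the known screening description $\W^T(\g,f;\Gamma)\simeq\bigcap_{\alpha\in\Pi_{>0}}\Ker\hQ_\alpha$ can be imported. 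Without either establishing the exactness claim or carrying out this matching of screening operators, the reverse inclusion $\bigcap_\alpha\Ker Q_\alpha\subset\Img\omega$ is not proved.
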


See Theorem \ref{main thm} for the precise formulae of $Q_\alpha$. In the case that $f$ is a principal nilpotent element, screening operators $Q_\alpha$ coincide with the ones constructed in \cite{FL, FF3}. In the case that $\g_0$ is a Cartan subalgebra $\h$, screening operators $Q_\alpha$ coincide with the ones constructed in \cite{G}.

Our strategy to prove Theorem A is simple. Under the assumption in Theorem A, we consider the specialization of inclusion maps
\begin{align}\label{intro screening eq}
\bigcap_{\alpha\in\Pi}\Ker Q_\alpha\hookrightarrow\bigcap_{\alpha\in\Pi_\lf}\Ker Q_\alpha\hookrightarrow\bigcap_{\alpha\in\Pi_0}\Ker Q_\alpha.
\end{align}
We show by using Theorem C that the first map is nothing but $\Ind^\g_\lf$, i.e.
\begin{align*}
\W^{\kappa_\lf}(\lf,f_\lf;\Gamma_\lf)\simeq\bigcap_{\alpha\in\Pi_\lf}\Ker Q_\alpha.
\end{align*}
Our assumption ($\Pi\backslash\Pi_\lf\subset\Pi_1$) is used here. Since the Miura map is injective by \cite{F,A,G}, Theorem A therefore follows if we show that the map $\Ind^\g_\lf$ satisfies the formula $\mu_\kappa=\mu_{\lf, \kappa_\lf} \circ\Ind^\g_\lf$, which in fact follows from \eqref{intro screening eq} and Theorem D.

\begin{thmD}[Corollary \ref{Miura cor}]
The specialization
\begin{align*}
\mu_k\colon\W^k(\g,f;\Gamma)\rightarrow\Vtau\otimes\Fne
\end{align*}
of an inclusion map
\begin{align*}
\bigcap_{\alpha\in\Pi}\Ker Q_\alpha\hookrightarrow\bigcap_{\alpha\in\Pi_0}\Ker Q_\alpha.
\end{align*}
coincides with the Miura map.
\end{thmD}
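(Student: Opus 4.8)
The plan is to trace the Drinfeld--Sokolov reduction through the Wakimoto free field realization and check that the resulting map $\bigcap_{\alpha\in\Pi}\Ker Q_\alpha\hookrightarrow\bigcap_{\alpha\in\Pi_0}\Ker Q_\alpha$, after specializing $\para$ to $k$, agrees with the Miura map on generators. Recall (Section \ref{W-alg Wak sec}) that the Wakimoto realization of $V^k(\g)$ factors the affine vertex algebra into $\Hi^T\otimes\Fch$, where $\Fch$ is built from the $\beta\gamma$-systems attached to positive root vectors; the Drinfeld--Sokolov complex is then obtained by tensoring with the charged free fermions on $\g_{>0}$ and the neutral free fields $\Fne$ on $\g_{1/2}$, and taking the cohomology of the BRST differential $d_\chi$. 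The key structural observation is that the $\beta\gamma$-system factor decomposes compatibly with the grading $\Gamma$: the fields attached to $\Delta_0^+$ (positive roots in $\g_0$) survive the reduction essentially untouched, contributing $\Vtau=V^{\tau_k}(\g_0)$ together with their own screening operators $Q_\alpha$ for $\alpha\in\Pi_0$, while the fields attached to $\Delta_{>0}$ are precisely what is killed/absorbed by the BRST differential, leaving only the $\Fne$ factor. Thus the inclusion $\bigcap_{\alpha\in\Pi}\Ker Q_\alpha\hookrightarrow\bigcap_{\alpha\in\Pi_0}\Ker Q_\alpha$ is literally the restriction of the Wakimoto embedding to the $\g_0$-part of the Heisenberg and $\beta\gamma$ data.

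First I would set up the cohomological picture carefully: write the BRST complex computing $\W^k(\g,f;\Gamma)$ after passing to Wakimoto fields, i.e. on $\Hi^T\otimes\Fch\otimes F_{\mathrm{ch}}(\g_{>0})\otimes\Fne$, and identify the subcomplex/quotient structure induced by the $\Gamma$-grading (charge and conformal weight bookkeeping). The point is that $d_\chi$ restricted to the $\Delta_{>0}$ part is, up to the Wakimoto change of variables, the standard Drinfeld--Sokolov differential whose cohomology is concentrated in degree zero and equals $\Fne$ tensored with the image of the $\g_0$-data; this is the Feigin--Frenkel type vanishing theorem already invoked in the construction (Theorem C and its antecedents \cite{F, A, G}). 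Second, I would identify the residual $\g_0$-screening operators: the screening operators $Q_\alpha$ for $\alpha\in\Pi_0$ are induced from $S_\alpha$ on $V^k(\g)$, and one checks they commute with $d_\chi$ and descend to exactly the screening operators defining the Wakimoto realization of $\W^{\kappa_0}(\g_0,0;\Gamma_0)=V^{\tau_k}(\g_0)$ inside $\Hi^T\otimes F_{\mathrm{ch}}(\Delta_0^+)$. Third, I would compare the resulting composite $\W^k(\g,f;\Gamma)\hookrightarrow\Vtau\otimes\Fne$ with the Miura map: by definition the Miura map is the projection of the Drinfeld--Sokolov reduction onto the $\g_{\le 0}$-part, and since the Wakimoto embedding is by construction compatible with the triangular decomposition $\g=\nil_-\oplus\h\oplus\nil_+$, restricting to $\g_0\oplus\g_{1/2}$-fields reproduces exactly the Miura projection. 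The cleanest way to nail this down is to evaluate both maps on the standard generators of $\W^k(\g,f;\Gamma)$ (the fields $J^{\{a\}}$ indexed by a basis of $\g^f$, in Kac--Roan--Wakimoto normalization) and observe that the leading term of the generator, which is what the Miura map records, is unaffected by the Wakimoto substitution since the latter only adds terms involving $\Delta_{>0}$-fields that project to zero.

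The main obstacle I expect is the bookkeeping in the second step: showing that the $\Pi_0$-screening operators $Q_\alpha$, which a priori involve the full $\beta\gamma$-system on all of $\Delta_+$, actually descend to the naive screening operators on the $\Delta_0^+$-subsystem after taking $d_\chi$-cohomology — one must control the correction terms coming from the fields attached to roots $\alpha+\beta$ with $\beta\in\Delta_{>0}$, and verify these are $d_\chi$-exact or vanish for weight reasons. This is where the specific form of $Q_\alpha$ from Theorem \ref{main thm} and the good-grading hypothesis (ensuring $\langle\alpha,x_\Gamma\rangle=0$ for $\alpha\in\Pi_0$, so these screenings have conformal weight $1$ and preserve the grading) are essential. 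A secondary, more routine difficulty is keeping the normalizations of the bilinear forms consistent: the shift $\tau_k$ appearing in $V^{\tau_k}(\g_0)$ and the quadratic Casimir corrections in $d_\chi$ must be matched against the level shift in the definition of the Miura map, which is a finite computation with the structure constants but must be done with care. Once these are in place, uniqueness of the map with the stated intertwining property is immediate from injectivity of $\mu_\lf$ (equivalently, of the Wakimoto realization of $\Vtau\otimes\Fne$), so no additional argument is needed there.
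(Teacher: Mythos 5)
Your steps (1) and (2) — the vanishing theorem identifying the cohomology of the $\Delta_{>0}$-part with $\FneT$, and the identification of $\bigcap_{\alpha\in\Pi_0}\Ker Q_\alpha$ with $\VtauT\otimes\FneT$ via the Wakimoto realization of $\VtauT$ — match the paper's setup (Proposition \ref{V prop}, Lemma \ref{sc0 lemma}, Lemma \ref{Miura const lem}). The divergence, and the gap, is in your step (3). The paper does not compare the two maps on generators at all: it first proves (Theorem \ref{Miura thm}) that for $\alpha\in\Pi_{>0}$ the induced screening operators $Q_\alpha$, restricted to $\VtauT\otimes\FneT$, coincide with the intertwining-operator screenings $\hQ_\alpha$ of \cite{G}, and then invokes the result of \cite{G} that the specialization of $\bigcap_{\alpha\in\Pi_{>0}}\Ker\hQ_\alpha\hookrightarrow\VtauT\otimes\FneT$ \emph{is} the Miura map. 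That identification $Q_\alpha=\hQ_\alpha$ is the real content and rests on Proposition \ref{[a] prop} and the commutation relations of Lemma \ref{mod prop} (proved in Appendix \ref{mod prop sec} by an explicit computation with the left and right $N_+$-actions), none of which appears in your plan.

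The concrete gap in your alternative route is the assertion that the Wakimoto substitution ``only adds terms involving $\Delta_{>0}$-fields that project to zero,'' so that the leading ($\g_{\leq0}$-) term of each generator survives untouched. The comparison between a cocycle in $C_+^0=V^{\tau_k}(\g_{\leq0})\otimes\Fne$ and its image in $\Azero^T\otimes\FneT\otimes\Hi^T$ goes through the cohomology identification $\sigma$ of Corollary \ref{coh isom cor}, and $\sigma$ does \emph{not} annihilate the $\Delta_{>0}$-fields: one has $\sigma(a_\beta)=-\chi(e_\beta)\neq0$ for $\beta\in\Delta_1$ (Lemma \ref{a=1 lem}), and the $\Fne$-factor is only recovered through the corrected fields $\hat{\Phi}_\alpha=\Phi_\alpha+\sum_\beta\chi([e_\alpha,e_\beta])a^*_\beta$, so the $a^*_\beta$ with $\beta\in\Deltahalf$ contribute nontrivially as well. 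Since $\hat{\rho}(f_\alpha(z))$ and $\hat{\rho}(u(z))$ for $u\in\g_{<0}$ involve $:Q_\alpha^\beta(a^*)a_\beta:$ over all $\beta\in\Delta_+$, tracking what happens to a KRW generator under $\sigma$ is exactly the kind of computation your sketch waves away; carrying it out honestly would amount to reproving the screening-operator characterization of the Miura map that the paper imports from \cite{G}. (A minor separate point: the uniqueness statement you address at the end belongs to Theorem A, not Theorem D, and the factor you call $\Fch$ in your first paragraph is the $\beta\gamma$-system $\Anil$, not the charged fermions.)
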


We include an appendix due to Shigenori Nakatsuka concerning the relationship between $Q_\alpha$ and screening operators $\widetilde{Q}_\alpha$ given in \cite{G}. The main tool is a commutative diagram between the Wakimoto resolution and a parabolic Wakimoto resolution of $V^F(\g)$. Using Drinfeld-Sokolov functors, the diagram implies a commutative one between the Wakimoto resolutions of $\W^F(\g, f;\Gamma)$ containing $Q_\alpha$ and a parabolic Wakimoto resolution of $\W^F(\g,f;\Gamma)$ containing $\widetilde{Q}_\alpha$. Since $\widetilde{Q}_\alpha$ is compatible with the Miura map in the sense of Lemma \ref{lem:Miura F}, we obtain a lemma (Lemma \ref{Miura lemma}) on the compatibility of $Q_\alpha$ with the Miura map for $\W^F(\g, f;\Gamma)$, which derives Theorem D.

Let us make some comment on the relationship between $\W^k(\gl_N,\Py)$ and the affine Yangian $Y(\widehat{\gl}_n)$. In the case that $f$ is a principal nilpotent element, an action of $Y(\widehat{\gl}_1)$ on $\W^k_N$ was first suggested by Aldey-Gaiotto-Tachikawa \cite{AGT} and was studied by Maulik-Okounkov \cite{MO} and Schiffmann-Vasserot \cite{SV}, see also \cite{BFN} for the generalizations. The coproduct \eqref{intro prin eq} is expected to be induced by the coproduct of $Y(\widehat{\gl}_1)$ as an analogue of the finite cases. We hope to study the relationship between the coproduct $\cop$ in Theorem B and that of affine Yangian $Y(\widehat{\gl}_n)$ in our future works.

The paper is organized as follows. In Section \ref{sec:W-alg-def}, we review the definitions of $\W$-algebras. In Section \ref{W-alg T sec}, we introduce the $\W$-algebras over $T$. In Section \ref{sec:Miura maps}, we recall Miura maps. In Section \ref{parabolic screening operator}, we recall results in \cite{G}. In Section \ref{aff Wak sec}, we recall Wakimoto representations, Wakimoto resolutions and screening operators $S_\alpha$ of $V^T(\g)$. In Section \ref{local sec}, we define special coordinates on $N_+$ that give some restrictions on $S_\alpha$ and are always chosen in the rest of paper. In Section \ref{semi-inf sec}, we recall the semi-regular bimodule theory developed in \cite{ACL} and derive the vanishing of semi-infinite cohomologies of Wakimoto representations of $V^T(\g)$. In Section \ref{Wak sec}, we introduce Wakimoto representations, Wakimoto resolutions and screening operators $Q_\alpha$ of $\W$-algebras over $T$ by applying to those of $V^T(\g)$ the Drinfeld-Sokolov functors and results on vanishing of cohomologies. In Section \ref{Explicit forms of screenings}, we prove Theorem \ref{main thm}. In Section \ref{sec:compati Miura}, we state Lemma \ref{Miura lemma} and prove Theorem C and Theorem D by using Lemma \ref{Miura lemma} and Theorem \ref{main thm}. In Section \ref{reductive W-alg sec}, we define $\W$-algebras associated with reductive Lie algebras and conclude some results from Theorem C and Theorem D. In Section \ref{ind nil sec}, we recall the definitions and properties of induced nilpotent orbits. In Section \ref{pre sec}, we prepare some preliminary results in order to prove Theorem A. In Section \ref{induced sec}, we prove Theorem A. In Section \ref{chiral sec}, we derive some results for finite $\W$-algebras from Theorem A. In Section \ref{pyramid sec}, we recall the definitions of pyramids. In Section \ref{cop thm sec}, we prove Theorem B. In Section \ref{cop BCD sec}, we derive some generalizations of the coproducts for the $\W$-algebras of type $BCD$ from Theorem A. In Section \ref{examples sec}, we give examples of Theorem B in the case that $f$ is a principal, rectangular and subregular nilpotent element. In Appendix \ref{appendix}, S. Nakatsuka presents a relationship between the Wakimoto resolution and a parabolic Wakimoto resolution of $V^F(\g)$ by using Fiebig's equivalence and results in \cite{Ku}, and proves Proposition \ref{coincidence with Miura map}, which implies Lemma \ref{Miura lemma}.

\vspace{3mm}

{\it Acknowledgments}\quad N.G. is grateful to his supervisor Tomoyuki Arakawa for valuable discussions and lots of advice to improve this paper. He wishes to express his gratitude to Boris Feigin for sharing his ideas and useful discussions for Theorem B. He is grateful to Hiraku Nakajima for valuable comments, which triggered Theorem A. Some part of this work was done while N.G. was visiting M.I.T. and National Research University Higher School of Economics in 2016, and N.G. and S.N. were visiting National Cheng Kung University in 2019. They are grateful to those institutes for their hospitality.

\section{Affine $\W$-algebras}\label{W-alg sec}

\subsection{Definitions of $\W$-algebras}\label{sec:W-alg-def}
We recall the definitions of the (affine) $\W$-algebras, following \cite{KRW}. Let $\g$ be a finite-dimensional simple Lie algebra over $\C$, $f$ a nilpotent element of $\g$ and $\Gamma$ a good grading of $\g$ for $f$ denoted by
\begin{align*}
\Gamma\colon\g=\bigoplus_{j\in\frac{1}{2}\Z}\g_{j},
\end{align*}
where the $\frac{1}{2}\Z$-grading $\Gamma$ is called good for $f$ if $[\g_i, \g_j]\subset\g_{i+j}$ for all $i,j\in\frac{1}{2}\Z$, $f\in\g_{-1}$ and $\ad f\colon\g_{j}\rightarrow\g_{j-1}$ is injective for $j\geq\frac{1}{2}$, surjective for $j\leq\frac{1}{2}$. Then there exists a semisimple element $h\in\g$ such that the grading $\Gamma$ of $\g$ is the eigenspace decomposition of $\ad(\frac{1}{2}h)$. By Jacobson-Morozov Theorem, there exists an $\slf_2$-triple $(e,h,f)$ in $\g$, and $\ad(\frac{1}{2}h)$ defines a $\frac{1}{2}\Z$-grading on $\g$, which is good for $f$ called the {\em Dynkin grading}. Choose the Cartan subalgebra $\h$ containing $h$ so that $\h\subset\g_0$. Let $\Delta$ be the set of roots, $\Delta_{+}$ the set of positive roots such that $\bigoplus_{\alpha\in\Delta_{+}}\g_{\alpha}\subset\g_{\geq0}$, where $\g_{\alpha}$ is the root space of $\alpha\in\Delta$. Let $\Pi$ be the set of simple roots, $\Delta_{j}=\{\alpha\in\Delta\mid\g_{\alpha}\subset\g_{j}\}$ and $\Pi_{j}=\Pi\cap\Delta_{j}$ for all $j\in\frac{1}{2}\Z$. Set $\Deltazero=\Delta_0\cap\Delta_+$. Then
\begin{align*}
\Delta=\bigsqcup_{j\in\frac{1}{2}\Z}\Delta_{j},\quad
\Delta_{+}=\Deltazero\sqcup\bigsqcup_{j>0}\Delta_j,\quad
\Pi=\Pi_{0}\sqcup\Pi_{\frac{1}{2}}\sqcup\Pi_{1},
\end{align*}
see \cite{EK}. Denote by $\degG\alpha=j$ if $\alpha\in\Delta_j$. Fix a root vector $e_{\alpha}\in\g$ for each $\alpha\in\Delta$ and a non-degenerate symmetric invariant bilinear form $\inv$ on $\g$ such that $(\theta|\theta)=2$ for the highest root $\theta$ of $\g$. Then $\killing_\g(u|v)=2h^{\vee}(u|v)$ for all $u,v\in\g$, where $\killing_{\g}$ is the Killing form on $\g$ and $h^{\vee}$ is the dual Coxeter number of $\g$. Let $\chi\colon\g\rightarrow\C$ be a linear map defined by $\chi(u)=(f|u)$ for $u\in\g$. Denote by $\nil_{\pm}=\bigoplus_{\alpha\in\Delta_{\pm}}\g_{\alpha}$ and $\bo_{\pm}=\h\oplus\nil_{\pm}$.

We follow \cite{FBZ, Ka} for the definitions of vertex algebras. We use the following notations:
\begin{align*}
A(z)=\sum_{n\in\Z}A_{(n)}z^{-n-1},\quad\int A(z)\ dz=A_{(0)}
\end{align*}
for any field $A(z)$, and $\delta(z-w)=\sum_{n\in\Z}z^{-n-1}w^n$. Denote by $|0\rangle$ the vacuum vector, by $:A(z)B(z):$ the normally ordered products and by $A(z)B(w)\sim\sum_{n\geq0}\frac{C_n(w)}{(z-w)^{n+1}}$ the operator product expansion for local fields $A(z), B(z)$, where $[A(z),B(w)]=\sum_{n\geq0}\frac{1}{n!}C_n(w)\der_w^n\delta(z-w)$. If $A(z),B(z)$ are fields on a vertex (super)algebra, $C_n(z)=(A_{(n)}B)(z)$ for $n\geq0$. For $k\in\C$, let $V^k(\g)$ be the affine vertex algebra associated with $\g$ of level $k$, whose generating fields $u(z)$ for $u\in\g$ satisfy
\begin{align*}
u(z)v(w)\sim\frac{[u,v](w)}{z-w}+\frac{k(u|v)}{(z-w)^2}
\end{align*}
for all $u,v\in\g$. Let $\Fch$ be the charged fermion vertex superalgebra associated with $\g_{>0}$, whose generating odd fields $\varphi_{\alpha}(z),\varphi^{\alpha}(z)$ for $\alpha\in\Delta_{>0}$ satisfy
\begin{align*}
\varphi_{\alpha}(z)\varphi^{\beta}(w)\sim\frac{\delta_{\alpha,\beta}}{z-w},\quad
\varphi_{\alpha}(z)\varphi_{\beta}(w)\sim0\sim\varphi^{\alpha}(z)\varphi^{\beta}(w)
\end{align*}
for all $\alpha,\beta\in\Delta_{>0}$. The charged decomposition $\Fch=\bigoplus_{n\in\Z}\Fchn$ is defined by the {\em charged} degree $\charge(\varphi_{\alpha}(z))=-1$ and $\charge(\varphi^{\alpha}(z))=1$ for all $\alpha\in\Delta_{>0}$, where $\Fchn=\{A\in\Fch\mid\charge(A)=n\}$. Let $\Fne$ be the neutral vertex algebra associated with $\g_{\frac{1}{2}}$, whose generating (even) fields $\Phi_{\alpha}(z)$ for $\alpha\in\Deltahalf$ satisfy
\begin{align*}
\Phi_{\alpha}(z)\Phi_{\beta}(w)\sim\frac{\chi([e_{\alpha},e_{\beta}])}{z-w}
\end{align*}
for all $\alpha,\beta\in\Deltahalf$. Set
\begin{align*}
C_{k}=V^k(\g)\otimes\Fch\otimes\Fne
\end{align*}
and $d=\int d(z)\ dz$, where $d(z)=\dst(z)+\dne(z)+\dchi(z)$ is an odd field on $C_k$ defined by
\begin{align*}
\dst(z)&=\sum_{\alpha\in\Delta_{>0}}:e_{\alpha}(z)\varphi^{\alpha}(z):-\frac{1}{2}\sum_{\alpha,\beta,\gamma\in\Delta_{>0}}c_{\alpha,\beta}^{\gamma}:\varphi_{\gamma}(z)\varphi^{\alpha}(z)\varphi^{\beta}(z):,\\
\dne(z)&=\sum_{\alpha\in\Deltahalf}:\varphi^{\alpha}(z)\Phi_{\alpha}(z):,\quad
\dchi(z)=\sum_{\alpha\in\Deltaone}\chi(e_{\alpha})\varphi^{\alpha}(z),
\end{align*}
where $c_{\alpha,\beta}^{\gamma}\in\C$ is the structure constant for $\alpha,\beta,\gamma\in\Delta_{>0}$. The charged decomposition $C_k^\bullet=V^k(\g)\otimes\Fch^\bullet\otimes\Fne$ is induced from that of $\Fch$. Since $d^2=0$ and $d\cdot C_k^p\subset C_k^{p+1}$, an odd vertex operator $d$ defines a differential of a cochain complex on $C_k$. The $\W$-algebra $\W^k(\g,f;\Gamma)$ associated with $\g,f,k,\Gamma$ is defined as the BRST cohomology of the complex $(C_k,d)$:
\begin{align*}
\W^k(\g,f;\Gamma)=H(C_k,d),
\end{align*}
called the (generalized) Drinfeld-Sokolov reduction. There exists a decomposition of the complex $C_k=C_{-}\otimes C_+$ such that $H(C_{-},\C)=\C$ and $C_+$ has only non-negative charged degree. Moreover $\W^k(\g,f;\Gamma)=H^{0}(C_+,d)$ (\cite{KW1}). A vertex algebra structure on $\W^k(\g,f;\Gamma)$ is induced from that of $C_k$ and does not depend on the choice of $\Gamma$ \cite{BG, AKM}. A conformal $\frac{1}{2}\Z$-grading on $C_k$ is defined by $\conf(u)=1-j$ ($u\in\g_j$), $\conf(\varphi_\alpha)=1-\degG\alpha$, $\conf(\varphi^\alpha)=\degG\alpha$ and $\conf(\Phi_\alpha)=\frac{1}{2}$, where $\Delta(A)$ is the conformal weight of $A\in C_k$. This conformal grading is preserved by the differential $d$ and induces a $\frac{1}{2}\Z_{\geq0}$-grading on $\W^k(\g,f;\Gamma)$, which depends on the choice of $\Gamma$.

\subsection{$\W$-algebras over $T$}\label{W-alg T sec}

Set the polynomial ring $U$ with a formal parameter $\para$ and the quotient field $F$, that is,
\begin{align*}
U=\C[\para],\quad
F=\C(\para).
\end{align*}
We denote by $T = U$ or $F$. Let $V^T(\g)$ the affine vertex algebra over $T$, where we replace $k$ by a formal parameter $\para$. Set $\FchT=\Fch\otimes T$ and $\FneT=\Fne\otimes T$. Then $d$ defines a differential on
\begin{align*}
C_T=V^T(\g)\otimes\FchT\otimes\FneT,
\end{align*}
where $\otimes=\otimes_T$. Instead of $\otimes_T$, we use the notation $\otimes$ whenever the base ring (or field) is $T$. The $\W$-algebra $\W^T(\g,f;\Gamma)$ over $T$ is defined by the BRST cohomology of the complex $(C_T,d)$. See e.g. \cite{ACL}. Let $V^{\tau_k}(\g_{\leq0})$ be the affine vertex algebra associated with $\g_{\leq0}$ and its invariant bilinear form $\tau_k$, see \eqref{eq:tauk-def} for the definition of $\tau_k$. By replacing $k$ with the indeterminate $\para$, we have the vertex algebra $V^{\tau_\para}(\g_{\leq0})$ over $T$, which we simply denote by $V^T(\g_{\leq0})$. Since the statements in \cite{KW1, KW2} are proved for the $\W$-algebras over $\C$ but the same proof applies for the $\W$-algebras over $T$, we have an embedding
\begin{align}\label{eq:W-alg emb T}
\W^T(\g,f;\Gamma) \hookrightarrow V^T(\g_{\leq0}) \otimes \FneT
\end{align}
and filtrations on $\W^T(\g,f;\Gamma)$, $V^T(\g_{\leq0})$ and $\FneT$ such that the image of the induced map
\begin{align*}
\gr\W^T(\g,f;\Gamma) \hookrightarrow \gr V^T(\g_{\leq0}) \otimes \gr\FneT
\end{align*}
coincides with $\gr V^T(\g^f)$, where $\g^f \subset \g_{\leq0}$ is the centralizer of $f$ in $\g$. This implies that there exists a finite set $\{W^i(z)\}_{i=1}^{\dim\g^f}$ of fields on $V^T(\g_{\leq0}) \otimes \FneT$ such that $\{W^i(z)\}_{i=1}^{\dim\g^f}$ freely generates $\W^T(\g,f;\Gamma)$ over $T$ and each $W^i(z)$ has a leading term $u_i(z)$ on $V^T(\g_{\leq0})$ for some basis $\{u_i\}_{i=1}^{\dim\g^f}$ of $\g^f$. By using the PBW basis of $\W^T(\g,f;\Gamma)$ that forms $W^{i_1}_{(-n_1)} \cdots W^{i_s}_{(-n_s)}|0\rangle$ with some orders on $i_j$ and $n_j \in \Z_{\geq1}$, it follows that $\W^U(\g,f;\Gamma)$ is a subalgebra of $\W^F(\g,f;\Gamma)$ and
\begin{align*}
\W^U(\g,f;\Gamma) \otimes_U F=\W^F(\g,f;\Gamma),\quad
\W^U(\g,f;\Gamma)\otimes_U \C_{k}=\W^{k}(\g,f;\Gamma),
\end{align*}
where $\C_{k}$ is a $1$-dimensional $U$-module defined by $\para \mapsto k\in\C$. For $k\in\C$, we call the functor $?\otimes_U\C_{k}$ the {\em specialization}.

\subsection{Miura maps}\label{sec:Miura maps}
By \cite{KW1, KW2}, the $\W$-algebra $\W^k(\g,f;\Gamma)$ is embedded into a vertex algebra $V^{\tau_k}(\g_{\leq0}) \otimes \Fne$. Composing the embedding with a surjective homomorphism $V^{\tau_k}(\g_{\leq0}) \twoheadrightarrow \Vtau$ induced from the projection $\g_{\geq0} \twoheadrightarrow \g_0$, we have a vertex algebra homomorphism
\begin{align*}
\mu_k \colon \W^k(\g, f;\Gamma) \rightarrow \Vtau \otimes \Fne,
\end{align*}
which is called the Miura map of $\W^k(\g, f ;\Gamma)$ and injective for all $k\in\C$ by \cite{F, A}, see also \cite{G}. As in the same way, using the embedding \eqref{eq:W-alg emb T}, we also have a homomorphism
\begin{align*}
\mu_T \colon \W^T(\g,f;\Gamma) \hookrightarrow V^T(\g_0) \otimes \FneT
\end{align*}
of vertex algebras over $T$, which we call the Miura map of $\W^T(\g,f;\Gamma)$. Using the facts that $\W^U(\g,f;\Gamma)$ is a subalgebra of $\W^F(\g,f;\Gamma)$ and that $\W^U(\g,f;\Gamma)\otimes_U \C_{k}=\W^{k}(\g,f;\Gamma)$, it follows that
\begin{align}\label{eq:Miura U k property}
\mu_U = \mu_F|_{\W^U(\g,f;\Gamma)},\quad
\mu_k = \mu_U \otimes_U \C_k.
\end{align}
Since $\mu_F$ is injective by Lemma \ref{lem:Miura F}, $\mu_U$ is injective as well.

\subsection{Screening Operators $\hQ_\alpha$}\label{parabolic screening operator}
We recall the screening operators of $\W$-algebras introduced in \cite{G}. Let $\bQ_0=\bigoplus_{\gamma\in\Pi_0}\Z\gamma$ be the root lattice of $\rf$, $\Pi^\Gamma=\{\alpha\in\Delta_{>0}\mid{}^\nexists\beta,\gamma\in\Delta_{>0}\ \mathrm{s.t.}\ \alpha=\beta+\gamma\}$ a set of indecomposable roots in $\Delta_{>0}$. Define an equivalence relation on $\Delta_{>0}$ by $\alpha\sim\beta\iff\alpha-\beta\in\bQ_0$, which may restrict to $\Pi^\Gamma$. Let $[\Pi^\Gamma]=\Pi^\Gamma/\sim$ be the quotient set and
\begin{align}\label{[a] def eq}
[\alpha]=\{\beta\in\Delta_+\mid\beta-\alpha\in\bQ_0\}
\end{align}
the equivalence class of $\alpha\in\Pi^\Gamma$ in $[\Pi^\Gamma]$. Consider a map $\flat\colon\Pi_{>0}\ni\alpha\mapsto[\alpha]\in[\Pi^\Gamma]$.
\begin{lemma}\label{flat lemma}
The map $\flat$ is bijective.
\end{lemma}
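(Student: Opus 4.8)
The plan is to establish injectivity and surjectivity of $\flat\colon\Pi_{>0}\to[\Pi^\Gamma]$ separately, using the decomposition $\Pi_{>0}=\Pi_{\frac12}\sqcup\Pi_1$ together with the structure of $\bQ_0=\bigoplus_{\gamma\in\Pi_0}\Z\gamma$ inside the root lattice $\bQ$. The key elementary fact I would isolate first is that $\Pi$ is a basis of $\bQ$, so that for $\alpha,\beta\in\Pi$ one has $\alpha-\beta\in\bQ_0$ if and only if $\alpha=\beta$ (since $\bQ_0$ is spanned by the $\Pi_0$-part of the basis, and $\alpha,\beta\notin\Pi_0$ as $\alpha,\beta\in\Pi_{>0}$). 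In particular $\Pi_{>0}\subset\Pi^\Gamma$, because a simple root is indecomposable in $\Delta_{>0}$ as well: if $\alpha=\beta+\gamma$ with $\beta,\gamma\in\Delta_{>0}\subset\Delta_+$, then writing $\alpha$ in the basis $\Pi$ forces a contradiction with $\alpha$ being simple. This gives that $\flat$ is well-defined and, by the displayed fact, immediately \emph{injective}: if $[\alpha]=[\beta]$ for $\alpha,\beta\in\Pi_{>0}$, then $\alpha-\beta\in\bQ_0$, hence $\alpha=\beta$.

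For surjectivity I would take an arbitrary indecomposable root $\alpha\in\Pi^\Gamma$ and show its class $[\alpha]$ contains a simple root of positive $\degG$-degree. Write $\alpha=\sum_{\gamma\in\Pi}n_\gamma\gamma$ with $n_\gamma\in\Z_{\geq0}$. Since $\alpha\in\Delta_{>0}$, we have $\degG\alpha>0$, so $n_\gamma>0$ for at least one $\gamma\in\Pi_{>0}$. I would argue that in fact exactly one such $\gamma$ occurs and with coefficient $1$: consider the ``height in the $\Pi_{>0}$-directions'' $\rootht_{>0}(\alpha)=\sum_{\gamma\in\Pi_{>0}}n_\gamma$. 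If $\rootht_{>0}(\alpha)\geq2$, one can peel off simple roots of $\Delta_0^+=\Delta_0\cap\Delta_+$ using the good grading hypothesis: the injectivity/surjectivity of $\ad f\colon\g_j\to\g_{j-1}$, or more directly the standard fact that any positive root decomposes into simple roots so that every partial sum is again a root, lets one write $\alpha$ as a sum of two elements of $\Delta_{>0}$, contradicting indecomposability unless $\rootht_{>0}(\alpha)=1$. Once $\rootht_{>0}(\alpha)=1$, there is a unique $\gamma_0\in\Pi_{>0}$ with $n_{\gamma_0}=1$, and then $\alpha-\gamma_0=\sum_{\gamma\in\Pi_0}n_\gamma\gamma\in\bQ_0$, so $[\alpha]=[\gamma_0]=\flat(\gamma_0)$.

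The main obstacle I anticipate is the ``peeling'' step: showing that an indecomposable element of $\Delta_{>0}$ has $\Pi_{>0}$-height exactly one. The cleanest route is probably to use the good-grading structure rather than raw root-combinatorics — namely that $\degG$ restricted to $\Delta_{>0}$ takes values in $\{\tfrac12,1,\dots\}$ and that $\g_{>0}=\bigoplus_{j>0}\g_j$ with $\Pi_{\frac12}\subset\Delta_{\frac12}$, $\Pi_1\subset\Delta_1$. For $\alpha\in\Delta_{>0}$ with $\rootht_{>0}(\alpha)\geq2$, one finds a simple root $\gamma\in\Pi$ (necessarily in $\Pi_0$, using the compatibility $\Pi=\Pi_0\sqcup\Pi_{\frac12}\sqcup\Pi_1$ and $\degG$-counting) with $\alpha-\gamma\in\Delta_+$; iterating, one reaches a root $\alpha'\in\Delta_{>0}$ with $\alpha-\alpha'$ a nonzero sum of elements of $\Delta_0^+$, hence $\alpha=\alpha'+(\alpha-\alpha')$ with both summands in $\Delta_{>0}$ once $\rootht_{>0}(\alpha')\geq1$ and the complement lies in $\Delta_0^+\subset\Delta_{>0}$? — here one must be careful, since $\Delta_0^+\not\subset\Delta_{>0}$, so the decomposition witnessing decomposability must instead be arranged as $\alpha=\beta+\gamma$ with \emph{both} $\beta,\gamma\in\Delta_{>0}$, which is exactly where $\rootht_{>0}(\alpha)\geq2$ is used: split the $\Pi_{>0}$-support into two nonempty pieces and check each piece, together with an appropriate share of the $\Pi_0$-part, is a root. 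Making this splitting argument precise — invoking the convexity of $\Delta_+$ and the fact that $\g_{>0}$ is an ideal in $\nil_+$ (Lemma-type input already used in Section \ref{local sec}) — is the technical heart; everything else is bookkeeping with the basis $\Pi$ of $\bQ$.
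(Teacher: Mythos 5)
Your injectivity argument is correct and is the same as the paper's: simple roots are linearly independent, $\bQ_0$ is spanned by $\Pi_0$, and elements of $\Pi_{>0}$ have zero $\Pi_0$-component, so $\alpha-\beta\in\bQ_0$ forces $\alpha=\beta$. Your observation that $\Pi_{>0}\subset\Pi^\Gamma$ (so that $\flat$ is well defined) is also fine.

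The surjectivity half, however, has a genuine gap, and you essentially say so yourself. Everything rests on the claim that an indecomposable $\alpha\in\Pi^\Gamma$ has $\rootht_{>0}(\alpha)=\sum_{\gamma\in\Pi_{>0}}n_\gamma=1$, equivalently that $\rootht_{>0}(\alpha)\geq2$ forces a decomposition $\alpha=\beta+\gamma$ with \emph{both} $\beta,\gamma\in\Delta_{>0}$. Your sketch of this step does not go through as written: peeling off simple roots one at a time produces decompositions $\alpha=\alpha'+\delta$ in which $\delta$ is a sum of elements of $\Pi_0$, and since $\Delta_0^+\not\subset\Delta_{>0}$ such a decomposition does not witness decomposability in $\Delta_{>0}$; and if one instead peels off a simple root of $\Pi_{>0}$ at an intermediate stage, one only learns that some $\alpha_k$ in the chain is decomposable, not that $\alpha$ itself is. Turning "some descendant of $\alpha$ is decomposable" into "$\alpha$ is decomposable" requires an honest argument — this is exactly where the paper works. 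The paper's proof of surjectivity runs the descent in the opposite direction and never needs your claim in full: given $\beta\in\Pi^\Gamma$ of height $n>1$, it writes $\beta=\beta_1+\beta_2$ with $\beta_1,\beta_2\in\Delta_+$, observes that indecomposability and $\degG$-positivity force (after relabelling) $\beta_2\in\Delta_0^+$, so $[\beta]=[\beta_1]$ with $\rootht\beta_1<\rootht\beta$, and then proves $\beta_1\in\Pi^\Gamma$ by a Jacobi-identity computation with root spaces: if $\beta_1=\gamma_1+\gamma_2$ with $\gamma_i\in\Delta_{>0}$, then $\g_\beta=[[\g_{\gamma_1},\g_{\beta_2}],\g_{\gamma_2}]+[\g_{\gamma_1},[\g_{\gamma_2},\g_{\beta_2}]]$, so $\gamma_1+\beta_2$ or $\gamma_2+\beta_2$ is a root of $\Delta_{>0}$ and $\beta$ would be decomposable. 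Induction on the height then lands on a simple root in the class $[\beta]$. To repair your plan you would need to supply precisely this kind of bracket (or convexity) lemma; without it the "technical heart" you flag remains unproved.
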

\begin{proof}
Since it is clear that $\flat$ is injective, we will show that $\flat$ is surjective. Let $\beta\in\Pi^\Gamma$ and $n=\rootht\beta$ the height of $\beta$. In the case that $n=1$, we have $\beta\in\Pi_{>0}$ and $\flat(\beta)=[\beta]$. Next, we assume that $n>1$. Then there exist $\beta_1,\beta_2\in\Delta_+$ such that $\beta=\beta_1+\beta_2$. Since $\beta\in\Pi^\Gamma$, we may assume that $\beta_2\in\Deltazero$. Then $\beta-\beta_1=\beta_2\in\bQ_0$ and $[\beta]=[\beta_1]$. We claim that $\beta_1\in\Pi^\Gamma$. The reason is below. If there exist $\gamma_1,\gamma_2\in\Delta_{>0}$ such that $\beta_1=\gamma_1+\gamma_2$, we have
\begin{align*}
\g_\beta=[\g_{\beta_1},\g_{\beta_2}]=[[\g_{\gamma_1},\g_{\gamma_2}],\g_{\beta_2}]=[[\g_{\gamma_1},\g_{\beta_2}],\g_{\gamma_2}]+[\g_{\gamma_1},[\g_{\gamma_2},\g_{\beta_2}]].
\end{align*}
Then $\gamma_1+\beta_2\in\Delta_{>0}$ or $\gamma_2+\beta_2\in\Delta_{>0}$. Hence, it turns out that $\beta=\gamma_1+\gamma_2+\beta_2$ can be decomposed to the sum of two roots in $\Delta_{>0}$, which is contrary to our assumption that $\beta\in\Pi^\Gamma$. Therefore there exists $\beta_1\in\Pi^\Gamma$ such that $[\beta]=[\beta_1]$ and $\rootht(\beta_1)<\rootht(\beta)$. By induction on $n$, it follows that there exists $\alpha\in\Pi_{>0}$ such that $\flat(\alpha)=[\beta]$, that is, $\flat$ is surjective. The proof of the lemma is now complete.
\end{proof}
By Lemma \ref{flat lemma}, we may identify $[\Pi^\Gamma]$ with $\Pi_{>0}$ through $\flat$. Set a vector space $\C^{[\alpha]}=\bigoplus_{\beta\in[\alpha]}\C v_\beta$ for each $\alpha\in\Pi_{>0}$ and define a $\rf$-action on $\C^{[\alpha]}$ by
\begin{align}\label{eq:L0(-alpha)}
u\cdot v_\beta=\sum_{\gamma\in[\alpha]}c_{\gamma,u}^\beta v_\gamma,\quad
u \in \g_0,\ 
\beta \in [\alpha],
\end{align}
where $c_{\gamma,u}^\beta$ is a structure constant defined by the following formula: $[e_\gamma,u]=\sum_{\beta\in[\alpha]}c_{\gamma,u}^\beta e_\beta$. Then, by Remark 3.3 in \cite{G}, $\C^{[\alpha]}$ is the simple highest weight $\g_0$-module with a highest weight vector $v_\alpha$ of the highest weight $-\alpha$, which we denote by $L_{0}(-\alpha)$ (see Appendix \ref{appendix}). Let $\g_0^F = \g_0 \otimes F$ be a Lie algebra over $F$ and
\begin{align}\label{g0 affine Lie algebra}
\hat{\g}_0^F= \g_0^F \otimes F[t,t^{-1}]\oplus F K
\end{align}
be the affine Lie algebra of $\g_0^F$ over $F$ that is the central extension of $\g_0 ^F \otimes F[t,t^{-1}]$ by $\tau_\para$. Then $L_{0}(-\alpha)^F = L_{0}(-\alpha) \otimes F$ is a $\g_0^F \otimes F[t] \oplus F K$-module by $\g_0^F \otimes F[t]t=0$ and $K=1$. Let $\Weyl^F_0(-\alpha)$ be the Weyl module of $\hat{\g}_0^F$ with the highest weight $-\alpha$, which is the induced $V^F(\g_0)$-module from $L_{\Pi_0}(-\alpha)^F$ (see Appendix \ref{appendix}) defined by
\begin{align*}
\Weyl^F_0(-\alpha) = U(\hat{\g}^F_0)\underset{U(\g_0^F \otimes F[t]\oplus F K)}{\otimes} L_{0}(-\alpha)^F \simeq V^F(\g_0) \otimes \bigoplus_{\beta\in[\alpha]}F v_\beta.
\end{align*}
We will introduce the screening operators $\hQ_\alpha$ as intertwining operators. For a vertex algebra $V$ and $V$-modules $L,M$ and $N$, a linear map
\begin{align*}
Y_{L,M}^N(\cdot,z)\colon L\rightarrow\Hom(M,N)\{z\}=\sum_{n\in\mathbb{Q}}\Hom(M,N)\ z^n
\end{align*}
is called an intertwining operator of type $\binom{N}{L\ M}$ if it satisfies the Borcherds identity (see \cite{FHL} for the details). For a $V$-module $M$ and a vector $m\in M$,  we shall call $Y_{M,V}^M(m, z)$ an {\em intertwining operator corresponding to $m$}. In the present paper, we only consider intertwining operators of type $\binom{M}{M\ V}$. Let $\widetilde{V}_\beta(z)=\sum_{n\in\Z}\widetilde{V}_{\beta, n}z^{-n}$ be an intertwining operator corresponding to $v_\beta$ defined by $\widetilde{V}_{\beta,n}\cdot |0\rangle=\delta_{n,0}v_\beta$ $(n\geq0)$ and
\begin{align*}
[u(z),\widetilde{V}^\beta(w)]=\sum_{\gamma\in[\alpha]}c_{\gamma,u}^\beta \widetilde{V}^\gamma(w)\delta(z-w)
\end{align*}
for all $u\in\rf$. Then $\widetilde{V}^\beta(z)$ is well-defined, see Proposition 3.7 in \cite{G}. We define screening operators $\hQ_\alpha \colon V^F(\g_0) \otimes \FneF \rightarrow \Weyl^F_0(-\alpha) \otimes \FneF$ for $\alpha \in \Pi_{>0}$ by
\begin{align*}
\hQ_\alpha = 
\begin{cases}
\displaystyle \sum_{\beta\in[\alpha]}\int\widetilde{V}^\beta(z)\Phi_\beta(z)\ dz & (\alpha\in\Pi_{\frac{1}{2}}),\\
\displaystyle \sum_{\beta\in[\alpha]}\chi(e_\beta)\int \widetilde{V}^\beta(z)\ dz & (\alpha\in\Pi_1).
\end{cases}
\end{align*}
By the proof of Lemma 5.4 in \cite{G} (where the statement was proved for $\W$-algebras over $\C[(\para+h^\vee)^{-1}]$, but the same proof applies for $\W^F(\g,f;\Gamma)$), we have a vertex algebra isomorphism
\begin{align}\label{old main eq}
\W^F(\g,f;\Gamma) \simeq \bigcap_{\alpha\in\Pi_{>0}}\Ker\left( \hQ_\alpha \colon V^F(\g_0) \otimes \FneF \rightarrow \Weyl^F_0(-\alpha) \otimes \FneF \right).
\end{align}
\begin{lemma}[{\cite[Lemma 5.1]{G}}]\label{lem:Miura F}
The embedding $\W^F(\g,f;\Gamma) \hookrightarrow V^F(\g_0) \otimes \FneF$ induced from \eqref{old main eq} coincides with the Miura map $\mu_F$. In particular, $\mu_F$ is injective.
\begin{proof}
Though the assertion was proved in \cite{G} for $\W^k(\g,f;\Gamma)$ with generic $k\in\C$, the same proof applies.
\end{proof}
\end{lemma}

\section{Wakimoto representations for Affine Vertex Algebras}\label{aff Wak sec}

We introduce Wakimoto representations of $V^T(\g)$ and the screening operators $S_\alpha$ of $V^T(\g)$. We follow the construction given in \cite{F}.

\subsection{Differential representations of $\g$}
Let $G$ be a connected simply-connected Lie group corresponding to $\g$, $B_{+}$ the Borel subgroup corresponding to $\bo_{+}$, $B_{-}$ the opposite Borel subgroup and $N_{+}$ the unipotent subgroup corresponding to $\nil_{+}$. The left $G$-action on a flag variety $G/B_{-}$ induces a Lie algebra homomorphism $\rho_{G/B_{-}}\colon\g\rightarrow\D_{G/B_{-}}$, where $\D_{G/B_{-}}$ is the ring of differential operators of regular functions on $G/B_{-}$. Let $p\colon G\rightarrow G/B_{-}$ be the canonical projection and $U_0=N_{+}\cdot p(1)$ an $N_+$-orbit in $G/B_{-}$, where $1$ denotes the unit in $G$. An orbit $U_0$ is a unique open dense orbit in $G/B_{-}$ called the big cell. Since $N_{+}$ is unipotent, the exponential map $c(\nil_+)\colon\nil_{+}\rightarrow N_{+}$ is an isomorphism. The big cell $U_0 \simeq N_{+}$ is then the affine space of the dimension $|\Delta_{+}|$ and the ring $\C[N_{+}]$ of regular functions on $N_+$ is a polynomial ring. A Lie algebra homomorphism $\rho\colon\g\rightarrow\D_{N_{+}}$ is defined by the restriction of $\rho_{G/B_{-}}$ on $U$. Fix a coordinate system $\{x_{\alpha}\}_{\alpha\in\Delta_{+}}$ on $N_{+}$ by using $c(\nil_+)$ such that $h\cdot x_{\alpha}=-\alpha(h)x_{\alpha}$ for all $h\in\h$ and $\alpha\in\Delta_{+}$. This coordinate is called homogeneous.

To describe the image of $\rho$, we introduce the frameworks in \cite{F}. Fix a root vector $e_\alpha\in\g_\alpha$ for $\alpha\in\Delta$. Denote by $f_\alpha=e_{-\alpha}$ and $h_\alpha=[e_\alpha,f_\alpha]$ for $\alpha\in\Delta_+$. Let $G^{\circ}=p^{-1}(U_0)=N_{+}\cdot B_{-}$ be a dense open submanifold in $G$. For $a\in\g$, set a smooth curve $\gamma(t)=\exp(-t a)$ on $G$. Given $X\in G^{\circ}$,
\begin{align*}
\gamma(t)X=Z_{+}(t)Z_{-}(t)
\end{align*}
for $|t|\ll1$, where $Z_{+}(t)\in N_{+}$ and $Z_{-}(t)\in B_{-}$. A vector field $\zeta_a$ is then given by the following formula :
\begin{align*}
(\zeta_a f)(p(X))=\frac{d}{d t}f(Z_{+}(t))|_{t=0}
\end{align*}
for any smooth function $f$ defined in an open subset in $U_0$ around $p(X)$. Choose a faithful representation $V_0$ of $\g$ and consider $X\in N_{+}$ as a matrix in $\GL(V_0)$ whose entries are polynomials in $\C[N_{+}]=\C[x_{\alpha}]_{\alpha\in\Delta_{+}}$. We have
\begin{align*}
(1-t a)X=Z_{+}(t) Z_{-}(t)\quad\mod.\ (t^2).
\end{align*}
Hence $Z_{+}(t)=X+t Z$, $Z_{-}=1+t Z'$ $\mod.(t^2)$, where $Z\in\nil_{+}$ and $Z'\in\bo_{-}$. We have
\begin{align*}
\zeta_a\cdot X=-X(X^{-1} a X)_{+},
\end{align*}
where $(\cdot)_{+}:\g=\nil_{+}\oplus\bo_{-}\rightarrow\nil_{+}$ is the first projection. For $a\in\g$, $\rho(a)$ is a derivation in $\C[N_{+}]$ such that
\begin{align*}
\rho(e_{\alpha})&=\sum_{\beta\in\Delta_{+}}P_{\alpha}^{\beta}(x)\der_\beta=\der_\alpha+\sum_{\beta\in\Delta_{+}\backslash\{\alpha\}}P_{\alpha}^{\beta}(x)\der_\beta,\\
\rho(h_{\alpha})&=-\sum_{\beta\in\Delta_{+}}\beta(h_{\alpha})x_{\beta}\der_\beta,\\
\rho(f_{\alpha})&=\sum_{\beta\in\Delta_{+}}Q_{\alpha}^{\beta}(x)\der_\beta
\end{align*}
for all $\alpha\in\Delta_+$, where $\der_\alpha=\der/\der x_\alpha$, $x=(x_{\alpha})_{\alpha\in\Delta_{+}}$ and $P_{\alpha}^{\beta}(x), Q_{\alpha}^{\beta}(x)\in\C[N_{+}]$. For $\lambda\in\h^*$, we have a twisted Lie algebra homomorphism $\rho_{\lambda}\colon\g\rightarrow\D_{N_{+}}$ by
\begin{align*}
\rho_{\lambda}(e_{\alpha})&=\sum_{\beta\in\Delta_{+}}P_{\alpha}^{\beta}(x)\der_\beta,\\
\rho_{\lambda}(h_{\alpha})&=-\sum_{\beta\in\Delta_{+}}\beta(h_{\alpha})x_{\beta}\der_\beta+\lambda(h_{\alpha}),\\
\rho_{\lambda}(f_{\alpha})&=\sum_{\beta\in\Delta_{+}}Q_{\alpha}^{\beta}(x)\der_\beta+\lambda(h_{\alpha})x_{\alpha}
\end{align*}
for all $\alpha\in\Pi$.

\subsection{Wakimoto representations of $V^T(\g)$}\label{sec:Wakimoto subsec}
For any finite set $S$, let $\A_S$ be the (infinite-dimensional) Weyl vertex algebra associated with $S$, whose generating fields $a_{\alpha}(z),a^*_{\alpha}(z)$ for $\alpha\in S$ satisfy
\begin{align*}
a_{\alpha}(z)a^{*}_{\beta}(w)\sim\frac{\delta_{\alpha,\beta}}{z-w},\quad
a_{\alpha}(z)a_{\beta}(w)\sim0\sim a^*_{\alpha}(z)a^{*}_{\beta}(w)
\end{align*}
for all $\alpha,\beta\in S$. For a polynomial $P(x)\in\C[N_+]$, we define a field $P(a^*)(z)$ on $\Anil$ by
\begin{align}\label{P(a) pol eq}
P(a^*)(z):=P(x)|_{x_\alpha=a^*_\alpha(z)\ (\alpha\in\Delta_+)}.
\end{align}
Since $a^*_\alpha(z)$ and $a^*_\beta(z)$ commute for all $\alpha,\beta\in\Delta_+$, $P(a^*)(z)$ is well-defined. Denote by $P(a^*)$ the vector in $\Anil$ corresponding to a field $P(a^*)(z)$. We have
\begin{align}\label{a der eq}
a_\alpha(z)P(a^*)(w)\sim\frac{\der_\alpha P(a^*)(w)}{z-w}.
\end{align}
Let $\Hi^k = \Hi^k(\mathcal{\h}) =V^{k+h^{\vee}}(\h)$ be the Heisenberg vertex algebra associated with the Cartan subalgebra $\h$ of $\g$, whose generating fields $b_{\alpha}(z)$ for $\alpha\in\Pi$ satisfy
\begin{align*}
b_{\alpha}(z)b_{\beta}(w)\sim\frac{(k+h^{\vee})(\alpha|\beta)}{(z-w)^2}
\end{align*}
for all $\alpha,\beta\in\Pi$. For $\lambda\in\h^*$, denote by $\Hi_\lambda^k = \Hi_\lambda^k(\h)$ the highest weight $\Hi^k$-module with highest weight $\lambda$ whose highest weight vector $|\lambda\rangle$ satisfies that $b_{\alpha (n)}|\lambda\rangle = \delta_{n, 0}(\alpha|\lambda)$ for all $n \in \Z_{\geq0}$. Let $\Hi^T$ be the Heisenberg vertex algebra over $T$ and $\Hi_\lambda^T$ the highest weight $\Hi^T$-module with highest weight $\lambda\in\h^*$, where we replace $k$ by a formal parameter $\para$ in $T$. Set $\A_S^T=\A_S\otimes_\C T$.

\begin{lemma}[\cite{Wak, FF1, F}]\label{aff def lemma} There exists an injective vertex algebra homomorphism $\hat{\rho}_T\colon V^T(\g)\rightarrow\Anil^T\otimes\Hi^T$ over $T$ with some $c_\alpha\in\C$ for each $\alpha\in\Pi$ such that
\begin{align*}
\hat{\rho}_T(e_{\alpha}(z))&=\sum_{\beta\in\Delta_{+}}:P_{\alpha}^{\beta}(a^*)(z)a_\beta(z):\ =a_\alpha(z)+\sum_{\beta\in\Delta_{+}\backslash\{\alpha\}}:P_{\alpha}^{\beta}(a^*)(z)a_\beta(z):,\\
\hat{\rho}_T(h_{\alpha}(z))&=-\sum_{\beta\in\Delta_{+}}\beta(h_{\alpha}):a^*_{\beta}(z)a_\beta(z):+b_\alpha(z),\\
\hat{\rho}_T(f_{\alpha}(z))&=\sum_{\beta\in\Delta_{+}}:Q_{\alpha}^{\beta}(a^*)(z)a_\beta(z):+:b_\alpha(z)a^*_\alpha(z):+((e_\alpha|f_\alpha)\para+c_\alpha)\der a^*_\alpha(z)
\end{align*}
for all $\alpha\in\Pi$. For any $\alpha\in\Delta_+$, $\hat{\rho}_T(e_{\alpha}(z))$, $\hat{\rho}_T(h_{\alpha}(z))$ also take the same forms.
\end{lemma}

The injective vertex algebra homomorphism $\hat{\rho}_T$ provides a $V^T(\g)$-module structure on any $\Anil^T\otimes\Hi^T$-module, called a Wakimoto representation of $V^T(\g)$. By definition, it is clear that the restriction of $\hat{\rho}_F$ on $V^U(\g)$ coincides with $\hat{\rho}_U$:
\begin{align*}
\hat{\rho}_U = \hat{\rho}_F|_{V^U(\g)}.
\end{align*}
The specialization of $\hat{\rho}_U$ induces a vertex algebra homomorphism
\begin{align*}
\hat{\rho}_k=\hat{\rho}_U\otimes_U\C_k\colon V^k(\g)\rightarrow\Anil\otimes\Hi,
\end{align*}
which is also injective by \cite{F}.

\subsection{Screening Operators for $V^T(\g)$}\label{sec:affine Wak sc}
Let $\rho^R\colon\nil_{+}\rightarrow\D_{N_{+}}$ be the Lie algebra anti-homomorphism induced by the right action of $N_{+}$ on itself. Denote by
\begin{align}\label{right rep eq}
\rho^R(e_{\alpha})=\sum_{\beta\in\Delta_{+}}P_{\alpha}^{\beta,R}(x)\der_\beta
\end{align}
for $\alpha\in\Delta_{+}$, where $P_{\alpha}^{\beta,R}(x)$ is a polynomial in $\C[N_{+}]$. Since the left and right actions of $N_+$ on itself commute, we have
\begin{align*}
[\rho(e_\alpha),\rho^R(e_{\beta})]=0
\end{align*}
for all $\alpha,\beta\in\Delta_+$. Let
\begin{align*}
\Wak^T(\lambda)=\Anil^T\otimes\Hi^T_\lambda
\end{align*}
be a Wakimoto representation of $V^T(\g)$ for $\lambda\in\h^*$. Denote by
\begin{align*}
\Wak(\lambda)=\Wak^U(\lambda)\otimes\C_k,\quad
\Wak^T_\g=\Wak^T(0),\quad
\Wak_\g=\Wak(0).
\end{align*}
Let $W$ be the Weyl group of $\g$, $\ell(w)$ the length of $w \in W$ and $w_\circ$ the longest element in $W$. For $w \in W$ and $\lambda \in \h^*$, $w \circ \lambda = w(\lambda+\rho_\circ)-\rho_\circ$ is called the dot action of $w$, where $\rho_\circ$ is the Weyl vector of $\g$, that is, the half sum of positive roots of $\g$. Let
\begin{align*}
D_i^T = \bigoplus_{\begin{subarray}{c} w \in W \\ \ell(w) = i \end{subarray}}\Wak^T(w^{-1} \circ 0).
\end{align*}
Then $D_0^T = \Wak^T_\g$ and $D_1^T = \bigoplus_{\alpha\in\Pi}\Wak^T(-\alpha)$. By \cite{FF5}, there exists an exact sequence
\begin{align}\label{affine Wakimoto resolution}
0\rightarrow V^F(\g)
\xrightarrow{\hat{\rho}_F} D_0^F
\xrightarrow{\bigoplus S_\alpha} D_1^F
\rightarrow \cdots
\rightarrow D_{\ell(w_\circ)}^F
\rightarrow 0,
\end{align}
where and $S_\alpha \colon \Wak^F_\g \rightarrow \Wak^F(-\alpha)$ is an intertwining operator defined by
\begin{align}\label{scaffine eq}
S_\alpha = \int S_\alpha(z)\ dz = \int:\hat{\rho}^{R}(e_\alpha(z))\ \e^{-\frac{1}{\para+h^{\vee}}\int b_\alpha(z)}:dz
\end{align}
for $\alpha\in\Pi$, where
\begin{align}\label{right affine eq}
\hat{\rho}^{R}(e_\alpha(z))=\sum_{\beta\in\Delta_{+}}:P_{\alpha}^{\beta,R}(a^*)(z)a_\beta(z):.
\end{align}
In particular, we have
\begin{align}\label{affine Wakimoto}
V^F(\g) \simeq \Img\hat{\rho}_F = \bigcap_{\alpha\in\Pi}\Ker\left(S_\alpha \colon \Wak^F_\g \rightarrow \Wak^F(-\alpha) \right).
\end{align}
The long exact sequence \eqref{affine Wakimoto resolution} is called the Wakimoto resolution of $V^F(\g)$ and the intertwining operators $S_\alpha$ are called the screening operators for $V^T(\g)$. We note that these screening operators are only considered for generic $\para=k$ in \cite{FF5} but the same proof also holds when the base field is $F$. See also \cite{ACL}. Since $\hat{\rho}_U = \hat{\rho}_F|_{V^U(\g)}$, we have
\begin{align*}
V^U(\g) \simeq \Img\hat{\rho}_U \subset \bigcap_{\alpha\in\Pi}\Ker\left(S_\alpha \colon \Wak^U_\g \rightarrow \Wak^F(-\alpha) \right).
\end{align*}
\begin{prop}\label{affine Wak U}
\begin{align*}
\Img\hat{\rho}_U = \bigcap_{\alpha\in\Pi}\Ker\left(S_\alpha \colon \Wak^U_\g \rightarrow \Wak^F(-\alpha) \right).
\end{align*}
\end{prop}
\begin{proof}
Let $\{u^i\}_{i=1}^{\dim\g}$ be a basis of $\g$ and $\{v^i\}_{i\in\Lambda}$ be a PBW basis of $V^T(\g)$ over $T$  that forms $u^{i_1}_{(-n_1)}\cdots u^{i_s}_{(-n_s)}|0\rangle$ with some order on $i_j$ and $n_j \in \Z_{\geq1}$. Then we can take the index set $\Lambda$ independent of the choice of $T$. Using the PBW basis of $V^U(\g)$, we have the specialization map
\begin{align*}
V^U(\g) \ni A \mapsto A|_{\para=k} \in V^k(\g),\quad
k \in \C
\end{align*}
that maps the PBW basis of $V^U(\g)$ to that of $V^k(\g)$ and each coefficients are specialized by $\para \mapsto k$. Let $w \in \bigcap_{\alpha\in\Pi}\Ker\left(S_\alpha \colon \Wak^U_\g \rightarrow \Wak^F(-\alpha) \right)$ with $w\neq0$. By \eqref{affine Wakimoto}, there exists $v \in V^F(\g)$ such that $\hat{\rho}_F(v) = w$. Then
\begin{align*}
v = \sum_{i \in \Lambda_v} \left( \frac{f_i}{g_i} \right) v^i,\quad
f_i, g_i \in U\backslash\{0\},\ 
(f_i, g_i)=1,
\end{align*}
where $\Lambda_v$ is a finite subset of $\Lambda$. Let $g$ be the least common multiple of $\{g_i\}_{i \in \Lambda_v}$ in $U$ (unique up to $\C\backslash\{0\}$). Then $g \cdot v \in V^U(\g)$. Suppose that $g \notin \C\backslash\{0\}$. Let $k_0$ be any root of $g$. The specialization $\para \mapsto k_0$ yields the equation
\begin{align*}
\hat{\rho}_{k_0}\left((g \cdot v)|_{\para = k_0}\right) = g|_{\para = k_0}  \cdot w|_{\para = k_0} =0.
\end{align*}
Since $g$ is the least common multiple of $\{g_i\}_{i \in \Lambda_v}$, we have $(g \cdot v)|_{\para = k_0} \neq 0$, contrary to the injectivity of $\hat{\rho}_{k_0}$. Therefore $g \in \C\backslash\{0\}$ and $v \in V^U(\g)$. Hence $\hat{\rho}_U(v) = w$.
\end{proof}

\section{Wakimoto representations for Affine $\W$-algebras}\label{W-alg Wak sec}

\subsection{Coordinates on $N_{+}$}\label{local sec}
Let $G_{>0}$, $G_0^+$ be the unipotent Lie subgroups in $N_{+}$ corresponding to $\g_{>0}$, $\g_0^+=\g_0\cap\nil_+$ respectively. Since $\g_{>0}$ is an ideal in $\nil_{+}$, a subgroup $G_{>0}$ is normal in $N_{+}$. Hence a set $G_{>0}\times G_0^+$ has a group structure and is isomorphic to $N_{+}$ by $G_{>0}\times G_0^+\ni(a,b)\mapsto a\cdot b\in G_{>0}\cdot G_0^+=N_{+}$. Let $c(\g_{>0})$, $c(\g_0^+)$ be homogeneous coordinates on $G_{>0}$, $G_0^+$ respectively. A coordinate $c(\nil_+)$ on $N_+$ is then defined by $c(\nil_+)=c(\g_{>0})\cdot c(\g_0^+)$, which induces a ring isomorphism $\C[N_{+}]\simeq\C[G_{>0}]\otimes\C[G_0^+]$. We call $c(\nil_+)=c(\g_{>0})\cdot c(\g_0^+)$ a coordinate on $N_+$ {\em compatible with the decomposition $N_{+}=G_{>0}\times G_0^+$}. By construction, we have
\begin{align*}
\rho|_{\g_{>0}}=\rho_{\g_{>0}},\quad
\rho^R|_{\g_0^{+}}=\rho^R_{\g_0^{+}},
\end{align*}
where $\rho_{\g_{>0}}$ is the Lie algebra homomorphism derived from the left action of $G_{>0}$ on $G_{>0}$ and $\rho^R_{\g_0^{+}}$ is the Lie algebra anti-homomorphism derived from the right action of $G_0^{+}$ on $G_0^{+}$. Thus, we obtain:

\begin{lemma}\label{pol0 lemma} Suppose that $c(\nil_+)$ is compatible with the decomposition $N_{+}=G_{>0}\times G_0^+$. Then
\begin{enumerate}
\item $\rho(u)$ belongs to $\D_{G_{>0}}$ for all $u\in\g_{>0}$.
\item $\rho^R(u)$ belongs to $\D_{G_0^{+}}$ for all $u\in\g_0^+$. In particular,
\begin{align*}
\rho^R(e_\alpha)=\sum_{\beta\in\Deltazero}P_\alpha^{\beta,R}(x)\der_\beta
\end{align*}
for all $\alpha\in\Deltazero$.
\end{enumerate}
\end{lemma}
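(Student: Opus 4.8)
The plan is to reduce everything to the two identities displayed just before the statement, namely $\rho|_{\g_{>0}}=\rho_{\g_{>0}}$ and $\rho^R|_{\g_0^+}=\rho^R_{\g_0^+}$, which hold because the coordinate $c(\nil_+)=c(\g_{>0})\cdot c(\g_0^+)$ is built from the product decomposition $N_+=G_{>0}\times G_0^+$. For part (1), recall that $\rho$ was defined as the restriction to the big cell of the Lie algebra homomorphism $\rho_{G/B_-}$ coming from the left $G$-action; concretely, by the formula $\zeta_a\cdot X=-X(X^{-1}aX)_+$. First I would observe that for $u\in\g_{>0}$ the flow $\exp(-tu)$, acting on the left of $G_{>0}\subset N_+$, stays inside $G_{>0}$ up to first order because $\g_{>0}$ is an ideal in $\nil_+$ — so its generator only involves the coordinates $x_\beta$ with $\beta\in\Delta_{>0}$ and the derivations $\der_\beta$ with $\beta\in\Delta_{>0}$. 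More precisely, since $\rho|_{\g_{>0}}=\rho_{\g_{>0}}$ and $\rho_{\g_{>0}}$ is by construction the homomorphism derived from the left $G_{>0}$-action on $G_{>0}$, its image automatically lies in $\D_{G_{>0}}$; nothing further is needed once the displayed compatibility is granted.

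For part (2) the argument is symmetric but uses the \emph{right} action instead. Here $G_0^+$ is the subgroup corresponding to $\g_0^+=\g_0\cap\nil_+$, and since $\rho^R|_{\g_0^+}=\rho^R_{\g_0^+}$ with $\rho^R_{\g_0^+}$ the anti-homomorphism derived from the right $G_0^+$-action on $G_0^+$, its image lies in $\D_{G_0^+}=\C[G_0^+]\otimes\operatorname{span}\{\der_\beta:\beta\in\Deltazero\}$, since $c(\g_0^+)$ is a homogeneous coordinate on $G_0^+$ indexed exactly by $\Deltazero=\Delta_0\cap\Delta_+$. Then I would read off the explicit formula for $\rho^R(e_\alpha)$ with $\alpha\in\Deltazero$: writing $\rho^R(e_\alpha)=\sum_{\beta\in\Delta_+}P_\alpha^{\beta,R}(x)\der_\beta$ as in \eqref{right rep eq}, the conclusion $\rho^R(e_\alpha)\in\D_{G_0^+}$ forces $P_\alpha^{\beta,R}(x)=0$ whenever $\beta\notin\Deltazero$ and forces the remaining $P_\alpha^{\beta,R}(x)$ to be polynomials in the variables $\{x_\gamma\}_{\gamma\in\Deltazero}$ only, giving $\rho^R(e_\alpha)=\sum_{\beta\in\Deltazero}P_\alpha^{\beta,R}(x)\der_\beta$ as asserted.

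The only genuinely substantive point — which I expect to be the main obstacle — is verifying the two displayed compatibility identities $\rho|_{\g_{>0}}=\rho_{\g_{>0}}$ and $\rho^R|_{\g_0^+}=\rho^R_{\g_0^+}$ rather than taking them as given. This amounts to checking that, under the identification $N_+\simeq G_{>0}\times G_0^+$ via $c(\nil_+)=c(\g_{>0})\cdot c(\g_0^+)$, left translation by $G_{>0}$ and right translation by $G_0^+$ are compatible with the respective projections; the left-translation statement uses that $G_{>0}$ is \emph{normal} in $N_+$ (so left multiplication by $G_{>0}$ preserves each coset $G_{>0}\cdot b$ and acts on it in the standard way), while the right-translation statement uses that right multiplication by $G_0^+$ clearly preserves $G_{>0}\cdot b$ as a left coset representative set modulo $G_{>0}$, hence descends to $G_0^+$. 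Both checks are short manipulations with the formula $\zeta_a\cdot X=-X(X^{-1}aX)_+$ and its right-handed analogue, using that the relevant projections onto $\nil_+$ respect the decomposition $\nil_+=\g_{>0}\oplus\g_0^+$. Once this is in place, parts (1) and (2) are immediate, and the explicit formula in (2) follows by simply expanding $\rho^R_{\g_0^+}(e_\alpha)$ in the coordinate $c(\g_0^+)$.
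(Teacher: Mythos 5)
Your proposal is correct and takes essentially the same route as the paper, which treats the lemma as an immediate consequence of the compatibility identities $\rho|_{\g_{>0}}=\rho_{\g_{>0}}$ and $\rho^R|_{\g_0^{+}}=\rho^R_{\g_0^{+}}$ stated just before it and gives no further argument. Your verification that left translation by $G_{>0}$ and right translation by $G_0^+$ respect the factorization $N_+=G_{>0}\times G_0^+$ (using normality of $G_{>0}$, which is what makes the product decomposition work in the first place) supplies exactly the detail the paper leaves implicit, and the final expansion of $\rho^R(e_\alpha)$ in the coordinates on $G_0^+$ is as you describe.
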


Let $\bQ$ be the root lattice of $\g$ and $\bQ_+\subset\bQ$ the set of linear combination with coefficients in $\Z_{\geq0}$ of elements of $\Pi$. Define a $\bQ$-valued grading on $\D_{N_+}$ by
\begin{align*}
\degQ(\der_\alpha)=-\alpha,\quad
\degQ x_\alpha=\alpha
\end{align*}
for $\alpha\in\Delta_+$, which induces a $\bQ_+$-grading on $\C[N_+]$. We define a $\bQ$-valued grading on $\g$ by $\degQ(\g_\alpha)=\alpha$ and $\degQ(\h)=0$. Then $\rho$ and $\rho^R$ reverse the $\bQ$-gradings, i.e. $\degQ\rho(u)=-\degQ(u)$ for $u\in\g$, and $\degQ\rho^R(u)=-\degQ(u)$ for $u\in\nil_+$. Therefore
\begin{align}
\label{polQ eq}\degQ P_\alpha^\beta(x)&=\degQ P_\alpha^{\beta,R}(x)=\beta-\alpha\in\bQ_+,\\
\degQ Q_\alpha^\beta(x)&=\beta+\alpha\in\bQ_+
\end{align}
unless $P_\alpha^\beta(x)=P_\alpha^{\beta,R}(x)=Q_\alpha^\beta(x)=0$. A $\frac{1}{2}\Z$-grading $\degG$ on $\Delta$ may be extended to $\bQ$ linearly. Then the composition map $\degG\circ\degQ$ defines a $\frac{1}{2}\Z$-grading on $\D_{N_{+}}$, which we denote by $\degG$ by abuse of notations. We have
\begin{align}\label{grD eq}
\degG(\der_\alpha)=-\degG\alpha,\quad
\degG x_\alpha=\degG\alpha
\end{align}
for $\alpha\in\Delta_{+}$, which induces a $\frac{1}{2}\Z_{\geq0}$-grading on $\C[N_+]$. Then $\rho$ and $\rho^R$ reverse the gradings, i.e. $\degG\rho(u)=-\degG(u)$ for $u\in\g$, and $\degG\rho^R(u)=-\degG(u)$ for $u\in\nil_+$. We have
\begin{align}
\label{polG eq}\degG P^\beta_\alpha(x)&=\degG P^{\beta,R}_\alpha(x)=\degG\beta-\degG\alpha\geq0,\\
\degG Q_\alpha^\beta(x)&=\degG\beta+\degG\alpha\geq0
\end{align}
unless $P_\alpha^\beta(x)=P_\alpha^{\beta,R}(x)=Q_\alpha^\beta(x)=0$.

\begin{lemma}\label{pol1 lemma}
Suppose that $c(\nil_+)$ is compatible with the decomposition $N_{+}=G_{>0}\times G_0^+$. If $\degG\alpha=\degG\beta$, polynomials $P^\beta_\alpha(x)$ and $P^{\beta,R}_\alpha(x)$ belong to $\C[G_0^+]$.
\end{lemma}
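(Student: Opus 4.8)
The plan is to exploit the $\bQ$-grading refinement established just before the statement. Recall that the coordinate $c(\nil_+) = c(\g_{>0}) \cdot c(\g_0^+)$ is compatible with the decomposition $N_+ = G_{>0} \times G_0^+$, so $\C[N_+] \simeq \C[G_{>0}] \otimes \C[G_0^+]$, and under this isomorphism the variables $x_\beta$ with $\degG\beta > 0$ generate (the image of) $\C[G_{>0}]$ while the variables $x_\beta$ with $\degG\beta = 0$ (i.e.\ $\beta \in \Deltazero$) generate $\C[G_0^+]$. By \eqref{polG eq} we already know $\degG P_\alpha^\beta(x) = \degG P_\alpha^{\beta,R}(x) = \degG\beta - \degG\alpha$; so when $\degG\alpha = \degG\beta$ the polynomial $P_\alpha^\beta(x)$ (resp.\ $P_\alpha^{\beta,R}(x)$) has $\degG = 0$. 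Hence every monomial occurring in it is a product of variables $x_\gamma$ with $\degG\gamma = 0$, which is exactly the assertion that it lies in $\C[G_0^+]$.

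So the first step is to spell out the identification between the $\frac12\Z_{\geq 0}$-grading $\degG$ on $\C[N_+]$ and the tensor decomposition: a monomial $\prod_\gamma x_\gamma^{n_\gamma}$ has $\degG = \sum_\gamma n_\gamma \degG\gamma$, and since all $\degG\gamma \geq 0$ with $\degG\gamma = 0$ iff $\gamma \in \Deltazero$, a monomial has $\degG = 0$ iff $n_\gamma = 0$ for all $\gamma \notin \Deltazero$, i.e.\ iff it lies in $\C[G_0^+] = \C[x_\gamma : \gamma \in \Deltazero]$. The second step is to apply this to $P_\alpha^\beta(x)$: since this polynomial is $\bQ_+$-homogeneous (by the $\bQ$-grading considerations preceding, the action of $\rho$ reversing the $\bQ$-grading forces $P_\alpha^\beta$ to have $\degQ = \beta - \alpha$, hence it is homogeneous, hence so is its image under $\degG$), its vanishing or its being concentrated in $\degG$-degree $\degG\beta - \degG\alpha = 0$ means every monomial has $\degG = 0$, so $P_\alpha^\beta(x) \in \C[G_0^+]$. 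The argument for $P_\alpha^{\beta,R}(x)$ is identical, using that $\rho^R$ also reverses the $\bQ$-grading.

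There is essentially no obstacle here; this lemma is a bookkeeping consequence of the graded structures set up in Lemmas \ref{pol0 lemma} and in \eqref{polG eq}. The only point requiring a word of care is the homogeneity of $P_\alpha^\beta(x)$ with respect to $\degG$ — one should note that $P_\alpha^\beta(x)$ need not be $\degG$-homogeneous a priori from an arbitrary coordinate, but the compatibility of $c(\nil_+)$ with the root-space grading (encoded in $h \cdot x_\alpha = -\alpha(h) x_\alpha$, which makes each $x_\alpha$ $\bQ$-homogeneous of degree $\alpha$) guarantees that $\rho(e_\alpha)$, $\rho^R(e_\alpha)$ are $\bQ$-homogeneous operators, and the coefficient $P_\alpha^\beta(x)$ of the $\bQ$-homogeneous operator $\der_\beta$ must itself be $\bQ$-homogeneous of degree $\beta - \alpha$, hence $\degG$-homogeneous of degree $\degG\beta - \degG\alpha$. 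Once this is observed, the claim follows immediately, and I would present it in two or three lines: invoke homogeneity, invoke \eqref{polG eq} with $\degG\alpha = \degG\beta$ to get $\degG$-degree zero, and conclude membership in $\C[G_0^+]$ from the monomial description above.
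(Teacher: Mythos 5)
Your proposal is correct and follows the same route as the paper: the paper's proof likewise invokes \eqref{polG eq} to conclude that $P_\alpha^\beta(x)$ and $P_\alpha^{\beta,R}(x)$ are concentrated in the $\degG$-homogeneous component of $\C[N_+]$ of degree $0$, which is $\C[G_0^+]$. The extra care you take in justifying the $\bQ$-homogeneity of the coefficients (via $\rho$ and $\rho^R$ reversing the $\bQ$-grading) is exactly the bookkeeping the paper sets up in the paragraph preceding the lemma.
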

\begin{proof}
Since $\degG P^\beta_\alpha(x)=\degG P^{\beta,R}_\alpha(x)=0$, they are concentrated in the homogeneous component of $\C[N_+]$ with degree $0$, which coincides with $\C[G_0^+]$. This completes the proof.
\end{proof}

\begin{lemma}\label{pol2 lemma}
Suppose that $c(\nil_+)$ is compatible with the decomposition $N_{+} =G_{>0}\times G_0^+$. For $\alpha\in\Delta_{>0}$,
\begin{align*}
\rho(e_\alpha)=\der_\alpha+\sum_{\begin{subarray}{c} \beta\in\Delta_{>0}\\ \degG\beta>\degG\alpha \end{subarray}}P_\alpha^\beta(x)\der_\beta.
\end{align*}
\end{lemma}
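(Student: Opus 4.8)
The plan is to combine the two formulae for $\rho(e_\alpha)$ already on the table: the general formula
\[
\rho(e_\alpha)=\der_\alpha+\sum_{\beta\in\Delta_+\backslash\{\alpha\}}P_\alpha^\beta(x)\der_\beta
\]
together with the two gradings $\degQ$ and $\degG$ that $\rho$ reverses. First I would fix $\alpha\in\Delta_{>0}$ and recall from Lemma~\ref{pol0 lemma}(1) that, since $c(\nil_+)$ is compatible with $N_+=G_{>0}\times G_0^+$, we have $\rho(e_\alpha)\in\D_{G_{>0}}$; hence every term $P_\alpha^\beta(x)\der_\beta$ that survives must involve only the variables $x_\gamma$ with $\gamma\in\Delta_{>0}$ and only the derivations $\der_\beta$ with $\beta\in\Delta_{>0}$. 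In particular the sum may be restricted to $\beta\in\Delta_{>0}$ from the outset, and $P_\alpha^\beta(x)\in\C[G_{>0}]$ whenever it is nonzero.

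Next I would pin down the degree of $\beta$. By the $\bQ_+$-grading statement \eqref{polQ eq}, a nonzero $P_\alpha^\beta(x)$ has $\degQ P_\alpha^\beta(x)=\beta-\alpha\in\bQ_+$, and applying the linear extension of $\degG$ gives $\degG\beta-\degG\alpha=\degG P_\alpha^\beta(x)\geq0$, i.e. $\degG\beta\geq\degG\alpha$; this is exactly \eqref{polG eq}. It remains only to exclude the boundary case $\degG\beta=\degG\alpha$ (for $\beta\neq\alpha$). In that case Lemma~\ref{pol1 lemma} tells us $P_\alpha^\beta(x)\in\C[G_0^+]$, while from the previous paragraph $P_\alpha^\beta(x)\in\C[G_{>0}]$; since $\C[N_+]\simeq\C[G_{>0}]\otimes\C[G_0^+]$ with the two factors meeting only in the constants, $P_\alpha^\beta(x)$ must be a scalar $c\in\C$. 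But a nonzero constant has $\degQ=0$, forcing $\beta-\alpha=0$, i.e. $\beta=\alpha$, contrary to assumption. Hence $P_\alpha^\beta(x)=0$ whenever $\beta\in\Delta_{>0}\backslash\{\alpha\}$ with $\degG\beta=\degG\alpha$, and the surviving terms have $\degG\beta>\degG\alpha$, which is the asserted formula.

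I do not anticipate a serious obstacle here; the proof is a bookkeeping argument assembling Lemmas~\ref{pol0 lemma}--\ref{pol1 lemma} and the grading identities \eqref{polQ eq}--\eqref{polG eq}. The only point requiring a little care is the elimination of the $\degG\beta=\degG\alpha$ terms: one must use \emph{both} that such a coefficient lies in $\C[G_0^+]$ (degree $0$ for $\degG$) and that it lies in $\C[G_{>0}]$ (the image of $\rho$ on $\g_{>0}$), so that it is forced to be a constant and then killed by the $\bQ$-grading. An alternative, slightly slicker route is to argue directly from $X^{-1}e_\alpha X$ with $X\in G_{>0}$: conjugation by $G_{>0}$ preserves the ideal $\g_{>0}$ and can only raise the $\g$-grading degree, so the $\der_\beta$-component of $\rho(e_\alpha)=-X(X^{-1}e_\alpha X)_+$ is supported on $\degG\beta\geq\degG\alpha$, with equality only for the leading term $\der_\alpha$; but the grading argument above is cleaner to write and I would present that one.
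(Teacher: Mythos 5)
Your proposal is correct and follows essentially the same route as the paper: restrict to $\D_{G_{>0}}$ via Lemma \ref{pol0 lemma}, use the $\degG$-grading \eqref{polG eq} to exclude $\degG\beta<\degG\alpha$, and kill the $\degG\beta=\degG\alpha$ terms by combining Lemma \ref{pol1 lemma} with $\C[G_{>0}]\cap\C[G_0^+]=\C$ and the $\bQ$-grading. No gaps.
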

\begin{proof}
Let $\alpha\in\Delta_{>0}$. Then $\rho(e_\alpha)$ belongs to $\D_{G_{>0}}$ by Lemma \ref{pol0 lemma}, so polynomials $P_\alpha^\beta(x)$ are in $\C[G_{>0}]$ for all $\beta\in\Delta_{>0}$. First, we assume that $\degG\beta<\degG\alpha$. If $P_\alpha^\beta(x)\neq0$, we have $\degG P^\beta_\alpha(x)=\degG\beta-\degG\alpha\geq0$, which is contrary to our assumption. Therefore $P_\alpha^\beta(x)=0$.

Next, we assume that $\degG\beta=\degG\alpha$. Then $P_\alpha^\beta(x)\in\C[G_{>0}]\cap\C[G_0^+]=\C$ by Lemma \ref{pol1 lemma}. Therefore, $P_\alpha^\beta(x)$ is a scalar. Hence, $\degQ P_\alpha^\beta(x)=\beta-\alpha$ should be zero unless $P_\alpha^\beta(x)=0$. Now, we have $P_\alpha^\beta(x)=0$ if $\degG\beta\leq\degG\alpha$ except for $\beta=\alpha$. Since $P_\alpha^\alpha(x)=1$ by construction, the lemma follows.
\end{proof}

\subsection{Semi-infinite cohomology}\label{semi-inf sec}

Let $V^T(\g_{>0})$ be the vertex subalgebra of $V^T(\g)$ generated by $u(z)$ for all $u \in \g_{>0}$. Notice that $\Am^T$ is a $V^T(\g_{>0})$-module through the Wakimoto representation of $V^T(\g)$ with the coordinate $c(\nil_+)$ on $N_+$ compatible with $N_+ = G_{>0} \times G_0^+$. It follows from \cite{A2} that $\Am^T$ is isomorphic to the \textit{semi-regular bimodule} \cite{V93, V} of the loop algebra $\g_{>0}((t))\otimes T$ over $T$.
\begin{lemma}[{\cite[Proposition 3.4]{ACL}}]\label{ACL lemma} For any left $V^T(\g_{>0})$-module $M$ that is an integrable $\nilcurrent\otimes T$-module and free over $T$, there exists a $V^T(\g_{>0})$-bimodule isomorphism
\begin{align*}
\Psi \colon \Am^T\otimes M\xrightarrow{\sim}\Am^T\otimes M
\end{align*}
such that
\begin{align*}
\Psi \circ (x(z)\otimes 1+1\otimes x(z)) &= (x(z)\otimes 1) \circ \Psi,\\
\Psi \circ (x^R(z)\otimes 1) &= (x^R(z)\otimes 1-1\otimes x(z)) \circ \Psi.
\end{align*}
for $x\in\g_{>0}$, where $x^R(z)$ denotes the right action on $\Am^T$. Moreover, if $M$ is a vertex algebra equipped with a vertex algebra homomorphism $V^T(\g_{>0}) \rightarrow M$ that induces the $V^T(\g_{>0})$-module structure on $M$, $\Psi$ is a vertex algebra isomorphism.
\end{lemma}
The construction of $\Psi$ is as follows: Let $\Njet$ be the arc space of $G_{>0}$, which coincides with a pro-unipotent group $\Ncurrent$ whose Lie algebra is $\nilcurrent$. Then the ring $T[\Njet]$ of regular functions on $\Njet$ over $T$ may be identified with a vertex subalgebra of $\Am^T$ generated by $a^*_\alpha(z)$ for all $\alpha\in\Delta_{>0}$, where $a^*_{\alpha (-n-1)}$ corresponds to a coordinate for $e_\alpha t^n\in\nilcurrent$ for $n\geq0$. We have an isomorphism
\begin{align*}
V^T(\g_{>0})\otimes T[\Njet]\xrightarrow{\sim}\Am^T,\quad
u\otimes v\mapsto\hat{\rho}(u)_{(-1)}v
\end{align*}
of left $\g_{>0}[t^{-1}]t^{-1}$-right $\nilcurrent$-modules. By our assumptions, $M$ is an $\nilcurrent$-integrable module, and so is an $\Ncurrent$-module. This implies that the associated comodule map
\begin{align*}
\psi\colon M\rightarrow T[\Njet]\otimes M
\end{align*}
is defined. Given a basis $\{m_i\}_{i\in I}$ over $T$ of $M$, set regular functions $v_{i,j}\in T[\Njet]$ for $i,j\in I$ by
\begin{align*}
g\cdot m_i=\sum_{j\in I}v_{i,j}(g)m_j
\end{align*}
for all $g\in\Ncurrent=\Njet$. Then we have
\begin{align*}
\psi(m_i)=\sum_{j\in I}v_{i,j}\otimes m_j.
\end{align*}
We extend $\psi$ to the isomorphism $\widetilde{\psi}$ by
\begin{align*}
\widetilde{\psi}\colon T[\Njet]\otimes M\xrightarrow{\sim} T[\Njet]\otimes M,\quad
v\otimes m\mapsto (v\otimes1)\psi(m).
\end{align*}
It follows that
\begin{align*}
\widetilde{\psi}\circ(g^L\otimes g^L)=(g^L\otimes1)\circ\widetilde{\psi},\quad
\widetilde{\psi}\circ(g^R\otimes1)=(g^R\otimes g^{-1})\circ\widetilde{\psi}
\end{align*}
for $g\in\Ncurrent$, where $g^R\cdot v(g')=v(g g')$ for all $v\in T[\Njet]$ and $g'\in\Ncurrent$. Then the isomorphism
\begin{align*}
\Psi\colon\Am^T\otimes M=V^T(\g_{>0})\otimes(T[\Njet]\otimes M)\xrightarrow{\sim}\Am^T\otimes M
\end{align*}
is defined by
\begin{align*}
u \otimes w \mapsto \cop(u)(\widetilde{\psi}^{-1}(w))
\end{align*}
for $u\in V^T(\g_{>0})$, $w\in T[\Njet]\otimes M$, where $\cop\colon V^T(\g_{>0})\rightarrow \Am^T\otimes M$ is the coproduct action of $V^T(\g_{>0})\simeq U(\g_{>0}[t^{-1}]t^{-1})$ on $\Am^T\otimes M$, and we embed $T[\Njet]\otimes M$ into $\Am^T\otimes M$.

The \textit{semi-infinite cohomology} \cite{Fei} of a $V^T(\g_{>0})$-module $M$ is the cohomology
\begin{align*}
H^{\frac{\infty}{2}+\bullet}(V^T(\g_{>0}),M):=H^\bullet(M\otimes\FchT,\dst)
\end{align*}
of the cochain complex $M\otimes\FchT$ with the differential $\dst=\int \dst(z)dz$ whose degree inherits from the charged degree on $\FchT$. Define a $V^T(\g_{>0})$-module structure on $\FneT$ by
\begin{align*}
e_\alpha(z) \mapsto \Phi_\alpha(z)+\chi(e_\alpha),\quad
\alpha\in\Delta_{>0}.
\end{align*}
If $\g_{\frac{1}{2}}=0$, $\FneT$ is just a $V^T(\g_{>0})$-module $T_\chi$ free over $T$ of rank $1$ defined by $e_\alpha(z) \mapsto \chi(e_\alpha)$. By definition,
\begin{align*}
\W^T(\g,f;\Gamma)=H^{\frac{\infty}{2}+0}(V^T(\g_{>0}),V^T(\g)\otimes\FneT)
\end{align*}
with the diagonal $V^T(\g_{>0})$-action on $V^T(\g)\otimes\FneT$. Now, we consider the cohomology
\begin{align*}
H^{\frac{\infty}{2}+0}(V^T(\g_{>0}),\Wak^T(\lambda)\otimes\FneT)
\end{align*}
with the diagonal $V^T(\g_{>0})$-action on $\Wak^T(\lambda)\otimes\FneT$. If $c(\nil_+)$ is compatible with $N_+ = G_{>0} \times G_0^+$, we have an isomorphism
\begin{align*}
\Wak^T(\lambda) \simeq \Am^T \otimes \Azero^T\otimes\Hi^T_\lambda \otimes \FneT
\end{align*}
of $V^T(\g_{>0})$-modules, where the $V^T(\g_{>0})$-action on the right hand side is defined by the diagonal action with a trivial $V^T(\g_{>0})$-module $\Azero^T\otimes\Hi^T_\lambda$. By using Lemma \ref{ACL lemma} for $M = \Azero^T\otimes\Hi^T_\lambda \otimes \FneT$, we have a $V^T(\g_{>0})$-bimodule isomorphism
\begin{align*}
\Psi_\lambda\colon\Am^T\otimes(\Azero^T\otimes\Hi^T_\lambda\otimes\FneT)\xrightarrow{\sim}\Am^T\otimes(\Azero^T\otimes\Hi^T_\lambda\otimes\FneT).
\end{align*}
\begin{prop}\label{semi-inf Wak prop}
Suppose that $c(\nil_+)$ is compatible with the decomposition $N_+ = G_{>0} \times G_0^+$. The map $\Psi_\lambda$ induces a $V^T(\g_{>0})$-bimodule isomorphism
\begin{align*}
[\Psi_\lambda]\colon H^{\frac{\infty}{2}+0}(V^T(\g_{>0}),\Wak^T(\lambda)\otimes\FneT) \xrightarrow{\sim} \Azero^T\otimes\Hi^T_\lambda\otimes\FneT
\end{align*}
that is defined by the identity on $\Azero^T\otimes\Hi^T_\lambda$ and
\begin{align*}
\hat{\Phi}_\alpha:=\Phi_\alpha - \sum_{\beta \in \Deltahalf} \chi([e_\beta, e_\alpha])a^*_{\beta} \mapsto \Phi_\alpha,\quad
\alpha \in \Deltahalf.
\end{align*}
\end{prop}
\begin{proof}
First of all, by using the fact that $V^T(\g_{>0})$ acts trivially on $\Azero^T\otimes\Hi^T_\lambda$, it follows that
\begin{align*}
H^{\frac{\infty}{2}+0}(V^T(\g_{>0}),\Wak^T(\lambda)\otimes\FneT) = \Azero^T\otimes\Hi^T_\lambda \otimes H^{\frac{\infty}{2}+0}(V^T(\g_{>0}),\Am^T\otimes\FneT).
\end{align*}
On the other hand, since
\begin{align}\label{cohomology vanishing for semi-regular bimodule} 
H^{\frac{\infty}{2}+0}(V^T(\g_{>0}),\Am^T)=T
\end{align}
by \cite[Theorem 2.1]{V93}, $\Psi_\lambda$ induces a $V^T(\g_{>0})$-bimodule isomorphism
\begin{multline}
H^{\frac{\infty}{2}+0}(V^T(\g_{>0}),\Anil^T\otimes\Hi_\lambda^T\otimes\FneT)\\
\rightarrow H^{\frac{\infty}{2}+0}(V^T(\g_{>0}),\Am^T)\otimes(\Azero^T\otimes\Hi_\lambda^T\otimes\FneT)
=\Azero^T\otimes\Hi_\lambda^T\otimes\FneT,
\end{multline}
which we denote by $[\Psi_\lambda]$. Thus, we have an isomorphism
\begin{align*}
[\Psi_\lambda] \colon \Azero^T\otimes\Hi^T_\lambda \otimes H^{\frac{\infty}{2}+0}(V^T(\g_{>0}),\Am^T\otimes\FneT) \xrightarrow{\sim} \Azero^T\otimes\Hi_\lambda^T\otimes\FneT
\end{align*}
that is the identity on $\Azero^T\otimes\Hi^T_\lambda$.

Now, consider $\Psi_\lambda$ on $\Am^T\otimes\FneT$. Let $x_{\beta (-n-1)}$ be a coordinate on $\Njet$ corresponding to $e_\beta t^n\in\nilcurrent$ for $\beta \in \Delta_{>0}$ and $n\geq0$. Notice that one can define the coordinate $J_\infty c(\g_{>0})\colon \nilcurrent \xrightarrow{\sim} \Ncurrent = \Njet$ by replacing $x_\beta e_\beta$ in $c(\g_{>0})$ with $\sum_{n \geq 0}x_{\beta (-n-1)} e_{\beta (n)}$. Then $x_{\beta (-n-1)}$ corresponds to $a^*_{\beta (-n-1)}$ in the isomorphism
\begin{align*}
V^T(\g_{>0}) \otimes T[\Njet] \xrightarrow{\sim} \Am^T,
\end{align*}
so we may identify $x_{\beta (-n-1)}$ with $a^*_{\beta (-n-1)}$. For $\displaystyle g = J_\infty c(\g_{>0})$ with indeterminates $x_{\beta (-n-1)}$, using the equations
\begin{align*}
e_{\beta (n)} \cdot \Phi_\alpha = \delta_{n, 0}\ \chi([e_\beta, e_\alpha]),\quad
\alpha \in \Deltahalf,\ 
\beta \in \Delta_{>0},\ 
n \in \Z_{\geq 0}
\end{align*}
and the fact that the linear term of $g$ forms $1 - \sum_{\beta \in \Delta_{>0}}x_{\beta (-n-1)}e_{\beta (n)}$, we have
\begin{align*}
g \cdot \Phi_\alpha = \Phi_\alpha - \sum_{\beta \in \Deltahalf}\chi([e_\beta, e_\alpha]) x_{\beta (-1)}.
\end{align*}
This implies that the comodule map $\psi$ satisfies the equations
\begin{align*}
\psi(\Phi_\alpha) = \Phi_\alpha - \sum_{\beta \in \Deltahalf} \chi([e_\beta, e_\alpha])a^*_{\beta} = \hat{\Phi}_\alpha,\quad
\alpha \in \Deltahalf.
\end{align*}
Then
\begin{align*}
\widetilde{\psi}(u(z)v(z))=\ :u(z)\psi(v)(z):
\end{align*}
for $u \in T[\Njet]$, $v \in \FneT$. Therefore $\widetilde{\psi}^{-1}(\hat{\Phi}_\alpha) = \Phi_\alpha$, and so $\Psi_\lambda(\hat{\Phi}_\alpha) = \Phi_\alpha$ by construction of $\Psi_\lambda$. By direct computations, we see that $\hat{\Phi}_\alpha$ is in the kernel of the differential of the complex defining $H^{\frac{\infty}{2}+0}(V^T(\g_{>0}),\Am^T\otimes\FneT)$. Thus $[\Psi_\lambda]$ maps $\hat{\Phi}_\alpha$ to $\Phi_\alpha$. This completes the proof.
\end{proof}
\begin{cor}\label{semi-inf Wak cor}
Suppose that $c(\nil_+)$ is compatible with the decomposition $N_+ = G_{>0} \times G_0^+$. Then
\begin{align*}
[\Psi_0] \colon H^{\frac{\infty}{2}+0}(V^T(\g_{>0}),\Wak^T_\g\otimes\FneT) \xrightarrow{\sim} \Azero^T \otimes \Hi^T \otimes \FneT
\end{align*}
is an isomorphism of vertex algebras and $[\Psi_\lambda]$ is an isomorphism of $H^{\frac{\infty}{2}+0}(V^T(\g_{>0}),\Wak^T_\g\otimes\FneT) = \Azero^T \otimes \Hi^T \otimes \FneT$-modules.
\begin{proof}
Since
\begin{align*}
\hat{\Phi}_\alpha(z)\hat{\Phi}_\beta(w) \sim \frac{\chi([e_\alpha, e_\beta])}{z-w},\quad
\alpha, \beta \in \Deltahalf,
\end{align*}
$\hat{\Phi}_\alpha \mapsto \Phi_\alpha$ defines an isomorphism of vertex algebras from $H^{\frac{\infty}{2}+0}(V^T(\g_{>0}),\Am^T\otimes\FneT)$ to $\FneT$. Thus, the assertion is immediate from Proposition \ref{semi-inf Wak prop}.
\end{proof}
\end{cor}

\subsection{Wakimoto free fields realizations of $\W^T(\g,f;\Gamma)$}\label{Wak sec}
Given a $V^T(\g)$-module $M$, define a $C_T$-module $C_T(M)=M\otimes\FchT\otimes\FneT$. Then $(C_T(M),d)$ is a cochain complex whose cohomology
\begin{align*}
H^\bullet_\chi(M)=H^\bullet(C_T(M),d)=H^{\frac{\infty}{2}+\bullet}(V^T(\g_{>0}),M\otimes\FneT)
\end{align*}
has a structure of a $\W^T(\g,f;\Gamma)$-module by construction. We have (\cite{KW1})
\begin{align}\label{eq:W-alg vanish}
H_\chi^\bullet(V^T(\g)) = H^0_\chi(V^T(\g)) = \W^T(\g,f;\Gamma).
\end{align}
Consider the case that $M = \Wak^T(\lambda)$. Then $H_\chi(\Wak^T(\lambda))$ is a $\W^T(\g,f;\Gamma)$-module for all $\lambda \in \h^*$. If $c(\nil_+)$ is compatible with the decomposition $N_+ = G_{>0} \times G_0^+$, Proposition \ref{semi-inf Wak prop} implies that
\begin{align}\label{eq:W-Wak vanish}
H_\chi^\bullet(\Wak^T(\lambda)) = H^0_\chi(\Wak^T(\lambda)) \simeq \Azero^T\otimes\Hi^T_\lambda\otimes\FneT,
\end{align}
which is an isomorphism of $H_\chi(\Wak^T_\g)$-module by Corollary \ref{semi-inf Wak cor}. Now, set
\begin{align*}
\Wak^T_\chi(\lambda) = \Azero^T\otimes\Hi^T_\lambda\otimes\FneT.
\end{align*}
Denote by
\begin{align*}
\Wak_\chi(\lambda) = \Wak^U_\chi(\lambda)\otimes\C_k,\quad
\Wak^T_{\g, \chi} = \Wak^T_\chi(0)\quad
\Wak_{\g, \chi} = \Wak_\chi(0).
\end{align*}
Applying the cohomology functor $H_\chi(?)$ with \eqref{eq:W-alg vanish} and \eqref{eq:W-Wak vanish} to \eqref{affine Wakimoto resolution}, we have an exact sequence
\begin{align}\label{W-alg Wakimoto resolution}
0\rightarrow \W^F(\g,f;\Gamma)
\xrightarrow{\omega_F} E_0^F
\xrightarrow{\bigoplus Q_\alpha} E_1^F
\rightarrow \cdots
\rightarrow E_{\ell(w_\circ)}^F
\rightarrow 0
\end{align}
of $\W^T(\g,f;\Gamma)$-modules with
\begin{align*}
E_i^T = \bigoplus_{\begin{subarray}{c} w \in W \\ \ell(w) = i \end{subarray}} H^0_\chi(\Wak^T(w^{-1}\circ0)) = \bigoplus_{\begin{subarray}{c} w \in W \\ \ell(w) = i \end{subarray}}\Wak^T_\chi(w^{-1}\circ0),
\end{align*}
where
\begin{align*}
\omega_F \colon \W^F(\g,f;\Gamma) \rightarrow \Wak^F_{\g, \chi}
\end{align*}
is an injective homomorphism of vertex algebras over $F$ induced from $\hat{\rho}_F$, and
\begin{align*}
Q_\alpha = \int Q_\alpha(z)\ dz \colon \Wak^F_{\g, \chi} \rightarrow \Wak^F_\chi(-\alpha)
\end{align*}
is the screening operator induced from $S_\alpha$. In particular, we have
\begin{align}\label{eq:W-sc F}
\W^F(\g,f;\Gamma) \simeq \Img\omega_F = \bigcap_{\alpha \in \Pi}\Ker\left( Q_\alpha \colon \Wak^F_{\g, \chi} \rightarrow \Wak^F_\chi(-\alpha) \right).
\end{align}
We call \eqref{W-alg Wakimoto resolution} the Wakimoto resolution of $\W^F(\g,f;\Gamma)$. Since the restriction of $\omega_F$ to $\W^U(\g,f;\Gamma)$ is induced from $\hat{\rho}_U$ and $H_\chi(\Wak^U_\g) = \Wak^U_{\g, \chi}$ by Proposition \ref{semi-inf Wak prop}, we also have an injective homomorphism
\begin{align*}
\omega_U =\omega_F|_{\W^U(\g,f;\Gamma)} \colon \W^U(\g,f;\Gamma) \rightarrow \Wak^U_{\g, \chi}
\end{align*}
of vertex algebras over $U$. Then
\begin{align}\label{eq:W-sc U}
\W^U(\g,f;\Gamma) \simeq \Img\omega_U \subset  \bigcap_{\alpha \in \Pi}\Ker\left( Q_\alpha \colon \Wak^U_{\g, \chi} \rightarrow \Wak^F_\chi(-\alpha) \right).
\end{align}
The map $\omega_T$ provides a $\W^T(\g,f;\Gamma)$-module structure on any $\Wak^T_{\g, \chi}$-module, which we call a {\em Wakimoto representation for a $\W$-algebra $\W^T(\g,f;\Gamma)$ over $T$}.

\subsection{Explicit forms of $Q_\alpha$}\label{Explicit forms of screenings}

Recall that $Q_\alpha$ is the screening operator induced from $S_\alpha$ that is defined in \eqref{scaffine eq}. Also recall that $P_\alpha^{\beta,R}(a^*)(z)$ is the field on $\Anil$ defined by \eqref{P(a) pol eq} for the polynomial $P_\alpha^{\beta,R}(x)$ in \eqref{right rep eq}, which depends on the choice of coordinates $c(\nil_+)$ on $N_+$. According to Lemma \ref{pol1 lemma}, it follows that $P_\alpha^{\beta,R}(a^*)(z)$ is a field on $\Azero$ if $\degG\alpha=\degG\beta$. In addition, recall that $[\alpha]$ is defined in \eqref{[a] def eq}.
\begin{lemma}\label{lem:PR zero}
Suppose that $c(\nil_+)$ is compatible with the decomposition $N_+=G_{>0}\times G_0^+$. Let $\alpha\in\Pi_{>0}$ and $\beta\in\Delta_+$ such that $\degG\alpha=\degG\beta$. Then $P_\alpha^{\beta,R}(x)=0$ unless $\beta\in[\alpha]$.
\begin{proof}
Assume that $P_\alpha^{\beta,R}(x)\neq0$ for $\alpha\in\Pi_{>0}$ and $\beta\in\Delta_+$ such that $\degG\alpha=\degG\beta$. We will show that $\beta\in[\alpha]$. Since $P_\alpha^{\beta,R}(x)$ is a polynomial in $\C[G_0^+]$ by Lemma \ref{pol1 lemma} and $\degQ P_\alpha^{\beta,R}(x)=\beta-\alpha$ by \eqref{polQ eq}, we have $\beta-\alpha\in\bQ_0$, that is, $\beta\in[\alpha]$. Thus, $P_\alpha^{\beta,R}(x)=0$ unless $\beta\in[\alpha]$.
\end{proof}
\end{lemma}
\begin{theorem}\label{main thm}
Suppose that the coordinate $c(\nil_+)$ on $N_{+}$ is compatible with the decomposition $N_{+}=G_{>0}\times G_0^+$. Then, we have
\begin{align*}
Q_\alpha =
\begin{cases}
S_\alpha & (\alpha\in\Pi_0),\\
\displaystyle \sum_{\beta\in[\alpha]}\int V_\beta(z)\Phi_\beta(z)\ dz & (\alpha\in\Pi_{\frac{1}{2}}),\\
\displaystyle \sum_{\beta\in[\alpha]}\chi(e_{\beta})\int V_\beta(z)\ dz & (\alpha\in\Pi_1),
\end{cases}
\end{align*}
where $V_\beta(z) =\ :P_\alpha^{\beta, R}(a^*)(z)\ \e^{-\frac{1}{\para+h^{\vee}}\int b_{\alpha}(z)}:$\ .
\begin{proof}
First, consider the action of $\rho^R(x)$ on $\C[N_+]$ for $x\in\g_{>0}$ under the assumption that $c(\nil_+)$ is compatible with $N_+ = G_{>0} \times G_0^+$. To compute the action of $\rho^R(x)$, we set a smooth curve $X=\e^{-t x}$ on $N_+$ for $|t|\ll1$. Given $(g_1,g_2)\in G_{>0}\times G_0^+\simeq N_+$, the right action $X^R$ of $X$ on $N_+$ can be computed as follows:
\begin{align*}
X^R\cdot(g_1,g_2)=g_1 g_2 X=g_1 g_2 X g_2^{-1} g_2=(\Ad_{g_2}(X)^R\cdot g_1, g_2)=(\e^{-t\Ad_{g_2}(x)}\cdot g_1,g_2).
\end{align*}
Applying for $g_1$ (resp. $g_2$) whose entry in the coefficient corresponding to $e_\alpha$ is the indeterminate $x_\alpha$ for each $\alpha\in\Delta_{>0}$ (resp. $\alpha\in\Deltazero$), we have the differential $\rho^R(x)$ of the action $X^R$ as follows:
\begin{align*}
\rho^R(e_\alpha)=\sum_{\beta\in\Delta_{>0}}T_\alpha^\beta(x)e_\beta^R
\end{align*}
for $\alpha\in\Delta_{>0}$, where $T_\alpha^\beta(x)\in\C[G_0^+]$ with $\degQ T_\alpha^\beta(x)=\beta-\alpha$, and $e_\beta^R$ denotes the right action of $e_\beta$ on $\C[G_{>0}]$ induced from a right $G_{>0}$-action on itself. Recall that $\Am^T$ is a $V^T(\g_{>0})$-module through the Wakimoto representation of $V^T(\g)$ and is isomorphic to the semi-regular bimodule of the loop algebra $\g_{>0}((t))\otimes T$ over $T$. See Section \ref{semi-inf sec}. Then we have
\begin{align}\label{eq:right-action diff}
\hat{\rho}^R(e_\alpha(z))=\sum_{\beta\in\Delta_{>0}}:T_\alpha^\beta(a^*)(z)e_\beta^R(z):,\quad
\alpha\in\Delta_{>0},
\end{align}
where $e_\beta^R(z)$ is the right action on $\Am^T$.

Next, recall that the exact sequence \eqref{W-alg Wakimoto resolution} is obtained by \eqref{eq:W-alg vanish} and \eqref{eq:W-Wak vanish}, and the isomorphism \eqref{eq:W-Wak vanish} is exactly $[\Psi_\lambda]$ in Proposition \ref{semi-inf Wak prop}. The map $\Psi_\lambda$ is the special case of Lemma \ref{ACL lemma} for $M = \Wak^T_\chi(\lambda)$ and satisfies the properties in Lemma \ref{ACL lemma}. Then, by Lemma \ref{ACL lemma} and \eqref{eq:right-action diff},
\begin{align*}
&\Psi_{-\alpha}\circ S_\alpha(z)=\\
&\left\{S_\alpha(z)-:\left(\sum_{\beta\in\Delta_{\frac{1}{2}}}T_\alpha^\beta(a^*)(z)\Phi_\beta(z)+\sum_{\beta\in\Delta_1}\chi(e_\beta)T_\alpha^\beta(a^*)(z)\right)\e^{-\frac{1}{\para+h^{\vee}}\int b_{\alpha}(z)}:\right\}\circ\Psi_0.
\end{align*}
Thus, the cohomology functor $H_\chi(?)$ induces
\begin{align*}
&[\Psi_{-\alpha}]\circ [S_\alpha(z)]=\\
&\left[S_\alpha(z)-:\left(\sum_{\beta\in\Delta_{\frac{1}{2}}}T_\alpha^\beta(a^*)(z)\Phi_\beta(z)+\sum_{\beta\in\Delta_1}\chi(e_\beta)T_\alpha^\beta(a^*)(z)\right)\e^{-\frac{1}{\para+h^{\vee}}\int b_{\alpha}(z)}:\right]\circ [\Psi_0],
\end{align*}
where $[X(z)]$ denotes the intertwining operator induced from $X(z)$ by $H_\chi(?)$. Therefore, by weight considerations,
\begin{align*}
Q_\alpha(z)&=\left[S_\alpha(z)-:\left(\sum_{\beta\in\Delta_{\frac{1}{2}}}T_\alpha^\beta(a^*)(z)\Phi_\beta(z)+\sum_{\beta\in\Delta_1}\chi(e_\beta)T_\alpha^\beta(a^*)(z)\right)\e^{-\frac{1}{\para+h^{\vee}}\int b_{\alpha}(z)}:\right]\\
&=\begin{cases}
\ \displaystyle{S_\alpha(z)} & \mathrm{if}\ \alpha\in\Pi_0,\\
\displaystyle{-\sum_{\beta\in\Delta_{\frac{1}{2}}}:T_\alpha^\beta(a^*)(z)\Phi_\beta(z)\e^{-\frac{1}{\para+h^{\vee}}\int b_{\alpha}(z)}:} & \mathrm{if}\ \alpha\in\Pi_{\frac{1}{2}}, \\
\displaystyle{-\sum_{\beta\in\Delta_1}\chi(e_\beta):T_\alpha^\beta(a^*)(z)\e^{-\frac{1}{\para+h^{\vee}}\int b_{\alpha}(z)}:} & \mathrm{if}\ \alpha\in\Pi_1.
\end{cases}
\end{align*}

Furthermore, notice that $\rho^R(x) \in \D_{G_0^+}$ for all $x \in \g_0^+$ by Lemma \ref{pol0 lemma} and so $\hat{\rho}^R(x(z)) \in \Azero^T$ for all $x \in \g_0^+$, thus $S_\alpha$ for $\alpha \in \Pi_0$ is well-defined on $\Wak^T_\g = \Azero^T \otimes \Hi^T \otimes \FneT$.
By definition,
\begin{align*}
\sum_{\beta\in\Delta_{>0}}T_\alpha^\beta(x)e^R_\beta=\sum_{\beta\in\Delta_{>0}}P_\alpha^{\beta,R}(x)\der_\beta
\end{align*}
for all $\alpha\in\Delta_{>0}$, and
\begin{align*}
e^R_\beta=\der_\beta(z)+\sum_{\gamma\in\Delta_{>\degG\beta}}\tilde{T}_\beta^\gamma(x)\der_\gamma,\quad
\tilde{T}_\beta^\gamma(x)\in\C[G_{>0}]
\end{align*}
with $\degQ\tilde{T}_\beta^\gamma(x)=\gamma-\beta$. This implies that
\begin{align*}
T_\alpha^\beta(x)=P_\alpha^{\beta,R}(x)
\end{align*}
for all $\alpha\in\Pi_{>0}$ and $\beta\in\Delta_{\degG\alpha}$. Thus, $Q_\alpha=\int Q_\alpha(z)dz$ may be described as follows:
\begin{align*}
Q_\alpha =
\begin{cases}
S_\alpha & (\alpha\in\Pi_0),\\
\displaystyle -\sum_{\beta\in\Deltahalf}\int:P_{\alpha}^{\beta,R}(a^*)(z)\Phi_\beta(z)\ \e^{-\frac{1}{\para+h^{\vee}}\int b_{\alpha}(z)}:dz & (\alpha\in\Pi_{\frac{1}{2}}),\\
\displaystyle -\sum_{\beta\in\Deltaone}\chi(e_{\beta})\int:P_{\alpha}^{\beta,R}(a^*)(z)\ \e^{-\frac{1}{\para+h^{\vee}}\int b_{\alpha}(z)}:dz & (\alpha\in\Pi_1).
\end{cases}
\end{align*}

Finally, by Lemma \ref{lem:PR zero}, $P_\alpha^{\beta,R}(a^*)(z)=0$ for all $\alpha\in\Pi_{i}$ and $\beta\in\Delta_i\backslash[\alpha]$ with $i=\frac{1}{2},1$, and thus we may restrict the summation range in $Q_\alpha$ to $\{\beta\in[\alpha]\}$ for all $\alpha\in\Pi_{>0}$. Since $\Ker Q_\alpha=\Ker(-Q_\alpha)$, we may replace $Q_\alpha$ by $-Q_\alpha$ for all $\alpha\in\Pi_{>0}$. This completes the proof.
\end{proof}
\end{theorem}
Let $\Vtau$ be the affine vertex algebra associated with $\g_0$ and its invariant bilinear form $\tau_k$ defined by
\begin{align}\label{eq:tauk-def}
\tau_k(u|v)=k(u|v)+\frac{1}{2}\killing_\g(u|v)-\frac{1}{2}\killing_{\g_0}(u|v),\quad
u, v \in\g_0,
\end{align}
where $\killing_\g$, $\killing_{\g_0}$ are the Killing forms on $\g$, $\g_0$ respectively. Denote by $\VtauT$ instead of $V^{\tau_\para}(\g_0)$ when the base ring (or field) is $T$. By \cite{F}, there exists an exact sequence
\begin{align*}
0\rightarrow V^F(\g_0) \xrightarrow{\hat{\rho}_{\g_0, F}}\Azero^F\otimes\Hi^F\xrightarrow{\bigoplus S_\alpha}\bigoplus_{\alpha\in\Pi_0}\Azero^F\otimes\Hi^F_{-\alpha},
\end{align*}
where
\begin{align}\label{para wak eq}
\hat{\rho}_{\g_0, F}\colon V^F(\g_0) \rightarrow \Azero^F \otimes \Hi^F
\end{align}
is an injective vertex algebra homomorphism over $F$, called a Wakimoto representation of $V^F(\g_0)$, defined by
\begin{align*}
\hat{\rho}_{\rf, F}(e_{\alpha}(z))&=\sum_{\beta\in\Deltazero}:P_{\alpha}^{\beta}(a^*)(z)a_\beta(z):,\\
\hat{\rho}_{\rf, F}(h_{\alpha'}(z))&=-\sum_{\beta\in\Deltazero}\beta(h_{\alpha'}):a^*_{\beta}(z)a_\beta(z):+b_{\alpha'}(z),\\
\hat{\rho}_{\rf, F}(f_{\alpha}(z))&=\sum_{\beta\in\Deltazero}:Q_{\alpha}^{\beta}(a^*)(z)a_\beta(z):+:b_\alpha(z)a^*_\alpha(z):+((e_\alpha|f_\alpha)\para+c'_\alpha)\der a^*_\alpha(z)
\end{align*}
for all $\alpha\in\Pi_0$ and $\alpha'\in\Pi$. Here $P_{\alpha}^{\beta}(a^*)(z)$ and $Q_{\alpha}^{\beta}(a^*)(z)$ are determined by the coordinate $c(\g_0^+)$ on $G_0^+$. Thus, we have
\begin{align}\label{para wak eq1}
V^F(\g_0) \simeq \Img\hat{\rho}_{\g_0, F} = \bigcap_{\alpha\in\Pi_0}\Ker\left(S_\alpha \colon \Azero^F\otimes\Hi^F \rightarrow \Azero^F\otimes\Hi^F_{-\alpha}\right).
\end{align}
Let $\hat{\rho}_{\g_0, U} = \hat{\rho}_{\g_0, F}|_{V^U(\g_0)}$ be the restriction of $\hat{\rho}_{\g_0, F}$ to $V^U(\g_0)$. By construction, it is clear that the image of $\hat{\rho}_{\g_0, U}$ is in $\Azero^U\otimes\Hi^U$. Using the same proof as that of Proposition \ref{affine Wak U}, it follows that
\begin{align}\label{para wak eq2}
V^U(\g_0) \simeq \Img\hat{\rho}_{\g_0, U} = \bigcap_{\alpha\in\Pi_0}\Ker\left(S_\alpha \colon \Azero^U\otimes\Hi^U \rightarrow \Azero^F\otimes\Hi^F_{-\alpha}\right).
\end{align}
\begin{cor}\label{Miura const lem}
Suppose that the coordinate $c(\nil_+)$ on $N_{+}$ is compatible with the decomposition $N_{+}=G_{>0}\times G_0^+$. Then
\begin{align*}
\VtauT\otimes\FneT \simeq \bigcap_{\alpha\in\Pi_0}\Ker\left(Q_\alpha\colon\Wak^T_{\g, \chi}\rightarrow\Wak^F_\chi(-\alpha)\right).
\end{align*}
\begin{proof}
Since $Q_\alpha=S_\alpha$ by Theorem \ref{main thm} and acts as $0$ on $\FneT$ for all $\alpha\in\Pi_0$, the assertion is immediate from \eqref{para wak eq1} for $T = F$ and \eqref{para wak eq2} for $T = U$.
\end{proof}
\end{cor}
Remark that the specialization $\hat{\rho}_{\g_0, k} = \hat{\rho}_{\g_0, U} \otimes_U \C_k$ of $\hat{\rho}_{\g_0, U}$ is also known to be injective by \cite{F}.

\subsection{Compatibility with Miura maps}\label{sec:compati Miura}

Since $\Img\omega_F = \bigcap_{\alpha\in\Pi}\Ker Q_\alpha|_{\Wak^F_{\g, \chi}}$ by \eqref{eq:W-sc F}, we have an injective homomorphism
\begin{align*}
\widetilde{\mu}_F \colon \W^F(\g,f;\Gamma) \hookrightarrow \bigcap_{\alpha\in\Pi_0}\Ker Q_\alpha|_{\Wak^F_{\g, \chi}} \simeq V^F(\g_0)\otimes\FneF
\end{align*}
of vertex algebras over $F$. Recall that we introduce the Miura maps in Section \ref{sec:Miura maps}. We will show the following lemma in Appendix \ref{appendix}:
\begin{lemma}\label{Miura lemma}
The map $\widetilde{\mu}^F$ is the Miura map $\mu_F$.
\end{lemma}
By Lemma \ref{Miura lemma} and \eqref{eq:Miura U k property}, the restriction $\widetilde{\mu}_F|_{\W^U(\g, f ;\Gamma)}$ of $\widetilde{\mu}_F$ to $\W^U(\g, f ;\Gamma)$ is the Miura map $\mu_U$ of $\W^U(\g, f ;\Gamma)$. Note that
\begin{align*}
\omega_T = (\hat{\rho}_{\g_0, T}\otimes\Id) \circ \mu_T.
\end{align*}
Then by \eqref{eq:W-sc F} and \eqref{para wak eq1}, we have
\begin{align*}
\W^F(\g, f;\Gamma) \simeq \Img\mu_F = \bigcap_{\alpha \in \Pi_{>0}}\Ker\left(Q_\alpha \colon V^F(\g_0)\otimes\FneF \rightarrow \Wak^F_\chi(-\alpha)\right).
\end{align*}
Moreover, by \eqref{eq:W-sc U} and \eqref{para wak eq2}, we have
\begin{align*}
\W^U(\g, f;\Gamma) \simeq \Img\mu_U \subset \bigcap_{\alpha \in \Pi_{>0}}\Ker\left(Q_\alpha \colon V^U(\g_0)\otimes\FneU \rightarrow \Wak^F_\chi(-\alpha)\right).
\end{align*}
\begin{prop}
\begin{align*}
\Img\mu_U = \bigcap_{\alpha \in \Pi_{>0}}\Ker\left(Q_\alpha \colon V^U(\g_0)\otimes\FneU \rightarrow \Wak^F_\chi(-\alpha)\right).
\end{align*}
\begin{proof}
As mentioned in Section \ref{W-alg T sec}, there exists a set $\{W^i(z)\}_{i=1}^{\dim\g^f}$ of fields on $V^T(\g_{\geq0})\otimes\FneT$ that generates $\W^T(\g,f;\Gamma)$ and gives rise to a PBW basis $W^{i_1}_{(-n_1)} \cdots W^{i_s}_{(-n_s)}|0\rangle$ over $T$ of $\W^T(\g,f;\Gamma)$ with some orders on $i_j$ and $n_j \in \Z_{\geq1}$. We denote by $\{ v^i\}_{i\in \Lambda}$ the PBW basis of $\W^T(\g,f;\Gamma)$ whose index set $\Lambda$ is independent of the choice of $T$. Also recall that the Miura map $\mu_k = \mu_U\otimes_U\C_k$ of $\W^k(\g,f;\Gamma)$ is injective for all $k\in\C$, see Section \ref{sec:Miura maps}. Then, though we only consider $V^U(\g)$ in Proposition \ref{affine Wak U}, the same proof applies by using the PBW basis $\{ v^i\}_{i\in \Lambda}$ of $\W^T(\g,f;\Gamma)$ and the injectivity of $\mu_k$. This completes the proof.
\end{proof}
\end{prop}
\begin{cor}\label{main cor}
\begin{align*}
\W^T(\g, f;\Gamma) \simeq \Img\omega_T = \bigcap_{\alpha \in \Pi}\Ker\left(Q_\alpha \colon \Wak^T_{\g, \chi} \rightarrow \Wak^F_\chi(-\alpha)\right).
\end{align*}
\end{cor}
\begin{cor}\label{Miura cor}
The specialization of a natural embedding
\begin{align*}
\bigcap_{\alpha\in\Pi}\Ker Q_\alpha|_{\Wak^U_{\g, \chi}} \hookrightarrow \bigcap_{\alpha \in \Pi_0}\Ker Q_\alpha|_{\Wak^U_{\g, \chi}}
\end{align*}
coincides with the Miura map $\mu_k$.
\end{cor}
Let
\begin{align*}
\omega_k = \omega_U\otimes_U\C_k \colon \W^k(\g,f;\Gamma) \rightarrow \Wak_{\g,\chi}
\end{align*}
be the specialization of $\omega_U$. Then $\omega_k = \hat{\rho}_{\g_0, k} \circ \mu_k$.
\begin{cor}
$\omega_k$ is injective for all $k \in \C$.
\begin{proof}
As both $\hat{\rho}_{\g_0, k}$ and $\mu_k$ are injective, so is $\omega_k$.
\end{proof}
\end{cor}
The map $\omega_k$ provides a $\W^k(\g, f;\Gamma)$-module structure on any $\Wak_{\g, \chi}$-module, which we call a Wakimoto representation for $\W^k(\g,f;\Gamma)$.
\smallskip

Let $f=f_{\mathrm{prin}}=\sum_{\alpha\in\Pi}e_{-\alpha}$ be a principal nilpotent element in $\g$ and $\Gamma$ the Dynkin grading on $\g$ for $f$. Then $\Pi=\Pi_1=\Delta_1$ and $\chi(e_\alpha)=1$ for all $\alpha\in\Pi$. By Lemma \ref{pol2 lemma}, $P_\alpha^{\beta,R}(z)=\delta_{\alpha,\beta}$ for all $\alpha,\beta\in\Pi$. By Theorem \ref{main thm} and Corollary \ref{main cor},
\begin{align*}
\W^T(\g,f_{\mathrm{prin}};\Gamma)\simeq\bigcap_{\alpha\in\Pi}\Ker \int:\e^{-\frac{1}{\para+h^{\vee}}\int b_{\alpha}(z)}:dz,
\end{align*}
which is a well-known result given in \cite{FF4}, see also \cite{FBZ}.

In the case that $\g_0=\h$, we have $\Pi=\Pi_{\frac{1}{2}}\sqcup\Pi_1$ and, by Lemma \ref{pol2 lemma}, $P_\alpha^{\beta,R}(z)=\delta_{\alpha,\beta}$ for all $\alpha\in\Pi_i$, $\beta\in\Delta_i$ and $i=\frac{1}{2},1$. By Theorem \ref{main thm} and Corollary \ref{main cor}, $\W^T(\g,f;\Gamma)$ is isomorphic to
\begin{align*}
\bigcap_{\alpha\in\Pi_{\frac{1}{2}}}\Ker \int:\Phi_\alpha(z)\ \e^{-\frac{1}{\para+h^{\vee}}\int b_{\alpha}(z)}:dz
\cap
\bigcap_{\begin{subarray}{c} \alpha\in\Pi_1 \\ \chi(e_\alpha)\neq0 \end{subarray}}\Ker \int:\e^{-\frac{1}{\para+h^{\vee}}\int b_{\alpha}(z)}:dz,
\end{align*}
which is a result previously obtained in \cite{G} in case that $T = F$.
\smallskip

We give several remarks on the topics in Section \ref{W-alg Wak sec}. First, note that $\Azero\otimes\Fne$ coincides with $\frac{1}{2}\dim(\mathcal{N}\cap\mathcal{S}_f)$ copies of the $\beta\gamma$-system, where $\mathcal{N}$ is the nilpotent cone of $\g$, $\mathcal{S}_f$ is the Slodowy slice of $\g$ through $f$, since
\begin{align*}
\frac{1}{2}\dim(\mathcal{N}\cap\mathcal{S}_f)=\frac{1}{2}(\dim\g_0+\dim\g_{\frac{1}{2}}-\dim\h)=\dim\g_0^++\frac{1}{2}\dim\g_{\frac{1}{2}}.
\end{align*}
Next, we have the following commutative diagram that summarizes the correspondence between the screening operators $Q_\alpha$ and $\hQ_\alpha$, see Appendix \ref{appendix}:
\begin{align*}
\SelectTips{cm}{}
\xymatrix@W15pt@H15pt@R15pt@C12pt{
\W^T(\g,f;\Gamma) \ar[rr]^(0.4){\omega_T} \ar[d]^{\mu_T} & & \Wak^T_{\g,\chi} \ar@{}[d]|{\|} \\
\VtauT\otimes\FneT \ar[rr]^(0.45){\hat{\rho}_{\rf, T}\otimes\Id} \ar[d]^{\bigoplus\hQ_\alpha} & & \Wak^T_{\g, \chi} \ar[d]^{\bigoplus Q_\alpha} \\
\displaystyle{\bigoplus_{\alpha\in\Pi_{>0}}\Weyl^F_0(-\alpha)\otimes\FneF} \ar@{}[r]|(0.50){\displaystyle{\hookrightarrow}} & \displaystyle{\bigoplus_{\alpha\in\Pi_{>0}}\Wak^F_0(-\alpha)\otimes\FneF} \ar@{}[r]|(0.60){\displaystyle{=}} & \displaystyle{\bigoplus_{\alpha\in\Pi_{>0}}\Wak^F_\chi(-\alpha)}
}
\end{align*}
Finally, remark that all statements for $\W^F(\g,f;\Gamma)$ (and $V^F(\g)$) hold if we replace $\para$ with a generic level $k\in\C$. Then we have the Wakimoto resolution and free fields realizations of $\W^k(\g,f;\Gamma)$ as common kernels of screening operators $Q_\alpha$ for generic $k\in\C$ from \eqref{W-alg Wakimoto resolution} and \eqref{eq:W-sc F}.

\section{Parabolic inductions}

In this section, we state and prove our main theorem (Theorem \ref{induced thm}). From now on, we assume that $\g$ is a reductive Lie algebra.

\subsection{$\W$-algebras for reductive Lie algebras}\label{reductive W-alg sec}
Let $\g$ be a finite-dimensional reductive Lie algebra, $f$ a nilpotent element in $[\g,\g]$, $\kappa$ a symmetric invariant bilinear form on $\g$ and $\Gamma$ a good grading for $f$ on $\g$ satisfying that the center $\z_\g$ of $\g$ lies in $\g_0$. The definition of $\W$-algebras $\W^\kappa(\g,f;\Gamma)$ naturally extends for $\g,f,\Gamma$ and $\kappa$. We use the same notations: $\g_i$, $\Delta_i$, $\Pi_i$, $\Deltazero$, $\h$, $\nil_+$, $\chi$ as in Section \ref{W-alg sec} and \ref{local sec}. Set
\begin{align}\label{reductive dec eq}
\g=\z_\g\oplus\bigoplus_{i=1}^m\g^{i},
\end{align}
where $\g^{i}$ is a simple Lie algebra. Let $\g^i_j=\g^i\cap\g_j$ and $f_i\in\g^i$ such that $f=\sum_{i=1}^m f_i$ corresponding to \eqref{reductive dec eq}. Then
\begin{align*}
\Gamma_i:\g^i=\bigoplus_{j\in\frac{1}{2}\Z}\g^i_j
\end{align*}
is good for $f_i$. We have an isomorphism of vertex algebras
\begin{align}\label{reductive W-alg eq}
\W^\kappa(\g,f;\Gamma)\simeq V^\kappa(\z_\g)\otimes\bigotimes_{i=1}^m\W^{k_i}(\g^i,f_i;\Gamma_i),
\end{align}
where $k_i=\kappa(\theta_i|\theta_i)/2\in\C$ for the highest root $\theta_i$ in $\g^i$. Let $\h^i$ be a Cartan subalgebra of $\g^i$ contained in $\g^i_0$ and $h^\vee_i$ the dual Coxter number of $\g^i$. We have $\h=\z_\g\oplus\bigoplus_{i=1}^m\h^i$. Denote by $\Delta^i$, $\Delta_+^i$, $\Pi^i$ the sets of roots, positive roots and simple roots in $\g^i$ respectively. Let $\nil_+^i=\bigoplus_{\alpha\in\Delta_+^i}\g_\alpha$ and $\g_0^{+,i}=\g_0^i\cap\nil_+^i$. We also have $\nil_+=\bigoplus_{i=1}^m\nil_+^i$ and $\g_0^+=\bigoplus_{i=1}^m\g_0^{+,i}$. Set $\chi^i(u) = (f_i|u)$, $\Delta_j^i=\Delta_j\cap\Delta^i$, $\Pi_j^i=\Pi_j\cap\Delta^i$ and $(\Delta_0^i)^+=\Delta_0^i\cap\Delta_+^i$. Define
\begin{align*}
&V^{\tau_\kappa}(\rf)=V^\kappa(\z_\g)\otimes\bigotimes_{i=1}^m V^{\tau_{k_i}}(\g_0^i),\quad
\Wak_\chi(\lambda) = \Azero \otimes \Hi^\kappa_\lambda \otimes \Fne,\\
&\Azero = \bigotimes_{i=1}^m\mathcal{A}_{(\Delta_0^i)^+},\quad
\Hi^\kappa_\lambda = V^\kappa(\z_\g)\otimes\bigotimes_{i=1}^m\Hi^{k_i+h^\vee_i}_{\lambda^i}(\h^i),\quad
\Fne = \bigotimes_{i=1}^m \Phi(\g^i_{\frac{1}{2}})
\end{align*}
for all $\lambda \in \h^:$, where $\lambda^i = \lambda|_{\h^i}$. Set $\Wak_{\g, \lambda} = \Wak_\chi(0)$. Let $U_i=\C[\para_i]$ be the polynomial ring with a formal parameter $\para_i$. By abuse of notation, we denote by $U=\bigotimes_{i=1}^m U_i$. By Corollary \ref{main cor},
\begin{align*}
\W^{U_i}(\g^i,f_i;\Gamma_i)\simeq\bigcap_{\alpha\in\Pi^i}\Ker\left(Q_\alpha \colon \Wak^{U_i}_{\g^i,\chi^i} \rightarrow \Wak^{U_i}_{\chi^i}(-\alpha)\right).
\end{align*}
Now, we replace $k_i$ with the formal parameter $\para_i$ and extend the base field $U_i$ to $U$. Then we define the $\W$-algebra $\W^U(\g,f;\Gamma)$ over $U$, the affine vertex algebra $V^U(\g_0)$ over $U$, the Wakimoto modules $\Wak^U_\chi(\lambda)$ of $\W^U(\g,f;\Gamma)$. Set $\Wak^U_{\g, \chi} = \Wak^U_\chi(0)$. Since $V^U(\z_\g)$ commutes with all $Q_\alpha$ and $\Wak^{U_i}_{\chi^i}(-\alpha) \subset \Wak^U_\chi(-\alpha)$, we have an isomorphism
\begin{align}\label{reductive screening eq}
\W^U(\g,f;\Gamma) \simeq \bigcap_{\alpha\in\Pi} \Ker\left(Q_\alpha \colon \Wak^U_{\g, \chi} \rightarrow \Wak^U_\chi(-\alpha)\right)
\end{align}
of vertex algebras over $U$. Define the Miura map $\mu_\kappa$ of $\W^\kappa(\g,f;\Gamma)$ by
\begin{align*}
\mu_\kappa = \Id \otimes \bigotimes_{i=1}^m \mu_{k_i} \colon \W^\kappa(\g,f;\Gamma) \rightarrow V^{\tau_\kappa}(\rf)\otimes\Fne.
\end{align*}
Then, by Corollary \ref{Miura cor}, $\mu_\kappa$ coincides with the map induced by the specialization of a natural embedding
\begin{align*}
\bigcap_{\alpha\in\Pi}\Ker Q_\alpha |_{\Wak^U_{\g,\chi}}\hookrightarrow\bigcap_{\alpha\in\Pi_0}\Ker Q_\alpha|_{\Wak^U_{\g,\chi}}.
\end{align*}

\subsection{Induced nilpotent orbits}\label{ind nil sec}
Let $\mathcal{N}$ be the set of all nilpotent elements in $[\g,\g]$. A Lie group $G$ acts on $\mathcal{N}$ by the adjoint action, which decompose $\mathcal{N}$ into finitely many orbits, called nilpotent orbits in $\g$. See e.g. \cite{CM}. Let $\p$ be a parabolic subalgebra, i.e. $\bo\subset\p$. There exists the Levi decomposition $\p=\lf\oplus\uf$ such that $\lf$ is a reductive Lie subalgebra and $\uf$ is a nilpotent subalgebra. We have a root subsystem $\Delta_\lf\subset\Delta$ such that
\begin{align}\label{Levi root eq}
\lf=\h\oplus\bigoplus_{\alpha\in\Delta_\lf}\g_\alpha.
\end{align}
The reductive Lie subalgebra $\lf$ is called a Levi subalgebra of $\g$ and uniquely determined by simple roots $\Pi_\lf$ of $\Delta_\lf$ up to conjugation. Denote by $P$, $L$ the Lie subgroups of $G$ corresponding to $\p$, $\lf$ respectively. The following results are due to Lusztig and Spaltenstein \cite{LS}.
\begin{propdef}[\cite{LS}]
Let $\Oc_\lf$ be a nilpotent orbit in $\lf$. Then there exists a unique nilpotent orbit $\Oc_\g$ in $\g$ such that $(\Oc_\lf+\uf)\cap\Oc_\g$ is Zariski dense in $\Oc_\lf+\uf$, and $\Oc_\g$ doesn't depend on the choice of $\p$. The orbit $\Oc_\g$ is called the induced nilpotent orbit from $\Oc_\lf$ and denoted by $\ind^\g_\lf\Oc_\lf$.
\end{propdef}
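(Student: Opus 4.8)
The plan is to reproduce the classical argument of Lusztig--Spaltenstein \cite{LS}. Fix the Levi decomposition $\p=\lf\oplus\uf$ and choose a cocharacter $\lambda$ of $L$ for which $\operatorname{Ad}(\lambda(t))$ acts trivially on $\lf$ and with strictly positive weights on $\uf$ (for a standard $\p$ this is the coweight dual to the simple roots outside $\Pi_\lf$, rescaled into the cocharacter lattice; in general, conjugate into this position). First I would record three elementary properties of the set $\Oc_\lf+\uf\subset\g$. It is irreducible, being the image of the irreducible variety $\Oc_\lf\times\uf$ under addition ($\Oc_\lf$ is an orbit of the connected group $L$). It is stable under $P=L\cdot\exp\uf$: the group $L$ preserves both $\Oc_\lf$ and $\uf$, and $[\uf,\p]\subset\uf$ forces $\operatorname{Ad}(\exp v)(x+u)\in x+\uf$ for $v\in\uf$. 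And every element $x+u$ of it ($x\in\Oc_\lf$, $u\in\uf$) is nilpotent in $[\g,\g]$: for any homogeneous $G$-invariant polynomial $g$ on $[\g,\g]$ of positive degree, $G$-invariance together with $\operatorname{Ad}(\lambda(t))(x+u)\to x$ as $t\to0$ gives $g(x+u)=g(x)=0$, so $x+u\in\mathcal N$ since the centre $\z_\g$ acts trivially.

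From these I would extract existence and uniqueness of $\Oc_\g$. Since $\mathcal N$ is a finite union of $G$-orbits $\Oc_j$, the set $\Oc_\lf+\uf$ is the finite disjoint union of the constructible pieces $(\Oc_\lf+\uf)\cap\Oc_j$, so irreducibility forces exactly one of the closed sets $\overline{(\Oc_\lf+\uf)\cap\Oc_j}$ to equal $\overline{\Oc_\lf+\uf}$; its orbit is $\Oc_\g$. It is the unique orbit meeting $\Oc_\lf+\uf$ in a dense subset, because two dense constructible subsets of an irreducible variety intersect. Using $P$-stability one moreover obtains $\overline{\Oc_\g}=\overline{G\cdot(\Oc_\lf+\uf)}=\overline{G\cdot(\overline{\Oc_\lf}+\uf)}$.

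Next I would prove the dimension formula $\dim\Oc_\g=\dim\Oc_\lf+2\dim\uf$, the bookkeeping device needed for the last step. The collapsing map $G\times_P\overline{\Oc_\lf+\uf}\to\overline{\Oc_\g}$ is proper and dominant, and its source has dimension $\dim\uf+\dim\overline{\Oc_\lf+\uf}=\dim\uf+(\dim\Oc_\lf+\dim\uf)$; equality holds because the map is generically finite, equivalently because a generic $x\in\Oc_\lf+\uf$ satisfies $Z_G(x)^{\circ}\subset P$ --- Richardson's lemma, which one proves by building a Jacobson--Morozov cocharacter for $x$ compatible with $\lambda$.

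The remaining point --- that $\Oc_\g$ depends only on the $G$-conjugacy class of $(\lf,\Oc_\lf)$ and not on $\p$ --- is the genuine obstacle; everything above is formal. The plan here is threefold. First, establish transitivity for the $\p$-dependent construction: if $\p\subset\p'$ with compatible Levi decompositions, then $\uf=\uf'\oplus(\uf\cap\lf')$, the summand $\uf\cap\lf'$ is the nilradical of the parabolic $\p\cap\lf'$ of $\lf'$, and $\overline{\Oc_\lf+\uf}=\overline{\Oc_{\lf'}+\uf'}$ with $\Oc_{\lf'}=\ind^{\lf',\,\p\cap\lf'}_\lf\Oc_\lf$ --- a short density computation from the first two steps using that density is preserved under adding a linear subspace, whence $\ind^{\g,\p}_\lf\Oc_\lf=\ind^{\g,\p'}_{\lf'}\Oc_{\lf'}$. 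Second, connect any two parabolics with Levi $\lf$ by a chain of parabolics with Levi $\lf$ in which consecutive members differ by replacing a sub-parabolic of an intermediate Levi by its opposite --- a combinatorial statement about Weyl chambers. Third, treat this single opposite-parabolic move, i.e.\ show $\ind^{\g,\p}_\lf\Oc_\lf=\ind^{\g,\bar\p}_\lf\Oc_\lf$ for the opposite $\bar\p=\lf\oplus\bar\uf$. This third step is the hard part: $\p$ and $\bar\p$ are in general not $G$-conjugate, so no transport is available, and one must compare the two candidate orbits --- which have equal dimension $\dim\Oc_\lf+2\dim\uf$ by the previous step --- through a direct analysis of Richardson elements and their associated cocharacters, as carried out in \cite{LS}. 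I expect that opposite-parabolic case to be the one serious obstacle in the whole argument.
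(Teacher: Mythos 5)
The paper does not prove this statement at all: it is quoted from Lusztig--Spaltenstein \cite{LS}, so the only thing to compare your proposal against is the classical argument you are reconstructing. Your existence and uniqueness argument is correct and standard: irreducibility of $\Oc_\lf+\uf$, nilpotency of its elements via the contracting cocharacter together with the characterization of the nilpotent cone by positive-degree invariants, and the finiteness of the number of nilpotent orbits then force exactly one orbit to meet $\Oc_\lf+\uf$ densely. The dimension count through the generically finite collapsing map $G\times_P\overline{\Oc_\lf+\uf}\to\overline{\Oc_\g}$ is likewise the standard route (it is what the paper records separately as Proposition \ref{LS prop}(1)), and your three-step plan for independence of $\p$ (transitivity, a chain of parabolics with fixed Levi, the opposite-parabolic case) is a recognized organization of the argument.

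Two points need attention. First, in the transitivity step the asserted equality $\overline{\Oc_\lf+\uf}=\overline{\Oc_{\lf'}+\uf'}$ is false whenever $\lf\subsetneq\lf'$: by the dimension formula applied inside $\lf'$, the right-hand side has dimension $\dim\Oc_\lf+2\dim(\uf\cap\lf')+\dim\uf'$, which strictly exceeds $\dim(\Oc_\lf+\uf)=\dim\Oc_\lf+\dim(\uf\cap\lf')+\dim\uf'$. What is true, and what suffices, is the one-sided statement that the dense subset $V+\uf'$ of $\Oc_\lf+\uf$, with $V=(\Oc_\lf+(\uf\cap\lf'))\cap\Oc_{\lf'}$, is contained in $\Oc_{\lf'}+\uf'$; hence $\ind^{\g}_\lf\Oc_\lf$ (computed with $\p$) meets $\Oc_{\lf'}+\uf'$ and has dimension $\dim\Oc_{\lf'}+2\dim\uf'$, so it coincides with the orbit induced from $\Oc_{\lf'}$ via $\p'$ by the uniqueness characterization in Proposition \ref{LS prop}(1). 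Second, you correctly isolate the opposite-parabolic comparison as the crux of the independence assertion, but you do not prove it --- you defer it back to \cite{LS}. Since the paper itself cites \cite{LS} for the entire statement this is a defensible stopping point, but be aware that your argument is then incomplete precisely at the one step that carries all the content of the claim that $\Oc_\g$ is independent of $\p$.
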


\begin{prop}[\cite{LS}]\label{LS prop}
Let $\Oc_\lf$ be a nilpotent orbit in $\lf$ and $\Oc_\g=\ind^\g_\lf\Oc_\lf$ the induced nilpotent orbit from $\Oc_\lf$.
\begin{enumerate}
\item $\Oc_\g$ is a unique nilpotent orbit that has the dimension $\dim\Oc_\g=\dim\Oc_\lf+2\dim\uf$ and $(\Oc_\lf+\uf)\cap\Oc_\g\neq\phi$.
\item Induced nilpotent orbits are transitive, i.e.
\begin{align*}
\ind^\g_\lf\Oc_\lf=\ind^\g_{\lf'}\ind^{\lf'}_\lf\Oc_\lf
\end{align*}
for any Levi subalgebra $\lf'$ such that $\lf\subset\lf'\subset\g$.
\end{enumerate}
\end{prop}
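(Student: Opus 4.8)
The plan is to derive both statements from the existence-and-uniqueness assertion of the preceding Proposition and Definition --- that $\Oc_\g:=\ind^\g_\lf\Oc_\lf$ is the unique nilpotent $G$-orbit meeting $\Oc_\lf+\uf$ in a Zariski-dense subset, and that it does not depend on the chosen parabolic $\p=\lf\oplus\uf$ --- supplemented by one genuinely geometric input. Throughout I fix such a $\p$ with $\bo\subseteq\p$.

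\emph{Part (1).} That $(\Oc_\lf+\uf)\cap\Oc_\g\neq\emptyset$ is immediate, being weaker than the density that defines $\Oc_\g$. For the dimension I would use the collapsing map
\begin{align*}
\psi\colon G\times_P(\Oc_\lf+\uf)\longrightarrow\g,\qquad [g,x]\mapsto\operatorname{Ad}(g)x,
\end{align*}
whose source is irreducible of dimension $\dim G/P+\dim(\Oc_\lf+\uf)=\dim\uf+(\dim\Oc_\lf+\dim\uf)=\dim\Oc_\lf+2\dim\uf$. Since every element of $\Oc_\lf+\uf$ is nilpotent (a standard fact, e.g.\ it is contracted to $0$ by a suitable cocharacter of the maximal torus), $\operatorname{Im}\psi=G\cdot(\Oc_\lf+\uf)$ is an irreducible $G$-stable subset of the nilpotent cone, and the density of $(\Oc_\lf+\uf)\cap\Oc_\g$ together with $G$-invariance gives $\Oc_\g\subseteq\operatorname{Im}\psi\subseteq\overline{\Oc_\g}$; hence $\overline{\operatorname{Im}\psi}=\overline{\Oc_\g}$ and $\dim\Oc_\g\le\dim\Oc_\lf+2\dim\uf$. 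The reverse inequality is equivalent to $\psi$ being generically finite onto its image, and \textbf{this is the one non-formal step}: it is exactly the content of Lusztig and Spaltenstein's theorem, which one would prove by exhibiting a single $e=e_\lf+n$ with $e_\lf\in\Oc_\lf$ and $n\in\uf$ generic whose $G$-orbit already has dimension $\dim\Oc_\lf+2\dim\uf$ (for $\g$ of type $A$ --- the case of interest here --- this may instead be read off from the partition recipe for $\ind$). Granting it, $\dim\Oc_\g=\dim\Oc_\lf+2\dim\uf$, and uniqueness is then formal: a nilpotent orbit $\Oc'$ of this same dimension meeting $\Oc_\lf+\uf$ lies in $\operatorname{Im}\psi$ with $\dim\Oc'=\dim\operatorname{Im}\psi$, so $\Oc'$ is dense in the irreducible set $\operatorname{Im}\psi$, forcing $\overline{\Oc'}=\overline{\Oc_\g}$, i.e.\ $\Oc'=\Oc_\g$.

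\emph{Part (2).} After conjugating I may assume $\h\subseteq\lf\subseteq\lf'$ with $\Pi_\lf\subseteq\Pi_{\lf'}$, and take the standard parabolics $\p=\lf\oplus\uf\subseteq\p'=\lf'\oplus\uf'$; then $\uf'\subseteq\uf$ and $\uf=\uf''\oplus\uf'$, where $\uf''=\uf\cap\lf'$ is the nilradical of the parabolic $\p\cap\lf'=\lf\oplus\uf''$ of $\lf'$. Put $\Oc_{\lf'}=\ind^{\lf'}_\lf\Oc_\lf$; by the Proposition and Definition applied inside $\lf'$ we have $\overline{\Oc_{\lf'}}=\overline{L'\cdot(\Oc_\lf+\uf'')}$. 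I would then run the chain of elementary closure identities
\begin{align*}
\overline{G\cdot(\Oc_{\lf'}+\uf')}
&=\overline{G\cdot\bigl(\overline{L'\cdot(\Oc_\lf+\uf'')}+\uf'\bigr)}
=\overline{G\cdot\bigl(L'\cdot(\Oc_\lf+\uf'')+\uf'\bigr)}\\
&=\overline{G\cdot L'\cdot(\Oc_\lf+\uf)}
=\overline{G\cdot(\Oc_\lf+\uf)},
\end{align*}
which use, in order: the identities $\overline{G\cdot\overline X}=\overline{G\cdot X}$ and $\overline{A+\uf'}=\overline A+\uf'$ for $A\subseteq\lf'$ (valid since $\lf'$ and $\uf'$ are complementary in $\p'$); the fact that $L'$ normalizes $\uf'$, so that $L'\cdot(\Oc_\lf+\uf'')+\uf'=L'\cdot(\Oc_\lf+\uf''+\uf')$; and $GL'=G$ with $\uf''+\uf'=\uf$. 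By the Proposition and Definition the dense $G$-orbit of the left-hand side is $\ind^\g_{\lf'}\Oc_{\lf'}=\ind^\g_{\lf'}\ind^{\lf'}_\lf\Oc_\lf$ and that of the right-hand side is $\ind^\g_\lf\Oc_\lf$, so the two orbits coincide. (Consistency check: part (1) gives $\dim\ind^\g_{\lf'}\ind^{\lf'}_\lf\Oc_\lf=\dim\Oc_\lf+2\dim\uf''+2\dim\uf'=\dim\Oc_\lf+2\dim\uf=\dim\ind^\g_\lf\Oc_\lf$.)

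The only real obstacle is therefore the generic finiteness of the collapsing map $\psi$ in part (1) --- equivalently, that the generic element of $\Oc_\lf+\uf$ already sweeps out a $G$-orbit of the expected dimension $\dim\Oc_\lf+2\dim\uf$; everything else follows formally from irreducibility, the finiteness of the set of nilpotent orbits, and the characterization of $\ind^\g_\lf\Oc_\lf$ granted above.
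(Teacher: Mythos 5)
The paper does not prove this proposition at all: it is quoted verbatim from Lusztig--Spaltenstein \cite{LS} (via \cite{CM}), so there is no in-paper argument to compare yours against. Judged on its own, your reconstruction is the standard one and is sound where it is complete. Part (2) is fully proved: the closure manipulations $\overline{G\cdot\overline{X}}=\overline{G\cdot X}$ and $\overline{A+\uf'}=\overline{A}+\uf'$ (the latter from $\p'\cong\lf'\times\uf'$), together with the fact that $L'$ normalizes $\uf'$ and $\uf=\uf''\oplus\uf'$, correctly reduce transitivity to the defining characterization of $\ind$, and the final identification of the two orbits from the equality of their closures is legitimate since an orbit is determined by its closure. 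In part (1), the nonemptiness, the upper bound $\dim\Oc_\g\le\dim\Oc_\lf+2\dim\uf$ via the collapsing $\psi\colon G\times_P(\Oc_\lf+\uf)\to\g$, the nilpotency of elements of $\Oc_\lf+\uf$, and the deduction of uniqueness from the dimension formula are all correct.

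The one genuine gap is exactly the one you flag: the lower bound $\dim\Oc_\g\ge\dim\Oc_\lf+2\dim\uf$, i.e.\ generic finiteness of $\psi$. This is not a formal consequence of the Proposition-and-Definition; it is the analytic core of the Lusztig--Spaltenstein theorem, and simply asserting that it ``is exactly the content'' of the cited result leaves the dimension formula --- and hence the uniqueness statement, which rests on it --- unproved. The usual way to close it is to show that for $x$ in the dense orbit of $\Oc_\lf+\uf$ the fibre $\{gP\in G/P\mid \operatorname{Ad}(g)^{-1}x\in\overline{\Oc_\lf}+\uf\}$ is finite, e.g.\ by the tangent-space computation $\dim(\g^x\cap\p)=\dim\lf^{x_\lf}$ for such $x$ (equivalently, a Lagrangian/conormal-variety argument), or, in type $A$, by reading the induced partition off the column-addition recipe as you note. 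Since the proposition is itself a citation in the paper, deferring this step to \cite{LS} is defensible, but it should be stated as a citation of that specific lemma rather than presented as part of the proof.
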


To prove Lemma \ref{ind good lem}, we recall the properties of (good) gradings in \cite{EK}.
\begin{lemma}[\cite{EK}]\label{good pro lem}
Let $\g$ be a reductive Lie algebra, $f$ a nilpotent element of $[\g,\g]$. Let $\Gamma$ be a $\frac{1}{2}\Z$-grading on $\g$ such that $f\in\g_{-1}$, the center of $\g$ lies in $\g_0$ and $[\g_i,\g_j]\subset\g_{i+j}$ for all $i,j\in\frac{1}{2}\Z$. 
\begin{enumerate}
\item The following are equivalent.
\begin{enumerate}
\item $\ad(f)\colon\g_j\rightarrow\g_{j-1}$ is injective for $j\geq\frac{1}{2}$.
\item $\ad(f)\colon\g_j\rightarrow\g_{j-1}$ is surjective for $j\leq\frac{1}{2}$.
\item $\Gamma$ is good for $f$.
\end{enumerate}
\item Suppose that $\Gamma$ is good for $f$. Then $\dim\g^f=\dim\g_0+\dim\g_{\frac{1}{2}}$.
\end{enumerate}
\end{lemma}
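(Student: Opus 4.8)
The plan is to reduce the whole statement to two structural facts about the grading: that it is internal, and that a nondegenerate invariant form pairs $\g_j$ with $\g_{-j}$ perfectly. First I would observe that, since $\g$ is reductive and $\z_\g\subset\g_0$ by hypothesis, the derivation $D$ of $\g$ acting by $j\cdot\operatorname{id}$ on each $\g_j$ maps $[\g,\g]$ to itself and annihilates $\z_\g$; its restriction to the semisimple algebra $[\g,\g]$ is therefore inner, so $D=\ad(h_\Gamma)$ for some $h_\Gamma\in[\g,\g]$ and $\g_j$ is the $j$-eigenspace of $\ad(h_\Gamma)$ (this is precisely where the hypothesis $\z_\g\subset\g_0$ is used). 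Fixing a nondegenerate invariant symmetric form $(\,\cdot\,|\,\cdot\,)$ on $\g$, invariance then gives $(i+j)(x|y)=0$ for $x\in\g_i$, $y\in\g_j$, so $\g_i\perp\g_j$ whenever $i+j\neq0$; since the form is nondegenerate on $\g$, the induced pairing $\g_j\times\g_{-j}\to\C$ is perfect for every $j$, and in particular $\dim\g_j=\dim\g_{-j}$.

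For part (1) I would use the transpose of $\ad f$ relative to these pairings. For $x\in\g_j$ and $y\in\g_{1-j}$, invariance reads $([f,x]\,|\,y)=-(x\,|\,[f,y])$, which under the identifications $\g_{j-1}^{*}\simeq\g_{1-j}$ and $\g_{j}^{*}\simeq\g_{-j}$ says that the transpose of $\ad f\colon\g_j\to\g_{j-1}$ is $-\ad f\colon\g_{1-j}\to\g_{-j}$. Hence $\ad f\colon\g_j\to\g_{j-1}$ is injective if and only if $\ad f\colon\g_{1-j}\to\g_{-j}$ is surjective. As $j$ runs over $\{\tfrac12,1,\tfrac32,\dots\}$ the index $1-j$ runs over $\{\tfrac12,0,-\tfrac12,\dots\}$, so condition (a) is equivalent to condition (b); and since, under the standing hypotheses on $\Gamma$, ``$\Gamma$ good for $f$'' means exactly that (a) and (b) both hold, this yields (a)$\Longleftrightarrow$(b)$\Longleftrightarrow$(c).

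For part (2) I would assume $\Gamma$ good and compute $\dim\Img(\ad f)$ one graded piece at a time. Since $\ad f$ maps $\g_j$ into $\g_{j-1}$, $\dim\Img(\ad f)=\sum_{j}\dim\Img(\ad f|_{\g_j})$, and this term equals $\dim\g_{j-1}$ when $j\leq\tfrac12$ (surjectivity) and $\dim\g_j$ when $j>\tfrac12$ (injectivity). Re-indexing the first group of terms by $i=j-1$ gives $\dim\Img(\ad f)=\sum_{i\leq-\frac12}\dim\g_i+\sum_{j\geq1}\dim\g_j=\dim\g-\dim\g_0-\dim\g_{\frac12}$, and therefore $\dim\g^f=\dim\g-\dim\Img(\ad f)=\dim\g_0+\dim\g_{\frac12}$.

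The only step I expect to need genuine care is the first one: verifying that the grading is internal, which is where $\z_\g\subset\g_0$ is essential and which simultaneously produces the perfect pairing driving both the transpose argument in (1) and the dimension count in (2). Everything after that is a short bookkeeping with invariance and rank--nullity; alternatively, part (2) can be obtained by telescoping $\dim\Ker(\ad f|_{\g_j})$ over the integer-graded and half-integer-graded parts of $\g$ separately, using $\dim\g_{\frac12}=\dim\g_{-\frac12}$ from the first step.
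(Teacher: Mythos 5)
Your argument is correct. The paper offers no proof of this lemma — it is quoted directly from Elashvili--Kac \cite{EK} — and what you give is essentially the standard argument from that source: realizing the grading as the eigenspace decomposition of an inner derivation $\ad(h_\Gamma)$ (which is exactly where $\z_\g\subset\g_0$ is needed), using the resulting perfect pairing $\g_j\times\g_{-j}\to\C$ to identify $\ad f\colon\g_j\to\g_{j-1}$ with minus the transpose of $\ad f\colon\g_{1-j}\to\g_{-j}$ for the equivalence (a)$\Leftrightarrow$(b)$\Leftrightarrow$(c), and then summing $\dim\Img(\ad f|_{\g_j})$ over the graded pieces together with rank--nullity for the dimension formula $\dim\g^f=\dim\g_0+\dim\g_{\frac{1}{2}}$.
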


\begin{lemma}\label{ind good lem}
Let $\Gamma$ be a good grading for $f$ on $\g$, $G\cdot f$ the nilpotent orbit in $\g$ through $f$, $\lf$ a Levi subalgebra of $\g$ with simple roots $\Pi_\lf$. Suppose that $\degG\alpha=1$ for all $\alpha\in\Pi\backslash\Pi_\lf$. Then there exists a nilpotent element $f_\lf$ in $[\lf,\lf]$ such that $\Gamma_{\lf}$ is a good grading for $f_\lf$ and $G\cdot f=\ind^\g_\lf L\cdot{f_\lf}$, where $\Gamma_{\lf}$ is the restriction of $\Gamma$ to $\lf$ and $L\cdot f_\lf$ is the nilpotent orbit in $\lf$ through $f_\lf$.
\end{lemma}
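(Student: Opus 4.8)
The plan is to take $f_\lf$ to be the $\lf$-component of $f$ with respect to the triangular decomposition $\g=\bar{\uf}\oplus\lf\oplus\uf$ attached to the parabolic $\p=\lf\oplus\uf$, where $\uf=\bigoplus_{\alpha\in\Delta_+\setminus\Delta_\lf}\g_\alpha$ and $\bar{\uf}=\bigoplus_{\alpha\in\Delta_-\setminus\Delta_\lf}\g_\alpha$, and then to verify the two assertions in turn. I would write $f=f_\lf+f_{\bar{\uf}}$ with $f_\lf\in\lf$ and $f_{\bar{\uf}}\in\bar{\uf}$, and set $\lf_j:=\lf\cap\g_j$, so that $\Gamma_\lf$ is the decomposition $\lf=\bigoplus_j\lf_j$.

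First I would record the structural consequences of the hypothesis $\degG\alpha=1$ for all $\alpha\in\Pi\setminus\Pi_\lf$, i.e. $\Pi\setminus\Pi_\lf\subset\Pi_1$. Since $\degG$ is additive on $\bQ$ and $\degG\alpha\geq0$ for every $\alpha\in\Pi$, a positive root lying outside $\Delta_\lf$ must involve a simple root from $\Pi\setminus\Pi_\lf$, hence has $\degG\geq1$; thus $\uf\subset\g_{\geq1}$ and $\bar{\uf}\subset\g_{\leq-1}$. Conversely, every root of degree $-\frac{1}{2}$, $0$ or $\frac{1}{2}$ is an integral combination of $\Pi_0\cup\Pi_{\frac{1}{2}}\subset\Pi_\lf$, so $\g_j\subset\lf$, and therefore $\lf_j=\g_j$ for $j\in\{-\frac{1}{2},0,\frac{1}{2}\}$. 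In particular $f\in\g_{-1}\subset\lf\oplus\bar{\uf}$ (as $\uf\subset\g_{\geq1}$), so $f_\lf\in\lf_{-1}\subset[\lf,\lf]$ and $f_{\bar{\uf}}\in\bar{\uf}\cap\g_{-1}$.

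Next I would check that $\Gamma_\lf$ is a good grading for $f_\lf$ on $\lf$. The hypotheses of Lemma \ref{good pro lem} hold for $(\lf,f_\lf,\Gamma_\lf)$: $\lf$ is $\ad\h$-stable with $\h\subset\g_0$, $f_\lf\in\lf_{-1}$, the bracket condition $[\lf_i,\lf_j]\subset\lf_{i+j}$ is inherited, and the center $\z_\lf\subset\h\subset\lf_0$. So by Lemma \ref{good pro lem}(1) it suffices to prove that $\ad f_\lf\colon\lf_j\to\lf_{j-1}$ is injective for $j\geq\frac{1}{2}$. Using that $\bar{\uf}$ is an $\ad\lf$-submodule of $\g$, for $x\in\lf_j$ we have $\ad f(x)=\ad f_\lf(x)+\ad f_{\bar{\uf}}(x)$ with $\ad f_\lf(x)\in\lf_{j-1}$ and $\ad f_{\bar{\uf}}(x)\in\bar{\uf}\cap\g_{j-1}$; but $\bar{\uf}\cap\g_{j-1}=0$ as soon as $j-1>-1$, i.e. $j\geq\frac{1}{2}$, since $\bar{\uf}\subset\g_{\leq-1}$. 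Hence $\ad f_\lf$ agrees on $\lf_j$ with the restriction of the injective map $\ad f\colon\g_j\to\g_{j-1}$, so it is injective, and $\Gamma_\lf$ is good for $f_\lf$.

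Finally, for the identification $G\cdot f=\ind^\g_\lf(L\cdot f_\lf)$ I would invoke Proposition \ref{LS prop}(1) with the opposite parabolic $\bar{\p}=\lf\oplus\bar{\uf}$ (legitimate because the induced orbit does not depend on the chosen parabolic with Levi $\lf$, by \cite{LS}); it is then enough to check $(L\cdot f_\lf+\bar{\uf})\cap G\cdot f\neq\emptyset$ and $\dim G\cdot f=\dim L\cdot f_\lf+2\dim\bar{\uf}$. The first holds because $f=f_\lf+f_{\bar{\uf}}\in(L\cdot f_\lf)+\bar{\uf}$. For the second, Lemma \ref{good pro lem}(2) applied to $(\g,f,\Gamma)$ and to $(\lf,f_\lf,\Gamma_\lf)$ gives $\dim\g^f=\dim\g_0+\dim\g_{\frac{1}{2}}$ and $\dim\lf^{f_\lf}=\dim\lf_0+\dim\lf_{\frac{1}{2}}=\dim\g_0+\dim\g_{\frac{1}{2}}$, using $\lf_0=\g_0$ and $\lf_{\frac{1}{2}}=\g_{\frac{1}{2}}$ from the first step; hence $\dim G\cdot f-\dim L\cdot f_\lf=\dim\g-\dim\lf=\dim\uf+\dim\bar{\uf}=2\dim\bar{\uf}$. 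Uniqueness in Proposition \ref{LS prop}(1) then yields the claim. The argument is entirely structural and I do not expect a serious obstacle; the point requiring the most care is the grading bookkeeping of the first step (the inclusions $\uf\subset\g_{\geq1}$, $\bar{\uf}\subset\g_{\leq-1}$ and the identities $\g_j=\lf_j$ for $|j|\leq\frac{1}{2}$), together with the ordering constraint that the good-grading property of $\Gamma_\lf$ must be established before the dimension count, since that count applies Lemma \ref{good pro lem}(2) to $(\lf,f_\lf,\Gamma_\lf)$.
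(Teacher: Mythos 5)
Your proof is correct and follows essentially the same route as the paper's: decompose $f=f_\lf+f_{\bar\uf}$ along the parabolic with Levi $\lf$ containing $\bo_-$, verify goodness of $\Gamma_\lf$ via Lemma \ref{good pro lem}(1), and conclude with the dimension count from Lemma \ref{good pro lem}(2) and Proposition \ref{LS prop}(1). The only (immaterial) difference is that you check injectivity of $\ad f_\lf$ on $\lf_j$ for $j\geq\frac{1}{2}$, whereas the paper checks surjectivity for $j\leq\frac{1}{2}$; these are equivalent by Lemma \ref{good pro lem}(1).
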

\begin{proof}
As in Section \ref{reductive W-alg sec}, there exists a root decomposition $\g=\h\oplus\bigoplus_{\alpha\in\Delta}\g_\alpha$ compatible with $\Gamma$. We may choose $\Delta_+$ such that $\nil_+=\bigoplus_{\alpha\in\Delta_+}\g_\alpha\subset\g_{\geq0}$. We have a root subsystem $\Delta_\lf$ of $\Delta$ satisfying \eqref{Levi root eq}. Let $\uf=\bigoplus_{\alpha\in\Delta_{-}\backslash(\Delta_{-}\cap\Delta_\lf)}\g_\alpha$. Then $\p=\lf\oplus\uf$ is a parabolic subalgebra including the opposite Borel subalgebra $\bo_-$ and gives the Levi decomposition of $\p$ whose Levi subalgebra is $\lf$. Denote by $\lf_j=\lf\cap\g_j$ and by $\uf_j=\uf\cap\g_j$. Since $\Pi\backslash\Pi_\lf\subset\Pi_1$, we have $\g_j=\lf_j\oplus\uf_j$ for all $j\leq\frac{1}{2}$. Choose $f_\lf\in\lf_{-1}$ and $f_\uf\in\uf_{-1}$ such that $f=f_\lf+f_\uf$ corresponding to $\g_{-1}=\lf_{-1}\oplus\uf_{-1}$. Since $[f,\g_j]=\g_{j-1}$ for all $j\leq\frac{1}{2}$ and $[\lf,\uf]\subset\uf$, we have $[f_\lf,\lf_j]=\lf_{j-1}$ for all $j\leq\frac{1}{2}$. Note that the center of $\lf$ lies in $\h\subset\g_0$ and the formula $[\lf_i,\lf_j]\subset\lf_{i+j}$ is deduced from $[\g_i,\g_j]\subset\g_{i+j}$. These imply that $\Gamma_\lf$ is good for $f_\lf$ by Lemma \ref{good pro lem} (1). Therefore $\dim\lf^{f_\lf}=\dim\lf_0+\dim\lf_{\frac{1}{2}}$ by Lemma \ref{good pro lem} (2). Since $\g_j=\lf_j$ for $j=0,\frac{1}{2}$, we have $\dim\lf^{f_\lf}=\dim\g^f$. Hence,
\begin{align*}
\dim G\cdot f=\dim\g-\dim\g^f=\dim\lf+2\dim\uf-\dim\lf^{f_\lf}=\dim L\cdot{f_\lf}+2\dim\uf.
\end{align*}
By construction, $f\in G\cdot f\cap(L\cdot{f_\lf}+\uf)\neq\phi$. Therefore $G\cdot f=\ind^\g_\lf L\cdot{f_{\lf}}$ by Proposition \ref{LS prop}.
\end{proof}

\begin{cor}\label{ind chi cor}
Under the conditions in Lemma \ref{ind good lem}, $\chi(u)=(f_\lf|u)$ for all $u\in\lf$.
\end{cor}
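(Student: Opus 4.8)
The plan is to read the statement off from the decomposition of $f$ constructed in the proof of Lemma~\ref{ind good lem}, together with the orthogonality of a Levi subalgebra and the nilradical of a parabolic with respect to the invariant form $\inv$.

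In the notation of Lemma~\ref{ind good lem} we have the Levi decomposition $\p=\lf\oplus\uf$ with $\uf=\bigoplus_{\alpha\in\Delta_{-}\backslash(\Delta_{-}\cap\Delta_\lf)}\g_\alpha$ and $\lf=\h\oplus\bigoplus_{\alpha\in\Delta_\lf}\g_\alpha$, and $f=f_\lf+f_\uf$ with $f_\lf\in\lf_{-1}$, $f_\uf\in\uf_{-1}$. First I would note that $(\uf|\lf)=0$: the form $\inv$ pairs $\g_\alpha$ nontrivially only with $\g_{-\alpha}$ and $\h$ with $\h$, while $\uf$ has no component in $\h$ and, since $\Delta_\lf$ is a root subsystem (hence stable under $\alpha\mapsto-\alpha$), no root $\alpha$ occurring in $\uf$ has $-\alpha$ occurring in $\lf$. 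In particular $(f_\uf|u)=0$ for every $u\in\lf$.

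It then remains only to compute, for $u\in\lf$,
\begin{align*}
\chi(u)=(f|u)=(f_\lf|u)+(f_\uf|u)=(f_\lf|u),
\end{align*}
which is the assertion. There is no real obstacle here: the one nontrivial ingredient is the standard orthogonality $(\uf|\lf)=0$, which is immediate from the root-space description already in place in the proof of Lemma~\ref{ind good lem}.
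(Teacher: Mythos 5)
Your proof is correct and is essentially the paper's own argument: the paper likewise invokes $(\lf\mid\uf)=0$ and computes $\chi(u)=(f_\lf|u)+(f_\uf|u)=(f_\lf|u)$ for $u\in\lf$. Your extra justification of the orthogonality via root spaces and the stability of $\Delta_\lf$ under negation is a harmless elaboration of what the paper leaves implicit.
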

\begin{proof}
We use the notations in the proof of Lemma \ref{ind good lem}. Since $(\lf\mid\uf)=0$, we have $\chi(u)=(f_\lf|u)+(f_\uf|u)=(f_\lf|u)$ for all $u\in\lf$.
\end{proof}

\begin{rem}\label{ind rem}
The condition $\Pi\backslash\Pi_\lf\subset\Pi_1$ in Lemma \ref{ind good lem} is valid for all cases of type $A$ by \cite{Kr, OW}, rectangular nilpotent cases of type $BCD$ by \cite{Ke, Sp}, all cases of type $G$ and many cases of other exceptional types by \cite{GE}.
\end{rem}

\subsection{Preliminary results}\label{pre sec}
Continue to use the notations in Section \ref{reductive W-alg sec} and \ref{ind nil sec}. Under the condition $\Pi\backslash\Pi_\lf\subset\Pi_1$, we have a nilpotent element $f_\lf$ in $[\lf,\lf]$ such that $\Gamma_{\lf}$ is good for $f_\lf$ by Lemma \ref{ind good lem}. Set $\nil_+^\lf=\lf\cap\nil_+$ and $\lf_0^+=\lf_0\cap\nil_+$. Denote by $(\Delta_\lf)_+=\Delta_\lf\cap\Delta_+$ and by $(\Delta_\lf)_j=\Delta_\lf\cap\Delta_j$. Let $N_+^\lf$, $L_0^+$, $L_{>0}$ be the Lie subgroup of $L$ corresponding to $\nil_+^\lf$, $\lf_0^+$, $\lf_{>0}$ respectively. Since $\g_0=\lf_0$, we have $L_0^+=G_0^+$. Let $c(\nil_+)=c(\g_{>0})\cdot c(\g_0^+)$ be a coordinate on $N_+$ compatible with the decomposition $N_+=G_{>0}\times G_0^+$. Then $c(\nil_+^\lf)=c(\nil_+)|_{N_+^\lf}=c(\g_{>0})|_{L_{>0}}\cdot c(\g_0^+)$ is a coordinate on $N_+^\lf$ compatible with the decomposition $N_+^\lf=L_{>0}\times L_0^+$. Let $\rho^R_\lf\colon\nil_+^\lf\rightarrow\D_{N_+^\lf}$ be the anti-homomorphism induced by the right action of $N_+^\lf$ on itself. Then $\rho^R_\lf(u)=\rho^R(u)$ for all $u\in\nil_+^\lf$ as differentials on $\C[N_+^\lf]$ by construction. Set
\begin{align*}
\rho^R_\lf(e_\alpha)=\sum_{\beta\in(\Delta_\lf)_+}P_{\alpha,\lf}^{\beta,R}(x)\der_\beta
\end{align*}
for all $\alpha\in(\Delta_\lf)_+$. We have
\begin{align}\label{Ps=P eq}
P_{\alpha,\lf}^{\beta,R}(x)=P_\alpha^{\beta,R}(x)|_{x_\gamma=0\ \mathrm{for}\ \mathrm{all}\ \gamma\in\Delta_+\backslash(\Delta_\lf)_+}.
\end{align}

\begin{lemma}\label{Ps=P lemma}
Suppose that $\Pi\backslash\Pi_\lf\subset\Pi_1$. For $\alpha,\beta\in(\Delta_\lf)_+$, $P_{\alpha,\lf}^{\beta,R}(x)=P_\alpha^{\beta,R}(x)$ if $\degG\alpha=\degG\beta$.
\end{lemma}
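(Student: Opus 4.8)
The plan is to read off the result directly from the restriction formula \eqref{Ps=P eq} together with the degree information already available: in the degree-equal case the polynomial $P_\alpha^{\beta,R}(x)$ lies in $\C[G_0^+]$ by Lemma \ref{pol1 lemma}, and I will show that $\C[G_0^+]$ only involves the coordinates $x_\gamma$ with $\gamma\in(\Delta_\lf)_+$, so the substitution $x_\gamma\mapsto 0$ for $\gamma\in\Delta_+\backslash(\Delta_\lf)_+$ acts trivially on it.

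First I would invoke Lemma \ref{pol1 lemma}, which applies because $c(\nil_+)$ is compatible with $N_+=G_{>0}\times G_0^+$: since $\degG\alpha=\degG\beta$, we get $P_\alpha^{\beta,R}(x)\in\C[G_0^+]$. Recall that $\C[G_0^+]$ is the homogeneous component of $\C[N_+]$ of $\degG$-degree $0$, and that $\degG x_\gamma=\degG\gamma\geq 0$ for every $\gamma\in\Delta_+$ (as $\nil_+\subset\g_{\geq0}$). Hence a polynomial of $\degG$-degree $0$ can only involve the variables $x_\gamma$ with $\degG\gamma=0$, i.e. $\gamma\in\Deltazero$; in other words $\C[G_0^+]=\C[x_\gamma\mid\gamma\in\Deltazero]$ and $P_\alpha^{\beta,R}(x)$ is a polynomial in these variables alone.

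The one genuine point is to check that $\Deltazero\subset(\Delta_\lf)_+$. Because $\Pi\backslash\Pi_\lf\subset\Pi_1$, every simple root of $\degG$-degree $0$ lies in $\Pi_\lf$, i.e. $\Pi_0\subset\Pi_\lf$. If $\gamma\in\Deltazero$, write $\gamma=\sum_{\delta\in\Pi}n_\delta\delta$ with $n_\delta\in\Z_{\geq0}$; applying $\degG$ and using $\degG\delta\geq 0$ for all $\delta\in\Pi$ forces $n_\delta=0$ whenever $\degG\delta>0$. Thus $\gamma$ is a $\Z_{\geq0}$-combination of elements of $\Pi_0\subset\Pi_\lf$, so $\gamma\in\Delta_\lf$ and hence $\gamma\in(\Delta_\lf)_+$.

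Combining the two observations, $P_\alpha^{\beta,R}(x)$ depends only on the coordinates $x_\gamma$ with $\gamma\in\Deltazero\subset(\Delta_\lf)_+$, so the substitution in \eqref{Ps=P eq} leaves it unchanged, giving $P_{\alpha,\lf}^{\beta,R}(x)=P_\alpha^{\beta,R}(x)$. I do not expect any serious obstacle; the only care needed is that the compatibility hypothesis on $c(\nil_+)$ is exactly what licenses the use of Lemma \ref{pol1 lemma}, and that the $\bQ_+$-positivity of $\degG$ on $\Delta_+$ is what pins down which coordinates can appear in a $\degG$-degree-$0$ polynomial.
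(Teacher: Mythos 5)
Your proof is correct and follows essentially the same route as the paper: the paper's own argument is simply that $\Pi\backslash\Pi_\lf\subset\Pi_1$ gives $\Delta_+\backslash(\Delta_\lf)_+\subset\Delta_{\geq1}$, so the variables killed by the substitution in \eqref{Ps=P eq} all have positive $\degG$-degree and hence cannot occur in $P_\alpha^{\beta,R}(x)\in\C[G_0^+]$, exactly the observation you make (in the equivalent form $\Deltazero\subset(\Delta_\lf)_+$). You have merely written out the details that the paper leaves implicit.
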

\begin{proof}
Since $\Pi\backslash\Pi_\lf\subset\Pi_1$, we have $\Delta_+\backslash(\Delta_\lf)_+\subset\Delta_{\geq1}$. The assertion follows by \eqref{Ps=P eq} and Lemma \ref{pol1 lemma}.
\end{proof}

Let $\Phi(\lf_{\frac{1}{2}})$ be the neutral vertex algebra associated with $\lf_{\frac{1}{2}}$, which is defined by
\begin{align*}
\Phi^\lf_\alpha(z)\Phi^\lf_\beta(w)\sim\frac{(f_\lf|[e_\alpha,e_\beta])}{z-w}
\end{align*}
for generating fields $\Phi_\alpha^\lf(z)$, $\Phi_\beta^\lf(z)$ with $\alpha,\beta\in(\Delta_\lf)_{\frac{1}{2}}$.
\begin{lemma}\label{ind 1/2 lemma}
Suppose that $\Pi\backslash\Pi_\lf\subset\Pi_1$. Then $\Phi(\lf_{\frac{1}{2}})=\Fne$.
\end{lemma}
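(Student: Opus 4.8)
The plan is to show that $\Phi(\lf_{\frac{1}{2}})$ and $\Fne=\Phi(\g_{\frac{1}{2}})$ are literally the same vertex algebra, by matching their generating fields and their defining operator product expansions. Since a neutral vertex algebra is freely generated by the prescribed fields subject only to those OPEs, once I check that the two index sets of generators agree and that the corresponding OPEs agree, the two vertex algebras coincide, and moreover $\Phi^\lf_\alpha=\Phi_\alpha$ for every index $\alpha$.

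First I would record that the hypothesis $\Pi\backslash\Pi_\lf\subset\Pi_1$ forces $\g_{\frac{1}{2}}=\lf_{\frac{1}{2}}$. This is exactly the computation already carried out in the proof of Lemma \ref{ind good lem}: with $\uf=\bigoplus_{\alpha\in\Delta_-\backslash(\Delta_-\cap\Delta_\lf)}\g_\alpha$, every positive root not in $\Delta_\lf$ involves a simple root from $\Pi_1$ and hence has $\degG\geq1$, so $\uf\subset\g_{\leq-1}$ and $\g_j=\lf_j$ for $j=0,\frac{1}{2}$. Since each root space lying in $\g_{\frac{1}{2}}$ then lies in $\lf$, we get $\Deltahalf=(\Delta_\lf)_{\frac{1}{2}}$, so $\Phi(\lf_{\frac{1}{2}})$ and $\Fne$ are built on the same family of generating fields, indexed by $\Deltahalf$.

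Next I would compare the OPEs. For $\alpha,\beta\in\Deltahalf=(\Delta_\lf)_{\frac{1}{2}}$ the vectors $e_\alpha,e_\beta$ lie in $\g_{\frac{1}{2}}=\lf_{\frac{1}{2}}\subset\lf$, so $[e_\alpha,e_\beta]\in\lf$ because $\lf$ is a subalgebra. By Corollary \ref{ind chi cor}, $\chi(u)=(f_\lf|u)$ for all $u\in\lf$, hence $\chi([e_\alpha,e_\beta])=(f_\lf|[e_\alpha,e_\beta])$. Thus the relation $\Phi_\alpha(z)\Phi_\beta(w)\sim\chi([e_\alpha,e_\beta])/(z-w)$ defining $\Fne$ is identical to the relation $\Phi^\lf_\alpha(z)\Phi^\lf_\beta(w)\sim(f_\lf|[e_\alpha,e_\beta])/(z-w)$ defining $\Phi(\lf_{\frac{1}{2}})$, and therefore $\Phi(\lf_{\frac{1}{2}})=\Fne$, with $\Phi^\lf_\alpha=\Phi_\alpha$ for all $\alpha\in\Deltahalf$.

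I do not expect a genuine obstacle here: the whole argument reduces to the bookkeeping identity $\g_{\frac{1}{2}}=\lf_{\frac{1}{2}}$ together with the compatibility $\chi|_\lf=(f_\lf|\cdot)$ already established in Corollary \ref{ind chi cor}. The only point needing a little care is to read $\Phi(\lf_{\frac{1}{2}})=\Fne$ as an equality of vertex algebras sitting inside the same ambient object (rather than an abstract isomorphism), which is precisely what the identification of generators and OPEs provides, and which is what will be used in the free field realization of $\W^{\kappa_\lf}(\lf,f_\lf;\Gamma_\lf)$.
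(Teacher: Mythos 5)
Your proof is correct and follows essentially the same route as the paper, whose one-line proof invokes Corollary \ref{ind chi cor} and implicitly the equality $\g_{\frac{1}{2}}=\lf_{\frac{1}{2}}$ from the proof of Lemma \ref{ind good lem}; you have simply spelled out both ingredients. No issues.
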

\begin{proof}
The assertion of the lemma immediately follows from Corollary \ref{ind chi cor}.
\end{proof}

Let $(\Pi_\lf)_j=\Pi_\lf\cap\Delta_j$. Recall that $[\alpha]$ is the subset of $\Delta_{>0}$ defined by \eqref{[a] def eq} for $\alpha\in\Pi_{>0}$.

\begin{lemma}\label{[a]s=[a] lemma}
Suppose that $\Pi\backslash\Pi_\lf\subset\Pi_1$. Then all roots in $[\alpha]$ lie in $\Delta_+^\lf$ for all $\alpha\in(\Pi_\lf)_{>0}$.
\end{lemma}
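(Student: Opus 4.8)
The statement to prove is: if $\Pi\backslash\Pi_\lf\subset\Pi_1$, then every root in $[\alpha]$ lies in $\Delta_+^\lf$ for all $\alpha\in(\Pi_\lf)_{>0}$. Recall from \eqref{[a] def eq} that $[\alpha]=\{\beta\in\Delta_+\mid\beta-\alpha\in\bQ_0\}$, where $\bQ_0=\bigoplus_{\gamma\in\Pi_0}\Z\gamma$ is the root lattice of $\rf$. So I must show that if $\beta\in\Delta_+$ and $\beta-\alpha$ is an integral combination of simple roots in $\Pi_0$, with $\alpha\in(\Pi_\lf)_{>0}$, then $\beta$ is a positive root of $\lf$.

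First I would expand $\beta$ in the basis $\Pi=\Pi_0\sqcup\Pi_\lf^{>0}\sqcup(\Pi\backslash\Pi_\lf)$, where I abbreviate $\Pi_\lf^{>0}=\Pi_\lf\cap\Delta_{>0}=(\Pi_\lf)_{>0}$; note $\Pi_\lf=(\Pi_\lf)_0\sqcup(\Pi_\lf)_{>0}$ and $(\Pi_\lf)_0\subset\Pi_0$, while by hypothesis $\Pi\backslash\Pi_\lf\subset\Pi_1\subset\Delta_{>0}$ and these simple roots have $\degG=1$. Write $\beta=\sum_{\gamma\in\Pi_0}m_\gamma\gamma+\sum_{\gamma\in(\Pi_\lf)_{>0}}n_\gamma\gamma+\sum_{\gamma\in\Pi\backslash\Pi_\lf}p_\gamma\gamma$ with nonnegative integer coefficients (since $\beta\in\Delta_+$). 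The condition $\beta-\alpha\in\bQ_0$ forces the coefficients of $\beta$ and of $\alpha$ to agree outside $\Pi_0$. Since $\alpha\in(\Pi_\lf)_{>0}$ is itself a simple root, its only nonzero coefficient outside $\Pi_0$ is the coefficient $1$ at $\alpha$ itself, and in particular $\alpha$ has coefficient $0$ at every root of $\Pi\backslash\Pi_\lf$. Therefore $p_\gamma=0$ for all $\gamma\in\Pi\backslash\Pi_\lf$, and $n_\alpha=1$ while $n_\gamma=0$ for $\gamma\in(\Pi_\lf)_{>0}\backslash\{\alpha\}$. Hence $\beta=\sum_{\gamma\in\Pi_0}m_\gamma\gamma+\alpha$, a nonnegative integer combination of simple roots all of which lie in $\Pi_\lf$ (because $\Pi_0\backslash(\Pi_\lf)_0\subset\Pi_0$ — wait, I need $\Pi_0\subset\Pi_\lf$, which does not hold in general).

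Here is the fix, and the one genuinely delicate point: the coefficients $m_\gamma$ for $\gamma\in\Pi_0\backslash\Pi_\lf$ need not a priori vanish, because $\bQ_0$ is the full root lattice of $\rf$, which may be larger than the root lattice of $\rf\cap\lf$. So I must argue separately that those coefficients are zero. The argument: $\Pi_0\backslash\Pi_\lf\subset(\Pi\backslash\Pi_\lf)\cap\Pi_0$; but $\Pi\backslash\Pi_\lf\subset\Pi_1$ by hypothesis, and $\Pi_1\cap\Pi_0=\emptyset$, so $\Pi_0\backslash\Pi_\lf=\emptyset$, i.e. $\Pi_0\subset\Pi_\lf$. (In fact $\Pi_0=(\Pi_\lf)_0$.) With this in hand, $\beta=\sum_{\gamma\in\Pi_0}m_\gamma\gamma+\alpha$ is a nonnegative integer combination of roots in $\Pi_\lf\subset\Delta_\lf$, hence lies in the root lattice of $\lf$, and being an element of $\Delta_+$ it is a positive root of $\g$ lying in $\bigoplus_{\gamma\in\Delta_\lf}\g_\gamma$, which by \eqref{Levi root eq} means precisely $\beta\in(\Delta_\lf)_+=\Delta_+^\lf$. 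This completes the argument.

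I expect the only subtlety to be exactly the containment $\Pi_0\subset\Pi_\lf$ just noted; everything else is a direct manipulation of coefficients in the simple-root basis. An alternative, slightly slicker phrasing avoids coefficients entirely: the hypothesis gives $\degG\delta=1$ for each $\delta\in\Pi\backslash\Pi_\lf$, so $\degG$ of any positive root is at least its total $(\Pi\backslash\Pi_\lf)$-multiplicity; since $\beta-\alpha\in\bQ_0$ and $\degG$ vanishes on $\bQ_0$ (as $\Pi_0\subset\Delta_0$), we get $\degG\beta=\degG\alpha$, but more importantly one tracks that $\beta$ and $\alpha$ have the same $(\Pi\backslash\Pi_\lf)$-multiplicity, which is $0$ for the simple root $\alpha\in\Pi_\lf$ — forcing $\beta\in\bigoplus_{\gamma\in\Pi_\lf}\Z_{\geq0}\gamma\cap\Delta_+=\Delta_+^\lf$. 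Either way the key input is that $\Pi\backslash\Pi_\lf$ is disjoint from both $\Pi_0$ and, via the simplicity of $\alpha$, from the support of $\alpha$.
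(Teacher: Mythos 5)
Your proof is correct and follows essentially the same route as the paper's (one-line) proof: every $\beta\in[\alpha]$ is $\alpha$ plus a nonnegative combination of simple roots in $\Pi_0$, and the hypothesis $\Pi\backslash\Pi_\lf\subset\Pi_1$ forces $\Pi_0=(\Pi_\lf)_0\subset\Pi_\lf$, so $\beta$ lies in $\Z_{\geq0}\Pi_\lf\cap\Delta_+=\Delta_+^\lf$. The "delicate point" you flag ($\Pi_0\subset\Pi_\lf$) is exactly the observation the paper records as $\Pi_0=(\Pi_\lf)_0$.
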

\begin{proof}
All roots in $[\alpha]$ are spanned by $\alpha$ and simple roots in $\Pi_0=(\Pi_\lf)_0$. Hence, the assertion of the lemma follows.
\end{proof}

\subsection{Parabolic inductions}\label{induced sec}
Set the Killing forms $\killing_\g$, $\killing_\lf$ on $\g$, $\lf$ respectively. 

\begin{theorem}\label{induced thm}
Let $\Gamma$ be a good grading for $f$ on $\g$ and $\lf$ a Levi subalgebra of $\g$ with simple roots $\Pi_\lf$. Suppose that $\Pi\backslash\Pi_\lf\subset\Pi_1$. Let $\Gamma_\lf$ be a $\frac{1}{2}\Z$-grading on $\lf$ defined by restriction of $\Gamma$ and $f_\lf$ the nilpotent element of $[\lf,\lf]$ chosen by Lemma \ref{ind good lem}. Then there exists an injective vertex algebra homomorphism
\begin{align*}
\Ind^\g_\lf\colon\W^\kappa(\g,f;\Gamma)\rightarrow\W^{\kappa_\lf}(\lf,f_\lf;\Gamma_\lf),
\end{align*}
where
\begin{align*}
\kappa_\lf=\kappa+\frac{1}{2}\killing_\g-\frac{1}{2}\killing_\lf.
\end{align*}
Moreover, the map $\Ind^\g_\lf$ is a unique vertex algebra homomorphism that satisfies
\begin{align*}
\mu_\kappa=\mu_{\lf, \kappa_\lf} \circ \Ind^\g_\lf,
\end{align*}
where $\mu_\kappa$, $\mu_{\lf, \kappa_\lf}$ are the Miura maps for $\W^\kappa(\g,f;\Gamma)$, $\W^{\kappa_\lf}(\lf,f_\lf;\Gamma_\lf)$ respectively.
\end{theorem}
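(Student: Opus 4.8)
The plan is to realise $\Ind^\g_\lf$ as the specialisation of the natural chain of inclusions of screening kernels
\begin{align*}
\bigcap_{\alpha\in\Pi}\Ker Q_\alpha\hookrightarrow\bigcap_{\alpha\in\Pi_\lf}\Ker Q_\alpha\hookrightarrow\bigcap_{\alpha\in\Pi_0}\Ker Q_\alpha
\end{align*}
inside $\Azero^T\otimes\FneT\otimes\Hi^T$, where the $Q_\alpha$ ($\alpha\in\Pi$) are the screening operators of Theorem \ref{main thm} attached to $\W^T(\g,f;\Gamma)$ in the reductive form \eqref{reductive screening eq}. First note that $\Pi\backslash\Pi_\lf\subset\Pi_1$ forces $\Delta_0=(\Delta_\lf)_0$ and $\Delta_{\frac{1}{2}}=(\Delta_\lf)_{\frac{1}{2}}$, hence $\Pi_0=(\Pi_\lf)_0\subset\Pi_\lf$ and $\Pi_{\frac{1}{2}}=(\Pi_\lf)_{\frac{1}{2}}\subset\Pi_\lf$, so the three intersections are genuinely nested; by Theorem \ref{Miura thm} the leftmost is $\W^T(\g,f;\Gamma)$ and by Lemma \ref{Miura const lem} the rightmost is $\VtauT\otimes\FneT$.

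The core step is to identify the middle term with the Wakimoto realisation of the $\W$-algebra of $\lf$. Put on $N_+^\lf$ the coordinate $c(\nil_+^\lf)=c(\g_{>0})|_{L_{>0}}\cdot c(\g_0^+)$ compatible with $N_+^\lf=L_{>0}\times L_0^+$ as in Section \ref{pre sec}, and run the construction of Section \ref{W-alg Wak sec} for $(\lf,f_\lf;\Gamma_\lf)$ with the form $\kappa_\lf$. This produces screening operators $Q_\alpha^\lf$ ($\alpha\in\Pi_\lf$) on $\mathcal{A}_{(\Delta_\lf)_0^+}^T\otimes\Phi^T(\lf_{\frac{1}{2}})\otimes\Hi_\lf^T$ with $\bigcap_{\alpha\in\Pi_\lf}\Ker Q_\alpha^\lf\simeq\W^T(\lf,f_\lf;\Gamma_\lf)$. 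I claim this is literally the same data as $(\Azero^T\otimes\FneT\otimes\Hi^T,\{Q_\alpha\}_{\alpha\in\Pi_\lf})$. Indeed the $\beta\gamma$-system agrees since $(\Delta_\lf)_0^+=\Deltazero$; the neutral part agrees by Lemma \ref{ind 1/2 lemma} (via Corollary \ref{ind chi cor}, which gives $\Phi(\lf_{\frac{1}{2}})=\Fne$); the Heisenberg parts agree because $\kappa_\lf+\frac{1}{2}\killing_\lf=\kappa+\frac{1}{2}\killing_\g$ on $\h$, which is exactly the reason for the stated choice of $\kappa_\lf$, so the shifted forms defining the two Heisenberg vertex algebras — and hence the exponential factors $\e^{-\frac{1}{\para+h^\vee}\int b_\alpha(z)}$ occurring in the screening operators — coincide; and finally, by Proposition \ref{[a] prop} together with Lemmas \ref{[a]s=[a] lemma} and \ref{Ps=P lemma}, for $\alpha\in\Pi_\lf$ the operator $Q_\alpha$ only involves the polynomials $P_\alpha^{\beta,R}$ with $\beta\in[\alpha]\subset\Delta_+^\lf$, and on those $P_\alpha^{\beta,R}=P_{\alpha,\lf}^{\beta,R}$, so $Q_\alpha=Q_\alpha^\lf$. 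Hence the middle term, with the inverse of the Wakimoto map $\omega_\lf$, is identified with $\W^T(\lf,f_\lf;\Gamma_\lf)$ for the form $\kappa_\lf$.

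Granting this, the first inclusion becomes a vertex algebra homomorphism $\W^T(\g,f;\Gamma)\to\W^T(\lf,f_\lf;\Gamma_\lf)$ over $T$, and $\Ind^\g_\lf$ is its specialisation at the value of the formal parameter prescribed by $\kappa$ (the fixed shift relating $\kappa$ to $\kappa_\lf$ making this compatible with the corresponding specialisation on the $\lf$-side). For the Miura compatibility and injectivity I would invoke Theorem \ref{w-alg Miura thm} twice. Applied to $(\g,f;\Gamma)$, it says the composite of both inclusions specialises to $\mu$. Applied to $(\lf,f_\lf;\Gamma_\lf)$ — legitimate because $(\Pi_\lf)_0=\Pi_0$ and $Q_\alpha=Q_\alpha^\lf$ give $\bigcap_{\alpha\in(\Pi_\lf)_0}\Ker Q_\alpha^\lf=\bigcap_{\alpha\in\Pi_0}\Ker Q_\alpha$, while $\tau_{\kappa_\lf}$ on $\lf_0=\g_0$ equals $\tau_\kappa$ on $\g_0$ since $\killing_{\lf_0}=\killing_{\g_0}$ — it says the second inclusion specialises to $\mu_\lf$. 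Therefore $\mu=\mu_\lf\circ\Ind^\g_\lf$; since $\mu$ is injective by \cite{F,A,G}, so is $\Ind^\g_\lf$; and since $\mu_\lf$ is injective, any vertex algebra homomorphism $\psi$ with $\mu=\mu_\lf\circ\psi$ must equal $\Ind^\g_\lf$, which is the asserted uniqueness.

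The main obstacle is the identification carried out in the second paragraph, and within it the form-matching for the Heisenberg factors: the non-formal point is precisely that $\frac{1}{2}\killing_\g-\frac{1}{2}\killing_\lf$ is the unique correction making $\kappa_\lf+\frac{1}{2}\killing_\lf$ and $\kappa+\frac{1}{2}\killing_\g$ restrict to the same form on $\h$, so that the exponential parts of the two families of screening operators agree on the nose. The remaining bookkeeping — matching polynomials via Lemma \ref{Ps=P lemma}, summation ranges via Lemma \ref{[a]s=[a] lemma}, and neutral fields via Lemma \ref{ind 1/2 lemma} — is then routine given the preliminary results of Section \ref{pre sec}.
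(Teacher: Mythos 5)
Your proposal is correct and follows essentially the same route as the paper's proof: identify $\W^T(\g,f;\Gamma)$ and $\W^T(\lf,f_\lf;\Gamma_\lf)$ with nested intersections of screening kernels via Theorem \ref{Miura thm}, match $Q_\alpha=Q_\alpha^\lf$ for $\alpha\in\Pi_\lf$ using Lemmas \ref{Ps=P lemma}, \ref{ind 1/2 lemma}, \ref{[a]s=[a] lemma} together with the level shift $\kappa_\lf+\frac{1}{2}\killing_\lf=\kappa+\frac{1}{2}\killing_\g$ (which the paper spells out factor-by-factor as $\para_i+h_i^\vee=\frac{2}{(\theta_i|\theta_i)}(\para+h^\vee)$), and deduce the Miura factorization, injectivity and uniqueness from Theorem \ref{w-alg Miura thm} and the injectivity of $\mu$ and $\mu_\lf$. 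The only cosmetic difference is that the paper first treats simple $\g$ and then passes to reductive $\g$ by the tensor decomposition \eqref{reductive W-alg eq}, whereas you work with the reductive form throughout; both are fine.
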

\begin{proof}
First, we consider the case that $\g$ is a simple Lie algebra. By Corollary \ref{main cor},
\begin{align*}
\W^U(\g,f;\Gamma)\simeq\bigcap_{\alpha\in\Pi}\Ker\left(Q_\alpha \colon \Wak^U_{\g,\chi} \rightarrow \Wak^U_\chi(-\alpha)\right).
\end{align*}
Set
\begin{align}\label{s dec eq}
\lf=\z_\lf\oplus\bigoplus_{i=1}^{m_\lf}\lf^i,
\end{align}
where $\z_\lf$ is the center of $\lf$ and $\lf^i$ is a simple Lie algebra. Note that $\h$ is also a Cartan subalgebra of $\lf$. Let $\theta_i$ be the highest root of $\lf^i$, $h^\vee_i$ the dual Coxeter number of $\lf^i$ and $\bm{\kappa}_\lf=\bm{\kappa}+\frac{1}{2}\killing_\g-\frac{1}{2}\killing_\lf$ a $U$-valued invariant bilinear form on $\g$, where $\bm{\kappa}(u|v)=\para(u|v)$. Then
\begin{align*}
\bm{\kappa}_\lf(u|v)=
\begin{cases}
(\para+h^\vee)(u|v) & (u,v\in\z_\lf)\\
\para_i(u|v) & (u,v\in\lf^i),
\end{cases}
\end{align*}
where $\para_i=\frac{2}{(\theta_i|\theta_i)}(\para+h^\vee)-h^\vee_i$. We shall denote by $\W^U(\lf,f_\lf;\Gamma_\lf)$ instead of $\W^{\bm{\kappa}_\lf}(\lf,f_\lf;\Gamma_\lf)$. Set $(\Pi_\lf)^i=\{\alpha\in\Pi_\lf\mid\lf_\alpha\subset\lf^i\}$ and $(\Pi_\lf)_j^i=(\Pi_\lf)^i\cap(\Pi_\lf)_j$. As in Section \ref{reductive W-alg sec}, we have
\begin{align*}
\W^U(\lf,f_\lf;\Gamma_\lf)\simeq\bigcap_{\alpha\in\Pi_\lf}\Ker\left(Q_\alpha^\lf \colon \Wak^U_{\lf, \chi^\lf} \rightarrow \Wak^U_{\chi^\lf}(-\alpha)\right),
\end{align*}
where $Q_\alpha^\lf$ with $\alpha \in \Pi_\lf$ are screening operators of $\W^U(\lf,f_\lf;\Gamma_\lf)$ given in Section \ref{reductive W-alg sec} for $\para_i=\frac{2}{(\theta_i|\theta_i)}(\para+h^\vee)-h^\vee_i$ and $\chi^\lf(u) = (f_\lf|u)$. By Theorem \ref{main thm},
\begin{align*}
Q_\alpha^\lf =
\begin{cases}
\displaystyle \sum_{\beta\in(\Delta_\lf)_0^+}\int:P_{\alpha,\lf}^{\beta,R}(a^*)(z)a_{\beta}(z)\ \e^{-\frac{1}{\para_i+h_i^{\vee}}\int b^\lf_{\alpha}(z)}:dz & (\alpha\in(\Pi_\lf)_0^i),\\
\displaystyle \sum_{\beta\in(\Delta_\lf)_{\frac{1}{2}}\cap[\alpha]}\int:P_{\alpha,\lf}^{\beta,R}(a^*)(z)\Phi_\beta^\lf(z)\ \e^{-\frac{1}{\para_i+h_i^{\vee}}\int b^\lf_{\alpha}(z)}:dz & (\alpha\in(\Pi_\lf)^i_{\frac{1}{2}}),\\
\displaystyle \sum_{\beta\in(\Delta_\lf)_1\cap[\alpha]}(f_\lf|e_{\beta})\int:P_{\alpha,\lf}^{\beta,R}(a^*)(z)\ \e^{-\frac{1}{\para_i+h_i^{\vee}}\int b^\lf_{\alpha}(z)}:dz & (\alpha\in(\Pi_\lf)^i_1),
\end{cases}
\end{align*}
where $b^\lf_\alpha(z)=\frac{2}{(\theta_i|\theta_i)}b_\alpha(z)$ for all $\alpha\in(\Pi_\lf)^i$. Then, by definition of $\para_i$,
\begin{align*}
:\e^{-\frac{1}{\para_i+h_i^{\vee}}\int b^\lf_{\alpha}(z)}:\ =\ :\e^{-\frac{1}{\para+h^{\vee}}\int b_{\alpha}(z)}:,\quad
\alpha\in(\Pi_\lf)^i.
\end{align*}
Thus, we have
\begin{align*}
Q_\alpha^\lf =
\begin{cases}
\displaystyle \sum_{\beta\in\Deltazero}\int:P_{\alpha}^{\beta,R}(a^*)(z)a_{\beta}(z)\ \e^{-\frac{1}{\para+h^{\vee}}\int b_{\alpha}(z)}:dz & (\alpha\in(\Pi_\lf)_0),\\
\displaystyle \sum_{\beta\in[\alpha]}\int:P_{\alpha}^{\beta,R}(a^*)(z)\Phi_\beta(z)\ \e^{-\frac{1}{\para+h^{\vee}}\int b_{\alpha}(z)}:dz & (\alpha\in(\Pi_\lf)_{\frac{1}{2}}),\\
\displaystyle \sum_{\beta\in[\alpha]}\chi(e_{\beta})\int:P_{\alpha}^{\beta,R}(a^*)(z)\ \e^{-\frac{1}{\para+h^{\vee}}\int b_{\alpha}(z)}:dz & (\alpha\in(\Pi_\lf)_1),
\end{cases}
\end{align*}
thanks to Lemma \ref{Ps=P lemma}, Lemma \ref{ind 1/2 lemma} and Lemma \ref{[a]s=[a] lemma}. Therefore $Q_\alpha=Q_\alpha^\lf$ for all $\alpha\in\Pi_\lf$ by Theorem \ref{main thm}. Moreover, it is clear that $\Wak^U_{\g, \chi} = \Wak^U_{\lf, \chi^\lf}$. Hence the specialization of embeddings
\begin{align*}
\bigcap_{\alpha\in\Pi}\Ker Q_\alpha|_{\Wak^U_{\g, \chi}} \hookrightarrow
\bigcap_{\alpha\in\Pi_\lf}\Ker Q_\alpha|_{\Wak^U_{\g, \chi}} \hookrightarrow
\bigcap_{\alpha\in\Pi_0}\Ker Q_\alpha|_{\Wak^U_{\g, \chi}}
\end{align*}
induces vertex algebra homomorphisms
\begin{align}\label{para prf eq}
\W^k(\g,f;\Gamma)\rightarrow\W^{{\kappa}_\lf}(\lf,f_\lf;\Gamma_\lf)\rightarrow\Vtau\otimes\Fne.
\end{align}
Let us denote by
\begin{align*}
\Ind^\g_\lf\colon\W^k(\g,f;\Gamma)\rightarrow\W^{\kappa_\lf}(\lf,f_\lf;\Gamma_\lf)
\end{align*}
the first map and by $\mu_{\lf, \kappa_\lf}$ the second map in \eqref{para prf eq}. Then
\begin{align*}
\mu_k = \mu_{\lf, \kappa_\lf} \circ \Ind^\g_\lf,
\end{align*}
where $\mu_k$, $\mu_{\lf, \kappa_\lf}$ are the Miura maps for $\W^k(\g,f;\Gamma)$, $\W^{\kappa_\lf}(\lf,f_\lf;\Gamma_\lf)$ respectively by Corollary \ref{Miura cor}. Since $\mu_k$ is injective, so is $\Ind^\g_\lf$. Therefore the assertion of the theorem follows for any simple Lie algebra $\g$.

Next, consider arbitrary reductive Lie algebra $\g$. We use the decomposition \eqref{reductive dec eq}. Let $f^i_\lf$ be the image of $f_\lf$ by the projection $\lf\twoheadrightarrow\lf\cap\g^i$ and $\Gamma^i_\lf$ the good grading for $f^i_\lf$ inherited from $\Gamma_\lf$ provided that $\lf\cap\g^i\neq0$. Following the argument in the above, since $\g^i$ is a simple Lie algebra, we have an injective homomorphism
\begin{align*}
\Ind^{\g^i}_{\lf\cap\g^i}\colon\W^{k_i}(\g^i,f^i;\Gamma^i)\rightarrow\W^{\kappa_i}(\lf\cap\g^i,f^i_\lf;\Gamma^i_\lf)
\end{align*}
for all $i\in I_\lf:=\{j\in\{1,\ldots,m\}\mid\lf\cap\g^j\neq0\}$, where $k_i=\kappa(\theta^i|\theta^i)/2$ for the highest root $\theta^i$ in $\g^i$ and
\begin{align*}
\kappa_i(u|v)=k_i(u|v)+\frac{1}{2}\killing_{\g^i}-\frac{1}{2}\killing_{\lf\cap\g^i}(u|v)
\end{align*}
for all $u,v\in\lf\cap\g^i$. Since
\begin{align*}
\W^{\kappa_\lf}(\lf,f_\lf;\Gamma_\lf)=V^\kappa(\z_\g)\otimes\bigotimes_{i\in I_\lf}\W^{\kappa_i}(\lf\cap\g^i,f^i_\lf;\Gamma_\lf^i),
\end{align*}
we have an injective homomorphism
\begin{align*}
\Ind^\g_\lf=\Id_{V^\kappa(\z_\g)}\otimes\bigotimes_{i\in I_\lf}\Ind^{\g^i}_{\lf\cap\g^i}\colon\W^\kappa(\g,f;\Gamma)\rightarrow\W^{\kappa_\lf}(\lf,f_\lf;\Gamma_\lf),
\end{align*}
where $\kappa_\lf=\kappa+\frac{1}{2}\killing_\g-\frac{1}{2}\killing_\lf$. Then $\Ind^\g_\lf$ satisfies $\mu_\kappa=\mu_{\lf, \kappa_\lf} \circ \Ind^\g_\lf$ by the property of $\Ind^{\g^i}_{\lf\cap\g^i}$. The proof of the theorem is now complete except for the uniqueness of $\Ind^\g_\rf$. Let $\psi\colon\W^k(\g,f;\Gamma)\rightarrow\W^{{\kappa}_\lf}(\lf,f_\lf;\Gamma_\lf)$ be a vertex algebra homomorphism such that $\mu_\kappa = \mu_{\lf,\kappa_\lf} \circ \psi$. Since $\mu_{\lf, \kappa_\lf} \circ \Ind^\g_\lf = \mu_\kappa = \mu_{\lf, \kappa_\lf} \circ \psi$ and $\mu_{\lf, \kappa_\lf}$ is injective, we have $\Ind^\g_\lf = \psi$. This completes the proof.
\end{proof}

\begin{prop}\label{induced prop}
Let $\g$ be a reductive Lie algebra, $f$ a nilpotent element in $\g$, $\Gamma$ a good grading for $f$ on $\g$. Let $\lf$, $\lf'$ be Levi subalgebras of $\g$ such that $\lf\subset\lf'$ and $\Pi\backslash\Pi_\lf, \Pi\backslash\Pi_{\lf'}\subset\Pi_1$. Then
\begin{align*}
\Ind^\g_\lf=\Ind^{\lf'}_\lf\circ\Ind^\g_{\lf'}.
\end{align*}
\end{prop}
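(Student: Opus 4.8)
The plan is to reduce the identity to the uniqueness clause of Theorem~\ref{induced thm}. That theorem characterizes $\Ind^\g_\lf$ as the \emph{unique} vertex algebra homomorphism $\W^\kappa(\g,f;\Gamma)\to\W^{\kappa_\lf}(\lf,f_\lf;\Gamma_\lf)$ satisfying $\mu=\mu_\lf\circ\Ind^\g_\lf$, so it suffices to verify (i) that $\Ind^{\lf'}_\lf\circ\Ind^\g_{\lf'}$ is a vertex algebra homomorphism with the same source and target as $\Ind^\g_\lf$, and (ii) that it satisfies the same Miura identity $\mu=\mu_\lf\circ(\Ind^{\lf'}_\lf\circ\Ind^\g_{\lf'})$.

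For (i) I first check that all three maps are defined. The grading hypothesis propagates to the pair $\lf\subset\lf'$: since $\Pi_{\lf'}\backslash\Pi_\lf\subseteq\Pi\backslash\Pi_\lf\subseteq\Pi_1$ and $\Pi_{\lf'}\backslash\Pi_\lf\subseteq\Pi_{\lf'}$, we get $\Pi_{\lf'}\backslash\Pi_\lf\subseteq\Pi_{\lf'}\cap\Delta_1$, which is exactly the hypothesis of Lemma~\ref{ind good lem} and of Theorem~\ref{induced thm} for the reductive algebra $\lf'$, the good grading $\Gamma_{\lf'}$, the nilpotent $f_{\lf'}$, and the Levi $\lf$ (whose center lies in $\h\subset\g_0$). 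Moreover the construction in the proof of Lemma~\ref{ind good lem} takes $f_\lf$ to be the $\lf_{-1}$-component of $f$ and $f_{\lf'}$ the $(\lf')_{-1}$-component of $f$; since $\lf_{-1}\subset(\lf')_{-1}$, applying that construction inside $\lf'$ to $f_{\lf'}$ returns precisely $f_\lf$ together with the grading $\Gamma_\lf$, consistently with the transitivity $\ind^\g_\lf=\ind^\g_{\lf'}\circ\ind^{\lf'}_\lf$ of Proposition~\ref{LS prop}(2). Finally the invariant forms match: $\Ind^\g_{\lf'}$ has target of level $\kappa_{\lf'}=\kappa+\tfrac12\killing_\g-\tfrac12\killing_{\lf'}$, and by Theorem~\ref{induced thm} applied inside $\lf'$ the target of $\Ind^{\lf'}_\lf$ has level $(\kappa_{\lf'})_\lf=\kappa_{\lf'}+\tfrac12\killing_{\lf'}-\tfrac12\killing_\lf=\kappa+\tfrac12\killing_\g-\tfrac12\killing_\lf=\kappa_\lf$ (all forms restricted to $\lf$). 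Hence $\Ind^{\lf'}_\lf\circ\Ind^\g_{\lf'}\colon\W^\kappa(\g,f;\Gamma)\to\W^{\kappa_\lf}(\lf,f_\lf;\Gamma_\lf)$.

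For (ii), Theorem~\ref{induced thm} for the pair $\lf'\subset\g$ gives $\mu=\mu_{\lf'}\circ\Ind^\g_{\lf'}$, and for the pair $\lf\subset\lf'$ it gives $\mu_{\lf'}=\mu_\lf\circ\Ind^{\lf'}_\lf$; here one uses that the Miura map of $\W^{(\kappa_{\lf'})_\lf}(\lf,f_\lf;\Gamma_\lf)=\W^{\kappa_\lf}(\lf,f_\lf;\Gamma_\lf)$ depends only on this $\W$-algebra, hence equals $\mu_\lf$, and similarly that the $\mu_{\lf'}$ appearing on the two sides is one and the same map, intrinsic to $\W^{\kappa_{\lf'}}(\lf',f_{\lf'};\Gamma_{\lf'})$. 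Composing, $\mu=\mu_{\lf'}\circ\Ind^\g_{\lf'}=\mu_\lf\circ\Ind^{\lf'}_\lf\circ\Ind^\g_{\lf'}$, and uniqueness in Theorem~\ref{induced thm} yields $\Ind^\g_\lf=\Ind^{\lf'}_\lf\circ\Ind^\g_{\lf'}$. For general reductive $\g$ one runs the same argument on each simple summand $\g^i$ of the decomposition~\eqref{reductive W-alg eq}, with the Levis $\lf\cap\g^i\subseteq\lf'\cap\g^i$ (the grading conditions restrict to each $\g^i$), and reassembles using that $\Ind$ is, by construction in Theorem~\ref{induced thm}, the tensor product of the simple-case maps with the identity on $V^\kappa(\z_\g)$.

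The only nontrivial point is the bookkeeping in the second paragraph: confirming that the two routes to $\W^{\kappa_\lf}(\lf,f_\lf;\Gamma_\lf)$ land on literally the same vertex algebra (same $f_\lf$, same $\Gamma_\lf$, same form), so that the symbol $\mu_\lf$ is unambiguous. Once that is settled the statement is a one-line diagram chase, and I expect no further obstacle beyond keeping the Killing-form restrictions and the reductive/central decomposition straight.
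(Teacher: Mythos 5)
Your proposal is correct and follows essentially the same route as the paper: the paper likewise derives $\mu_\lf\circ\Ind^\g_\lf=\mu=\mu_{\lf'}\circ\Ind^\g_{\lf'}=(\mu_\lf\circ\Ind^{\lf'}_\lf)\circ\Ind^\g_{\lf'}$ from the characterization in Theorem \ref{induced thm} and concludes by the injectivity of $\mu_\lf$ (equivalently, the uniqueness clause you invoke). Your extra bookkeeping — that $(\kappa_{\lf'})_\lf=\kappa_\lf$ and that the two routes produce the same $f_\lf$ and $\Gamma_\lf$ — is left implicit in the paper but is a harmless and correct elaboration.
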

\begin{proof}
Since $\Pi_{\lf'}\backslash\Pi_\lf\subset\Pi\backslash\Pi_\lf\subset\Pi_1$, a map $\Ind^{\lf'}_\lf$ exists by Theorem \ref{induced thm}. By the characterization of $\Ind^\g_\rf$, $\Ind^{\lf'}_\lf$ and $\Ind^\g_{\lf'}$ given in Theorem \ref{induced thm}, $\mu_{\lf, \kappa_\lf} \circ \Ind^\g_\lf = \mu_\kappa = \mu_{\lf', \kappa_{\lf'}} \circ \Ind^\g_{\lf'}= (\mu_{\lf, \kappa_\lf} \circ \Ind^{\lf'}_\lf ) \circ \Ind^\g_{\lf'}$. Since $\mu_{\lf, \kappa_\lf}$ is injective, $\Ind^\g_\lf = \Ind^{\lf'}_\lf \circ \Ind^\g_{\lf'}$. Therefore the assertion follows.
\end{proof}

If $f$ is a principal nilpotent element in $[\g,\g]$, there exists a (unique) good grading $\Gamma$. Then $\Pi=\Pi_1=\Delta_1$. Let $\lf$ be any Levi subalgebra of $\g$. Then $\Pi_\lf=(\Pi_\lf)_1$ and the principal nilpotent element $f_\lf$ in $[\lf,\lf]$ is chosen by Lemma \ref{ind good lem}. By Theorem \ref{induced thm}, we have an injective homomorphism, which is constructed in \cite{BFN}.

\subsection{Chiralizations}\label{chiral sec}
Let $V$ be any $\frac{1}{2}\Z_{\geq0}$-graded vertex algebra. Denote by
\begin{align*}
A\circ B&=\sum_{j=0}^\infty\binom{\conf(A)}{j}A_{(j-2)}B,\\
A*B&=\sum_{j=0}^\infty\binom{\conf(A)}{j}A_{(j-1)}B
\end{align*}
for $A,B\in V$. Then a vector space $\Zhu(V)=V/(V\circ V)$ has a structure of an associative algebra by the multiplication induced by $*$, called the (twisted) Zhu algebra of $V$ \cite{Z,FZ,DK}. We call $V$ a {\em chiralization} of an associative algebra $\Zhu(V)$. Recall that $\W^\kappa(\g,f;\Gamma)=H^0(C_+,d)$, see Section \ref{W-alg sec}. By \cite{A,DK}, we have
\begin{align}\label{Zhu BRST eq}
\Zhu(H^0(C_+,d))=H^0(\Zhu(C_+),\bar{d}),
\end{align}
where $\bar{d}$ is the differential induced by $d$ such that a complex $(\Zhu(C_+^\bullet),\bar{d})$ defines the finite $\W$-algebra associated with $\g,f,\Gamma$, which we denote by $U(\g.f;\Gamma)$, i.e.
\begin{align*}
\Zhu(\W^\kappa(\g,f;\Gamma))=U(\g.f;\Gamma).
\end{align*}
See e.g. \cite{Lo2,Wan} for the definitions and properties of finite $\W$-algebras. Let $\Zhu(\Fne)=\fne$. Note that $\Zhu(V^\kappa(\g))=U(\g)$ and $\Zhu(C^0_+)=U(\g_{\leq0})\otimes\fne$. The projection $\g_{\leq0}\oplus\fne\twoheadrightarrow\g_0\oplus\fne$ induces an algebra homomorphism
\begin{align*}
\bar{\mu}\colon U(\g,f;\Gamma)\rightarrow U(\g_0\oplus\fne),
\end{align*}
called the {\em Miura map for $U(\g.f;\Gamma)$}. The following result is proved by \cite{Ly} in the case that $\Gamma$ is $\Z$-graded but may be also applied in general case. We give the sketch of the proof, following the proof of Proposition 4 in Section 2.6 in \cite{A} (with slight generalization).

\begin{lemma}[\cite{A}]\label{classical inj lem}
$\bar{\mu}$ is injective.
\end{lemma}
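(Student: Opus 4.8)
The plan is to establish the injectivity of the finite Miura map $\bar{\mu}\colon U(\g,f;\Gamma)\to U(\g_0\oplus\fne)$ by a filtered-degeneration argument, exactly paralleling the proof of \cite[Proposition~4, \S2.6]{A}, but carried out for an arbitrary good grading rather than only a $\Z$-grading. The key point is that $U(\g,f;\Gamma)=H^0(\Zhu(C_+),\bar d)$ carries a natural Kazhdan-type (or good) filtration inherited from the PBW filtration on $U(\g_{\leq 0})$ together with the charge/conformal data, and that the associated graded of $U(\g,f;\Gamma)$ is identified with the coordinate ring $\C[\mathcal S_f]$ of the Slodowy slice (equivalently, with $\mathrm{gr}$ of the Kostant–Whittaker reduction). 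The map $\bar\mu$ is filtration-preserving, so it suffices to show that the induced map $\mathrm{gr}\,\bar\mu$ on associated graded rings is injective; injectivity of a filtered map follows from injectivity of its symbol provided the filtration is exhaustive and separated, which it is here because everything is finitely generated and nonnegatively graded by conformal weight.

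First I would set up the filtration: put the Kazhdan filtration on $U(\g)$ by $\deg(x)=1-j+2n$ for $x\in\g_j$ acting in charge/weight degree $n$ (the precise normalization is the one making $\gr U(\g)=\C[\g^*]$ with the Slodowy-adapted grading), restrict it to $U(\g_{\leq0})\otimes\fne$, and note that it passes to cohomology so that $\gr U(\g,f;\Gamma)\simeq \C[\mathcal S_f]$. This identification is standard (Gan–Ginzburg \cite{GG}, Premet \cite{P1}) and is insensitive to the choice of good grading since all good gradings for a fixed $f$ give isomorphic finite $\W$-algebras with isomorphic associated graded; one only needs to know it holds for the Dynkin grading and then transport. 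Next I would describe $\gr\bar\mu$ geometrically: the projection $\g_{\leq0}\oplus\fne\twoheadrightarrow\g_0\oplus\fne$ dualizes to a closed embedding $\mathcal S_f^{\,0}\hookrightarrow \mathcal S_f$ (or more precisely the symbol of $\bar\mu$ is the comorphism of the inclusion of the zero-level Slodowy-type slice inside $\mathcal S_f$), and the claim is that this comorphism $\C[\mathcal S_f]\to \C[\,\cdot\,]$ is surjective — i.e. $\gr\bar\mu$ is \emph{surjective}, hence in particular we get an injectivity statement in the other direction. The cleanest route, following Arakawa, is to observe that $\mathcal S_f$ is, as a scheme, the image of $f+\ker(\mathrm{ad}\,f)$ and that the composite $\mathcal S_f\to \g^*\to (\g_0\oplus\g_{1/2})^*$ (restriction of functionals) is a closed embedding because $f+\g^f\subset f+\g_{\geq 0}$ and the $\g_{>0}$-coordinates are determined by the rest via the defining relations; concretely, $\g^f\subset\bigoplus_{j\leq 0}\g_j\oplus\g_{1/2}$ by $\slf_2$-theory, so the slice $f+\g^f$ already lives in the subspace dual to $\g_0\oplus\g_{1/2}$, and the projection is injective on it. Therefore $\gr\bar\mu$ is injective, and hence $\bar\mu$ is injective.

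The step I expect to be the main obstacle is the identification $\gr U(\g,f;\Gamma)\simeq\C[\mathcal S_f]$ compatibly with the grading coming from an \emph{arbitrary} good grading $\Gamma$, and the verification that $\gr\bar\mu$ corresponds to the restriction-of-functionals comorphism under this identification. For the Dynkin grading this is exactly \cite{GG, P1, A}; for general good $\Gamma$ one must either redo the Gan–Ginzburg argument (the quantum Hamiltonian reduction computation of $\gr$ goes through verbatim once one has the vanishing $H^{i}=0$ for $i\neq 0$, which holds for any good grading) or invoke the isomorphism of finite $\W$-algebras for different good gradings of the same nilpotent together with the fact that such isomorphisms are filtered. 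I would take the first option — just remark that the associated-graded computation in \cite{GG} uses only the good-grading axioms ($f\in\g_{-1}$, $\ad f$ injective on $\g_{j}$ for $j\geq 1/2$, surjective for $j\leq 1/2$) and the $\slf_2$-inclusion $\g^f\subset\g_{\leq 0}\oplus\g_{1/2}$, all available here — and then the behaviour of $\bar\mu$ on symbols is immediate from functoriality of $\gr$ applied to the projection $\g_{\leq 0}\oplus\fne\twoheadrightarrow\g_0\oplus\fne$. With that in hand, injectivity of $\bar\mu$ is a one-line consequence of the standard fact that a filtered map with separated exhaustive filtrations is injective whenever its symbol map is.

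\begin{proof}[Sketch of proof of Lemma \ref{classical inj lem}]
Equip $U(\g,f;\Gamma)=H^0(\Zhu(C_+),\bar d)$ with the Kazhdan filtration induced from the PBW filtration on $U(\g_{\leq 0})\otimes\fne$, normalized so that it is compatible with the conformal grading. Since the good-grading axioms give the cohomology vanishing $H^i(\Zhu(C_+),\bar d)=0$ for $i\neq 0$, the quantum Hamiltonian reduction computation of \cite{GG} (which uses only these axioms together with $\g^f\subset\g_{\leq 0}\oplus\g_{\frac12}$) applies for an arbitrary good grading and yields a graded algebra isomorphism
\begin{align*}
\gr U(\g,f;\Gamma)\simeq\C[\mathcal S_f],\qquad \mathcal S_f=f+\g^f.
\end{align*}
The projection $\g_{\leq 0}\oplus\fne\twoheadrightarrow\g_0\oplus\fne$ is filtration-preserving, so $\bar\mu$ is a filtered algebra homomorphism and $\gr\bar\mu$ is the comorphism of the restriction-of-functionals map $(\g_0\oplus\g_{\frac12})^*\hookrightarrow(\g_{\leq0}\oplus\g_{\frac12})^*$, intersected with the slice. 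Because $\g^f\subset\g_{\leq 0}\oplus\g_{\frac12}$, the slice $\mathcal S_f=f+\g^f$ is already contained in the affine subspace complementary to $\g_{>0}^*$; hence the restriction map is injective on $\mathcal S_f$, i.e. $\gr\bar\mu$ is injective (in fact it realizes $\mathcal S_f$ as a closed subscheme). Finally, the Kazhdan filtration on $U(\g,f;\Gamma)$ is exhaustive and separated (it is nonnegatively graded with finite-dimensional pieces), so a filtered homomorphism whose associated graded is injective is itself injective. Therefore $\bar\mu$ is injective.
\end{proof}
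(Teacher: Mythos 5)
Your overall strategy is the same as the paper's: put the Kazhdan filtration on $U(\g,f;\Gamma)$, identify $\gr U(\g,f;\Gamma)$ with $\C[\mathcal{S}_f]$, and reduce to injectivity of the symbol map $\gr\bar{\mu}$. The gap is in the final, decisive step. You claim that $\gr\bar{\mu}$ is injective because $\g^f\subset\g_{\leq0}\oplus\g_{\frac12}$, so that the slice $f+\g^f$ ``already lives in the subspace dual to $\g_0\oplus\g_{\frac12}$'' and the restriction of functionals is injective on it. This is false. Under $\g\simeq\g^*$ the quotient $(\g_0\oplus\g_{\frac12})^*$ corresponds to the component-wise projection onto $\g_0\oplus\g_{-\frac12}$, and $\g^f$ is contained in $\g_{\leq0}$, not in $\g_0\oplus\g_{-\frac12}$: it generally has nonzero components in $\g_{-1},\g_{-\frac32},\dots$, which the projection kills. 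Already for the principal nilpotent in $\slf_2$ one has $\g^f=\C f=\g_{-1}$, so the projection of $\g^f$ to $\g_0\oplus\g_{-\frac12}=\g_0$ is identically zero, and your ``closed embedding'' collapses to a constant map. (The two spaces merely have the same dimension, $\dim\g^f=\dim\g_0+\dim\g_{\frac12}$; there is no containment.)

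The correct argument cannot be a containment of linear subspaces. One must use the Hamiltonian-reduction presentation $\C[\mathcal{S}_f]=\C[f+\g_{\geq-\frac12}]^{G_{\geq\frac12}}$, under which $\gr\bar{\mu}$ becomes restriction of $G_{\geq\frac12}$-invariant functions on $f+\g_{\geq-\frac12}$ to the subspace $f+\g_0\oplus\g_{-\frac12}$. Injectivity then follows because the action map $\xi\colon G_{\geq\frac12}\times(f+\g_0\oplus\g_{-\frac12})\to f+\g_{\geq-\frac12}$ is dominant, i.e.\ the $G_{\geq\frac12}$-saturation of $f+\g_0\oplus\g_{-\frac12}$ is Zariski dense; this is proved by computing the differential $d\xi_{(1,u)}(a,b)=[a,u]+b$ and checking it is an isomorphism at points $u$ in a suitable regular locus. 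That tangent-space computation is the actual content of the lemma and is entirely missing from your proposal; without it (or an equivalent density statement) the injectivity of $\gr\bar{\mu}$, and hence of $\bar{\mu}$, is not established.
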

\begin{proof}
Recall that $\mathcal{S}_f$ is the Slodowy slice through $f$ and is isomorphic to the Marsden-Weinstein quotient of a transversal slice $f+\g_{\geq-\frac{1}{2}}$ in $\g\simeq\g^*$ by $G_{\geq\frac{1}{2}}$ (\cite{GG}). There exists a filtration on $U(\g,f;\Gamma)$, called the Kazhdan filtration, such that the induced map
\begin{align*}
\gr\bar{\mu}\colon\gr U(\g,f;\Gamma)\rightarrow\gr U(\g_0)\otimes\gr\fne.
\end{align*}
can be identified with the restriction map
\begin{align*}
\nu\colon\C[\mathcal{S}_f]=\C[f+\g_{\geq-\frac{1}{2}}]^{G_{\geq\frac{1}{2}}}\rightarrow\C[f+\g_0\oplus\g_{-\frac{1}{2}}].
\end{align*}
We will show that the restriction map $\nu$ is injective. If $P\in\C[\mathcal{S}_f]$ lies in the kernel of $\nu$, $P(g\cdot u)=0$ for all $g\in G_{\geq\frac{1}{2}}$ and $u\in f+\g_{\geq-\frac{1}{2}}$. Hence, it suffices to show that a map
\begin{align*}
\xi\colon G_{\geq\frac{1}{2}}\times(f+\g_0\oplus\g_{-\frac{1}{2}})\rightarrow f+\g_{\geq-\frac{1}{2}}
\end{align*}
defined by $\xi(g,u)=g\cdot u$ is dominant (i.e. the image of $\xi$ is Zariski dense). Let
\begin{align*}
d\xi\colon\g_{\geq\frac{1}{2}}\times\g_0\oplus\g_{-\frac{1}{2}}\rightarrow\g_{\geq-\frac{1}{2}}
\end{align*}
be the differential of $\xi$. Then the differential of $\xi$ at $(1,u)\in G_{\geq\frac{1}{2}}\times(f+\g_0\oplus\g_{-\frac{1}{2}})$ is given by
\begin{align*}
d\xi_{(1,u)}(a,b)=[a,u]+b
\end{align*}
and is an isomorphism if $u\in f+(\g_0\oplus\g_{-\frac{1}{2}})_\mathrm{reg}$, where
\begin{align*}
(\g_0\oplus\g_{-\frac{1}{2}})_\mathrm{reg}=\{v\in\g_0\oplus\g_{-\frac{1}{2}}\mid\g_{\geq\frac{1}{2}}^v=0\},\quad
\g_{\geq\frac{1}{2}}^v=\{w\in\g_{\geq\frac{1}{2}}\mid[v,w]=0\}.
\end{align*}
Hence, $\xi$ is dominant. See e.g. \cite{TY}. This completes the proof.
\end{proof}

Let $V,W$ be any $\frac{1}{2}\Z_{\geq0}$-graded vertex algebras and $\psi\colon V\rightarrow W$ any vertex algebra homomorphism. Since $\psi(V\circ V)=\psi(V)\circ \psi(V)\subset W\circ W$, the map $\psi$ induces an algebra homomorphism
\begin{align*}
\Zhu(\psi)\colon\Zhu(V)\rightarrow\Zhu(W).
\end{align*}
We shall say that $\psi$ is a chiralization of $\Zhu(\psi)$. For $\psi=\mu_\kappa$, we obtain a map
\begin{align*}
\Zhu(\mu_\kappa) \colon U(\g,f;\Gamma) \rightarrow U(\g_0)\otimes\fne.
\end{align*}
\begin{lemma}\label{classical Miura lem}
$\Zhu(\mu_\kappa) = \bar{\mu}$. In particular, $\Zhu(\mu_\kappa)$ is injective.
\end{lemma}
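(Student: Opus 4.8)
The plan is to write the Miura map as a composite of a vertex-algebra inclusion and a projection, and then exploit the functoriality of the (twisted) Zhu functor. By the discussion in Section~\ref{Miura map sec}, $\W^\kappa(\g,f;\Gamma)=\Ker\bigl(d\colon C_+^0\to C_+^1\bigr)$ is a vertex subalgebra of $C_+^0=V^{\tau_\kappa}(\g_{\leq0})\otimes\Fne$ (it is closed under the vertex operations because $d$ is a derivation), and $\mu=\pi\circ\iota$, where $\iota\colon\W^\kappa(\g,f;\Gamma)\hookrightarrow C_+^0$ is the inclusion and $\pi\colon C_+^0\to V^{\tau_\kappa}(\g_0)\otimes\Fne$ is induced by the projection $\g_{\leq0}\twoheadrightarrow\g_0$. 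Applying $\Zhu$, which is a functor on $\frac{1}{2}\Z_{\geq0}$-graded vertex algebras, we obtain $\Zhu(\mu)=\Zhu(\pi)\circ\Zhu(\iota)$.

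I would then identify the two factors separately. For $\Zhu(\iota)$: by \eqref{Zhu BRST eq}, that is by \cite{A,DK}, the identification $\Zhu(\W^\kappa(\g,f;\Gamma))\cong H^0(\Zhu(C_+),\bar d)=U(\g,f;\Gamma)$ is the one under which $\Zhu(\iota)$ becomes the tautological inclusion $U(\g,f;\Gamma)=\Ker\bigl(\bar d\colon\Zhu(C_+^0)\to\Zhu(C_+^1)\bigr)\hookrightarrow\Zhu(C_+^0)=U(\g_{\leq0})\otimes\fne$. For $\Zhu(\pi)$: since $\pi$ is the tensor product of the surjection $V^{\tau_\kappa}(\g_{\leq0})\twoheadrightarrow V^{\tau_\kappa}(\g_0)$ attached to $\g_{\leq0}\twoheadrightarrow\g_0$ with $\Id_{\Fne}$, and $\Zhu$ is monoidal on the vertex algebras at hand and sends $\Zhu(V^\kappa(\g))=U(\g)$ naturally in $\g$ (both facts already used in Section~\ref{chiral sec}, in the form $\Zhu(C_+^0)=U(\g_{\leq0})\otimes\fne$ and $\Zhu(V^\kappa(\g))=U(\g)$), the map $\Zhu(\pi)$ is the projection $U(\g_{\leq0})\otimes\fne\twoheadrightarrow U(\g_0)\otimes\fne=U(\g_0\oplus\fne)$ induced by $\g_{\leq0}\oplus\fne\twoheadrightarrow\g_0\oplus\fne$. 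Composing, $\Zhu(\mu)=\Zhu(\pi)\circ\Zhu(\iota)$ is exactly the map $U(\g,f;\Gamma)\hookrightarrow U(\g_{\leq0})\otimes\fne\twoheadrightarrow U(\g_0\oplus\fne)$, which is the definition of $\bar\mu$ recalled in Section~\ref{chiral sec}. Hence $\Zhu(\mu)=\bar\mu$, and the injectivity of $\Zhu(\mu)$ follows at once from Lemma~\ref{classical inj lem}.

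The one step requiring genuine care — and the main obstacle — is the naturality assertion used for $\Zhu(\iota)$: that the comparison isomorphism $\Zhu(H^0(C_+,d))\simrightarrow H^0(\Zhu(C_+),\bar d)$ of \cite{A,DK} intertwines $\Zhu$ applied to the vertex-algebra inclusion $\W^\kappa(\g,f;\Gamma)\hookrightarrow C_+^0$ with the obvious inclusion $H^0(\Zhu(C_+),\bar d)\hookrightarrow\Zhu(C_+^0)$. This is implicit in the construction in \emph{loc.\ cit.}, where the isomorphism is produced from the compatibility of $\Zhu$ with the charge grading on $C_+$ and with the Kazhdan (good) filtration; since $\W^\kappa(\g,f;\Gamma)=\Ker d|_{C_+^0}$ is the charge-zero cohomology, the filtration induced on $\W^\kappa(\g,f;\Gamma)$ is precisely the restriction of the one on $C_+^0$, so no extra argument is needed. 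Once this compatibility is granted, $\Zhu(\iota)$ is automatically injective with image $U(\g,f;\Gamma)$, consistently with the injectivity of $\bar\mu$.
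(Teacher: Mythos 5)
Your proposal is correct and takes essentially the same route as the paper: the paper's proof is a one-line appeal to the compatibility \eqref{Zhu BRST eq} of the Zhu functor with the BRST reduction for the identity $\Zhu(\mu)=\bar{\mu}$, followed by Lemma \ref{classical inj lem} for injectivity. Your write-up simply makes explicit the factorization $\mu=\pi\circ\iota$ and the naturality of the comparison isomorphism that the paper leaves implicit in the phrase ``is induced by \eqref{Zhu BRST eq}''.
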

\begin{proof}
The formula $\Zhu(\mu_\kappa)=\bar{\mu}$ is induced by \eqref{Zhu BRST eq}. The injectivity of $\Zhu(\mu_\kappa)$ follows from Lemma \ref{classical inj lem}.
\end{proof}

For any map $\Ind^\g_\lf$ given in Theorem \ref{induced thm}, it induces an algebra homomorphism
\begin{align*}
\Zhu(\Ind^\g_\lf)\colon U(\g,f;\Gamma)\rightarrow U(\lf,f_\lf;\Gamma_\lf)
\end{align*}
such that
\begin{align}\label{mu ind eq}
\Zhu(\mu_\kappa)=\Zhu(\mu_{\lf, \kappa_\lf})\circ\Zhu(\Ind^\g_\lf)
\end{align}
by the characterization of $\Ind^\g_\lf$.
\begin{lemma}\label{Zhu ind lem}
$\Zhu(\Ind^\g_\lf)$ is a unique injective algebra homomorphism that satisfies $\bar{\mu}=\bar{\mu}_\lf\circ\Zhu(\Ind^\g_\lf)$.
\end{lemma}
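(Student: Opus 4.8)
The statement to prove is Lemma \ref{Zhu ind lem}: that $\Zhu(\Ind^\g_\lf)$ is the unique injective algebra homomorphism $U(\g,f;\Gamma)\to U(\lf,f_\lf;\Gamma_\lf)$ satisfying $\bar\mu=\bar\mu_\lf\circ\Zhu(\Ind^\g_\lf)$. The plan is to reduce everything to the identities already assembled in the excerpt. The functor $\Zhu(-)$ is functorial on vertex algebra homomorphisms, so applying it to the relation $\mu=\mu_\lf\circ\Ind^\g_\lf$ from Theorem \ref{induced thm} gives $\Zhu(\mu)=\Zhu(\mu_\lf)\circ\Zhu(\Ind^\g_\lf)$, which is exactly \eqref{mu ind eq}. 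By Lemma \ref{classical Miura lem} applied both to $\W^\kappa(\g,f;\Gamma)$ and to $\W^{\kappa_\lf}(\lf,f_\lf;\Gamma_\lf)$, we have $\Zhu(\mu)=\bar\mu$ and $\Zhu(\mu_\lf)=\bar\mu_\lf$; substituting these into \eqref{mu ind eq} yields the desired factorization $\bar\mu=\bar\mu_\lf\circ\Zhu(\Ind^\g_\lf)$.

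For injectivity, the cleanest route is: $\bar\mu=\bar\mu_\lf\circ\Zhu(\Ind^\g_\lf)$ together with the injectivity of $\bar\mu$ (Lemma \ref{classical inj lem}) forces $\Zhu(\Ind^\g_\lf)$ to be injective, since any map whose post-composition with another map is injective must itself be injective. Uniqueness is then immediate: if $\psi\colon U(\g,f;\Gamma)\to U(\lf,f_\lf;\Gamma_\lf)$ is any algebra homomorphism with $\bar\mu=\bar\mu_\lf\circ\psi$, then $\bar\mu_\lf\circ\Zhu(\Ind^\g_\lf)=\bar\mu=\bar\mu_\lf\circ\psi$, and since $\bar\mu_\lf$ is injective (again Lemma \ref{classical inj lem}, applied to $\lf$), we conclude $\Zhu(\Ind^\g_\lf)=\psi$. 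This mirrors exactly the uniqueness argument for $\Ind^\g_\lf$ itself in the proof of Theorem \ref{induced thm}.

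The one genuinely substantive point to check is that Lemma \ref{classical inj lem} (injectivity of the finite Miura map) applies in the generality needed here, namely for $\lf$ reductive with a good $\frac12\Z$-grading $\Gamma_\lf$ that need not be a $\Z$-grading. The proof of Lemma \ref{classical inj lem} given in the excerpt is written for $\g$ but the argument—passing to the associated graded with respect to the Kazhdan filtration, identifying $\gr\bar\mu$ with the restriction map $\C[\mathcal{S}_{f_\lf}]\to\C[f_\lf+\lf_0\oplus\lf_{-1/2}]$, and checking dominance of $\xi$ via the differential computation $d\xi_{(1,u)}(a,b)=[a,u]+b$—goes through verbatim for any reductive Lie algebra with a good grading, since it only uses the Slodowy-slice description and the good-grading axioms. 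I would therefore simply invoke Lemma \ref{classical inj lem} (and Lemma \ref{classical Miura lem}) for $\lf$ as well, noting that the reductive case follows from the simple-Lie-algebra case by the tensor decomposition \eqref{reductive W-alg eq}, under which all the maps $\mu$, $\bar\mu$, $\Ind^\g_\lf$ factor as tensor products. This is the only step requiring a remark rather than a one-line citation.
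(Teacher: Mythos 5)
Your proof is correct and follows essentially the same route as the paper: apply $\Zhu(-)$ to $\mu=\mu_\lf\circ\Ind^\g_\lf$ to get \eqref{mu ind eq}, identify $\Zhu(\mu)=\bar\mu$ and $\Zhu(\mu_\lf)=\bar\mu_\lf$ via Lemma \ref{classical Miura lem}, and deduce injectivity and uniqueness from the injectivity of $\bar\mu$ and $\bar\mu_\lf$ respectively. Your added remark that Lemma \ref{classical inj lem} must be checked for the reductive $\lf$ with its (possibly half-integer) good grading is a reasonable point of care that the paper leaves implicit, but it does not change the argument.
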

\begin{proof}
The assertion of the lemma immediately follows from \eqref{mu ind eq} and Lemma \ref{classical Miura lem}.
\end{proof}

Losev constructs an injective algebra homomorphism in \cite{L}
\begin{align}\label{Losev eq}
U(\g,f;\Gamma)\rightarrow\widetilde{U}(\lf,f_\lf;\Gamma_\lf)
\end{align}
if $G\cdot f=\ind^\g_\lf L\cdot{f_\lf}$, where $\widetilde{U}(\lf,f_\lf;\Gamma_\lf)$ is a certain completion of $U(\lf,f_\lf;\Gamma_\lf)$.

\begin{conj}\label{Conj}
Under the condition that $\Pi\backslash\Pi_\lf\subset\Pi_1$, $\Zhu(\Ind^\g_\lf)$ coincides with the map \eqref{Losev eq}.
\end{conj}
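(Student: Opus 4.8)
The plan is to deduce Conjecture \ref{Conj} from the characterization of $\Zhu(\Ind^\g_\lf)$ in Lemma \ref{Zhu ind lem}: $\Zhu(\Ind^\g_\lf)$ is the \emph{unique} injective algebra homomorphism $\psi\colon U(\g,f;\Gamma)\to U(\lf,f_\lf;\Gamma_\lf)$ with $\bar{\mu}=\bar{\mu}_\lf\circ\psi$. Since Losev's map \eqref{Losev eq} is injective, it would suffice to show: \textbf{(a)} under the hypothesis $\Pi\backslash\Pi_\lf\subset\Pi_1$ the image of \eqref{Losev eq} lies in the uncompleted subalgebra $U(\lf,f_\lf;\Gamma_\lf)\subset\widetilde{U}(\lf,f_\lf;\Gamma_\lf)$; and \textbf{(b)} the resulting map satisfies $\bar{\mu}=\bar{\mu}_\lf\circ(\text{Losev})$. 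Two remarks make (b) well posed: at the finite level the finite $\W$-algebras carry no level parameter, so the shift $\kappa_\lf=\kappa+\tfrac12\killing_\g-\tfrac12\killing_\lf$ of Theorem \ref{induced thm} plays no role; and by Lemma \ref{ind good lem} together with Lemma \ref{ind 1/2 lemma} (applied after $\Zhu$) one has $\g_0=\lf_0$ and $\bar\Phi(\lf_{\frac12})=\fne$, so $\bar{\mu}$ and $\bar{\mu}_\lf$ genuinely share the target $U(\g_0\oplus\fne)$.

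Statement (a) should be accessible. By Lemma \ref{ind good lem} the hypothesis forces $\uf\subset\g_{\leq-1}$, so the parabolic passage from $\g$ to $\lf$ is a Hamiltonian reduction by a unipotent group concentrated in negative degree --- a Drinfeld-Sokolov type reduction, for which no completion is required. One can make this precise with a Kazhdan-filtration argument parallel to Lemma \ref{classical inj lem}: the associated graded of Losev's map is (up to the translation identifying the transverse slices) the restriction map of coordinate rings of Slodowy slices attached to $G\cdot f=\ind^\g_\lf(L\cdot f_\lf)$, which under the hypothesis is a morphism of finitely generated algebras, and lifting it through the Kazhdan filtrations shows that Losev's map preserves them, hence has image in $U(\lf,f_\lf;\Gamma_\lf)$. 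The same conclusion is visible from the chiral side: $\Ind^\g_\lf$ of Theorem \ref{induced thm} is defined with no completion whatsoever, so its $\Zhu$-image is automatically algebraic.

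Statement (b) is the crux. Classically it is forced: $\bar{\mu}$ and $\bar{\mu}_\lf$ are the ``total'' reductions to $\g_0\oplus\fne$, and since the projection $\g_{\leq0}=\lf_{\leq0}\oplus\uf\twoheadrightarrow\lf_{\leq0}\twoheadrightarrow\g_0$ coincides with $\g_{\leq0}\twoheadrightarrow\g_0$, the corresponding factorization holds on $\C[\mathcal S_f]$ by transitivity of Lusztig-Spaltenstein induction (Proposition \ref{LS prop}(2)) and of Slodowy-slice restriction. Promoting this to the quantum identity $\bar{\mu}=\bar{\mu}_\lf\circ(\text{Losev})$ requires a sufficiently explicit grip on Losev's map. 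I see three routes: unwind the Fedosov-quantization construction of \cite{L} far enough to evaluate $\bar{\mu}_\lf\circ(\text{Losev})$ directly; extract from \cite{L} that Losev's map, like $\Zhu(\Ind^\g_\lf)$, is characterized by compatibility with the Miura maps --- equivalently with the Skryabin/Whittaker equivalences --- after which Lemma \ref{Zhu ind lem} finishes the argument; or, most cleanly, chiralize Losev's map to a vertex-algebra homomorphism $\W^\kappa(\g,f;\Gamma)\to\W^{\kappa_\lf}(\lf,f_\lf;\Gamma_\lf)$ and invoke the uniqueness in Theorem \ref{induced thm}. A fourth, representation-theoretic option is to verify that restriction along $\Zhu(\Ind^\g_\lf)$ reproduces Premet's parabolic induction functor \cite{P} --- both being pull-push along the Whittaker correspondence of $\p=\lf\oplus\uf$ --- and to recover the algebra map from the functor by evaluating on the regular bimodule.

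The main obstacle is precisely this reconciliation of two constructions of entirely different nature --- screening operators and Drinfeld-Sokolov reduction on one side, Fedosov quantization (and \emph{a priori} only a map into a completion, built by a procedure that is not manifestly compatible with the Miura map) on the other. All of the work is in step (b) at the quantum level; step (a) and the classical shadow of (b) are comparatively routine, and the cleanest resolution would presumably come from a chiral lift of Losev's map, which would also upgrade Conjecture \ref{Conj} to a statement at the level of affine $\W$-algebras.
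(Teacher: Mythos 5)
The statement you are working on is recorded in the paper as a \emph{conjecture} (Conjecture \ref{Conj}); the paper offers no proof of it, and your proposal does not close the gap either. Your reduction is the natural one and matches the surrounding text of the paper: by Lemma \ref{Zhu ind lem}, $\Zhu(\Ind^\g_\lf)$ is the unique injective algebra homomorphism satisfying $\bar{\mu}=\bar{\mu}_\lf\circ\Zhu(\Ind^\g_\lf)$, so it would suffice to show that Losev's map \eqref{Losev eq} (i) has image in the uncompleted algebra $U(\lf,f_\lf;\Gamma_\lf)$ and (ii) is itself compatible with the finite Miura maps.

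The genuine gap is your step (b), and you say so yourself: you enumerate four possible routes (unwinding the Fedosov construction of \cite{L}, extracting a Miura-type characterization of Losev's map, chiralizing Losev's map and invoking the uniqueness in Theorem \ref{induced thm}, or comparing the two parabolic induction functors on the regular bimodule), but none of them is carried out, and each requires substantive input from \cite{L} that is not available in this paper. The classical shadow does not suffice: knowing that $\gr$ of Losev's map is the restriction of coordinate rings of Slodowy slices does not determine the quantum map, since two filtered algebra homomorphisms with the same associated graded need not coincide, and the uniqueness clause of Lemma \ref{Zhu ind lem} requires the exact identity $\bar{\mu}=\bar{\mu}_\lf\circ(\mathrm{Losev})$, not merely its symbol. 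Step (a) is likewise not settled by observing that $\Zhu(\Ind^\g_\lf)$ has algebraic image, since that presupposes the coincidence being proved. In short, your write-up is a reasonable and well-organized plan of attack, but it is a plan rather than a proof; the paper leaves the statement as Conjecture \ref{Conj} for exactly this reason.
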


The pull-back of Losev's map \eqref{Losev eq} gives a functor from $U(\lf,f_\lf;\Gamma_\lf)$-mod to $U(\g,f;\Gamma)$-mod, called a parabolic induction functor and first introduced by Premet in \cite{P}. Motivated by these results and Conjecture \ref{Conj}, we call a map $\Ind^\g_\lf$ given in Theorem \ref{induced thm} a {\em parabolic induction} for $\W$-algebras, which induces a functor from $\W^{\kappa_\lf}(\lf,f_\lf;\Gamma_\lf)$-mod to $\W^{\kappa}(\g,f;\Gamma)$-mod.

\section{Coproducts}\label{coproduct sec}

In this section, we consider parabolic inductions $\Ind^\g_\lf$ in the case that $\g=\gl_N$ and the case that $\g$ is of type $BCD$ and $f$ is a rectangular nilpotent element. In the former case, we show that $\Ind^\g_\lf$ is a chiralization of a coproduct $\bar{\cop}$ of finite $\W$-algebras of type $A$ constructed by \cite{BK2}, which we call coproducts for $\W$-algebras of type $A$. In the latter case, we give a structure of coproducts on $W$-algebras of type $BCD$ with rectangular nilpotent elements, giving rise to coproducts of twisted Yangians of level $l$.

\subsection{Pyramids}\label{pyramid sec}
To describe good gradings of $\gl_N$ combinatorially, we introduce {\em pyramids}, which are first introduced by \cite{EK} to classify all good gradings of simple (classical) Lie algebras. Following \cite{BK2}, we only consider pyramids corresponding to good $\Z$-gradings of $\gl_N$, which should be called even pyramids but shall call just pyramids.

Let $q=(q_1,\ldots,q_l)$ be a sequence of positive integers and $\Py$ a diagram defined by stacking $q_1$ boxes in the first column, $q_2$ boxes in the second column, $\cdots$, $q_l$ boxes in the right-most column. The diagram $\Py$ is called a pyramid if each row of $\Py$ consists of a single connected strip, i.e. $0\leq {}^\exists t\leq l$ such that $q_1\leq\cdots\leq q_t$ and $q_{t+1}\geq\cdots\geq q_l$. For example,
\vspace{3mm}
\begin{align*}
\setlength{\unitlength}{1mm}
\begin{picture}(0, 0)(20,10)
\put(-30,0){\line(0,1){15}}
\put(-30,0){\line(1,0){15}}
\put(-30,5){\line(1,0){15}}
\put(-30,10){\line(1,0){10}}
\put(-30,15){\line(1,0){5}}
\put(-25,0){\line(0,1){15}}
\put(-20,0){\line(0,1){10}}
\put(-15,0){\line(0,1){5}}
\put(-14,0){,}
\put(-5,0){\line(0,1){10}}
\put(-5,0){\line(1,0){15}}
\put(-5,5){\line(1,0){15}}
\put(-5,10){\line(1,0){10}}
\put(0,0){\line(0,1){15}}
\put(0,15){\line(1,0){5}}
\put(5,0){\line(0,1){15}}
\put(10,0){\line(0,1){5}}
\put(11,0){,}
\put(20,0){\line(0,1){5}}
\put(20,0){\line(1,0){15}}
\put(20,5){\line(1,0){15}}
\put(25,0){\line(0,1){15}}
\put(25,10){\line(1,0){10}}
\put(25,15){\line(1,0){5}}
\put(30,0){\line(0,1){15}}
\put(35,0){\line(0,1){10}}
\put(36,0){,}
\put(45,0){\line(0,1){5}}
\put(45,0){\line(1,0){15}}
\put(45,5){\line(1,0){15}}
\put(50,0){\line(0,1){10}}
\put(50,10){\line(1,0){10}}
\put(55,0){\line(0,1){15}}
\put(55,15){\line(1,0){5}}
\put(60,0){\line(0,1){15}}
\put(61,0){.}
\end{picture}\\
\quad\\
\end{align*}
Fix a pyramid $\Py$. Set the height $n=\max(q_1,\ldots,q_l)$ of $\Py$, the sequence $p=(p_1,\ldots,p_n)$ of length of rows of $\Py$ from top to bottom, and the number $N=\sum_{i=1}^l q_i=\sum_{i=1}^n p_i$ of boxes in $\Py$ (e.g. $p=(1,2,3)$ and $N=6$ in the above examples). We fix a numbering of boxes in $\Py$ by $1,\ldots,N$ from top to bottom and from left to right, and denote by $\row(i)$ the row number of the box in which $i$ appears and by $\col(i)$ the column number similarly. For example,\vspace{5mm}
\begin{align}\label{cop diag}
\ 
\setlength{\unitlength}{1mm}
\begin{picture}(0, 0)(20,10)
\put(-26,6){$\Py\ =$}
\put(-11,16.5){\footnotesize$\row(i)$}
\put(-8,11){1}
\put(-8,6){2}
\put(-8,1){3}
\put(0,0){\line(0,1){5}}
\put(0,0){\line(1,0){20}}
\put(0,5){\line(1,0){20}}
\put(2,1){1}
\put(5,0){\line(0,1){15}}
\put(5,10){\line(1,0){10}}
\put(5,15){\line(1,0){5}}
\put(7,11){2}
\put(7,6){3}
\put(7,1){4}
\put(10,0){\line(0,1){15}}
\put(12,6){5}
\put(12,1){6}
\put(15,0){\line(0,1){10}}
\put(17,1){7}
\put(20,0){\line(0,1){5}}
\put(21,0){,}
\put(2,-6){1}
\put(7,-6){2}
\put(12,-6){3}
\put(17,-6){4}
\put(23,-5.5){\footnotesize$\col(i)$}
\put(40,11){$p=(\ 1,2,4\ )$,}
\put(40,6){$q=(\ 1,3,2,1\ )$,}
\put(40,0){$N=7$.}
\end{picture}
\end{align}

\vspace{15mm}\hspace{-5mm}We have $\row(4)=3$ and $\col(4)=2$ etc. Let $\{v_i\}_{i=1}^{N}$ be the standard basis of $\C^N$ and $\{e_{i,j}\}_{i,j=1}^N$ the standard basis of $\gl_N=\End(\C^N)$ by $e_{i,j}(v_k)=\delta_{j,k}v_i$. We attach to $\Py$ a nilpotent element $f_\Py$ by
\begin{align*}
f_\Py(v_j)=
\begin{cases}
\ v_i & (\ \row(i)=\row(j)\quad \mathrm{and}\quad \col(i)=\col(j)+1\ ),\\
\ 0 & (\ \mathrm{otherwise}\ )
\end{cases}
\end{align*}
and a $\Z$-grading $\Gamma_\Py$ on $\gl_n$ by $\deg_{\Gamma_\Py}(e_{i,j})=\col(j)-\col(i)$. Then $f_\Py$ has the standard Jordan form consisting of $p_1$ Jordan block, $p_2$ Jordan block, $\cdots$, $p_n$ Jordan block with all the diagonal $0$, and $\Gamma_\Py$ is good for $f_\Py$. Set a Cartan subalgebra $\h=\bigoplus_{i=1}^N\C e_{i,i}$, the dual basis $\{\epsilon_i\}_{i=1}^N$ of $\h^*$ by $\epsilon_i(e_{j,j})=\delta_{i,j}$, and the root system
\begin{align*}
\Delta=\{\epsilon_i-\epsilon_j\in\h^*\mid1\leq i\neq j\leq N\},\quad
\Pi=\{\alpha_i:=\epsilon_i-\epsilon_{i+1}\mid i=1,\ldots,N-1\}
\end{align*}
as usual. Since $\deg_{\Gamma_\Py}(\epsilon_i-\epsilon_j)=\deg_{\Gamma_\Py}(e_{i,j})$, we have
\begin{align}\label{pi deg eq}
\Pi_{0}=\{\alpha_i\in\Pi\mid\row(i)<\row(N)\},\quad
\Pi_{1}=\{\alpha_i\in\Pi\mid\row(i)=\row(N)\}.
\end{align}
In the case of \eqref{cop diag},
\begin{align*}
\quad\\
f_\Py=e_{5,3}+e_{7,6}+e_{6,4}+e_{4,1},\hspace{60mm}
\setlength{\unitlength}{1mm}
\begin{picture}(0,0)(20,10)
\put(-29,10){\circle{2}}
\put(-30,13){\footnotesize$1$}
\put(-30,5){\footnotesize$\alpha_1$}
\put(-28,10.3){\line(1,0){8}}
\put(-19,10){\circle{2}}
\put(-20,13){\footnotesize$0$}
\put(-20,5){\footnotesize$\alpha_2$}
\put(-18,10.3){\line(1,0){8}}
\put(-9,10){\circle{2}}
\put(-10,13){\footnotesize$0$}
\put(-10,5){\footnotesize$\alpha_3$}
\put(-8,10.3){\line(1,0){8}}
\put(1,10){\circle{2}}
\put(0,13){\footnotesize$1$}
\put(0,5){\footnotesize$\alpha_4$}
\put(2,10.3){\line(1,0){8}}
\put(11,10){\circle{2}}
\put(10,13){\footnotesize$0$}
\put(10,5){\footnotesize$\alpha_5$}
\put(12,10.3){\line(1,0){8}}
\put(21,10){\circle{2}}
\put(20,13){\footnotesize$1$}
\put(20,5){\footnotesize$\alpha_6$}
\put(23,9){.}
\end{picture}\\
\hspace{10mm}
\end{align*}
We split $\Py$ into two pyramids $\Py_1$, $\Py_2$ along a column, which we denote by $\Py=\Py_1\oplus\Py_2$. For example,
\begin{align*}
\quad\\
\setlength{\unitlength}{1mm}
\begin{picture}(0, 0)(20,10)
\put(-10,0){\line(0,1){5}}
\put(-10,0){\line(1,0){20}}
\put(-10,5){\line(1,0){20}}
\put(-8,1){1}
\put(-5,0){\line(0,1){15}}
\put(-5,10){\line(1,0){10}}
\put(-5,15){\line(1,0){5}}
\put(-3,11){2}
\put(-3,6){3}
\put(-3,1){4}
\put(0,0){\line(0,1){15}}
\put(2,6){5}
\put(2,1){6}
\put(5,0){\line(0,1){10}}
\put(7,1){7}
\put(10,0){\line(0,1){5}}
\put(13.5,6){$=$}
\put(20,0){\line(0,1){5}}
\put(20,0){\line(1,0){10}}
\put(20,5){\line(1,0){10}}
\put(22,1){1}
\put(25,0){\line(0,1){15}}
\put(25,10){\line(1,0){5}}
\put(25,15){\line(1,0){5}}
\put(27,11){2}
\put(27,6){3}
\put(27,1){4}
\put(30,0){\line(0,1){15}}
\put(33.5,6){$\oplus$}
\put(40,0){\line(0,1){10}}
\put(40,0){\line(1,0){10}}
\put(40,5){\line(1,0){10}}
\put(40,10){\line(1,0){5}}
\put(42,6){5}
\put(42,1){6}
\put(45,0){\line(0,1){10}}
\put(47,1){7}
\put(50,0){\line(0,1){5}}
\put(51,0){.}
\end{picture}
\quad\\ \quad\\ \quad\\
\end{align*}
For $i=1,2$, let $N_i$ be the number of boxes in $\Py_i$ and $\lf_i=\gl_{N_i}$ the Lie subalgebra of $\gl_N$ spanned by all $e_{j,j'}$, where $j,j'$ run over numbers labeling $\Py_i$. Then $\Gamma_{\Py_i}$ is a good grading on $\lf_i$ for $f_{\Py_i}$ and is the restriction of $\Gamma_{\Py}$ by construction. Denote by $\Oc_\Py$ a nilpotent orbit in $\gl_N$ through $f_\Py$, by $\Oc_{\Py_i}$ a nilpotent orbit in $\lf_i$ through $f_{\Py_i}$ and by $\lf=\lf_1\oplus\lf_2$ a maximal Levi subalgebra of $\gl_N$. A combinatorial description of induced nilpotent orbits in $\gl_N$ given in \cite{Kr, OW} is compatible with our cases:
\begin{align*}
\Oc_\Py=\ind^{\gl_N}_\lf(\Oc_{\Py_1}+\Oc_{\Py_2}).
\end{align*}

\subsection{Coproducts for type $A$}\label{cop thm sec}
Let $\Py$ be a pyramid consisting of $N$ boxes. Set
\begin{align*}
\W^k(\gl_N,\Py)=V^{k+N}(\z_{\gl_N})\otimes\W^k(\slf_N,f_\Py;\Gamma_\Py).
\end{align*}
Set the subset $\Pi^i$ of $\Pi$ consisting of simple roots in $\gl_{N_i}$. Then
\begin{align*}
\Pi^1=\{\alpha_1,\ldots,\alpha_{N_1-1}\},\quad
\Pi^2=\{\alpha_{N_1+1},\ldots,\alpha_{N-1}\}.
\end{align*}
Therefore $\Pi\backslash(\Pi^1\sqcup\Pi^2)=\{\alpha_{N_1}\}$ and $\deg_{\Gamma_\Py}\alpha_{N_1}=1$ by $\row(N_1)=\row(N)$.

\begin{theorem}\label{coproduct thm}
Let $\Py$ be a pyramid split into $\Py_1\oplus\Py_2$. Set the numbers $N$, $N_1$, $N_2$ of boxes in $\Py$, $\Py_1$, $\Py_2$ respectively ($N=N_1+N_2$). Then there exists an injective vertex algebra homomorphism
\begin{align*}
\cop = \cop^{\Py}_{\Py_1,\Py_2}\colon\W^k(\gl_N,\Py)\rightarrow\W^{k_1}(\gl_{N_1},\Py_1)\otimes\W^{k_2}(\gl_{N_2},\Py_2)
\end{align*}
for all $k\in\C$, where $k+N=k_1+N_1=k_2+N_2$, such that
\begin{enumerate}
\item $\cop$ is a unique vertex algebra homomorphism that satisfies $\mu_k=(\mu_{1, k_1}\otimes\mu_{2, k_2})\circ\cop$, where $\mu_k$, $\mu_{1, k_1}$, $\mu_{2, k_2}$ are the Miura maps for $\W^k(\gl_N,\Py)$, $\W^{k_1}(\gl_{N_1},\Py_1)$, $\W^{k_2}(\gl_{N_2},\Py_2)$ respectively.
\item $\cop$ is coassociative, i.e.
\begin{align*}
(\Id\otimes\cop^{\Py_2\oplus\Py_3}_{\Py_2,\Py_3})\circ\cop^\Py_{\Py_1,\Py_2\oplus\Py_3}=(\cop^{\Py_1\oplus\Py_2}_{\Py_1,\Py_2}\otimes\Id)\circ\cop^\Py_{\Py_1\oplus\Py_2, \Py_3}
\end{align*}
for $\Py=\Py_1\oplus\Py_2\oplus\Py_3$.
\end{enumerate}
\end{theorem}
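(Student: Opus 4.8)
The plan is to derive Theorem~\ref{coproduct thm} as a direct application of Theorem~\ref{induced thm} (Theorem~A) and Proposition~\ref{induced prop} to the special case $\g=\gl_N$ with the good $\Z$-grading $\Gamma_\Py$ and the maximal Levi subalgebra $\lf=\gl_{N_1}\oplus\gl_{N_2}$. First I would observe that $\Pi\backslash\Pi_\lf=\{\alpha_{N_1}\}$ and that $\deg_{\Gamma_\Py}\alpha_{N_1}=1$, which was computed just before the statement using $\row(N_1)=\row(N)$; hence the hypothesis $\Pi\backslash\Pi_\lf\subset\Pi_1$ of Theorem~\ref{induced thm} is satisfied. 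Lemma~\ref{ind good lem} then produces a nilpotent element $f_\lf\in[\lf,\lf]$, and by the combinatorial description of induced nilpotent orbits in type $A$ (\cite{Kr,OW}) recalled in Section~\ref{pyramid sec}, $f_\lf$ is conjugate to $f_{\Py_1}+f_{\Py_2}$ and $\Gamma_\lf=\Gamma_{\Py_1}\oplus\Gamma_{\Py_2}$. So Theorem~\ref{induced thm} gives an injective vertex algebra homomorphism
\begin{align*}
\Ind^{\gl_N}_\lf\colon\W^{\kappa}(\gl_N,f_\Py;\Gamma_\Py)\rightarrow\W^{\kappa_\lf}(\lf,f_\lf;\Gamma_\lf)\simeq\W^{k_1}(\gl_{N_1},\Py_1)\otimes\W^{k_2}(\gl_{N_2},\Py_2),
\end{align*}
using the decomposition \eqref{reductive W-alg eq} for the reductive Lie algebra $\lf$.

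Next I would carry out the level bookkeeping. With $\kappa(u|v)=k\,\mathrm{tr}(uv)$ on $\slf_N$, Theorem~\ref{induced thm} gives $\kappa_\lf=\kappa+\tfrac12\killing_{\gl_N}-\tfrac12\killing_\lf$; since $\killing_{\gl_N}(u|v)=2N\,\mathrm{tr}(uv)$ on $\slf_N$ and $\killing_{\gl_{N_i}}(u|v)=2N_i\,\mathrm{tr}(uv)$ on $\slf_{N_i}$, restricting $\kappa_\lf$ to the $i$-th simple factor $\slf_{N_i}$ gives the level $k+N-N_i=k_i$, i.e. $k+N=k_1+N_1=k_2+N_2$ as claimed. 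On the central part $\z_{\gl_N}$ the form is unchanged, matching the factor $V^{k+N}(\z_{\gl_N})=V^{k_i+N_i}(\z_{\gl_{N_i}})$ in the definition of $\W^{k_i}(\gl_{N_i},\Py_i)$. This identifies $\Ind^{\gl_N}_\lf$ with the desired map $\cop=\cop^\Py_{\Py_1,\Py_2}$, and injectivity is immediate from Theorem~\ref{induced thm}. Part~(1), the uniqueness characterization $\mu=(\mu_1\otimes\mu_2)\circ\cop$, is exactly the uniqueness clause of Theorem~\ref{induced thm} once one notes that the Miura map $\mu_\lf$ for $\W^{\kappa_\lf}(\lf,f_\lf;\Gamma_\lf)$ factors as $\mu_1\otimes\mu_2$ under the tensor decomposition, which follows from the construction of the Miura map for $\W$-algebras of reductive Lie algebras in Section~\ref{reductive W-alg sec}.

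For part~(2), coassociativity, I would apply Proposition~\ref{induced prop} with the chain of Levi subalgebras $\lf=\gl_{N_1}\oplus\gl_{N_2}\oplus\gl_{N_3}\subset\lf'\subset\gl_N$, where $\lf'$ is obtained by merging either the first two or the last two blocks. One must check that both intermediate Levi subalgebras satisfy $\Pi\backslash\Pi_{\lf'}\subset\Pi_1$: indeed $\Pi\backslash\Pi_{\lf'}$ is a single simple root sitting in the bottom row of $\Py$ (the splitting column), so it lies in $\Pi_1$ by \eqref{pi deg eq}, just as for $\lf$ itself. Proposition~\ref{induced prop} then gives $\Ind^{\gl_N}_\lf=\Ind^{\lf'}_\lf\circ\Ind^{\gl_N}_{\lf'}$ for both choices of $\lf'$, and translating through the tensor-factor identifications \eqref{reductive W-alg eq} turns these two factorizations into precisely the two sides of the coassociativity identity. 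The only point requiring a little care — and the step I expect to be the main (though still minor) obstacle — is verifying that the composite $\Ind^{\lf'}_\lf\circ\Ind^{\gl_N}_{\lf'}$, when $\lf'=\gl_{N_1+N_2}\oplus\gl_{N_3}$ say, really equals $(\cop^{\Py_1\oplus\Py_2}_{\Py_1,\Py_2}\otimes\Id)\circ\cop^\Py_{\Py_1\oplus\Py_2,\Py_3}$ after identifying $\W$-algebras of Levi factors with tensor products; this amounts to checking that $\Ind^{\lf'}_\lf$ acts as $\Ind^{\gl_{N_1+N_2}}_{\gl_{N_1}\oplus\gl_{N_2}}\otimes\Id_{\W^{k_3}(\gl_{N_3},\Py_3)}$, which is exactly the behavior of $\Ind^\g_\lf$ under the reductive decomposition \eqref{reductive dec eq} established in the proof of Theorem~\ref{induced thm} (the formula $\Ind^\g_\lf=\Id_{V^\kappa(\z_\g)}\otimes\bigotimes_{i\in I_\lf}\Ind^{\g^i}_{\lf\cap\g^i}$). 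With these identifications in place, coassociativity follows from the transitivity of $\ind^\g_\lf$ (Proposition~\ref{LS prop}(2)) combined with Proposition~\ref{induced prop}.
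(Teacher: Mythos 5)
Your proposal is correct and follows essentially the same route as the paper: verify $\Pi\backslash\Pi_\lf=\{\alpha_{N_1}\}\subset\Pi_1$, apply Theorem~\ref{induced thm} to $\lf=\gl_{N_1}\oplus\gl_{N_2}$ to get $\cop=\Ind^{\gl_N}_\lf$ together with the Miura characterization, and deduce coassociativity from Proposition~\ref{induced prop}. The extra details you supply (the level bookkeeping via the Killing forms and the check that the two intermediate Levi subalgebras also satisfy the grading hypothesis) are accurate and merely flesh out steps the paper leaves implicit.
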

\begin{proof}
We use the notations in Section \ref{pyramid sec}. Let $\Pi_\lf$ be the set of simple roots in $\lf=\lf_1\oplus\lf_2$. Since $\{\alpha_{N_1}\}=\Pi\backslash\Pi_\lf\subset\Pi_1$, we may apply Theorem \ref{induced thm} for a nilpotent element $f_\Py$ of $\gl_N$. Hence, we have an injective vertex algebra homomorphism
\begin{align*}
\cop=\cop^\Py_{\Py_1,\Py_2}=\Ind^{\gl_N}_{\lf}\colon\W^k(\gl_N,\Py)\rightarrow\W^{\kappa_\lf}(\lf,f_\lf;\Gamma_\lf)=\W^{k_1}(\gl_{N_1},\Py_1)\otimes\W^{k_2}(\gl_{N_2},\Py_2)
\end{align*}
for all $k\in\C$, where $k+N=k_1+N_1=k_2+N_2$, which satisfies the desired properties by the characterization of $\Ind^{\gl_N}_\lf$ and Proposition \ref{induced prop}. This completes the proof.
\end{proof}

We will call $\cop$ a {\em coproduct} for $\W$-algebras of type $A$.
\smallskip

Let $U(\gl_N,\Py)=U(\gl_N,f_\Py;\Gamma_\Py)$ be the finite $\W$-algebra associated with $\gl_N,f_\Py,\Gamma_\Py$ for a pyramid $\Py$ consisting of $N$ boxes. It is known that $U(\gl_N,\Py)$ is isomorphic to a truncation of a shifted Yangian by \cite{BK2}. Following \cite{BK2}, for any pyramid $\Py$ split into $\Py_1\oplus\Py_2$, we have an injective algebra homomorphism
\begin{align*}
\bar{\cop}=\bar{\cop}^\Py_{\Py_1, \Py_2}\colon U(\gl_N,\Py)\rightarrow U(\gl_{N_1},\Py_1)\otimes U(\gl_{N_2},\Py_2),
\end{align*}
called a coproduct for finite $\W$-algebras of type $A$, where $N_i$ is the number of boxes in $\Py_i$. 

\begin{prop}\label{chiral prop}
$\Zhu(\cop)=\bar{\cop}$. Therefore $\cop$ is a chiralization of $\bar{\cop}$.
\end{prop}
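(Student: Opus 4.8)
The plan is to deduce $\Zhu(\cop)=\bar{\cop}$ from the uniqueness clause of Lemma \ref{Zhu ind lem}. Recall from the proof of Theorem \ref{coproduct thm} that $\cop=\Ind^{\gl_N}_\lf$ for the maximal Levi $\lf=\gl_{N_1}\oplus\gl_{N_2}$, and that the reductive decomposition of Section \ref{reductive W-alg sec} identifies $\W^{\kappa_\lf}(\lf,f_\lf;\Gamma_\lf)$ with $\W^{k_1}(\gl_{N_1},\Py_1)\otimes\W^{k_2}(\gl_{N_2},\Py_2)$, hence $U(\lf,f_\lf;\Gamma_\lf)=U(\gl_{N_1},\Py_1)\otimes U(\gl_{N_2},\Py_2)$ and $\bar{\mu}_\lf=\bar{\mu}_1\otimes\bar{\mu}_2$. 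Applying the Zhu functor to $\cop$ gives an algebra homomorphism $\Zhu(\cop)\colon U(\gl_N,\Py)\to U(\gl_{N_1},\Py_1)\otimes U(\gl_{N_2},\Py_2)$ which, by Lemma \ref{Zhu ind lem}, is the \emph{unique} injective algebra homomorphism satisfying $\bar{\mu}=(\bar{\mu}_1\otimes\bar{\mu}_2)\circ\Zhu(\cop)$. It therefore suffices to check that the Brundan--Kleshchev coproduct $\bar{\cop}$ shares these two properties: that it is injective, and that $\bar{\mu}=(\bar{\mu}_1\otimes\bar{\mu}_2)\circ\bar{\cop}$.

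Injectivity of $\bar{\cop}$ is part of its construction in \cite{BK2}, so the essential point is the Miura-compatibility $\bar{\mu}=(\bar{\mu}_1\otimes\bar{\mu}_2)\circ\bar{\cop}$. First observe that both sides are algebra homomorphisms with the same target: writing $q_1,\dots,q_l$ for the column heights of $\Py$, the splitting $\Py=\Py_1\oplus\Py_2$ along the $l_1$-th column decomposes $\g_0=\gl_{q_1}\oplus\cdots\oplus\gl_{q_l}$ as $(\lf_1)_0\oplus(\lf_2)_0$, and since $\Gamma_\Py$ is a $\Z$-grading one has $\Fne=\C$, so $\bar{\mu}$ lands in $U(\g_0)$ while $(\bar{\mu}_1\otimes\bar{\mu}_2)\circ\bar{\cop}$ lands in $U((\lf_1)_0)\otimes U((\lf_2)_0)=U(\g_0)$. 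To prove the identity I would pass to associated graded algebras for the Kazhdan filtration, exactly as in the proof of Lemma \ref{classical inj lem}: there $\gr\bar{\mu}$ is identified with the restriction map $\C[\mathcal{S}_{f_\Py}]\to\C[f_\Py+\g_0]$ (with $\g_{-\frac12}=0$ in the present situation). The key geometric input is that $\bar{\cop}$ is a filtered map whose symbol $\gr\bar{\cop}$ is the comorphism of the closed embedding $\mathcal{S}_{f_{\Py_1}}\times\mathcal{S}_{f_{\Py_2}}\hookrightarrow\mathcal{S}_{f_\Py}$, which is available because under the block embedding $\lf\hookrightarrow\gl_N$ the $\lf$-component of $f_\Py$ equals $f_{\Py_1}+f_{\Py_2}=f_\lf$ and $\g_j\cap\lf=\lf_j$ for $j\le\tfrac12$, just as in the proof of Lemma \ref{ind good lem}. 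Composing with $\gr\bar{\mu}_i=$ (restriction to $f_{\Py_i}+(\lf_i)_0$) yields again the restriction $\C[\mathcal{S}_{f_\Py}]\to\C[f_\Py+\g_0]$; hence $\gr\bar{\mu}$ and $\gr\!\big((\bar{\mu}_1\otimes\bar{\mu}_2)\circ\bar{\cop}\big)$ coincide, so the two filtered homomorphisms are equal.

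Alternatively, the identity $\bar{\mu}=(\bar{\mu}_1\otimes\bar{\mu}_2)\circ\bar{\cop}$ can be read off from \cite{BK2} directly: there $\bar{\cop}$ is induced by the comultiplication of the Yangian $Y(\gl_n)$, under which the Miura map $\bar{\mu}$ corresponds to the iterated comultiplication all the way down to the single columns of $\Py=(\mathrm{col}\,1)\oplus\cdots\oplus(\mathrm{col}\,l)$ (each single-column pyramid carrying the zero nilpotent), so the relation reduces to the coassociativity of the Yangian comultiplication together with the coassociativity of $\bar{\cop}$. In either case, once $\bar{\mu}=(\bar{\mu}_1\otimes\bar{\mu}_2)\circ\bar{\cop}$ is established, the uniqueness clause of Lemma \ref{Zhu ind lem} forces $\Zhu(\cop)=\bar{\cop}$, which is exactly the assertion that $\cop$ is a chiralization of $\bar{\cop}$. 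The main obstacle is precisely this Miura-compatibility of $\bar{\cop}$: it requires either unwinding the Yangian-theoretic definition of $\bar{\cop}$ from \cite{BK2}, or verifying that $\bar{\cop}$ is Kazhdan-filtered with the expected symbol; everything else in the argument is formal.
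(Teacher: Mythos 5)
Your proposal is correct, and your ``alternative'' paragraph is in fact exactly the paper's own argument: the paper observes that, by \cite{BK2}, the coproducts attached to the splittings of $\Py$ and of each $\Py_i$ into their individual columns \emph{are} the Miura maps $\bar{\mu}$ and $\bar{\mu}_i$, so coassociativity of $\bar{\cop}$ immediately yields $\bar{\mu}=(\bar{\mu}_1\otimes\bar{\mu}_2)\circ\bar{\cop}$, and Lemma \ref{Zhu ind lem} then forces $\Zhu(\cop)=\bar{\cop}$. (Note that injectivity of $\bar{\cop}$ is not actually needed for the uniqueness step: injectivity of $\bar{\mu}_\lf=\bar{\mu}_1\otimes\bar{\mu}_2$ alone pins down any map satisfying the Miura compatibility.) Your primary route, via the Kazhdan filtration and the identification of $\gr\bar{\cop}$ with the comorphism of a closed embedding $\mathcal{S}_{f_{\Py_1}}\times\mathcal{S}_{f_{\Py_2}}\hookrightarrow\mathcal{S}_{f_\Py}$, is a genuinely different, more geometric approach; it would buy a proof independent of the Yangian presentation and could in principle generalize beyond type $A$. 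But as written it rests on the unproved assertion that $\bar{\cop}$ is Kazhdan-filtered with that particular symbol --- which you rightly flag as the main obstacle, and which is not obviously easier than the coassociativity argument. The clean path is the one you relegate to the alternative: take the column-by-column coproduct $=$ Miura map identity from \cite{BK2} as the input, and let coassociativity do the rest.
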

\begin{proof}
Let $\Py$ be a pyramid split into $\Py_1\oplus\Py_2$, $\cop$ the corresponding coproduct for $\W$-algebras, $\bar{\cop}$ the corresponding coproduct for finite $\W$-algebras, $N$ the number of boxes in $\Py$, $N_i$ the number of boxes in $\Py_i$, $l$ the column length of $\Py$ and $l_i$ the column length of $\Py_i$ ($l=l_1+l_2$). We split $\Py_i$ into individual columns, i.e.
\begin{align}
\label{chiral eq}\Py_i&=\Py_i^1\oplus\cdots\oplus\Py_i^{l_i},\\
\label{chiral eq2}\Py&=\Py_1^1\oplus\cdots\oplus\Py_1^{l_1}\oplus\Py_2^1\oplus\cdots\oplus\Py_2^{l_2}
\end{align}
such that $\Py_i^j$ has only one column for all $i=1,2$ and $j=1,\ldots,l_i$. By \cite{BK2}, the coproducts of finite $\W$-algebras corresponding to \eqref{chiral eq}, \eqref{chiral eq2} are the Miura maps $\bar{\mu}_i$, $\bar{\mu}$ for $U(\gl_{N_i},\Py_i)$, $U(\gl_N,\Py)$ respectively. By coassociativity of $\bar{\cop}$, it satisfies that
\begin{align*}
\bar{\mu}=(\bar{\mu}_1\otimes\bar{\mu}_2)\circ\bar{\cop},
\end{align*}
which implies that $\bar{\cop}=\Zhu(\cop)$ by Lemma \ref{Zhu ind lem}. This completes the proof. 
\end{proof}

\subsection{Coproducts for type $BCD$}\label{cop BCD sec}
Let $N$ be a positive integer and $\g_N=\so_N$ or $\spf_N$. If $\g_N=\spf_N$, we assume that $N$ is even. Recall that all nilpotent orbits in $\g_N$ are classified by orthogonal partitions of $N$ if $\g_N=\so_N$ and by symplectic partitions of $N$ if $\g_N=\spf_N$. See e.g. \cite{CM}. In case of $\so_{2M}$, we mean nilpotent orbits under the group $O_{2M}$ not $SO_{2M}$ here. Let $f$ be a rectangular nilpotent element in $\g_N$, corresponding to a partition $p=(l^n)$ of $N$. A rectangular pyramid with the height $n$ and the width $l$ represents a good grading for $f$ on $\g_N$ in the classification of good gradings of $\g_N$ in \cite{EK}, and we denote by $\Py^+$ if $\g_N=\so_N$ and by $\Py^-$ if $\g_N=\spf_N$. We fix a numbering  of boxes in $\Py^\epsilon$ ($\epsilon=\pm$) by $1,\ldots,M$ from top to bottom and from left to right, by $-1,\cdots,-M$ in central symmetry and by $0$ in the central box if the central box exists, where $M=\lfloor\frac{N}{2}\rfloor$. For example,\\

\begin{align*}
\setlength{\unitlength}{1mm}
\begin{picture}(0, 0)(20,10)
\put(-28,0){\line(0,1){15}}
\put(-28,0){\line(1,0){25}}
\put(-28,5){\line(1,0){25}}
\put(-28,10){\line(1,0){25}}
\put(-28,15){\line(1,0){25}}
\put(-26,11){1}
\put(-26,6){2}
\put(-26,1){3}
\put(-23,0){\line(0,1){15}}
\put(-21,11){4}
\put(-21,6){5}
\put(-21,1){6}
\put(-18,0){\line(0,1){15}}
\put(-16,11){7}
\put(-16,6){0}
\put(-17,1){-7}
\put(-13,0){\line(0,1){15}}
\put(-12,11){-6}
\put(-12,6){-5}
\put(-12,1){-4}
\put(-8,0){\line(0,1){15}}
\put(-7,11){-3}
\put(-7,6){-2}
\put(-7,1){-1}
\put(-3,0){\line(0,1){15}}
\put(-2,0){,}
\put(7,0){\line(0,1){10}}
\put(7,0){\line(1,0){25}}
\put(7,5){\line(1,0){25}}
\put(7,10){\line(1,0){25}}
\put(9,6){1}
\put(9,1){2}
\put(12,0){\line(0,1){10}}
\put(14,6){3}
\put(14,1){4}
\put(17,0){\line(0,1){10}}
\put(19,6){5}
\put(18,1){-5}
\put(22,0){\line(0,1){10}}
\put(23,6){-4}
\put(23,1){-3}
\put(27,0){\line(0,1){10}}
\put(28,6){-2}
\put(28,1){-1}
\put(32,0){\line(0,1){10}}
\put(33,0){,}
\put(42,0){\line(0,1){20}}
\put(42,0){\line(1,0){20}}
\put(42,5){\line(1,0){20}}
\put(42,10){\line(1,0){20}}
\put(42,15){\line(1,0){20}}
\put(42,20){\line(1,0){20}}
\put(44,16){1}
\put(44,11){2}
\put(44,6){3}
\put(44,1){4}
\put(47,0){\line(0,1){20}}
\put(49,16){5}
\put(49,11){6}
\put(49,6){7}
\put(49,1){8}
\put(52,0){\line(0,1){20}}
\put(53,16){-8}
\put(53,11){-7}
\put(53,6){-6}
\put(53,1){-5}
\put(57,0){\line(0,1){20}}
\put(58,16){-4}
\put(58,11){-3}
\put(58,6){-2}
\put(58,1){-1}
\put(62,0){\line(0,1){20}}
\put(63,0){.}
\end{picture}
\quad\\ \quad\\
\end{align*}
Denote a basis of $\C^N$ by $\{v_i,v_{-i}\}_{i=1}^{M}$ if $N=2M$, and by $\{v_i\}_{i=-M}^M$ if $N=2M+1$. Then $\End(\C^N)$ has a basis consisting of all $e_{i,j}$ by $e_{i,j}v_k=\delta_{j,k}v_i$. To describe $f$ from $\Py^\epsilon$ explicitly, we fix a basis of $\g_N$ in $\End(\C^N)$ as follows:
\begin{align*}
\so_{2M+1}:\ &e_{i,j}-e_{-j,-i},\ e_{s,-t}-e_{t,-s},\ e_{-s,t}-e_{-t,s},\ e_{i,0}-e_{0,-i},\ e_{0,-i}-e_{-i,0}\\
\so_{2M}:\ &e_{i,j}-e_{-j,-i},\ e_{s,-t}-e_{t,-s},\ e_{-s,t}-e_{-t,s}\\
\spf_{2M}:\ &e_{i,j}-e_{-j,-i},\ e_{s,-t}+e_{t,-s},\ e_{-s,t}+e_{-t,s},\ e_{i,-i},\ e_{-i,i}
\end{align*}
with $1\leq i,j\leq M$ and $1\leq s<t\leq M$. We attach to $\Py^\epsilon$ a nilpotent element $f_{\Py^\epsilon}$ by
\begin{align}\label{BCD nil eq}
f_{\Py^\epsilon}(v_j)=
\begin{cases}
\ \pm v_i & (\ \row(i)=\row(j)\quad \mathrm{and}\quad \col(i)=\col(j)+1\ ),\\
\ 0 & (\ \mathrm{otherwise}\ ),
\end{cases}
\end{align}
where the sign $\pm$ is chosen such that $f_{\Py^\epsilon}\in\g_N$. We split $\Py^\epsilon$ into three rectangular pyramids along two columns in line symmetry, which we denote by $\Py^\epsilon=\Py_1^\epsilon\oplus\Py_2^\epsilon\oplus\Py_1^\epsilon$, such that $\Py^\epsilon_2$ represents a symmetric partition of $N_2$ if $\g_{N_2}=\so_{N_2}$, or an orthogonal partition of $N_2$ if $\g_{N_2}=\spf_{N_2}$. For example,\\
\begin{align*}
\setlength{\unitlength}{1mm}
\begin{picture}(0, 0)(20,10)
\put(-25,0){\line(0,1){15}}
\put(-25,0){\line(1,0){35}}
\put(-25,5){\line(1,0){35}}
\put(-25,10){\line(1,0){35}}
\put(-25,15){\line(1,0){35}}
\put(-23,11){1}
\put(-23,6){2}
\put(-23,1){3}
\put(-20,0){\line(0,1){15}}
\put(-18,11){4}
\put(-18,6){5}
\put(-18,1){6}
\put(-15,0){\line(0,1){15}}
\put(-13,11){7}
\put(-13,6){8}
\put(-13,1){9}
\put(-10,0){\line(0,1){15}}
\put(-9,11){10}
\put(-8,6){0}
\put(-9.8,1){-10}
\put(-5,0){\line(0,1){15}}
\put(-4,11){-9}
\put(-4,6){-8}
\put(-4,1){-7}
\put(0,0){\line(0,1){15}}
\put(1,11){-6}
\put(1,6){-5}
\put(1,1){-4}
\put(5,0){\line(0,1){15}}
\put(6,11){-3}
\put(6,6){-2}
\put(6,1){-1}
\put(10,0){\line(0,1){15}}
\put(13.5,6){$=$}
\put(20,0){\line(0,1){15}}
\put(20,0){\line(1,0){10}}
\put(20,5){\line(1,0){10}}
\put(20,10){\line(1,0){10}}
\put(20,15){\line(1,0){10}}
\put(22,11){1}
\put(22,6){2}
\put(22,1){3}
\put(25,0){\line(0,1){15}}
\put(27,11){4}
\put(27,6){5}
\put(27,1){6}
\put(30,0){\line(0,1){15}}
\put(33.5,6){$\oplus$}
\put(40,0){\line(0,1){15}}
\put(40,0){\line(1,0){15}}
\put(40,5){\line(1,0){15}}
\put(40,10){\line(1,0){15}}
\put(40,15){\line(1,0){15}}
\put(42,11){7}
\put(42,6){8}
\put(42,1){9}
\put(45,0){\line(0,1){15}}
\put(46,11){10}
\put(47,6){0}
\put(45.2,1){-10}
\put(50,0){\line(0,1){15}}
\put(51,11){-9}
\put(51,6){-8}
\put(51,1){-7}
\put(55,0){\line(0,1){15}}
\put(58.5,6){$\oplus$}
\put(65,0){\line(0,1){15}}
\put(65,0){\line(1,0){10}}
\put(65,5){\line(1,0){10}}
\put(65,10){\line(1,0){10}}
\put(65,15){\line(1,0){10}}
\put(66,11){-6}
\put(66,6){-5}
\put(66,1){-4}
\put(70,0){\line(0,1){15}}
\put(71,11){-3}
\put(71,6){-2}
\put(71,1){-1}
\put(75,0){\line(0,1){15}}
\put(76,0){.}
\end{picture}
\quad\\ \quad\\
\end{align*}
Let $N_a$ be the number of boxes in $\Py_a^\epsilon$ ($N=2N_1+N_2$), and $\h=\bigoplus_{i=1}^M\C h_i$ a Cartan subalgebra of $\g_N$, where $h_i=e_{i,i}-e_{-i,-i}$. Set the dual basis $\{\epsilon_i\}_{i=1}^M$ of $\h^*$ by $\epsilon_i(h_j)=\delta_{i,j}$, and a set $\Pi=\{\alpha_i\}_{i=1}^M$ of simple roots by $\alpha_i=\epsilon_i-\epsilon_{i+1}$ for $i=1,\ldots,M-1$ and 
\begin{align*}
\alpha_M=
\begin{cases}
\epsilon_M & (\g_N=\so_{2M+1}),\\
2\epsilon_M & (\g_N=\spf_{2M}),\\
\epsilon_{M-1}+\epsilon_{M} & (\g_N=\so_{2M}).
\end{cases}
\end{align*}
Let $\lf=\lf_1\oplus\lf_2$ be a maximal Levi subalgebra of $\g_N$ such that $\{\alpha_i\}_{i=1}^{N_1-1}$ is a set of simple roots in $\lf_1=\gl_{N_1}$, and $\{\alpha_i\}_{i=N_1+1}^{M}$ is a set of simple roots in $\lf_2=\g_{N_2}$. We attach to $\Py_a^\epsilon$ a nilpotent element $f_{\Py_a^\epsilon}$ in $\lf_a$ by the same formula in \eqref{BCD nil eq}, where $i,j$ run over the set of numbers labeling $\Py_a^\epsilon$. By \cite{Ke}, we have
\begin{align*}
\Oc_{\Py^\epsilon}=\ind^{\g_N}_{\lf}(\Oc_{\Py_1^\epsilon}+\Oc_{\Py_2^\epsilon}),
\end{align*}
where $\Oc_{\Py^\epsilon}$ denotes a nilpotent orbit in $\g_N$ through $f_{\Py^\epsilon}$, and $\Oc_{\Py_a^\epsilon}$ denotes a nilpotent orbit in $\lf_a$ through $f_{\Py_a^\epsilon}$. Define a good $\Z$-grading $\Gamma_{\Py^\epsilon}$ on $\g_N$ for $f_{\Py^\epsilon}$ by $\deg_{\Gamma_{\Py^\epsilon}}(e_{i,j})=\col(j)-\col(i)$ and a good $\Z$-grading $\Gamma_{\Py_a^\epsilon}$ on $\lf_a$ for $f_{\Py_a^\epsilon}$ similarly. Then $\Gamma_{\Py_a^\epsilon}$ is the restriction of $\Gamma_{\Py^\epsilon}$ on $\lf_a$, and satisfies that $\{\alpha_{N_1}\}=\Pi\backslash\Pi_\lf\subset\Pi_1$. Let
\begin{align*}
\W^k(\g_N,\Py^\epsilon)&=\W^k(\g_N,f_{\Py^\epsilon};\Gamma_{\Py^\epsilon}),\\
\W^k(\gl_{N_1},\Py_1^\epsilon)&=V^{k+N_1}(\z_{\gl_{N_1}})\otimes\W^k(\slf_{N_1},f_{\Py_1^\epsilon};\Gamma_{\Py_1^\epsilon})
\end{align*}
and
\begin{align*}
\gamma(\epsilon)=
\begin{cases}
1 & (\epsilon=+), \\
2 & (\epsilon=-).
\end{cases}
\end{align*}
Recall that the dual Coxter number of $\g_N$ is $N-2$, $M+1$ if $\g_N=\so_{N}, \spf_{2M}$ respectively. The same proof as we use to prove Theorem \ref{coproduct thm} is applicable to the following.

\begin{theorem}\label{cop BCD thm}
In the above settings, there exists an injective vertex algebra homomorphism
\begin{align*}
\cop^\epsilon\colon\W^k(\g_N,\Py^\epsilon)\rightarrow\W^{k_1}(\gl_{N_1},\Py_1^\epsilon)\otimes\W^{k_2}(\g_{N_2},\Py_2^\epsilon),
\end{align*}
where $k+h^\vee=\gamma(\epsilon)(k_1+N_1)=k_2+h^\vee_2$ and $h^\vee,h^\vee_2$ are the dual Coxter numbers of $\g_N$, $\g_{N_2}$ respectively. Moreover, $\cop^\epsilon$ is a unique vertex algebra homomorphism that satisfies that $\mu_k=(\mu_{1, k_1} \otimes \mu_{2, k_2})\circ\cop^\epsilon$, where $\mu_k$, $\mu_{1, k_1}$, $\mu_{2, k_2}$ are the Miura maps for $\W^k(\g_N,\Py^\epsilon)$, $W^{k_1}(\gl_{N_1},\Py_1^\epsilon)$, $\W^{k_2}(\g_{N_2},\Py_2^\epsilon)$ respectively.
\end{theorem}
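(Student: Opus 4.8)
The plan is to obtain $\cop^\epsilon$ as a parabolic induction, exactly as Theorem \ref{coproduct thm} is deduced from Theorem \ref{induced thm} in type $A$. First I would take $\g=\g_N$, $f=f_{\Py^\epsilon}$ and $\Gamma=\Gamma_{\Py^\epsilon}$, the latter being a good $\Z$-grading for $f_{\Py^\epsilon}$ by the classification of good gradings of classical Lie algebras in \cite{EK}, and let $\lf=\lf_1\oplus\lf_2=\gl_{N_1}\oplus\g_{N_2}$ be the prescribed maximal Levi subalgebra. The hypothesis $\Pi\backslash\Pi_\lf=\{\alpha_{N_1}\}\subset\Pi_1$ of Theorem \ref{induced thm} holds: since $\Py^\epsilon$ is a \emph{rectangular} pyramid of height $n$ and $N_1$ is a multiple of $n$, the box labelled $N_1$ is the bottom box of the $(N_1/n)$-th column while $N_1+1$ is the top box of the next column, so $\deg_{\Gamma_{\Py^\epsilon}}(\alpha_{N_1})=\col(N_1+1)-\col(N_1)=1$. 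Theorem \ref{induced thm} then produces an injective vertex algebra homomorphism $\Ind^{\g_N}_\lf\colon\W^\kappa(\g_N,f_{\Py^\epsilon};\Gamma_{\Py^\epsilon})\to\W^{\kappa_\lf}(\lf,f_\lf;\Gamma_\lf)$ with $\kappa_\lf=\kappa+\tfrac{1}{2}\killing_{\g_N}-\tfrac{1}{2}\killing_\lf$, characterised by $\mu=\mu_\lf\circ\Ind^{\g_N}_\lf$. By Lemma \ref{ind good lem}, $\Gamma_\lf$ is good for a nilpotent $f_\lf\in[\lf,\lf]$, and comparing with the description of $\ind^{\g_N}_\lf$ recalled from \cite{Ke} one identifies $f_\lf$ with $f_{\Py_1^\epsilon}+f_{\Py_2^\epsilon}$ and $\Gamma_\lf$ with $\Gamma_{\Py_1^\epsilon}\oplus\Gamma_{\Py_2^\epsilon}$, where $\gl_{N_1}$ sits block-diagonally in $\g_N$ and the two outer copies of $\Py_1^\epsilon$ in the symmetric splitting $\Py^\epsilon=\Py_1^\epsilon\oplus\Py_2^\epsilon\oplus\Py_1^\epsilon$ are glued into this single $\gl_{N_1}$-factor; the compatibilities ($\Gamma_{\Py^\epsilon}$ restricting to $\Gamma_{\Py_a^\epsilon}$ on $\lf_a$ and $f_{\Py^\epsilon}$ to $f_{\Py_a^\epsilon}$) are immediate from the formulas $\deg(e_{i,j})=\col(j)-\col(i)$ and \eqref{BCD nil eq}.

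Next I would rewrite the target by the tensor-product decomposition \eqref{reductive W-alg eq} for reductive Lie algebras. Writing $\lf=\z_{\gl_{N_1}}\oplus\slf_{N_1}\oplus\g_{N_2}$ yields
\begin{align*}
\W^{\kappa_\lf}(\lf,f_\lf;\Gamma_\lf)\simeq\W^{k_1}(\gl_{N_1},\Py_1^\epsilon)\otimes\W^{k_2}(\g_{N_2},\Py_2^\epsilon)
\end{align*}
for levels $k_1,k_2$ determined by $\kappa_\lf$, and I would set $\cop^\epsilon:=\Ind^{\g_N}_\lf$ under this identification. Because the Miura map of a tensor product of $\W$-algebras is the tensor product of the Miura maps, the identity $\mu=\mu_\lf\circ\Ind^{\g_N}_\lf$ becomes $\mu=(\mu_1\otimes\mu_2)\circ\cop^\epsilon$; and any vertex algebra homomorphism $\psi$ with $\mu=(\mu_1\otimes\mu_2)\circ\psi$ then equals $\cop^\epsilon$ since $\mu_1\otimes\mu_2$ is injective (each $\mu_i$ being injective by \cite{F,A}). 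This gives existence, injectivity and uniqueness.

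The step requiring genuine care is the level matching $k+h^\vee=\gamma(\epsilon)(k_1+N_1)=k_2+h^\vee_2$. I would restrict $\kappa_\lf=\kappa+\tfrac{1}{2}\killing_{\g_N}-\tfrac{1}{2}\killing_\lf$ to the central line $\z_{\gl_{N_1}}$, to $\slf_{N_1}$ and to $\g_{N_2}$, and re-express each in the standard normalization of that factor (highest root of square length $2$), using $\killing_{\g_N}=2h^\vee\inv$ and that $\killing_\lf$ splits over the summands of $\lf$. The key input is that the invariant form of $\g_N$ restricts to $\slf_{N_1}$ (and to $\z_{\gl_{N_1}}$) as $\tfrac{1}{\gamma(\epsilon)}$ times the standard $\gl_{N_1}$-form, because the roots $\epsilon_i-\epsilon_j$ of $\gl_{N_1}\subset\g_N$ are long when $\g_N=\so_N$, giving $\gamma(+)=1$, and short when $\g_N=\spf_N$, giving $\gamma(-)=2$; on $\g_{N_2}$, which is of the same Cartan type as $\g_N$, no rescaling occurs, so one obtains $k_2+h^\vee_2=k+h^\vee$, and one reads off that the center of $\lf$ contributes the Heisenberg factor $V^{k_1+N_1}(\z_{\gl_{N_1}})$ with exactly the level in the notation $\W^{k_1}(\gl_{N_1},\Py_1^\epsilon)$. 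This is the same normalization bookkeeping as in the proof of Theorem \ref{induced thm} and of Theorem \ref{coproduct thm} (where for $\gl_N$ the factor $\gamma$ is absent), and I do not expect any obstacle beyond it; note that, unlike Theorem \ref{coproduct thm}, no coassociativity is claimed here, so the asymmetry of the splitting $\Py_1^\epsilon\oplus\Py_2^\epsilon\oplus\Py_1^\epsilon$ introduces no extra complication.
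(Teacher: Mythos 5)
Your proposal follows exactly the route the paper takes: the paper's entire proof of Theorem \ref{cop BCD thm} is the remark that ``the same proof as Theorem \ref{coproduct thm} is applicable,'' i.e.\ apply Theorem \ref{induced thm} to the maximal Levi $\lf=\gl_{N_1}\oplus\g_{N_2}$ (the hypothesis $\Pi\backslash\Pi_\lf=\{\alpha_{N_1}\}\subset\Pi_1$ being part of the stated setup), identify the target via \eqref{reductive W-alg eq}, and read off uniqueness from $\mu=\mu_\lf\circ\Ind^{\g_N}_\lf$ and the injectivity of the Miura maps; you simply spell out the details. One caution on the level bookkeeping, which you rightly single out as the delicate step: your stated rescaling goes the wrong way. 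If the roots $\epsilon_i-\epsilon_j$ of $\gl_{N_1}$ are \emph{short} in the $\g_N$-normalization (the $\spf_N$ case), then the form on $\h^*$ shrinks, so the form on the Lie algebra \emph{grows}: the restriction of $\inv_{\g_N}$ to $\gl_{N_1}$ is $\gamma(\epsilon)$ times the trace form, not $\tfrac{1}{\gamma(\epsilon)}$ times it (directly, $\mathrm{tr}_{\C^N}$ of the image of $u,v\in\gl_{N_1}$ under $e_{i,j}\mapsto e_{i,j}-e_{-j,-i}$ is $2\,\mathrm{tr}_{\C^{N_1}}(uv)$, while $\inv_{\so_N}=\tfrac12\mathrm{tr}$ and $\inv_{\spf_N}=\mathrm{tr}$). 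Feeding this into the formula $\para_i=\tfrac{2}{(\theta_i|\theta_i)}(\para+h^\vee)-h^\vee_i$ from the proof of Theorem \ref{induced thm}, with $(\theta_1|\theta_1)=2/\gamma(\epsilon)$, yields $k_1+N_1=\gamma(\epsilon)(k+h^\vee)$ rather than $k+h^\vee=\gamma(\epsilon)(k_1+N_1)$; you should recheck on which side of the equality $\gamma(\epsilon)$ belongs (the two agree only for $\epsilon=+$). This does not affect the existence, injectivity, or characterization of $\cop^\epsilon$, only the naming of the level $k_1$.
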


Suppose that the height $n$ of $\Py^\epsilon$ is even if $\g_N=\spf_N$. Let $l_a$ be the width of $\Py_a^\epsilon$ ($2l_1+l_2=l=N/n$). According to \cite{BK2} and \cite{Br}, it follows that $U(\gl_{N_1}, f_{\Py_1^\epsilon};\Gamma_{\Py_1^\epsilon})$ is isomorphic to the Yangian $Y_{l_1}(\gl_n)$ of level $l_1$, and $U(\g_N,f_{\Py^\epsilon};\Gamma_{\Py^\epsilon})$ is isomorphic to the twisted Yangian $Y_l^\epsilon(\g_n)$ of level $l$.

\begin{cor}\label{cop BCD cor}
Suppose that the height $n$ of $\Py^\epsilon$ is even if $\g_N=\spf_N$. Then there exists an injective algebra homomorphism
\begin{align*}
\bar{\cop}^\epsilon\colon Y_l^\epsilon(\g_n)\rightarrow Y_{l_1}(\gl_{n})\otimes Y_{l_2}^\epsilon(\g_{n}).
\end{align*}
\end{cor}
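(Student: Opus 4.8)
The plan is to obtain the statement from Theorem \ref{cop BCD thm} by passing to Zhu algebras, in exact parallel with the proof of Proposition \ref{chiral prop}. Since $\cop^\epsilon$ is a homomorphism between $\frac12\Z_{\geq0}$-graded conformal vertex algebras, it induces an algebra homomorphism $\Zhu(\cop^\epsilon)$ between the associated Zhu algebras. By \eqref{Zhu BRST eq} these Zhu algebras are the finite $\W$-algebras $U(\g_N,f_{\Py^\epsilon};\Gamma_{\Py^\epsilon})$, $U(\gl_{N_1},f_{\Py_1^\epsilon};\Gamma_{\Py_1^\epsilon})$ and $U(\g_{N_2},f_{\Py_2^\epsilon};\Gamma_{\Py_2^\epsilon})$; combining this with the fact that the Zhu functor is monoidal, $\Zhu(V_1\otimes V_2)\cong\Zhu(V_1)\otimes\Zhu(V_2)$, we get an algebra homomorphism $U(\g_N,f_{\Py^\epsilon};\Gamma_{\Py^\epsilon})\to U(\gl_{N_1},f_{\Py_1^\epsilon};\Gamma_{\Py_1^\epsilon})\otimes U(\g_{N_2},f_{\Py_2^\epsilon};\Gamma_{\Py_2^\epsilon})$. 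Under the hypothesis that $n$ is even when $\g_N=\spf_N$, so that the rectangular partitions $(l^n)$ and $(l_2^n)$ have the parity needed to define twisted Yangians of the stated levels, the structure theorems of \cite{BK2} and \cite{Br} identify these three finite $\W$-algebras with $Y_l^\epsilon(\g_n)$, $Y_{l_1}(\gl_n)$ and $Y_{l_2}^\epsilon(\g_n)$ respectively. Transporting $\Zhu(\cop^\epsilon)$ through these isomorphisms yields the desired $\bar{\cop}^\epsilon\colon Y_l^\epsilon(\g_n)\to Y_{l_1}(\gl_n)\otimes Y_{l_2}^\epsilon(\g_n)$.

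For injectivity I would reuse the Miura-map characterization in Theorem \ref{cop BCD thm}. Applying $\Zhu$ to $\mu=(\mu_1\otimes\mu_2)\circ\cop^\epsilon$ gives $\Zhu(\mu)=(\Zhu(\mu_1)\otimes\Zhu(\mu_2))\circ\Zhu(\cop^\epsilon)$, and by Lemma \ref{classical Miura lem} the map $\Zhu(\mu)$ equals the finite Miura map $\bar{\mu}$, which is injective (Lemma \ref{classical inj lem}). Hence $\Zhu(\cop^\epsilon)$ is injective, and therefore so is $\bar{\cop}^\epsilon$. This is the same packaging as Lemma \ref{Zhu ind lem} applied to $\Ind^{\g_N}_{\lf}=\cop^\epsilon$, and it is entirely formal given the material of Section \ref{chiral sec}.

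The only genuinely non-formal ingredient is the identification of the finite $\W$-algebras with (twisted) Yangians of the prescribed level: $U(\g_N,f_{\Py^\epsilon};\Gamma_{\Py^\epsilon})\cong Y_l^\epsilon(\g_n)$, with its Levi factors $\cong Y_{l_1}(\gl_n)$ and $\cong Y_{l_2}^\epsilon(\g_n)$. This is where the evenness assumption on $n$ is essential and where one must invoke \cite{Br} (building on \cite{BK2}); I would simply cite these results rather than reprove them. I do \emph{not} expect the induced map $\bar{\cop}^\epsilon$ to coincide literally with the Drinfeld-type coproduct of the twisted Yangian — that would require matching generators across the above isomorphisms and is not claimed here; the corollary asserts only the existence of an injective algebra homomorphism, which follows as above.
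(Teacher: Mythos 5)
Your proposal is correct and is essentially the paper's own argument: the paper derives the corollary immediately from Theorem \ref{cop BCD thm} together with Lemma \ref{Zhu ind lem} (which packages exactly your Miura-map injectivity argument for $\Zhu(\Ind^{\g_N}_{\lf})$), and then invokes the identifications of the finite $\W$-algebras with $Y_l^\epsilon(\g_n)$, $Y_{l_1}(\gl_n)$, $Y_{l_2}^\epsilon(\g_n)$ from \cite{BK2} and \cite{Br}, just as you do. Your closing caveat — that only existence of an injective homomorphism is claimed, not agreement with the Drinfeld coproduct — also matches the paper's stance.
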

\begin{proof}
The assertion of the corollary immediately follows from Theorem \ref{cop BCD thm} and Lemma \ref{Zhu ind lem}.
\end{proof}

\section{Examples}\label{examples sec}
We describe $\cop$ in Theorem \ref{coproduct thm} explicitly in some examples.

\subsection{Principal nilpotent}\label{principal sec}
Let $\Py_\prin$ be a pyramid that represents a principal nilpotent element in $\gl_N$, i.e. a pyramid consisting of one row of $N$ boxes. Set a basis $\{h_i=e_{i,i}\}_{i=1}^N$ of a Cartan subalgebra $\h$ of $\gl_N$ and the associated Heisenberg vertex algebra $\Hi=\Hi^{k+N}(\h)$, in which
\begin{align*}
h_i(z)h_j(w)\sim\frac{(k+N)\delta_{i,j}}{(z-w)^2}
\end{align*}
holds for all $i,j=1,\ldots,N$. Consider fields $W_i(z)$ on $\Hi$ defined by the following formal products
\begin{align*}
:(\hat{\der}+h_1(z))\cdot(\hat{\der}+h_2(z))\cdots(\hat{\der}+h_N(z)):\ =\sum_{i=0}^N W_i(z)\hat{\der}^{N-i},
\end{align*}
where $\hat{\der}$ is defined by $[\hat{\der},h_i(z)]=(k+N-1)\der_z h_i(z)$ for all $i$. Then, a vertex subalgebra of $\Hi$ generated by $W_i(z)$ for all $i=1,\ldots,N$ is isomorphic to the $\W$-algebra $\W^k_N=\W^k(\gl_N,\Py_\prin)$ by \cite{FL} (and \cite{FF4}), which coincides with the image of the Miura map for $\W^k_N$. Let $N_1$, $N_2$ be positive integers such that $N_1+N_2=N$ and $W_i^1(z)$, $W_j^2(z)$ fields on $\Hi$ defined by
\begin{align*}
\sum_{i=0}^{N_1} W_i^1(z)\hat{\der}^{N_1-i}&=\ :(\hat{\der}+h_1(z))\cdot(\hat{\der}+h_2(z))\cdots(\hat{\der}+h_{N_1}(z)):,\\
\sum_{i=0}^{N_2} W_i^2(z)\hat{\der}^{N_2-i}&=\ :(\hat{\der}+h_{N_1+1}(z))\cdot(\hat{\der}+h_{N_1+2}(z))\cdots(\hat{\der}+h_{N}(z)):.
\end{align*}
For $j=1,2$, a vertex subalgebra of $\Hi$ generated by $W_i^j(z)$ for all $i=1,\ldots,N_j$ is isomorphic to $\W^{k_j}_{N_j}$, where $k+N=k_1+N_1=k_2+N_2$. By construction,
\begin{align}\label{principal eq}
\sum_{i=0}^N W_i(z)\hat{\der}^{N-i}=\ :\left(\sum_{i=0}^{N_1} W_i^1(z)\hat{\der}^{N_1-i}\right)\left(\sum_{i=0}^{N_2} W_i^2(z)\hat{\der}^{N_2-i}\right):,
\end{align}
which induces an injective vertex algebra homomorphism
\begin{align*}
\cop\colon\W^k_N\rightarrow\W^{k_1}_{N_1}\otimes\W^{k_2}_{N_2}
\end{align*}
for all $k\in\C$. This map $\cop$ is a coproduct for $\W^k_N$ corresponding to a splitting of a pyramid:
\vspace{3mm}
\begin{align*}
\setlength{\unitlength}{1mm}
\begin{picture}(0, 0)(20,5)
\put(-20,3.5){\line(0,1){5}}
\put(-20,3.5){\line(1,0){32}}
\put(-20,8.5){\line(1,0){32}}
\put(-18.3,4.8){$1$}
\put(-15,3.5){\line(0,1){5}}
\put(-13.3,4.8){$2$}
\put(-10,3.5){\line(0,1){5}}
\put(-7.3,5){$\cdots$}
\put(0,3.5){\line(0,1){5}}
\put(1,5){\fontsize{4pt}{0pt}\selectfont $N$$-$$1$}
\put(7,3.5){\line(0,1){5}}
\put(8,4.8){\fontsize{7pt}{0pt}\selectfont $N$}
\put(12,3.5){\line(0,1){5}}
\put(15.5,5){$=$}
\put(22,3.5){\line(0,1){5}}
\put(22,3.5){\line(1,0){20}}
\put(22,8.5){\line(1,0){20}}
\put(23.7,4.8){$1$}
\put(27,3.5){\line(0,1){5}}
\put(29.7,5){$\cdots$}
\put(37,3.5){\line(0,1){5}}
\put(38,4.8){\fontsize{7pt}{0pt}\selectfont $N_1$}
\put(42,3.5){\line(0,1){5}}
\put(45.5,5){$\oplus$}
\put(52,3.5){\line(0,1){5}}
\put(52,3.5){\line(1,0){22}}
\put(52,8.5){\line(1,0){22}}
\put(52.5,5){\fontsize{4pt}{0pt}\selectfont $N_1$$+$$1$}
\put(59,3.5){\line(0,1){5}}
\put(61.7,5){$\cdots$}
\put(69,3.5){\line(0,1){5}}
\put(70,4.8){\fontsize{7pt}{0pt}\selectfont $N$}
\put(74,3.5){\line(0,1){5}}
\put(75,3.5){.}
\end{picture}\\
\end{align*}

\subsection{Rectangular cases}\label{rectangular sec}
We generalize the above construction to the case that $f$ is a rectangular nilpotent element in $\gl_N$. We follow the framework in \cite{AM}. Let $\Py$ be a rectangular pyramid of the width $l$ and the height $n$ and $N=n l$. The target space of the Miura map for $\W^k(\gl_{N},\Py)$ is a tensor vertex algebra $V^\kappa(\gl_n)^{\otimes l}$, where $\kappa$ is defined by
\begin{align*}
\kappa(u|v)=
\begin{cases}
(k+n l)\mathrm{tr}(u v) & (u,v\in\z_{\gl_n}) \\
(k+n(l-1))\mathrm{tr}(u v) & (u,v\in\slf_n).
\end{cases}
\end{align*}
Denote by $u^{(t)}(z)$ a field $u(z)$ on the $t$-th component in $V^\kappa(\gl_n)^{\otimes l}$ for all $u\in\gl_n$ and $t=1,\ldots,l$. Set a fields-valued matrix
\begin{align*}
A_t(z)=\left(e_{j,i}^{(t)}(z)\right)_{i,j=1}^n
\end{align*}
for each $t=1,\ldots,l$. Let $W_{i,j,t}(z)$ be a field on $V^\kappa(\gl_n)^{\otimes l}$ defined by the formal product
\begin{align*}
:(\hat{\der}+A_1(z))\cdot(\hat{\der}+A_2(z))\cdots(\hat{\der}+A_l(z)):\ =\sum_{t=0}^l W_t(z)\hat{\der}^{(l-t)}
\end{align*}
and $W_t(z)=\left(W_{i,j,t}(z)\right)_{i,j=1}^n$, where a product (e.g. $A_i(z)\cdot A_j(z)$ etc) is computed by the usual product of matrices and $\hat{\der}$ is defined by
\begin{align*}
[\hat{\der},A_t(z)]=(k+n(l-1))\der_z A_t(z),\quad
\der_z A_t(z)=\left(\der_z e_{j,i}^{(t)}(z)\right)_{i,j=1}^n.
\end{align*}
Then a vertex subalgebra of $V^\kappa(\gl_n)^{\otimes l}$ generated by $W_{i,j,t}(z)$ for all $i,j=1,\ldots,n$ and $t=1,\ldots,l$ is isomorphic to a $\W$-algebra $\W^k(\gl_{N},\Py)$ by \cite{AM}, and coincides with the image of the Miura map for $\W^k(\gl_{N},\Py)$. For a splitting $\Py=\Py_1\oplus\Py_2$ of $\Py$, let $l_i$ be the width of $\Py_i$ and $N_i=n l_i$. Then $\Py_i$ is a $n\times l_i$ rectangular pyramid. Define fields $W_{i,j,t}^{1}(z)$, $W_{i,j,t}^{2}(z)$ on $V^\kappa(\gl_n)^{\otimes l}$ by
\begin{align*}
\sum_{i=0}^{l_1} W_t^1(z)\hat{\der}^{(l_1-t)}&=\ :(\hat{\der}+A_1(z))\cdot(\hat{\der}+A_2(z))\cdots(\hat{\der}+A_{l_1}(z)):,\\
\sum_{i=0}^{l_2} W_t^2(z)\hat{\der}^{(l_2-t)}&=\ :(\hat{\der}+A_{l_1+1}(z))\cdot(\hat{\der}+A_{l_1+2}(z))\cdots(\hat{\der}+A_{l}(z)):,
\end{align*}
where $W_t^1(z)=\left(W_{i,j,t}^{1}(z)\right)_{i,j=1}^n$ and $W_t^2(z)=\left(W_{i,j,t}^{2}(z)\right)_{i,j=1}^n$. By construction,
\begin{align}
\sum_{t=0}^l W_t(z)\hat{\der}^{(l-t)}=\ :\left(\sum_{i=0}^{l_1} W_t^1(z)\hat{\der}^{(l_1-t)}\right)\left(\sum_{i=0}^{l_2} W_t^2(z)\hat{\der}^{(l_2-t)}\right):,
\end{align}
which induces an injective vertex algebra homomorphism
\begin{align*}
\cop\colon\W^k(\gl_{N},\Py)\rightarrow\W^{k_1}(\gl_{N_1},\Py_1)\otimes\W^{k_2}(\gl_{N_2},\Py_2)
\end{align*}
for all $k\in\C$. This map $\cop$ is a coproduct corresponding to $\Py=\Py_1\oplus\Py_2$.

\subsection{Subregular nilpotent}\label{subregular sec}
Let $\Py$ be the pyramid with the sequence of column lengths $(2,1^{N-2})$. Then the nilpotent element $f_\Py=\sum_{i=2}^{N-1}e_{i+1,i}$ is a subregular nilpotent element in $\gl_N$. We have $(\gl_N)_0=\z_{(\gl_N)_0}\oplus\slf_2$ and $\z_{(\gl_N)_0}=\bigoplus_{i=3}^N\C h_i$, where $h_i=e_{i,i}$. The corresponding $\W$-algebra $\W^k(\gl_N,\Py)$ is then isomorphic to the tensor of the Feigin-Semikatov algebra $\W^{(2)}_{N}$ (\cite{FS}) and the Heisenberg vertex algebra $V^{k+N}(\z_{\gl_N})$ if $k+N\neq0$ (\cite{G}). From now on, we assume that $k+N\neq0$. Let $H(z),Z(z),E(z),F(z)$ be fields on $V^{\tau_k}((\gl_N)_0)=V^{k+N}(\z_{(\gl_N)_0})\otimes V^{k+2}(\slf_2)$ defined by
\begin{align*}
H(z)&=h_1(z)-\frac{1}{N}\sum_{i=1}^N h_i(z),\quad
Z(z)=\sum_{i=1}^N h_i(z),\quad
E(z)=e_{1,2}(z),\\
F(z)&=\ :(\hat{\der}+(h_1-h_N)(z))(\hat{\der}+(h_1-h_{N-1})(z))\cdots(\hat{\der}+(h_1-h_3)(z))e_{2,1}(z):,
\end{align*}
where $\hat{\der}=(k+N-1)\der_z$, which generate a vertex subalgebra of $V^{\tau_k}((\gl_N)_0)$ isomorphic to $\W^k(\gl_N,\Py)$ by \cite{FS}. We split $\Py$ as
\vspace{3mm}
\begin{align}\label{subregular eq}
\setlength{\unitlength}{1mm}
\begin{picture}(0, 0)(20,5)
\put(-20,0.5){\line(0,1){10}}
\put(-20,0.5){\line(1,0){32}}
\put(-20,5.5){\line(1,0){32}}
\put(-20,10.5){\line(1,0){5}}
\put(-18.3,6.8){$1$}
\put(-18.3,1.8){$2$}
\put(-15,0.5){\line(0,1){10}}
\put(-13.3,1.8){$3$}
\put(-10,0.5){\line(0,1){5}}
\put(-7.3,2){$\cdots$}
\put(0,0.5){\line(0,1){5}}
\put(1,2){\fontsize{4pt}{0pt}\selectfont $N$$-$$1$}
\put(7,0.5){\line(0,1){5}}
\put(8,1.8){\fontsize{7pt}{0pt}\selectfont $N$}
\put(12,0.5){\line(0,1){5}}
\put(15.5,2){$=$}
\put(22,0.5){\line(0,1){10}}
\put(22,0.5){\line(1,0){20}}
\put(22,5.5){\line(1,0){20}}
\put(22,10.5){\line(1,0){5}}
\put(23.7,6.8){$1$}
\put(23.7,1.8){$2$}
\put(27,0.5){\line(0,1){10}}
\put(29.7,2){$\cdots$}
\put(37,0.5){\line(0,1){5}}
\put(38,1.8){\fontsize{7pt}{0pt}\selectfont $N_1$}
\put(42,0.5){\line(0,1){5}}
\put(45.5,2){$\oplus$}
\put(52,0.5){\line(0,1){5}}
\put(52,0.5){\line(1,0){22}}
\put(52,5.5){\line(1,0){22}}
\put(52.5,2){\fontsize{4pt}{0pt}\selectfont $N_1$$+$$1$}
\put(59,0.5){\line(0,1){5}}
\put(61.7,2){$\cdots$}
\put(69,0.5){\line(0,1){5}}
\put(70,1.8){\fontsize{7pt}{0pt}\selectfont $N$}
\put(74,0.5){\line(0,1){5}}
\put(75,0.5){,}
\end{picture}\ 
\end{align}
\vspace{3mm}

\hspace{-5mm}which we denote by $\Py=\Py_1\oplus\Py_2$. Let $\Zc=V^{k+N}(\z_{(\gl_N)_0})$ and $\Zc_1$ (resp. $\Zc_2$) a vertex subalgebra of $\Zc$ generated by $h_i(z)$ with $i=3,\ldots,N_1$ (resp. $i=N_1+1,\ldots,N$). Set $N_2=N-N_1$. We have $\Zc=\Zc_1\otimes\Zc_2$. Let $H_1(z), Z_1(z), E_1(z), F_1(z)$ be fields on $\Zc_1\otimes V^{k+2}(\slf_2)$ defined by
\begin{align*}
H_1(z)&=h_1(z)-\frac{1}{N_1}\sum_{i=1}^{N_1} h_i(z),\quad
Z_1(z)=\sum_{i=1}^{N_1} h_i(z),\quad
E_1(z)=e_{1,2}(z),\\
F_{1}(z)&=\ :(\hat{\der}+(h_1-h_{N_1})(z))(\hat{\der}+(h_1-h_{N_1-1})(z))\cdots(\hat{\der}+(h_1-h_3)(z))e_{2,1}(z):,
\end{align*}
which generate a vertex subalgebra of $\Zc_1\otimes V^{k+2}(\slf_2)$ isomorphic to $\W^{k_1}(\gl_{N_1},\Py_1)$ by construction, where $k+N=k_1+N_1$. For $i=0,\ldots,N_2$, let $W_i(z)$ be fields on $\Zc_2$ defined by
\begin{align*}
:(\hat{\der}-h_N(z))\cdots(\hat{\der}-h_{N_1+1}(z)):\ =\sum_{i=0}^{N_2}W_i(z)\hat{\der}^{N_2-i}.
\end{align*}
Since an automorphism $\tau$ on $\Zc_2$ defined by $h_i(z)\mapsto -h_{N_1+1+N-i}(z)$ ($i=N_1+1,\ldots,N$) implies the formula
\begin{align*}
\sum_{i=0}^{N_2}\tau(W_i(z))
=&\ :(\hat{\der}+h_{N_1+1}(z))\cdots(\hat{\der}+h_{N}(z)):,
\end{align*}
which is a formal product defining generating fields of the $\W^{k_2}_{N_2}$ introduced in Section \ref{principal sec}, the fields $W_i(z)$ with $i=1,\ldots,N_2$ generate a vertex subalgebra of $\Zc_2$ isomorphic to $\W^{k_2}(\gl_{N_2},\Py_2)$, where $k+N=k_2+N_2$. We have
\begin{align}
\label{subreg eq1}H(z)=&H_1(z)+\frac{N_2}{N N_1}Z_1(z)-\frac{1}{N}W_1(z),\\
\label{subreg eq2}Z(z)=&Z_1(z)+W_1(z),\\
\label{subreg eq3}E(z)=&E_1(z),\\
\label{subreg eq4}F(z)
=&\sum_{i=0}^{N_2}\sum_{j=0}^{N_2-i}\binom{N_2-j}{i}:\left(W_j(z)\hat{\der}^{N_2-j-i}\right)P_i(z)F_1(z):,
\end{align}
where
\begin{align*}
P_0(z)=1,\quad
P_i(z)=:(\hat{\der}-h_1(z))^{i-1}h_1(z):
\end{align*}
for $i=1,\ldots,N_2$. Here, we use the following lemma:

\begin{lemma}
\begin{align*}
\sum_{j=0}^{N_2-i}\binom{N_2-j}{i}W_j(z)\hat{\der}^{N_2-j-i}
=\sum_{j_1<\cdots<j_i}:(\hat{\der}-h_N(z))\overset{j_1}{\check{\cdots}}\ \overset{\cdots}{\cdots}\ \overset{j_i}{\check{\cdots}}(\hat{\der}-h_{N_1+1}(z)):
\end{align*}
for all $i=0,\ldots,N_2$.
\end{lemma}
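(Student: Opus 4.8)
The plan is to obtain the identity as the coefficient of a power of an auxiliary central variable in a single master product. Write $n=N_2$, and for $a=1,\dots,n$ let $L_a(z)=\hat\der-h_{N+1-a}(z)$, so that the listed factors are $L_1(z)=\hat\der-h_N(z),\dots,L_n(z)=\hat\der-h_{N_1+1}(z)$ and, by definition, $:L_1(z)\cdots L_n(z):\,=\sum_{j=0}^n W_j(z)\hat\der^{\,n-j}$. The right-hand side of the Lemma with $i=p$ is then $\Sigma_p(z):=\sum_{j_1<\cdots<j_p}:L_{\{1,\dots,n\}\backslash\{j_1,\dots,j_p\}}(z):$, the sum over all ways to delete $p$ of the $n$ factors while keeping the remaining ones in their original order.

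First I would introduce a formal parameter $u$ commuting with $\hat\der$ and with every $h_m(z)$, and consider $P(u,z):=\,:\prod_{a=1}^n(L_a(z)+u):$. Expanding each binomial and using that $u$ is central — so all the chosen $u$'s may be moved to the left while the remaining factors keep their relative order — gives the first of my two evaluations, $P(u,z)=\sum_{p=0}^n u^p\,\Sigma_p(z)$; this is pure bookkeeping. For the second, note $L_a(z)+u=(\hat\der+u)-h_{N+1-a}(z)$. The point is that replacing $\hat\der$ by $\hat\der+u$ everywhere does not change the coefficients $W_j(z)$: the passage to the normal form $:\prod_a(\hat\der-h_m(z)):\,=\sum_j W_j(z)\hat\der^{\,n-j}$ uses only the Leibniz relations $[\hat\der,h_m(z)]=(k+N-1)\der_z h_m(z)$ and the mutual commutativity of the distinct-index fields $h_{N_1+1}(z),\dots,h_N(z)$, and both are untouched by $\hat\der\mapsto\hat\der+u$ since $[\hat\der+u,h_m(z)]=[\hat\der,h_m(z)]$. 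Concretely, working inside $\mathcal R[\hat\der]$, where $\mathcal R$ is the commutative algebra generated by the $h_m(z)$ and their $\der_z$-derivatives and $\hat\der\cdot a=a\cdot\hat\der+(k+N-1)\der_z a$ for $a\in\mathcal R$, the assignment $\hat\der\mapsto\hat\der+u$, $a\mapsto a$ is an algebra homomorphism to $\mathcal R[\hat\der][u]$ fixing each $W_j(z)$, and applying it to the defining identity yields $P(u,z)=\sum_{j=0}^n W_j(z)\,(\hat\der+u)^{\,n-j}$.

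Finally I would expand $(\hat\der+u)^{\,n-j}=\sum_p\binom{n-j}{p}u^p\hat\der^{\,n-j-p}$ (valid since $u$ is central) and match the coefficient of $u^p$ in the two expressions for $P(u,z)$, obtaining $\Sigma_p(z)=\sum_{j=0}^n\binom{n-j}{p}W_j(z)\hat\der^{\,n-j-p}=\sum_{j=0}^{n-p}\binom{n-j}{p}W_j(z)\hat\der^{\,n-j-p}$, the summands with $j>n-p$ vanishing because $\binom{n-j}{p}=0$; with $p=i$ this is exactly the asserted formula. The one step that deserves care, and which I would write out in full, is the middle paragraph — pinning down the algebra in which the formal products are interpreted and verifying that $\hat\der\mapsto\hat\der+u$ is a legitimate homomorphism there; the surrounding steps are routine expansions and a comparison of coefficients. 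Alternatively one can induct on $n$ using $\Sigma_p^{(n)}=\Sigma_p^{(n-1)}L_n(z)+\Sigma_{p-1}^{(n-1)}$, obtained by conditioning on whether the last factor is deleted, together with the corresponding recursion for the $W_j(z)$; but the auxiliary-variable argument avoids the Leibniz bookkeeping of that approach.
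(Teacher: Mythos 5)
Your proposal is correct, and it takes a genuinely different route from the paper. The paper restates the identity as $\binom{n-j}{i}W_j^n(u_1,\ldots,u_n)=\sum_{j_1<\cdots<j_i}W_j^{n-i}(u_1,\check{\cdots},u_n)$ for the coefficients of partial products and proves it by a double induction: downward on $i+j$ (base case $i+j=n$) and on the number of factors $n$, the key step being an overcounting argument that rewrites the sum over $(i+1)$-element deletion sets as a sum over pairs (an $i$-element set, one extra index $t$) and then collapses the sum over $t$ using the one-deletion case for $n-i$ factors, producing exactly the factor $n-i-j$ needed to convert $\binom{n-j}{i+1}$ into $\binom{n-j}{i}$. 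Your generating-function argument replaces all of this with a single structural observation: the substitution $\hat\der\mapsto\hat\der+u$ for a central variable $u$ is a homomorphism of the Ore extension $\mathcal{R}[\hat\der]$ (since $[\hat\der+u,h_m(z)]=[\hat\der,h_m(z)]$) that fixes each $W_j(z)$, so the two expansions of $:\prod_a(L_a(z)+u):$ — by choosing $u$ or $L_a$ factorwise, versus by substituting into $\sum_j W_j(z)\hat\der^{\,n-j}$ — yield all the identities at once upon comparing coefficients of $u^i$. The one point you rightly flag, that the formal products live in a well-defined associative algebra, is unproblematic here because the factors involve the mutually commuting fields $h_{N_1+1}(z),\ldots,h_N(z)$ with trivial pairwise OPE, which is exactly the setting in which the paper itself manipulates these products. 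Your method is shorter and arguably more illuminating; the paper's is more elementary but requires the delicate induction bookkeeping you avoid.
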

\begin{proof}
For $1\leq n\leq N_2$, $1\leq j\leq n$ and $1\leq t_1<\cdots<t_{n}\leq N_2$, we define fields $W_j^{n}(u_{t_1},\cdots,u_{t_n})$ on $\Zc_2$ by the following formula:
\begin{align*}
:(\hat{\der}-u_{t_1}(z))\cdots(\hat{\der}-u_{t_n}(z)):
\ =\sum_{j=0}^{n}W_j^{n}(u_{t_1},\cdots,u_{t_n})\hat{\der}^{n-j},
\end{align*}
where $u_i(z)=h_{N-i+1}(z)$. Set $W_0^0(\phi)=0$. The assertion of the lemma is equivalent to the formula
\begin{align}\label{subreg lem eq}
\binom{n-j}{i}W_j^n(u_1,\ldots,u_n)=\sum_{1\leq j_1<\cdots<j_i\leq n}W_j^{n-i}(u_1,\overset{j_1}{\check{\cdots}}\ \overset{\cdots}{\cdots}\ \overset{j_i}{\check{\cdots}},u_n)
\end{align}
for $n=N_2$, where $(i,j)$ run over $\{(i,j)\in\Z^2\mid0\leq i,j,i+j\leq n\}$. We will show the formula \eqref{subreg lem eq} by induction on $n$ and $i+j$. If $n=1$ or $i+j=n$, it is easy to check that the formula \eqref{subreg lem eq} follows. If $n>1$ and $i+j<n$, we have
\begin{align*}
&\binom{n-j}{i}W_j^n(u_1,\ldots,u_n)
=\binom{n-j}{i}\binom{n-j}{i+1}^{-1}\binom{n-j}{i+1}W_j^n(u_1,\ldots,u_n)\\
=&\frac{i+1}{n-i-j}\sum_{1\leq j_1<\cdots<j_{i+1}\leq n}W_j^{n-i-1}(u_1,\overset{j_1}{\check{\cdots}}\ \overset{\cdots}{\cdots}\ \overset{j_{i+1}}{\check{\cdots}},u_n)\\
=&\frac{1}{n-i-j}\sum_{1\leq j_1<\cdots<j_{i}\leq n}\ \sum_{t\neq j_1,\ldots,j_{i}}W_j^{n-i-1}(u_1,\overset{j_1}{\check{\cdots}}\ \overset{\cdots}{\cdots}\ \overset{t}{\check{\cdots}}\ \overset{\cdots}{\cdots}\ \overset{j_{i}}{\check{\cdots}},u_n)\\
=&\sum_{1\leq j_1<\cdots<j_i\leq n}W_j^{n-i}(u_1,\overset{j_1}{\check{\cdots}}\ \overset{\cdots}{\cdots}\ \overset{j_i}{\check{\cdots}},u_n)
\end{align*}
by using our inductive assumptions. This completes the proof.
\end{proof}

Since $h_1(z)=H_1(z)+\frac{1}{N_1}Z_1(z)$ is a field on $\W^{k_1}(\gl_{N_1},\Py_1)$, $:P_i(z)F_{1}(z):$ are fields on $\W^{k_1}(\gl_{N_1},\Py_1)$ for all $i=0,\cdots,N_2$. Hence, the formula \eqref{subreg eq1}--\eqref{subreg eq4} induces an injective vertex algebra homomorphism
\begin{align*}
\cop\colon\W^k(\gl_N,\Py)\rightarrow\W^{k_1}(\gl_{N_1},\Py_1)\otimes\W^{k_2}(\gl_{N_2},\Py_2),
\end{align*}
which is the coproduct corresponding to $\Py=\Py_1\oplus\Py_2$.
\vspace{20mm}

\appendix
\begin{center}
\vspace*{12pt}

\textbf{\bfseries\large APPENDIX. Miura maps and Parabolic Wakimoto resolutions}
\vspace{12pt} \\
\begin{tabular}{rl}
SHIGENORI NAKATSUKA
\end{tabular}\footnote{\texttt{nakatuka@ms.u-tokyo.ac.jp} Graduate School of Mathematical Sciences, The University of Tokyo, 3-8-1 Komaba, Tokyo, Japan 153-8914}
\vspace{12pt} \\
\end{center}

\section{}\label{appendix}
We introduce here the parabolic Wakimoto resolution $E^F_{I,\bullet}$ for a $\W$-algebra $\W^F(\g,f;\Gamma)$. The resolution is constructed via the Drinfeld-Sokolov reduction from a parabolic Wakimoto resolution of $V^F(\g)$. The latter resolution (of $V^F(\g)$) is obtained from the dual $C^\vee_{I,\bullet}$ of the parabolic BGG resolution of the trivial $\g$-module $\C$ with respect to the parabolic subalgebra $\g_{\geq0}$ via the Fiebig's equivalence. We prove that the Miura map $\mu_F$ for $\W^F(\g,f;\Gamma)$ and the screening operators $\widetilde{Q}_\alpha$ given in \cite{G} are reconstructed as differential in $E^F_{I,\bullet}$, see Proposition \ref{Miura screening from differential} and \ref{coincidence with Miura map}.
Similarly, we reconstruct the Wakimoto resolution $E^F_{\bullet}$ of $\W^F(\g,f;\Gamma)$ from the dual $C^\vee_{\bullet}$ of the BGG resolution of $\C$.
Since $C^\vee_{I,\bullet}$ is embedded into $C^\vee_{\bullet}$, so is $E^F_{I,\bullet}$ into $E^F_{\bullet}$, see \eqref{parabolic BGG complex} and \eqref{parabolic dual BGG complex for W-algebra}. As a consequence, we obtain Lemma \ref{Miura lemma}.
The resolutions $E^F_{I,\bullet}$ and $E^F_{\bullet}$ are the same for the principal $\W$-algebras and coincide with the one in \cite{ACL, FF6}.

\subsection{Parabolic BGG resolution}\label{Para BGG}
Our main reference here is  \cite[Chapter 9]{Ku}, but our notation here is different from those in loc.\ cit listed in \cite[Chapter 1]{Ku}.
Let $\g$ be a finite-dimensional simple Lie algebra. We follows the notation in Section \ref{W-alg sec}-\ref{W-alg Wak sec}: in particular, $\g=\nil_+\oplus \h\oplus \nil_- = \bigoplus_{j\in\frac{1}{2}\Z}\g_j$, $\bo_\pm = \h\oplus\nil_\pm$, $\Pi = \Pi_0 \sqcup \Pi_{\frac{1}{2}} \sqcup \Pi_1$ the set of simple roots of $\g$, $W$ the Weyl group of $\g$, $\ell(w)$ the length of $w\in W$, $w_\circ$ the longest element in $W$, $w \circ \lambda$ the dot action of $w\in W$ for $\lambda \in \h^*$.

Let $M(\lambda)$ (resp. $M_0(\lambda)$) be the Verma module of $\g$ (resp. $\g_0$) with the highest weight $\lambda \in \h^*$ and $L(\lambda)$ (resp. $L_0(\lambda)$) be the simple quotient of $M(\lambda)$ (resp. $M_0(\lambda)$). Let $\widetilde{I}$ be an index set of simple roots of $\g$, i.e. $\Pi = \{\alpha_i\}_{i\in \tilde{I}}$ and $I$ be the subset of $\widetilde{I}$ such that $\Pi_0 = \{\alpha_i\}_{i\in I}$. The {\it generalized Verma module }$M_I(\lambda)$ of $\g$ with the highest weight $\lambda\in\h^*$ for a parabolic subalgebra $\g_{\geq0}$ is defined by
\begin{align*}
M_I(\lambda) = U(\g)\otimes_{U(\g_{\geq0})}L_{0}(\lambda),
\end{align*}
where $L_{0}(\lambda)$ extends to a $\g_{\geq0}$-module by $\g_{>0} \rightarrow 0$. Using the universal property of induced modules, we have projections $M(\lambda) \twoheadrightarrow M_I(\lambda) \twoheadrightarrow L(\lambda)$ as $\g$-modules, which we call canonical projections. Let $W_0$ be the Weyl group of $\g_0$, $\overline{\Oc}$ be the BGG category of $\g$-modules and $(?)^\vee$ be the duality functor in $\overline{\Oc}$. Then we have $L(\lambda)^\vee \simeq L(\lambda)$ and the canonical inclusions $L(\lambda) \hookrightarrow M_I(\lambda)^\vee \hookrightarrow M(\lambda)^\vee$ induced from the canonical projections. By Theorem 9.2.18 in \cite{Ku}, we have the following commutative diagram:
\begin{equation}\label{parabolic BGG complex}
\SelectTips{lu}{}
\vcenter{\xymatrix@W=15pt@H=20pt@R=12pt@C=10pt{
0 \ar[r]
&\C\ar[r]\ar@{=}[d]
&C_0^\vee\ar[r]
&C_1^\vee\ar[r]
&\cdots\ar[r]
&C_{\ell(w_\circ')}^\vee\ar[r]
&\cdots\ar[r]
&C_{\ell(w_\circ)}^\vee\ar[r]
&0\\
0 \ar[r]
&\C\ar[r]
&C_{I,0}^\vee\ar[r]\ar@{^{(}-_>}[u]
&C_{I,1}^\vee\ar[r]\ar@{^{(}-_>}[u]
&\cdots\ar[r]
&C_{I,\ell(w_\circ')}^\vee\ar[r]\ar@{^{(}-_>}[u]
&0,
}}
\end{equation}
where
\begin{align*}
&C_i^\vee = \bigoplus_{\begin{subarray}{c}w\in W\\ \ell(w)=i\end{subarray}}M(w^{-1}\circ 0)^\vee,\quad
C_{I,i}^\vee=\bigoplus_{\begin{subarray}{c}w\in W_0'\\ \ell(w)=i\end{subarray}}M_I(w^{-1}\circ 0)^\vee,\\
&W_0'=\{w\in W\mid \forall u\in W_0,\ \ell(wu)\geq \ell(w)\}
\end{align*}
with the longest element $w_\circ'$ in $W'_0$. Here the first row is the dual of the BGG resolution of the trivial $\g$-module $\C$, the second row is the dual of the parabolic BGG resolution of $\C$ associated with a Levi subalgebra $\g_0$ and the vertical morphisms are sum of canonical inclusions $M_I(w^{-1}\circ0)^\vee \hookrightarrow M(w^{-1}\circ0)^\vee$ for $w \in W'_0$.

\subsection{Fiebig's Equivalence}\label{Feibig}
Let $\g^F=\g\otimes_\C F$ be a Lie algebra over $F$ and $\hat{\g}^F = \g^F \otimes \C[t,t^{-1}]\oplus F K$ be the affine Lie algebra of $\g^F$ defined by
\begin{align}\label{commutator}
[x t^m, y t^n]=[x,y] t^{m+n}+m\delta_{m, n}(x|y)K,\quad
x, y \in \g^F,
\end{align}
and $K$ is a central element. Let $\overline{\Oc}_{F}$ be the category of $\g_F$-modules whose object are $\g^F$-modules $\overline{M}$ equipped with a $\C$-form $\overline{M}_\C$ that is a $\g$-module in $\overline{\Oc}$ and satisfy $\overline{M} = \overline{M}_\C \otimes F$, and whose morphisms are all the $\g^F$-homomorphisms preserving the $\C$-forms. If the $\C$-form is obvious, we omit mentioning it. It is clear that $\overline{\Oc}$ is equivalent to $\overline{\Oc}_F$ by $\overline{N} \mapsto \overline{N} \otimes F$. Recall that the affine vertex algebra $V^F(\g)$ over $F$ is defined as the induced $\hat{\g}^F$-module from the trivial $\g^F$-module $F$:
\begin{align*}
V^F(\g) = U(\hat{\g}^F)\underset{U(\g^F\otimes \C[t] \oplus F K)}{\otimes}F,
\end{align*}
where $F$ extends a $\g^F \otimes \C[t] \oplus F K$-module by $\g^F[t]t \mapsto 0$ and $K \mapsto \para$. Since $\para+h^\vee$ is invertible in $F$, $V^F(\g)$ is conformal by the Sugawara construction. The field corresponding to the conformal vector is given by
\begin{align*}
L(z) = \sum_{n\in\Z}L_n z^{-n-2}
=\frac{1}{2(\para+h^\vee)}\sum_{i=1}^{\dim\g}:x_i(z)x^i(z):,
\end{align*}
where $\{x_i\}_{i=1}^{\mathrm{dim}\g}$ is a basis of $\g$ and $\{x^i\}_{i=1}^{\mathrm{dim}\g}$ is its dual basis with respect to the normalized invariant form $\inv$. Let $\mathcal{O}_F$ be the category of $V^F(\g)$-modules whose objects are $V^F(\g)$-modules $M$ equipped with a $\C$-form $M_\C$ that is a $\g$-submodule over $\C$, satisfying the following properties: (1) $\h$ and $L_0$ acts on $M$ semisimply, (2) $\dim_F U(\hat{\nil}_+)m<\infty$ for all $m\in M$ where $\hat{\nil}_+=\nil_+\oplus \g^F[t]t$. The morphisms are $V^F(\g)$-homomorphisms preserving the $\C$-forms. For $M \in \mathcal{O}_F$, let $M = \oplus_{\Delta\in F}M_\Delta$ be the $L_0$-grading and $M^{\mathrm{top}}$ be the $\g_F$-submodule of $M$ defined by the direct sum of $M_r$ such that $M_{r+n}=0$ for all $n\in \Z_{<0}$. Then by Fiebig's equivalence \cite{Fie} it follows that the functors
\begin{align*}
\begin{array}{ll}
\ind_{\g}^{\hat{\g}^F}(?):& \overline{\Oc} \rightarrow \mathcal{O}_F,\quad
\overline{N} \mapsto U(\hat{\g}^F) \underset{U(\g^F\otimes \C[t] \oplus F K)}{\otimes} (\overline{N} \otimes F);\\
(?)^{\mathrm{top}}_\C:&\mathcal{O}_F \rightarrow \overline{\Oc},\quad
M \mapsto M^{\mathrm{top}} \cap M_\C,
\end{array}
\end{align*}
give rise to an equivalence of categories and are inverses to each other, where $\overline{N} \otimes F$ extends to a $\g^F\otimes \C[t] \oplus F K$-module by $\g^F[t]t \mapsto 0$ and $K \mapsto \para$ in the definition of $\ind_{\g}^{\hat{\g}^F}(?)$. For $M \in \mathcal{O}_F$, let $M = \oplus_{\lambda \in \h^*}M^\lambda$ be the $\h$-weight decomposition, $M^\lambda_\Delta = M_\Delta \cap M^\lambda$ and $M^\vee = \bigoplus_{\lambda, \Delta}\Hom_F\left( M^\lambda_\Delta, F \right)$ be the contragredient dual of $M$. Since $((\ind_{\g}^{\hat{\g}^F}(\overline{N}))^\vee)^\mathrm{top}_\C = \overline{N}^\vee$, we have $(\ind_{\g}^{\hat{\g}^F}(\overline{N}))^\vee = \ind_{\g}^{\hat{\g}^F}(\overline{N}^\vee)$ for $\overline{N} \in \overline{\Oc}$. Let $\mathbb{M}^F(\lambda)$ be the Verma module of $\hat{\g}^F$ with the highest weight $\lambda \in \h^*$ at level $\para$, and $\mathbb{M}^F_I(\lambda) = \ind_{\g}^{\hat{\g}^F}(M_I(\lambda))$ be the induced $\hat{\g}^F$-module from $M_I(\lambda)$. Then both of $\mathbb{M}^F(\lambda)$ and $\mathbb{M}^F_I(\lambda)$ are objects in $\Oc_F$. By \cite{FF2}, it follows that
\begin{align*}
\mathbb{M}^F(\lambda)^\vee \simeq \Wak^F(\lambda),\quad
\lambda \in \h^*.
\end{align*}
See also Proposition 3.3 in \cite{ACL}.

\subsection{Generalized Wakimoto representations of $V^F(\g)$}\label{G Wakimoto}
Suppose that the coordinate $c(\nil_+)$ on $N_+$ is compatible with $N_+ = G_{>0} \times G_0^+$. Recall that we introduce Wakimoto free fields realizations $\hat{\rho}_{F}$, $\hat{\rho}_{\g_{0}, F}$ of $V^F(\g)$, $V^F(\g_0)$ in Section \ref{sec:Wakimoto subsec}, \ref{Explicit forms of screenings} respectively, where $V^F(\g_0)$ is the affine vertex algebra over $F$ associated with $\g_0$ and its invariant form $\tau_\para$.
\begin{lemma}[\cite{F}]\label{lemma:para Wakimoto}
There exists an injective vertex algebra homomorphism $\hat{\rho}_{\g_0,F}^{\g} \colon V^F(\g) \rightarrow \Am^F\otimes V^F(\g_0)$ over $F$ such that $\hat{\rho}_{F} = (\Id \otimes \hat{\rho}_{\g_{0}, F}) \circ \hat{\rho}_{\g_0,F}^{\g}$ and
\begin{align}\label{parabolic Wakimoto}
\hat{\rho}_{\g_0,F}^{\g}(u)(z)=u(z)-\sum_{\alpha,\beta\in\Delta_{>0}}c_{u,\beta}^\alpha :a_\beta^*(z)a_\alpha(z):,\quad u\in\g_0.
\end{align}
\end{lemma}
Then any $\Am^F\otimes V^F(\g_0)$-module is a $V^F(\g)$-module through $\hat{\rho}_{\g_0,F}^{\g}$, called a {\it generalized Wakimoto representation of $V^F(\g)$}. Let $P_{0,+} = \{ \lambda \in \h^* \mid (\lambda|\alpha_i^\vee)\in\Z_{\geq0}\ \mathrm{for}\ \mathrm{all}\ i \in I\}$ be the set of dominant integral weights of $\g_0$. Then $L_0(\lambda)$ is a finite-dimensional simple $\g_0$-module. For $\lambda \in P_{0,+}$, let $\mathbb{V}_0^F(\lambda)$ be the Weyl module of $\hat{\g}_0^F$ with highest weight $\lambda$ defined by
\begin{align*}
\Weyl^F_0(\lambda) = U(\hat{\g}^F_0)\underset{U(\g_0^F \otimes \C[t]\oplus F K)}{\otimes}(L_{0}(\lambda)\otimes F).
\end{align*}
Then $\Weyl^F_0(\lambda)$ is a $V^F(\g_0)$-module and thus $\Am^T\otimes\Weyl^F_0(\lambda)$ is a $V^F(\g)$-module in $\mathcal{\Oc}_F$. Denote by
\begin{align*}
\Wak^F_I(\lambda) = \Am^T\otimes\Weyl^F_0(\lambda),\quad
\lambda \in P_{0,+};\quad
\Wak^F_{I, \g} = \Wak^F_I(0).
\end{align*}
\begin{lemma}
$\mathbb{M}^F_I(\lambda)^\vee \simeq \Wak^F_I(\lambda)$ for $\lambda \in P_{+,0}$.
\end{lemma}
\proof
The assertion follows from the Fiebig's equivalence and
\begin{align*}
\left(\Wak^F_I(\lambda)\right)^{\mathrm{top}}_\C \simeq M_I(\lambda)^\vee.
\end{align*}
\endproof
Let $\mathbb{M}_0^F(\lambda)$ be the Verma module of $\hat{\g}_0^F$ with the highest weight $\lambda\in \h^*$ and $\overline{\Oc}_0$ be the BGG category of $\g_0$-modules. As an analogue of $\Oc_F$, the category $\Oc_{0, F}$ of $V^F(\g_0)$-modules satisfying the same properties as in $\Oc_F$ can be defined. Then $\overline{\Oc}_0$ is equivalent to $\Oc_{0, F}$ through the induction functor by Fiebig's equivalence and the inclusion $L_0(\lambda) \hookrightarrow M_0(\lambda)^\vee$ implies that $\Weyl^F_0(\lambda) \hookrightarrow \mathbb{M}_0^F(\lambda)^\vee$ for $\lambda \in P_{0,+}$, where $(?)^\vee$ denotes the duality functor in $\overline{\Oc}_0$ and the contragredient dual in $\Oc_{0, F}$. Using an injective homomorphism $\hat{\rho}_{\g_0, F} \colon V^F(\g_0) \rightarrow \mathcal{A}_{\Deltazero}^F\otimes \mathcal{H}^F$ of vertex algebras over $F$, we have a Wakimoto representation $\mathcal{A}_{\Delta_0^+}^F\otimes \mathcal{H}^F_\lambda$ of $V^F(\g_0)$ with the highest weight $\lambda\in\h^*$, which we denote by $\Wak^F_0(\lambda)$. Then we also have \cite{FF2, ACL} isomorphisms
\begin{align*}
\mathbb{M}^F_0(\lambda) \simeq \Wak^F_0(\lambda),\quad
\lambda \in \h^*.
\end{align*}
Hence there exist embeddings
\begin{align*}
\hat{\rho}_{\g_0,\lambda,F}\colon \mathbb{V}_0^F\left(\lambda\right)\hookrightarrow \mathbb{M}_0^F(\lambda)^\vee \xrightarrow{\sim} \Wak^F_0(\lambda),\quad
\lambda \in P_{0,+}.
\end{align*}

\subsection{Parabolic Wakimoto resolution}\label{Para Wakimoto}
Now, we apply the induction functor $\mathrm{Ind}_{\g}^{\hat{\g}_F}$ to \eqref{parabolic BGG complex}. Then we obtain the following commutative diagram:
\begin{equation}\label{affine parabolic dual BGG complex}
\SelectTips{lu}{}
\vcenter{\xymatrix@W=15pt@H=20pt@R=12pt@C=10pt{
0 \ar[r]
&V^F(\g)\ar[r]^-{\epsilon}\ar@{=}[d]
&D_0^F\ar[r]^-{d_0}
&D_1^F\ar[r]
&\cdots\ar[r]
&D^F_{\ell(w_\circ')}\ar[r]
&\cdots\ar[r]
&D^F_{\ell(w_\circ)}\ar[r]
&0\\
0 \ar[r]
&V^F(\g)\ar[r]^-{\epsilon_I}
&D^F_{I,0}\ar[r]^-{d_{I,0}}\ar@{^{(}-_>}[u]_-{\iota_0}
&D^F_{I,1}\ar[r]\ar@{^{(}-_>}[u]_-{\iota_1}
&\cdots\ar[r]
&D^F_{I,\ell(w_0')}\ar[r]\ar@{^{(}-_>}[u]
&0,
}}
\end{equation}
where
\begin{align*}
D^F_i=\bigoplus_{\begin{subarray}{c}w\in W\\ \ell(w)=i\end{subarray}}W^F(w^{-1}\circ0),\quad 
D^F_{I,i}=\bigoplus_{\begin{subarray}{c}w\in W_0'\\ \ell(w)=i\end{subarray}}\Wak^F_I\left(w^{-1}\circ0\right).
\end{align*}
Here the first and second rows are exact and
\begin{align*}
D_0^F = \Wak^F_{\g},\quad
D_{I, 0}^F = \Wak^F_{I, \g}.
\end{align*}
By the Fiebig's equivalence,
$$\dim\Hom_{\mathcal{O}_F}(V^F(\g), D_0^F)=\dim\Hom_{\overline{\Oc}}(\C, C_0^\vee)=1.$$
Focusing on the top spaces, we see that $\epsilon = \hat{\rho}_F$ and $\epsilon_I = \hat{\rho}^\g_{\g_0, F}$. Similarly, we have $d_0 = \bigoplus_{\alpha\in \Pi}S_\alpha$, see \eqref{scaffine eq} for the definition of $S_\alpha$. Since the higher differentials $d_i \colon D^F_i \rightarrow D^F_{i+1}$ with $i \geq 1$ are unique up to scalar by the Fiebig's equivalence and Lemma 9.2.16 in \cite{Ku}, we conclude that the exact sequence in the first row in \eqref{affine parabolic dual BGG complex} coincides with \eqref{affine Wakimoto resolution} up to scalar for the choices of higher differentials. Next, consider the column arrows $\iota_i \colon D^F_{I, i} \rightarrow D^F_i$ with $i\geq0$. Again by the Fiebig's equivalence,
\begin{align*}
\dim\left(\Wak^F_I(\lambda),W^F(\lambda)\right)
=\dim\left(M_I(\lambda)^\vee,M(\lambda)^\vee\right)=1,\quad
\lambda \in P_{0,+}.
\end{align*}
Thus, we have $\iota_0=\Id\otimes \hat{\rho}_{\g_0,F}$, $\iota_1=\bigoplus_{\alpha\in \Pi}\Id\otimes \hat{\rho}_{\g_0,-\alpha,F}$ and
\begin{align*}
\iota_i=\bigoplus_{\begin{subarray}{c}w\in W_0'\\ \ell(w)=i\end{subarray}}c_w(\Id\otimes \hat{\rho}_{\g_0,w^{-1}\circ0,F}),\quad
i \geq 2
\end{align*}
for some invertibles $c_w\in F$.
Finally, consider the second row. We already know $\epsilon_I = \hat{\rho}_{\g_0,F}^{\g}$. Since $d_{I, 0}$ is the restriction of $\oplus_{\alpha\in \Pi_{>0}}S_\alpha$ to $\Wak^F_{I, \g}$ through $\iota_0$, the image of $\iota_0\left(\Wak^F_{I,\g}\right)$ by $S_\alpha$ for $\alpha \in \Pi_{>0}$ should be included in $\iota_1\left(\Wak^F_I(-\alpha)\right)$. Thus,
\begin{align}\label{eq:Sa-restriction}
S_\alpha \colon \iota_0\left(\Wak^F_{I,\g}\right) \rightarrow \iota_1\left(\Wak^F_I(-\alpha)\right),\quad
\alpha \in \Pi_{>0}.
\end{align}
\smallskip

From now on, we fix $\alpha\in\Pi_{>0}$. Recall that $S_\alpha$ is the 0-th mode of the intertwining operator $S_\alpha(z)$ corresponding to the vector $\hat{\rho}^R(e_\alpha)_{(-1)}|-\alpha\rangle \in \Wak^F(-\alpha)$ and
\begin{align*}
\hat{\rho}^R(e_\alpha)_{(-1)}|-\alpha\rangle = \sum_{\beta\in \Delta_{+}}P^{\beta,R}_{\alpha}(a^*)_{(-1)}a_{\beta (-1)}|-\alpha\rangle.
\end{align*}
See \eqref{scaffine eq}. Since $W^F(\lambda)^\mathrm{top}_\C = \{ P(a^*)_{(-1)}|\lambda\rangle \mid P(x) \in \C[x_\beta]_{\beta\in\Delta_+}\}$ for $\lambda \in \h^*$, we have an isomorphism
\begin{align*}
I_\lambda \colon W^F(\lambda)^\mathrm{top}_\C \xrightarrow{\sim} \C[x_\beta]_{\beta\in\Delta_+}\e^\lambda,\quad
a^*_{\beta (-1)} \mapsto x_\beta,\ 
|\lambda\rangle \mapsto \e^\lambda
\end{align*}
of vector spaces. Then
\begin{align}\label{eq:Sa-I}
I_{-\alpha} \circ S_\alpha \circ I_0^{-1} = \sum_{\beta \in \Delta_+}I_{-\alpha}(\bar{v}_\beta)\der_\beta,\quad
\bar{v}_\beta = P_\alpha^{\beta, R}(a^*)_{(-1)}|-\alpha\rangle.
\end{align}
Similarly, we have an isomorphsim $\Wak^F_I(\lambda)^\mathrm{top}_\C \simeq \C[x_\beta]_{\beta \in \Delta_{>0}}\otimes L_0(\lambda)$ of vector spaces for $\lambda \in P_{0,+}$. Thus, the restrictions of $I_0$, $I_{-\alpha}$ to $\iota_0\left(\Wak^F_{I,\g}\right)^\mathrm{top}_\C$, $\iota_1\left(\Wak^F_I(-\alpha)\right)^\mathrm{top}_\C$ respectively, yield the following isomorphisms of vector spaces:
\begin{align*}
I_0 \colon \iota_0\left(\Wak^F_{I,\g}\right)^\mathrm{top}_\C \xrightarrow{\sim}
\overline{W}_{I, \g},\quad
I_{-\alpha} \colon \iota_1\left(\Wak^F_I(-\alpha)\right)^\mathrm{top}_\C \xrightarrow{\sim}
\overline{W}_{I}(-\alpha),
\end{align*}
where
\begin{align*}
\overline{W}_{I, \g} = \C[x_\beta]_{\beta \in \Delta_{>0}},\quad
\overline{W}_{I}(-\alpha) = \C[x_\beta]_{\beta \in \Delta_{>0}}\otimes \hat{\rho}_{\g_0,-\alpha,F}\left( L_0(-\alpha) \right).
\end{align*}
Using the equations \eqref{eq:Sa-restriction} and \eqref{eq:Sa-I}, it follows that
\begin{align*}
I_{-\alpha}(\bar{v}_\beta) = (I_{-\alpha} \circ S_\alpha \circ I_0^{-1})(x_\beta) \in \overline{W}_{I}(-\alpha),\quad
\beta \in \Delta_{>0}.
\end{align*}
If $P_\alpha^{\beta, R}(x) \neq 0$ and $P_\alpha^{\beta, R}(x) \in \C[x_\beta]_{\beta \in \Deltazero}$, then $\beta \in [\alpha]$ by Lemma \ref{lem:PR zero}. Recall that $\{v_\beta\}_{\beta \in [\alpha]}$ is a basis of $L_0(-\alpha)$ satisfying \eqref{eq:L0(-alpha)}.
\begin{lemma}
For $\beta\in [\alpha]$, we have $\bar{v}_\beta\in  \hat{\rho}_{\g_0,-\alpha,F}\left( L_0(-\alpha) \right)$ and $\hat{\rho}_{\g_0,-\alpha,F}(v_\beta) = \bar{v}_\beta$.
\end{lemma}
\proof
The first assertion is immediate from the weight consideration. We will show the second assertion. Let $\widetilde{v}_\beta = \hat{\rho}_{\g_0,-\alpha,F}(v_\beta)$ for $\beta \in [\alpha]$. Using the fact that the weight space of $L_0(-\alpha)$ with the weight $-\beta$ is one-dimensional and spanned by $v_\beta$, it follows that $\bar{v}_\beta = c_{\alpha, \beta}\widetilde{v}_\beta$ with some $c_{\alpha, \beta}\in\C$. By $P_\alpha^{\alpha, R}(x) =1$, we have $\widetilde{v}_\beta = |-\alpha\rangle = \bar{v}_\alpha$. Thus $c_{\alpha, \alpha} = 1$. Since $\hat{\rho}_{F}(u) \in \Ker S_\alpha$ for $u\in\g$, we have $S_\alpha \circ \hat{\rho}_{F}(u)_{(0)} = \hat{\rho}_{F}(u)_{(0)} \circ S_\alpha$. Now, by Lemma \ref{lemma:para Wakimoto} and \eqref{eq:Sa-I},
\begin{align*}
(I_{-\alpha} \circ S_\alpha \circ \hat{\rho}_{F}(u)_{(0)} \circ I_0^{-1})(x_\beta) = \sum_{\gamma\in\Delta_{>0}}c_{\gamma, u}^\beta I_{-\alpha}(\bar{v}_\gamma),\quad
u\in\g_0,\ 
\beta \in \Delta_{>0}.
\end{align*}
On the other hand, again by Lemma \ref{lemma:para Wakimoto},
\begin{align*}
(I_{-\alpha} \circ \hat{\rho}_{F}(u)_{(0)} \circ S_\alpha \circ I_0^{-1})(x_\beta) = I_{-\alpha}(u \cdot\bar{v}_\beta),\quad
u\in\g_0,\ 
\beta \in \Delta_{>0}.
\end{align*}
Therefore $u \cdot \bar{v}_\beta = \sum_{\gamma\in\Delta_{>0}}c_{\gamma, u}^\beta \bar{v}_\gamma$ for $u\in\g_0$ and $\beta \in [\alpha]$. By the simplicity of $L_0(\alpha)$, we have $c_{\alpha, \beta} = 1$ for all $\beta \in [\alpha]$. This completes the proof.
\endproof
We see that $\bar{v}_\beta \in \iota_1\left(\Wak^F_I(-\alpha)\right)^\mathrm{top}_\C$ for all $\beta \in \Delta_{>0}$. For $\beta \in \Delta_{>0}\backslash[\alpha]$, we also denote by $v_\beta$ the vector in $\Wak^F_I(-\alpha)$ such that $\iota_1(v_\beta) = \bar{v}_\beta$. Set
\begin{align*}
&\bar{S}_\alpha^{(1)} = \sum_{\beta\in[\alpha]}\int Y(v_\beta,z)\ dz \colon \Wak^F_{I,\g} \rightarrow \Wak^F_I(-\alpha),\\
&\bar{S}_\alpha^{(2)} = \sum_{\beta\in \Delta_{>0}\backslash[\alpha]}\int Y(v_\beta,z)\ dz\colon \Wak^F_{I,\g} \rightarrow \Wak^F_I(-\alpha),
\end{align*}
where $Y(v_\beta,z)$ denotes the intertwining operator corresponding to $v_\beta$.
\begin{cor}\label{lem: identification}
$d_{I, 0} = \bigoplus_{\alpha \in \Pi_{>0}}\bar{S}_\alpha$, where $\bar{S}_\alpha = \bar{S}_\alpha^{(1)} + \bar{S}_\alpha^{(2)}$.
\end{cor}
As a consequence, we obtain the following commutative diagram from \eqref{affine parabolic dual BGG complex}:
\begin{equation}\label{affine parabolic dual BGG complex: differentials}
\SelectTips{lu}{}
\vcenter{\xymatrix@W=20pt@H=20pt@R=12pt@C=15pt{
0 \ar[r]
&V^F(\g)\ar[r]^-{\hat{\rho}_F}\ar@{=}[d]
&D_0^F\ar[r]^-{\oplus S_\alpha}
&D_1^F\ar[r]
&\cdots\ar[r]
&D^F_{\ell(w_\circ')}\ar[r]
&\cdots\\
0 \ar[r]
&V^F(\g)\ar[r]^-{\hat{\rho}_{\g_0,F}^{\g}}
&D^F_{I,0}\ar[r]^-{\oplus\bar{S}_\alpha}\ar@{^{(}-_>}[u]_-{\hat{\rho}_{\g_0,F}}
&D^F_{I,1}\ar[r]\ar@{^{(}-_>}[u]_-{\oplus \hat{\rho}_{\g_0,-\alpha,F}}
&\cdots\ar[r]
&D^F_{I,\ell(w_0')}\ar[r]\ar@{^{(}-_>}[u]
&0,
}}
\end{equation}
Here we omit $\Id\otimes\text{-}$ before $\hat{\rho}_{\g_0,F}$ and $\hat{\rho}_{\g_0,-\alpha,F}$ for simplicity. The first exact sequence is called the Wakimoto resolution of $V^F(\g)$ and the second one is called the parabolic Wakimoto resolution of $V^F(\g)$ associated with a Levi subalgebra $\g_0$.

\subsection{Resolutions for $\mathcal{W}$-algebras}\label{Resol W-alg}
By applying the cohomology functor $H_\chi^0(?)$ to \eqref{affine parabolic dual BGG complex}, we obtain the commutative diagram
\begin{equation}\label{parabolic dual BGG complex for W-algebra}
\SelectTips{lu}{}
\vcenter{\xymatrix@W=20pt@H=20pt@R=12pt@C=15pt{
0 \ar[r]
&\mathcal{W}^F(\g,f;\Gamma)\ar[r]^-{\omega_F}\ar@{=}[d]
&E^F_0\ar[r]^-{\oplus Q_\alpha}
&E^F_1\ar[r]
&\cdots\ar[r]
&E^F_{\ell(w_\circ')}\ar[r]
&\cdots\\
0 \ar[r]
&\mathcal{W}^F(\g,f;\Gamma)\ar[r]^-{[\hat{\rho}_{\g_0,F}^{\g}]}
&E^F_{0,0}\ar[r]^-{\oplus[\bar{S}_\alpha]}\ar@{^{(}-_>}[u]_-{\hat{\rho}_{\g_0,F}}
&E^F_{0,1}\ar[r]\ar@{^{(}-_>}[u]_-{\oplus \hat{\rho}_{\g_0,-\alpha,F}}
&\cdots\ar[r]
&E^F_{0,\ell(w_\circ')}\ar[r]\ar@{^{(}-_>}[u]
&0,}}
\end{equation}
where
\begin{align*}
E^F_{0,i}=\bigoplus_{\begin{subarray}{c}w\in W_0'\\ \ell(w)=i\end{subarray}}\mathbb{V}^F_0(w^{-1}\circ0)\otimes \FneF.
\end{align*}
See \eqref{W-alg Wakimoto resolution} for the definitions of $\omega_F$, $Q_\alpha$ and $E_i^F$ in the first row. The horizontal complexes are exact since the modules appearing in \eqref{affine parabolic dual BGG complex} are $H_\chi^0(?)$-acyclic by \eqref{eq:W-alg vanish} and \eqref{eq:W-Wak vanish} for the first row and by \eqref{cohomology vanishing for semi-regular bimodule} for the second row.  We call the resolution of the $\W$-algebra in the second row the parabolic Wakimoto resolution. Consider the differential $\oplus [\bar{S}_\alpha]\colon E^F_{0,0}\rightarrow E^F_{0,1}$. We have
$$E^F_{0,0}=V^F(\g_0)\otimes \Phi^F(\g_{\frac{1}{2}}),\quad
E^F_{0,1}=\bigoplus_{\alpha\in \Pi_{>0}}\mathbb{V}^F_0(-\alpha)\otimes \Phi^F(\g_{\frac{1}{2}})$$
and thus
$$[\bar{S}_\alpha]: V^F(\g_0)\otimes \Phi^F(\g_{\frac{1}{2}})\rightarrow \mathbb{V}^F_0(-\alpha)\otimes \Phi^F(\g_{\frac{1}{2}}).$$
\begin{prop}\label{Miura screening from differential}
For $\alpha\in \Pi_{>0}$, $[\bar{S}_\alpha]=\widetilde{Q}_\alpha$.
\end{prop}
\proof
By weight consideration, we have $[\bar{S}_{\alpha}]=[\bar{S}_{\alpha}^{(1)}]$ for $\alpha\in \Pi_{>0}$. Then Theorem \ref{main thm} implies the assertion.
\endproof
By Corollary \ref{Miura const lem}, we have $[\hat{\rho}_{\g_0,F}^{\g}]=\widetilde{\mu}_F$.
Thus the proof of Lemma \ref{Miura lemma} reduces to the following proposition.
\begin{prop}\label{coincidence with Miura map}
The map $[\hat{\rho}_{\g_0,F}^{\g}]$ coincides with the Miura map $\mu_F$.
\end{prop}
Before going to prove Proposition \ref{coincidence with Miura map}, we prepare some filtrations on the complexes for the functor $H_\chi^0$. Recall the settings in Section \ref{W-alg sec}: by definition $\mathcal{W}^F(\g,f;\Gamma)=H^0(C_F,d)$ and the complex $C_F=V^F(\g)\otimes F_{\mathrm{ch}}^F(\g_{>0})$ is quasi-isomorphic to the subcomplex $C_{+,F}$ defined as a vertex superalgebra generated by the fields $J^{(u)}(z)=u(z)+\sum_{\alpha,\beta\in \Delta_{>0}}c_{u,\beta}^\alpha:\varphi_\alpha(z)\varphi^\beta(z):$, $(u\in \g_{\leq0})$, $\varphi^\alpha(z)$, $(\alpha\in \Delta_{>0})$ and $\Phi_{\beta}(z)$, $(\beta\in \Delta_{\frac{1}{2}})$, see \cite{KW1}. Set $C_F^W =V^F(\g_0)\otimes \mathcal{A}_{\Delta_{>0}}^F\otimes F_{\mathrm{ch}}^F(\g_{>0})\otimes \Phi^F(\g_{\frac{1}{2}})$. Define weights on the complexes $C_F$ and $C^W_F$ by 
\begin{align*}
&\mathrm{wt}(u)=-2\mathrm{deg}(u),\quad
\mathrm{wt}(\Phi_\alpha)=0,\quad
\mathrm{wt}(\varphi^\alpha)=2\mathrm{deg}(\alpha)=-\mathrm{wt}(\varphi_\alpha),\\
&\mathrm{wt}(a_\alpha^*)=2\mathrm{deg}(\alpha)=-\mathrm{wt}(a_\alpha),\quad
\mathrm{wt}(ab)=\mathrm{wt}(a)+\mathrm{wt}(b),\quad \mathrm{wt}(\partial a)=\mathrm{wt}(a)
\end{align*}
with arbitrary fields $a, b$. Then we have the weight decompositions $C_F=\oplus_{n \in\Z}C_{F,n}$,  $C_F^W=\oplus_{n \in\Z}C_{F, n}^W$. Note that the map $\hat{\rho}_{\g_0,F}^{\g}\colon C_F\rightarrow C_F^W$ preserves these grading and the subcomplex $C_{+,F}$ is also graded as
$C_{+,F}=\oplus_{n\in\Z_{\geq0}}C_{+, F, n}$ and that the differential $d$ has the following weights:
$$\mathrm{wt}(d_{\mathrm{st}})=0,\quad \mathrm{wt}(d_{\mathrm{ne}})=1,\quad \mathrm{wt}(d_{\chi})=2.$$ 
Thus $d$ preserves the decreasing filtrations $\{F_n M=\oplus_{j\geq n}M_j\}_{n\in \Z}$ for $M=C_{+,F}, C_F^W$.
The filtration $F_\bullet C_{+,F}$ is used in \cite{G} to construct the Miura map $\mu_F$ as follows. The associated spectral sequence $\{E_q^\bullet\}_{q=1}^\infty$ converges, and we have
\begin{align}
E_1^\bullet=H^\bullet(C_{+,F},d_{\mathrm{st}})\simeq V^F(\g_0)\otimes \Phi^F(\g_{\frac{1}{2}})\otimes H^\bullet(\g_{>0},\C)
\end{align}
as complexes, where $H^\bullet(\g_{>0},\C)$ is the Lie algebra cohomology of $\g_{>0}$ with coefficients in the trivial module $\C$. Moreover, 
$$E_1^0\simeq V^F(\g_0)\otimes \Phi^F(\g_{\frac{1}{2}}),\quad E_1^1\simeq \bigoplus_{\alpha\in \Pi_{>0}}\mathbb{V}_0^F(-\alpha)\otimes \Phi^F(\g_{\frac{1}{2}})$$
as $V^F(\g_0)\otimes \Phi^F(\g_{\frac{1}{2}})$-modules and the the map $\widetilde{d}=d_{\mathrm{ne}}+d_{\chi}$ induces a differential 
$$[\widetilde{d}]: E_1^0\rightarrow E_1^1,$$
whose kernel is isomorphic to the $\W$-algebra
\begin{align}\label{Miura as spectral sequence}
\W^F(\g,f;\Gamma)\xrightarrow{\sim} \mathrm{Ker}\left([\widetilde{d}]: E_1^0\rightarrow E_1^1 \right).
\end{align}
This map coincides with the Miura map $\mu_F$, see \cite{G} for details. 

Similarly, let $\{E^{W,\bullet}_q\}_{q=1}^\infty$ be the spectral sequence associated with the filtration $F_\bullet C_F^{W}$. Define a conformal grading $G(?)$ on $C_F^W$ by 
\begin{align*}
&G(u)=1,\quad
u\in\g_0;\quad
G(\Phi_\alpha)=\frac{1}{2};\\
&G(a_\alpha)=G(\varphi_\alpha)=1-\mathrm{deg}(\alpha);\quad
G(a_\alpha^*)=G(\varphi^\alpha)=\mathrm{deg}(\alpha).
\end{align*}
Then $d$ preserves the decomposition $C_F^W=\oplus_{n\in \frac{1}{2}\Z}C_F^W[n]$ by the conformal grading. The following assertion is proved straightforwardly.
\begin{lemma}
$F_pC_F^W\cap C_F^W[n]=0$ for $p>2n$.
\end{lemma}
Then the associated spectral sequence $\{E_q^{W,\bullet}\}_{q=1}^\infty$ converges.
Note that $E_1^{W,\bullet}=H^\bullet(C_F^W,d_{\mathrm{st}})=V^F(\g_0)\otimes \Phi^F(\g_{\frac{1}{2}})\otimes H^\bullet(\mathcal{A}_{\Delta_{>0}}^F,d_{\mathrm{st}})\simeq V^F(\g_0)\otimes \Phi^F(\g_{\frac{1}{2}})$ by \cite{Fei}. Thus the spectral sequence collapses $E^{W,\bullet}_\infty=E^{W,0}_1\simeq V^F(\g_0)\otimes \Phi^F(\g_{\frac{1}{2}})$.
\begin{proof}[Proof of Proposition \ref{coincidence with Miura map}]
The map $\hat{\rho}_{\g_0,F}^{\g}\colon C_{+,F}\rightarrow C_F^W$ induces a map
$[\hat{\rho}_{\g_0,F}^{\g}]_1\colon E_1^\bullet \rightarrow E_1^{W,\bullet}$, whose zero-th part is 
$$[\hat{\rho}_{\g_0,F}^{\g}]_1\colon V^F(\g_0)\otimes \Phi^F(\g_{\frac{1}{2}}) \rightarrow V^F(\g_0)\otimes \Phi^F(\g_{\frac{1}{2}}).$$
By \eqref{Miura as spectral sequence} and the collapsing of the spectral sequences, it suffices to show that $[\hat{\rho}_{\g_0,F}^{\g}]_1$ is the identity.
Now by \eqref{parabolic Wakimoto}, we have
\begin{align*}
[\hat{\rho}_{\g_0,F}^{\g}]_1([u(z)])
&=\left[u(z)-\sum_{\alpha,\beta\in \Delta_{>0}}c_{u,\beta}^\alpha(:a_\beta^*(z)a_\alpha(z):-:\varphi_\alpha(z)\varphi^\beta(z):)\right]\\
&=[u(z)]-\left[\sum_{\alpha,\beta\in \Delta_{>0}}c_{u,\beta}^\alpha(:a_\beta^*(z)a_\alpha(z):-:\varphi_\alpha(z)\varphi^\beta(z):)\right]
\end{align*}
for $u\in\g_0$. Since the conformal grading $G$ of the second term is 1, we conclude that the second term is equal to 0 by $ H^0(\mathcal{A}_{\Delta_{>0}}^F,d_{\mathrm{st}})=\C$. Thus $[\hat{\rho}_{\g_0,F}^{\g}]_1$ is the identity map on $V^F(\g_0)$. It is obvious from the definition of $\hat{\rho}_{\g_0,F}^{\g}$ that $[\hat{\rho}_{\g_0,F}^{\g}]_1$ is also the identity map on $\Phi^F(\g_{\frac{1}{2}})$. This completes the proof.
\end{proof}

\end{document}